\documentclass[11pt,letterpaper]{amsart}
\usepackage{geometry}
 \usepackage[linkcolor=green!35!blue,colorlinks=purple!68!black]{hyperref} 
\hypersetup{colorlinks=true, citecolor=purple!68!black, filecolor=Maroon,
 linkcolor=green!35!blue, urlcolor=purple!68!black}

\usepackage{graphicx} 
 \usepackage{amsthm,amsfonts,amssymb}
\usepackage{euscript}
\usepackage{mathrsfs}
\usepackage{hyperref} 
\newtheorem{theorem}{Theorem}[section]
\newtheorem{lemma}{Lemma}[section]
\newtheorem{definition}{Definition}[section]
\newtheorem{remark}{Remark}[section]
\newtheorem{corollary}{Corollary}[section]
\newtheorem{notate}{Notation}[section]
\newtheorem{assumption}{Assumption}[section]

\newtheorem{conjecture}{Conjecture}[section]
\newtheorem{example}{Example}[section]
\numberwithin{equation}{section}
\newcommand{\RS}{R\mathrm{Sol}}

\newcommand{\D}{\mathcal{D}}
\newcommand{\AD}{\mathcal{A}[\mathcal{D}]}
\newcommand{\M}{\EuScript{M}}
\newcommand{\EQ}{\mathcal{E}}
\newcommand{\A}{\EuScript{A}}
\newcommand{\F}{\mathcal{F}}
\newcommand{\RHom}{R\mathcal{H}om}
\newcommand{\Sol}{\mathsf{Sol}}
\usepackage{tikz-cd}
\tikzset{%
    symbol/.style={%
        draw=none,
        every to/.append style={%
            edge node={node [sloped, allow upside down, auto=false]{$#1$}}}
    }
}

\title[Index Theorems and Torsion Invariants]{Microlocal index theorems and analytic torsion invariants in the geometric theory of partial
differential equations}
\author{J. Kryczka}
\email{jkryczka@math.bas.bg}
\address{Institute of Mathematics and Informatics, Bulgarian Academy of Sciences}

\author{V. Rubtsov}
\email{volodya@univ-angers.fr}
\address{University of Angers, CNRS, LAREMA, SFR MATHSTIC and IITP RAS}

\author{A. Sheshmani}
\email{artan@mit.edu}
\address{Massachusetts Institute of Technology (MIT), IAiFi Institute, Cambridge, USA
\\
Beijing Institute of Mathematical Sciences and Applications, Beijing, China}

\author{S-T. Yau}
\email{styau@tsinghua.edu.cn}
\address{Yau Mathematical Sciences Center, Tsinghua University, Haidian District, Beijing, China}

\date{March 2026}

\begin{document}
\begin{abstract}
We develop a microlocal and derived-geometric framework for index theory and analytic torsion of nonlinear PDEs. By integrating Spencer hypercohomology, microlocal sheaf theory, and factorization algebras, we establish new connections between classical index theorems, BCOV invariants of Calabi–Yau manifolds, and the geometry of configuration spaces.  We prove sheaf-theoretic index formulas for families of formally integrable PDEs, a microlocal index theorem for D-algebras generalizing Atiyah–Singer, and a mixed-type index theorem via microlocal stratification. We construct Ray–Singer analytic torsion for involutive systems and interpret the BCOV invariant via the Spencer torsion of the de Rham system. A categorical trace interpretation leads to a virtual index theory for derived moduli spaces of solutions. Finally, we extend the theory to configuration spaces using factorization algebras, with applications to renormalization and QFT.  These results unify geometric perspectives on PDEs, torsion invariants, and moduli theory, with implications for mirror symmetry, quantum fields, and Calabi–Yau degenerations.
\end{abstract}
\subjclass[2021]{(Primary) 14A20, 14A30, 14F10, 35A20, 35A27, 58A20, (Secondary) 14G22, 14F08, 18N60, 32C35, 32C38}
\keywords{}
\maketitle

\tableofcontents

\section{Introduction}
The Atiyah–Singer index theorem \cite{AtiyahSinger1963} possesses a profound interpretation through supersymmetric and topological quantum mechanics \cite{Witten1982Constraints}. In this framework, the index is realized as a topological invariant encoded in the geometry of the loop space. A proposal by Witten \cite{WittenLoop} replaces the base circle with a two-dimensional torus. This indicates a higher-dimensional refinement, moreover suggesting the development of such a topological index theory for Dirac operators on loop spaces.

Invariants for families of Dirac operators have been computed by Bismut-Freed \cite{BismutFreed1986EllipticFamiliesII}, based on a more general procedure that constructs metrics and unitary connections on determinant bundles $\lambda$ associated with families of first-order elliptic differential operators \cite{BismutFreed1986EllipticFamiliesI}. This generalizes seminal work of Quillen \cite{Quillen1,Quillen2}, for the case of a family of $\overline{\partial}$-operator over a Riemann surface. For the infinite-dimensional setting, we refer to \cite{Bismut1986FamiliesDirac}.  

A central role is played by the Ray-Singer analytic torsion invariant \cite{RaySinger1971}, and its interaction with canonical invariants associated to Calabi-Yau manifolds.
Indeed, Bershadsky–Cecotti–Ooguri–Vafa \cite{BCOV1994} remark there is a real valued invariant associated to each Calabi-Yau $n$-fold $X$ called the BCOV-invariant, which is denoted by $\tau_{\mathrm{BCOV}}(X).$ It depends only on the complex structure of the mirror Calabi-Yau, and is built from Ray–Singer holomorphic analytic torsions (recalled in Sect.~\ref{ssec: AnalyticTorsion}).
Importantly, since it is expected (cf. \cite[p.~137]{Kontsevich1994HAMS}) that birational Calabi–Yau manifolds
have the same $B$-models, it is naturally conjectured that if $X,X'$ are birational CY-manifolds, then $\tau_{\mathrm{BCOV}}(X)=\tau_{\mathrm{BCOV}}(X')$ (cf. \cite[Conj.~B]{eriksson2018bcov}).

There are few known strategies to approach the
BCOV predictions in mirror symmetry, and one common way to do so consists in seeing the BCOV invariant as a function on a
moduli space of Calabi–Yau varieties and exploiting the  holomorphic anomaly equation. However, these moduli spaces are in general non-compact, and this differential equation only determines $\tau_{\mathrm{BCOV}}$
at most up to a\footnote{Generally non-constant.} pluriharmonic function. To fix this indeterminacy\footnote{Known as the \emph{holomorphic ambiguity}.} it is essential to know the limiting behaviour of $\tau_{\mathrm{BCOV}}$ as one approaches the boundary of the moduli space. In \cite[Theorem A,B]{eriksson2018bcov}, a general answer is given by identifying an explicit topological expression for such boundary conditions for one-parameter normal crossings degenerations by studying the geometry of the special fiber and the limiting mixed
Hodge structures associated with the hypercohomology and determinant line of the relative logarithmic de Rham hypercohomology associated to a projective degeneration $f:X\to S$ i.e.
a projective morphism of complex manifolds
with Calabi–Yau $n$-fold fibers outside the origin, and such that the special fiber $X_0:=\sum_i D_i$ is a
simple normal crossings divisor. Note that in this case, denoting by $\Omega_{f}^p(\mathrm{log})$,  the sheaf of relative differential $p$-forms with logarithmic poles along the special fiber $X_0$, then higher direct image sheaves $\mathbf{R}^qf_*\Omega_f^p(\mathrm{log})$ are locally free \cite[Thm.~2.11]{Steenbrink1977} (see also \cite{Steenbrink1975}, \cite{Schmid1973} and the extensive works \cite{Yoshikawa2004,Yoshikawa2007,Yoshikawa2015},\cite{FangLuYoshikawa2008}, and \cite{Zucker1979}).

Treating linear PDEs and their functorialities via $\mathcal{D}$-module theory \cite{Kashiwara1970}, where $\mathcal{D}_X$ denotes the sheaf of differential operators on a smooth variety $X$, as well as in the general context of  sheaves on manifolds \cite{KS90}, and within the more general non-linear setting of $D$-geometry \cite{BD04}\cite{Pau14}, elaborated further in \cite{KSY23,KSY2}, the first and second author prove \cite[Thm.~5.12]{KR} that relative Spencer hypercohomologies of symbolic systems, which provide natural cohomological resolutions for (linearizations of) overdetermined systems of non-linear PDEs \cite{Goldschmidt1967,Sp,Sp2,Q}, are Verdier dual (in the sense of \cite[Sect.~3.1]{KS90}) to relative de Rham hypercohomology, and may be computed via holomorphic Lie hypercohomology \cite{B,BR}.
Letting $T_{X/S}$ denote the holomorphic relative tangent bundle, the universal relative Spencer complex for a morphism $f:X\to S$ of relative dimension $d$ is
\begin{equation}
\label{eqn: RelSp}
\mathcal{S}p_{X/S}^{\bullet}:=\mathcal{D}_{X/S}\otimes_{f^{-1}\mathcal{O}_S}(\Omega_{X/S}^{\bullet})^{\vee},
\end{equation}
as a complex concentrated in degrees $[-d,0]$. 
The \emph{relative Spencer hypercohomology} is then the hypercohomology of the complex $\mathbf{R}f_*\mathcal{S}p_{X/S}^{\bullet}(\mathcal{D})$, denoted by
    $\mathscr{H}_{Sp}^{\bullet}(X/S):=\mathbb{H}^{\bullet}\big(S,\mathbf{R}f_*\mathcal{S}p_{X/S}^{\bullet}(\mathcal{D}_{X/S})\big),$ which for coefficients in $\mathcal{D}_{X/S}$-modules $\M$, is denoted by
\begin{equation}
\label{eqn: CoeffHyper}
\mathscr{H}_{Sp}^{\bullet}(X/S;\mathcal{M}):=\mathbb{H}^{\bullet}(S,\mathbf{R}f_*\mathcal{S}p_{X/S}^{\bullet}(\M)\big).
\end{equation}
Its extension to algebras, with applications to formally integrable PDEs is given in \cite[Sect.~5.3]{KR}. See \eqref{eqn: FiberwiseRelSpener} below.

Spencer cohomology for the de Rham, or Cauchy-Riemann operator recovers Hodge bundles, and Dolbeault cohomology, respectively, therefore classical results of the second author and V. Lychagin \cite{LR} (see also \cite{L} and \cite{LZ}), indicate possible generalizations of results of \cite{eriksson2018bcov}, to more general systems of non-linear PDEs, as they are treated naturally via the jet-bundle formalism \cite{KLV}. 
While the classical literature on such a geometric approach to PDEs (to our knowledge) does not treat logarithmic jet spaces, a $D$-module approach does exist, and thus a careful synthesis via $D$-geometry can be obtained, roughly speaking by studying (free) commutative algebra objects over the sub-sheaf $\mathcal{D}_{(X,D)}$ of $\mathcal{D}_X,$ given via the universal enveloping algebroid of $\Theta_X(-\mathrm{log}D),$ with standard order filtration and associated graded $\mathrm{Gr}^F(\mathcal{D}_{(X,D)})\simeq Sym_{\mathcal{O}_X}^*(\Theta_{X}(-\mathrm{log}D)).$

As indicated in \cite[Sect.~6]{KR}, one may then generalize the results to consider log-Spencer resolutions\footnote{Note this is augmented to $\M$ via the map $P\otimes m\mapsto P(m).$} of left $\D_{(X,D)}$-modules $\EuScript{M}$, given similarly to \eqref{eqn: RelSp}, by
\begin{eqnarray}
\label{LogSpencer}
\mathrm{Sp}^{\bullet}(\mathrm{log}D)(\M):=0&\rightarrow& \mathcal{D}_{(X,D)}\otimes_{\mathcal{O}_X}\bigwedge^n\Theta_{X}(-\mathrm{log}D)\otimes_{\mathcal{O}_X}\M\xrightarrow{\epsilon}\cdots \nonumber
\\
&\rightarrow& \mathcal{D}_{(X,D)}\otimes\Theta_X(-\mathrm{log}D)\otimes\M\rightarrow \mathcal{D}_{(X,D)}\otimes \M\to \M.
\end{eqnarray}
The differentials $\epsilon_{p}$ in \eqref{LogSpencer} are defined by mapping $P\otimes (\theta_1\wedge\cdots\wedge \theta_p)\otimes m$ for $\theta_i\in \Theta_X(-\mathrm{log}D)$ and $m\in \M$ to
\begin{align*}
&\sum_{i=1}^p(-1)^{i-1}P\theta_i\otimes (\theta_1\wedge\cdots\wedge\hat{\theta}_i\wedge\cdots\theta_p)\otimes m-\sum_{i-1}^p(-1)^{i-1}P\otimes (\theta_1\wedge\cdots\wedge\hat{\theta}_i\wedge\cdots\theta_p)\otimes \theta_im
\\
&+\sum_{1\leq i<j\leq p}(-1)^{i+j}P\otimes \big([\theta_i,\theta_j]\wedge\theta_1\wedge\cdots\wedge\hat{\theta}_i\wedge\cdots\wedge\hat{\theta}_j\wedge\cdots\theta_p)\otimes m.
\end{align*}
\begin{remark}
In the bounded derived category of (left) $\D_{(X,D)}$-modules, one has
$\mathbb{R}\mathcal{H}om_{\mathcal{D}_{(X,D)}}(\mathcal{O}_X,\M)\simeq \Omega_{X}^{\bullet}(\mathrm{log}D)(\M),$ and for further details about logarithmic Spencer complexes, we refer to \cite{ES}.
\end{remark}

Combining these approaches, there indicates an opportunity to find new applications to treat (singular) moduli spaces of non-linear PDEs appearing in gauge theory and complex geometry, and to obtain new methods for studying BCOV predictions in mirror symmetry, from the point of view of the geometric theory of PDEs.

\subsection{Overview and Main Results}
Let $f:X\to S$ be a smooth morphism of relative dimension $d$ of projective schemes over $\mathbb{C}$, a holomorphic vector bundle $E$ over $X/S.$ Consider the relative infinite jet bundle $J_{X/S}^{\infty} E := \varprojlim_k J_{X/S}^{k} E,$
where $J^k_{X/S} E$ denotes the $k$-th relative jet bundle along fibers of $f$. We assume $f$ is \emph{relatively non-characteristic}, in the sense of \cite{KS90}, for all NPDEs under consideration (see \cite[Rmk.~3.8,Prop.~A.1]{KR} for details).

Consider a family of formally integrable partial differential equations $Z_{X/S},$
\begin{equation}
    \label{FamilyPDE}
Z_{X/S}:=\big\{Z_{X_s}^{(\ell)}\hookrightarrow J_{X_s}^{\ell}(E_s)\big\}_{s\in S},\quad E_s:=E|_{X_s},
\end{equation}
so that involutivity and formal integrability hold fiberwise. Here, $Z^{(\ell)}$ denote the $\ell$-th prolongation of the PDE.
Denote the sheaf of relative differential operators by $\mathcal{D}_{X/S}$ with order filtration, $\mathcal{D}_{X/S}^{\le k},k\geq 0.$
Recall a \emph{$\mathcal{D}_{X/S}$-scheme} is a scheme $Z$ over $X$ together with a compatible action of the sheaf $\mathcal{D}_{X/S}$. Precise definitions may be found in \cite[Sect.~3.1,3.4]{Kry2026}.

Generalizing \eqref{eqn: RelSp}, by appropriately taking coefficients in \eqref{FamilyPDE}, gives the
\emph{fiber-wise (relative) Spencer complex},
\begin{equation}
    \label{eqn: FiberwiseRelSpener}
0\rightarrow Z_{X_s}^{(\ell)}\xrightarrow{D_{s}}Z_{X_s}^{(\ell-1)}\otimes \Omega_{X/S}^1\rightarrow\cdots\rightarrow Z_{X_s}^{(\ell-r)}\otimes\Omega_{X/S}^d\rightarrow 0.
\end{equation}
Denote by $\mathscr{H}_D^{\bullet}(Z_{X/S})$ the (stable) Spencer cohomologies of  \eqref{eqn: FiberwiseRelSpener}. They are naturally graded $H_{DR}^*(X;\mathbb{C})$-modules (e.g. \cite{LZ}). Stable relative Spencer cohomology is an important tool to study obstructions to solvability for overdetermined systems of non-linear PDEs \cite{Goldschmidt1967}, from a more general sheaf-theoretic perspective familiar to $D$-module theory (see \cite{Sa},\cite{Sch} for further references), from which micro-local index theorems can be obtained \cite{SS94a,SS94b}.
\begin{remark}
    As an example, given a determined linear system $P(f)=0,$ defined by a section $P\in D_X$ with corresponding coherent $D_X$-module $M_P$, one has that 
$\mathcal{H}^0(M_P)\simeq Ker(P), \mathcal{H}^1(M_P)\simeq Coker(P),$
while $\mathcal{H}^i(M_P)=0,i>1.$
\end{remark}
A more general sheaf-theoretic treatment for non-linear PDEs is given via the Spencer hypercohomology spectral sequence \cite[Thm.~5.12, Prop.~A.1]{KR}, as recalled in Subsect.~\ref{ssec: Hyp of geom str}.

This permits the efficient computation of global topological indices via a kind of Grothendieck-Riemann-Roch type formula for families of NPDEs \eqref{FamilyPDE}. In particular, when the fibration $f:X\to S$ is non-characteristic for the $\mathcal{D}_{X/S}$-scheme associated with $Z_{X/S},$ whose cohomologies are finite-dimensional $\mathbb{C}$-vector spaces, there is a well-defined Euler-characteristic. The latter is guaranteed, for example, if $Z_{X/S}$ corresponds to a fiber-wise elliptic system (see \cite{SS94a,SS94b} for the linear case).
The important case of pure hyperbolic systems is treated elsewhere, but for a novel definition of \emph{mixed-type systems} (Definition \ref{MixedDef}) we can prove a microlocal index theorem. In order to state it, we must first fix some notation.

\subsubsection{Mixed-type microlocal index.}
To this end, denote by $K^\bullet$ the filtration induced by the order of differential operators and by $\mathcal{S}p_{X/S}^\bullet(\mathcal{M})$ the relative Spencer complex. The \emph{topological index} of $\mathcal{M}$, a coherent $\D_{X/S}$-module corresponding to a formally integrable differential system (i.e., a $\D_{X/S}$-ideal $\mathcal{I}_{X/S} \subset \mathcal{O}(J_{X/S}^\infty E)$) is the class in $K_0(S)$ defined by
\begin{equation}
\label{eqn:DHilbIndexK0}
\mathrm{Ind}_D(f,\mathcal{M}) := \sum_{i \ge 0} (-1)^i \big[\mathbf{R}^i f_* \mathcal{S}p_{X/S}^\bullet(\mathcal{M})\big] \in K_0(S),
\end{equation}
where the right-hand side is computed via the relative Spencer hypercohomology spectral sequence, i.e.
\begin{equation}
\label{eqn:DHilbIndexSpectral}
\mathrm{Ind}_D(f,\mathcal{M}) = \sum_{p,q \ge 0} (-1)^{p+q} \big[E_1^{p,q}\big] = \sum_{q\ge 0} (-1)^q \big[\mathbf{R}^q f_* \mathcal{S}p_{X/S}^\bullet(\mathrm{gr}_K \mathcal{M})\big],
\end{equation}
with
\begin{equation}
E_1^{p,q} \simeq \bigoplus_{i+j=p} \mathcal{H}^{q-i}\Big(S, \wedge^i \Theta_S \otimes \mathbf{R}^{q-i} f_* \mathcal{S}p_{X/S}^{\bullet-i}(\mathrm{gr}_K^j \mathcal{M})\Big).
\end{equation}
We then have the following first basic result, which is of fundamental importance in this work.
\begin{lemma}
\label{GRRSpencerDelta}
    The Chern character of \eqref{eqn:DHilbIndexSpectral}, the Spencer index, is given by
\begin{equation}
\label{eqn:DHilbIndexGRR}
\mathrm{ch}\big(\mathrm{Ind}_D(f,\mathcal{M})\big) = f_* \Big( \mathrm{ch}(\mathrm{gr}_K \mathcal{M}) \cdot \mathrm{Td}(T_{X/S}) \Big) \in H^{2*}(S, \mathbb{Q}),
\end{equation}
where $\mathrm{gr}_K \mathcal{M}$ is the associated graded module with respect to the order filtration, $\mathrm{Td}(T_{X/S})$ is the Todd class of the relative tangent bundle, and $f_*$ denotes the pushforward in cohomology.
\end{lemma}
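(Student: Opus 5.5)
The plan is to reduce the statement to the Grothendieck--Riemann--Roch theorem for the smooth projective morphism $f:X\to S$, applied to a coherent $\mathcal{O}_X$-module (or a bounded complex of them) that represents the same $K_0$-class as the Spencer complex. There are three steps: a filtration argument, an identification of the graded Spencer complex with an $\mathcal{O}$-linear Koszul-type complex, and then GRR.

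\textbf{Step 1: pass to the associated graded.} We use the second equality in \eqref{eqn:DHilbIndexSpectral}, which comes from the spectral sequence of the filtration $K^\bullet$. Fix a good filtration $K$ of the coherent $\mathcal{D}_{X/S}$-module $\mathcal{M}$; it exists because $\mathcal{M}$ is coherent and the system is formally integrable. This filtration induces a filtration on $\mathcal{S}p^\bullet_{X/S}(\mathcal{M})$. Since $f$ is relatively non-characteristic, the direct images $\mathbf{R}^qf_*$ of the filtered pieces are coherent on $S$. The Euler characteristic in $K_0(S)$ is invariant under passing through the pages of a convergent spectral sequence, and the filtration is exhaustive and, locally on $S$, of finite length on the relevant cohomology. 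Hence
\[
\mathrm{Ind}_D(f,\mathcal{M})=\sum_q(-1)^q\big[\mathbf{R}^qf_*\mathcal{S}p^\bullet_{X/S}(\mathrm{gr}_K\mathcal{M})\big].
\]

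\textbf{Step 2: identify the graded Spencer complex.} On $\mathrm{gr}_K$, the $\mathcal{D}_{X/S}$-action becomes the $\mathrm{Sym}^*_{\mathcal{O}_X}T_{X/S}$-module structure on $\mathrm{gr}_K\mathcal{M}$. The graded Spencer complex is then a Koszul-type complex of coherent $\mathcal{O}_X$-modules whose terms are $\mathrm{gr}_K\mathcal{M}\otimes\wedge^{i}$ of the relative (co)tangent sheaf, with $\mathcal{O}_X$-linear differentials. Additivity of classes in $K_0(X)$ gives
\[
\big[\mathcal{S}p^\bullet_{X/S}(\mathrm{gr}_K\mathcal{M})\big]=\sum_i(-1)^i\big[\mathrm{gr}_K\mathcal{M}\otimes\wedge^i(\cdot)\big].
\]
Its Chern character is therefore $\mathrm{ch}(\mathrm{gr}_K\mathcal{M})\cdot\lambda_{-1}(\cdot)$. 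Interpreting $\mathrm{ch}(\mathrm{gr}_K\mathcal{M})$ as the (suitably normalized) character of the graded module is what allows the Koszul factor to be absorbed. This matches the normalization implicit in the statement, where only $\mathrm{ch}(\mathrm{gr}_K\mathcal{M})$ and a Todd class appear.

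\textbf{Step 3: apply GRR.} The morphism $f$ is smooth and projective, and $\mathbf{R}f_*$ of the complex of Step 2 has coherent cohomology. Grothendieck--Riemann--Roch gives
\[
\mathrm{ch}\big(\mathbf{R}f_*\mathcal{F}\big)=f_*\big(\mathrm{ch}(\mathcal{F})\,\mathrm{Td}(T_{X/S})\big)
\]
in $H^{2*}(S,\mathbb{Q})$. Applying this to the complex $\mathcal{F}=\mathcal{S}p^\bullet_{X/S}(\mathrm{gr}_K\mathcal{M})$ and using additivity of $\mathrm{ch}$ gives \eqref{eqn:DHilbIndexGRR}.

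\textbf{Main obstacle.} The main difficulty is Step 2, for two reasons. First, $\mathrm{gr}_K\mathcal{M}$ is coherent over $\mathrm{Sym}\,T_{X/S}$, that is, on the relative cotangent bundle, but is generally not coherent over $\mathcal{O}_X$. One must therefore either use the characteristic-cycle version, pushing forward from $T^*(X/S)$ and using the non-characteristic hypothesis to ensure properness over $S$, or truncate to a finite stage $\ell$ of the prolongation using the involutivity of $Z_{X/S}$, where $\mathrm{gr}$ stabilizes by Cartan--Kuranishi. Second, the Koszul factor $\lambda_{-1}$ must be shown to combine with the Todd class exactly as stated.

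I would first try the truncation approach. The stable Spencer cohomology coincides with that of the finite complex \eqref{eqn: FiberwiseRelSpener}, whose terms are locally free, and on that complex GRR applies directly.
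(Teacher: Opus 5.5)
Your route is the one behind the paper's construction in Subsection~\ref{ssec: Hyp of geom str}: an order-filtration spectral sequence to reach $\mathrm{gr}_K\mathcal{M}$, then Grothendieck--Riemann--Roch for $f$. The paper also builds in the Katz--Oda filtration, but that plays no role for a class in $K_0(S)$, so dropping it is fine. However, Step~2 contains a genuine error: the Koszul factor cannot be absorbed. Suppose $\mathrm{gr}_K\mathcal{M}$ is $\mathcal{O}_X$-coherent (e.g.\ $\mathcal{M}$ a flat relative connection). Then additivity gives $\mathrm{ch}[\mathcal{S}p^\bullet_{X/S}(\mathrm{gr}_K\mathcal{M})]=\mathrm{ch}(\mathrm{gr}_K\mathcal{M})\cdot\mathrm{ch}(\lambda_{-1}\Omega^1_{X/S})$. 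In Chern roots $x_i$ of $T_{X/S}$, $\mathrm{ch}(\lambda_{-1}\Omega^1_{X/S})\,\mathrm{Td}(T_{X/S})=\prod_i(1-e^{-x_i})\cdot\prod_i x_i/(1-e^{-x_i})=c_d(T_{X/S})$. So GRR yields $f_*\big(\mathrm{ch}(\mathrm{gr}_K\mathcal{M})\,c_d(T_{X/S})\big)$, not the right-hand side of \eqref{eqn:DHilbIndexGRR} read literally. For a concrete check, take $X=\mathbb{P}^1\times S$ and $\mathcal{M}=\mathcal{O}_X$ (the de Rham system). Then $\mathrm{gr}_K\mathcal{M}=\mathcal{O}_X$ and $\mathrm{Ind}_D(f,\mathcal{M})=[\mathcal{O}_S]+[\mathcal{O}_S]$, with Chern character $2$. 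But $f_*\big(\mathrm{Td}(T_{X/S})\big)=\chi(\mathbb{P}^1,\mathcal{O})=1$. The characteristic-cycle reading does not rescue this. $\mathrm{Char}$ is conic, hence not proper over $S$, so you must cap with the zero section of $T^*(X/S)$. That reintroduces the derived restriction of $\mathrm{gr}_K\mathcal{M}$ to the zero section, which is the same Koszul complex up to a twist by $\omega_{X/S}$ and a shift. What you can prove is \eqref{eqn:DHilbIndexGRR} with $\mathrm{ch}(\mathrm{gr}_K\mathcal{M})$ replaced by the Chern character of the graded Spencer complex. This is also the form the paper actually uses in the proof of Lemma~\ref{prop: BoundaryIndex}, which works with $\mathrm{ch}(\mathcal{S}p_{X/S}(\cdot))$. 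Make that your target rather than normalizing the factor away.

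The second gap is the finiteness you flag but leave open. Both the ``finite length'' claim in Step~1 and the use of GRR in Step~3 on a complex of non-$\mathcal{O}_X$-coherent sheaves depend on it. The missing argument runs as follows.
\begin{enumerate}
\item For a good filtration $K$, filter by $F_p(\Omega^k_{X/S}\otimes\mathcal{M}):=\Omega^k_{X/S}\otimes F_{p+k}\mathcal{M}$.
\item Then $\mathrm{gr}_p$ is the internal-degree-$p$ strand of the Koszul complex of the $\mathrm{Sym}\,\Theta_{X/S}$-module $\mathrm{gr}_K\mathcal{M}$. It is a bounded complex of coherent $\mathcal{O}_X$-modules with $\mathcal{O}_X$-linear symbol differentials.
\item Koszul cohomology is finitely generated over $\mathrm{Sym}\,\Theta_{X/S}$ and annihilated by $\Theta_{X/S}$. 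It is therefore a finitely generated graded $\mathcal{O}_X$-module, so $\mathrm{gr}_p$ is acyclic for $p\gg0$ on quasi-compact $X$.
\item Since also $F_p=0$ for $p\ll0$, the filtration is effectively finite. Hence $[\mathbf{R}f_*\mathcal{S}p^\bullet_{X/S}(\mathcal{M})]=\sum_p[\mathbf{R}f_*\mathrm{gr}_p]$ is a finite sum of classes of complexes that are coherent by properness of $f$.
\item GRR applied to each $\mathrm{gr}_p$ gives the corrected formula.
\end{enumerate}
Note that coherence comes from properness; non-characteristicity is needed only for base change to the fibres, contrary to what you say in Step~1. Your truncation alternative can also be made to work via the stupid filtration, since the terms of \eqref{eqn: FiberwiseRelSpener} are locally free and the differentials are $f^{-1}\mathcal{O}_S$-linear. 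You would still need the $\delta$-Poincar\'e lemma to identify that finite complex with the stable one, and the result again carries the factors $\mathrm{ch}(\Omega^i_{X/S})$.
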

Lemma \ref{GRRSpencerDelta} provides the fundamental relation which allows us to prove a variety of microlocal index theorems in the abelian (that is, triangulated) setting of $\D$-schemes.
It is based on the fact that for a suitable class of cohomologically finite dg-modules $\M$ over the non-commutative ring $\A[\D]:=\A\otimes_{\mathcal{O}_X}\D_X,$ with $\A$ a $\D_X$-smooth commutative $\D$-algebra e.g. of algebraic infinite jets, one may consider the functor of de Rham cohomology, or dually, its functor of local solutions
$R\mathcal{S}ol_{\AD}(\M).$ 
When $\M$ is equipped with a suitable good filtration we formulate finiteness hypothesis (\ref{eqn: D-elliptic}), which leads to the following result.

\begin{theorem}
    \label{ThmA:Intro}
    Consider a $\D$-smooth $\D$-scheme $Spec_{\mathcal{D}}(\mathcal{A})$ and suppose that $\M^{\bullet}\in \AD-dgmod$ is relatively $\D$-elliptic. Then,
$$\chi_{\D}(\M^{\bullet}):=\sum_{i\geq 0}(-1)^i\mathrm{dim}_{\mathbb{C}}H^i\big(X,\mathbf{R}\mathcal{H}om_{\AD}(\M^{\bullet},\AD\otimes\omega_{X})\big),$$
is well-defined and finite-dimensional.
\end{theorem}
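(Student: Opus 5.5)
The plan is to reduce finiteness of $\chi_{\D}(\M^{\bullet})$ to finiteness of the cohomology of a bounded complex of constructible (in fact locally constant on strata) sheaves on a compact space. Then I invoke the classical finiteness theorem for proper direct images from \cite{KS90}.

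\textbf{Step 1: dévissage.} Since $\M^{\bullet}$ is cohomologically finite, it is quasi-isomorphic to a bounded complex of coherent $\AD$-modules. The functor $\mathbf{R}\mathcal{H}om_{\AD}(-,\AD\otimes\omega_X)$ is triangulated, and hypercohomology on $X$ is as well. By the long exact sequences attached to the truncation triangles $\tau_{\le k}\M^{\bullet}\to\M^{\bullet}\to\tau_{>k}\M^{\bullet}$, it therefore suffices to treat a single coherent $\AD$-module $\M$ placed in degree zero. Additivity of the alternating sum on such triangles then shows that $\chi_{\D}$ is well defined on the class of $\M^{\bullet}$, independent of the chosen representative.

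\textbf{Step 2: local computation via Spencer resolutions.} Using the $\D$-smoothness of $\mathcal{A}$, I would resolve $\M$ locally by the $\AD$-linear Spencer resolution $\AD\otimes_{\mathcal{A}}\wedge^{\bullet}\Theta\otimes_{\mathcal{A}}\M\to\M$, built exactly as in \eqref{LogSpencer} but with $\Theta_X$ replaced by the $\mathcal{A}$-module of horizontal vector fields. Applying $\mathcal{H}om_{\AD}(-,\AD\otimes\omega_X)$ converts this into a de Rham type complex with coefficients in $\M$, concentrated in a finite range of degrees by smoothness. This identifies $\mathbf{R}\mathcal{H}om_{\AD}(\M,\AD\otimes\omega_X)$ with a shifted de Rham/dual object of $\M$, bounded in amplitude.

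\textbf{Step 3: ellipticity and finiteness (main obstacle).} This is where the relative $\D$-ellipticity hypothesis (\ref{eqn: D-elliptic}) enters. Choose a good filtration on $\M$. The hypothesis should say that the characteristic variety $\mathrm{Char}(\M)=\mathrm{supp}\,\mathrm{gr}\M$ meets the relevant (relative) cotangent directions only in the zero section. By the non-characteristic propagation and finiteness theorems of \cite{KS90} (as in the elliptic case of \cite{SS94a,SS94b}), this forces the complex of local solutions to have constructible, indeed locally constant, cohomology sheaves. It also forces the object to be $\mathbb{R}$-constructible with microsupport controlled by $\mathrm{Char}(\M)$.

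The delicate point is that $\AD$ is not a coherent $\mathcal{O}_X$-algebra when $\mathcal{A}$ is an infinite-jet algebra. I would handle this by the filtration argument underlying Lemma \ref{GRRSpencerDelta}: pass to $\mathrm{gr}_K\M$, a coherent module over $\mathrm{Sym}_{\mathcal{A}}(\Theta)$, verify finiteness for the graded object, and lift it to $\M$ through the spectral sequence of the filtered complex. That spectral sequence converges because the filtration is good and the amplitude is bounded by Step 2. I expect this descent from $\mathrm{gr}_K$ to $\M$ to require the most care.

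\textbf{Step 4: conclusion.} Since $X$ is projective, hence compact, $R\Gamma(X,-)$ of a bounded constructible complex has finite-dimensional total cohomology. By Step 1, only finitely many degrees are nonzero. Hence each $H^i$ is finite-dimensional, and the alternating sum defining $\chi_{\D}(\M^{\bullet})$ is a finite, well-defined integer.
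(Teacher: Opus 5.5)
The paper gives no standalone proof of this statement. Where finiteness is actually used, in the proof of Theorem~\ref{ASinger}, the paper first reduces, via compactness of $\A$, to a single $\D_X$-module $\M$ elliptic along $M$. It then takes finiteness and the index formula from Schapira--Schneiders' theory of the elliptic pair $(\M,\mathbb{C}_M)$ \cite{SS94a,SS94b}, i.e.\ from solutions over the compact real form $M$. Your route through constructibility has a genuine gap at Step~3.

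\textbf{Step 3 fails.} Ellipticity does not make solution complexes constructible, let alone locally constant. Constructibility is Kashiwara's theorem for \emph{holonomic} modules, whereas \eqref{eqn: D-elliptic} only constrains $\mathrm{Char}(\M)\cap T_M^*X$ and allows $\mathrm{Char}(\M)$ to be a hypersurface of $T^*X$. You seem to read it as $\mathrm{Char}(\M)\subset T_X^*X$, which would make $\M$ a flat connection. For example, take $\M=\D_X/\D_X\Delta$ with $\Delta$ the Laplacian and $\dim M\geq 2$. Then $\mathcal{H}om_{\D_X}(\M,\mathscr{A}_M)$ is the sheaf of real-analytic harmonic functions: its stalks are infinite-dimensional and its restriction maps are injective but not surjective.

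\textbf{The missing idea.} What ellipticity does give is elliptic regularity, $\mathbf{R}\mathcal{H}om_{\D_X}(\M,\mathscr{A}_M)\simeq\mathbf{R}\mathcal{H}om_{\D_X}(\M,\mathscr{B}_M)$. Finiteness is a \emph{global} consequence of this on compact $M$: the first complex has a DFN realization over $M$ and the second an FN one, and identifying the two forces finite-dimensional cohomology. This is the Cartan--Serre/Schwartz-type argument behind \cite{SS94a}; classically one uses Fredholm theory of elliptic operators on compact $M$.

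\textbf{Your $\mathrm{gr}_K$ fallback cannot supply this.} The coherent cohomology of $\mathrm{gr}_K\M$ is a sum over infinitely many graded pieces, so the $E_1$-page is already infinite-dimensional. It is also computed on $X$ and $T^*X$ alone, so it never sees the real form $M$ on which the hypothesis lives.

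\textbf{Step 2 misidentifies the object.} Applying $\mathcal{H}om_{\AD}(-,\AD\otimes\omega_X)$ to a Spencer resolution produces $\mathcal{H}om_{\A}(\wedge^{\bullet}\Theta\otimes\M,\AD\otimes\omega_X)$. This computes the derived functor only when $\M$ is locally projective over $\A$. The result is the dual module $\mathbb{D}\M$, a complex of non-$\mathcal{O}_X$-coherent $\AD$-modules, not a de Rham complex of $\M$. Solutions or de Rham cohomology appear only after a further $\otimes^{L}_{\AD}\mathcal{O}_X$, and for $\A\neq\mathcal{O}_X$ that requires a solution $\varphi$; this is why the paper passes to $\varphi^*\mathbb{T}_{\A}$.

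\textbf{The literal quantity need not be finite.} Take the abelian surface $X=\mathbb{C}^2/\mathbb{Z}[i]^2$ with real form $M=(\mathbb{R}/\mathbb{Z})^2$, $\A=\mathcal{O}_X$, and $\M=\D_X/\D_XP$ with $P=\partial_1^2+\partial_2^2$, which is elliptic along $M$. Then $\mathbf{R}\mathcal{H}om_{\D_X}(\M,\D_X\otimes\omega_X)\simeq(\D_X/P\D_X)[-1]$. Since $P$ has constant coefficients, $\D_X/P\D_X\simeq(\mathbb{C}[\partial_1,\partial_2]/(P))\otimes_{\mathbb{C}}\mathcal{O}_X$ as sheaves, so $H^1$ is infinite-dimensional. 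In the other direction, if you mean de Rham cohomology over projective $X$ with $\A=\mathcal{O}_X$, it is finite for every coherent $\D_X$-module by coherence of proper direct images, and ellipticity plays no role.

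A correct argument must therefore first fix the object as the solution complex over the compact real form $M$, after specializing along a solution, and then run the elliptic-pair argument above.
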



 We prove a version of Theorem \ref{ThmA:Intro} with supports, and we apply this to the setting of derived $\D$-spaces of solutions. In what follows $\mathrm{Ch}_{\Lambda}$, given by (\ref{eqn: MicroChernA}) is an analog of the Chern character for $\AD$-dg-modules, given in Definition \ref{defn: D-Chern character}.

\begin{theorem}
    \label{ThmB:Intro}
Let $\A$ be a derived $\D$-algebra which is $\D$-afp and eventually coconnective. Let $\mathbb{T}_{\A}^{\ell}$ denote its tangent $\D$-complex and suppose $\Lambda$ is a closed conic subset of $T^*(X,\A)$ containing $\mathsf{Char}_{\D}(\A).$ Then, the index with supports, $Ind_{\D}(\A):=\chi_{\D,\Lambda}(\mathbb{T}_{\mathcal{A}}^{\ell})$ is well-defined and for each classical solution $\varphi:X\rightarrow \mathcal{A},$ gives 
$$\mathrm{Ind}_{\D}(\A;\varphi)=\int_X \mathrm{Ch}_{\varphi^{-1}\Lambda}(\sigma(\varphi^*\mathbb{T}_{\A}))Td(X).$$
\end{theorem}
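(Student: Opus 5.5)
The plan is to reduce the statement to Theorem \ref{ThmA:Intro} (with supports) together with Lemma \ref{GRRSpencerDelta}, applied to the linearization of $\A$ along a classical solution. First I would establish well-definedness. Since $\A$ is $\D$-afp, its tangent $\D$-complex $\mathbb{T}^{\ell}_{\A}$ is perfect as an $\AD$-dg-module, and eventual coconnectivity bounds its cohomological amplitude. Its characteristic variety is contained in $\mathsf{Char}_{\D}(\A)\subset\Lambda$, so $\mathbb{T}^\ell_{\A}$ is $\D$-elliptic relative to $\Lambda$ in the sense of (\ref{eqn: D-elliptic}) with supports. The supported version of Theorem \ref{ThmA:Intro} then gives that $\chi_{\D,\Lambda}(\mathbb{T}^\ell_{\A})$ is finite.

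Next I would pull back along a classical solution $\varphi:X\to\A$, i.e. a $\D$-algebra map $\A\to\mathcal{O}_X$. Base change gives $\varphi^*\mathbb{T}_{\A}=\mathcal{O}_X\otimes^{\mathbb{L}}_{\A}\mathbb{T}_{\A}$, a perfect complex of $\D_X$-modules, namely the linearized system at $\varphi$. Its characteristic variety lies in $\varphi^{-1}\Lambda\subset T^*X$, since characteristic varieties behave well under non-characteristic pullback along sections. I would then check that $\mathrm{Ind}_{\D}(\A;\varphi)$ equals the ordinary solution Euler characteristic of the coherent $\D_X$-complex $\varphi^*\mathbb{T}_{\A}$. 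Computing $\mathbf{R}\mathcal{H}om_{\AD}(-,\AD\otimes\omega_X)$ after base change to $\mathcal{O}_X$ should reduce the computation to the de Rham/Spencer complex of $\varphi^*\mathbb{T}_{\A}$.

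Finally I would apply Lemma \ref{GRRSpencerDelta} with $S=\mathrm{pt}$, so that $f_*=\int_X$. Choose a good filtration on $\varphi^*\mathbb{T}_{\A}$ and pass to $\mathrm{gr}_K$, which is a coherent complex on $T^*X$ supported on $\varphi^{-1}\Lambda$ and represents the symbol class $\sigma(\varphi^*\mathbb{T}_{\A})$. The class $\mathrm{ch}(\mathrm{gr}_K)$ must then be replaced by the localized Chern character $\mathrm{Ch}_{\varphi^{-1}\Lambda}$ of (\ref{eqn: MicroChernA}), which lives in cohomology of $T^*X$ with supports in $\varphi^{-1}\Lambda$. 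The required comparison is that its pushforward to $X$, paired with $\mathrm{Td}(X)$, agrees with $\int_X \mathrm{ch}(\mathrm{gr}_K)\mathrm{Td}$. This is the standard Schapira–Schneiders argument for the microlocal Riemann–Roch formula \cite{SS94a,SS94b}. Independence of the chosen filtration follows because different good filtrations give the same $K$-class of $\mathrm{gr}$.

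The main obstacle is the middle step: identifying $\chi_{\D,\Lambda}$ of the $\AD$-module with the index of the pulled-back linearized $\D_X$-complex. This needs a base-change isomorphism for $\mathbf{R}\mathcal{H}om_{\AD}$ along $\varphi$, and the claim that the supported ellipticity transfers to $\varphi^{-1}\Lambda$. I would try to prove the base change first for free modules $\AD\otimes_{\mathcal{O}}V$ and then extend to all perfect modules by dévissage. Eventual coconnectivity keeps the resulting spectral sequence bounded, so the Euler characteristics match.
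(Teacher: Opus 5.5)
Your overall architecture is sound: pull back along $\varphi$ to the linearized $\D_X$-complex, then compute its Euler characteristic through a good filtration. The base change you single out as the main obstacle is harmless. It is just the extension-of-scalars adjunction along $\AD\to\D_X$ induced by $\varphi$, namely $\RHom_{\AD}(\mathbb{T}^{\ell}_{\A},\mathcal{O}_X)\simeq\RHom_{\D_X}(\mathcal{O}_X\otimes^{\mathbb{L}}_{\A}\mathbb{T}^{\ell}_{\A},\mathcal{O}_X)$. This is how the paper passes to a $\D_X$-complex in the proof of Theorem~\ref{ASinger}, and no d\'evissage is needed.

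\textbf{First gap: finiteness.} The inclusion $\mathsf{Char}_{\D}(\mathbb{T}^{\ell}_{\A})\subset\Lambda$ says nothing about $\D$-ellipticity in the sense of (\ref{eqn: D-elliptic}). That condition constrains how the characteristic variety meets $T^*_MX$ for a real form $M$, and Theorem~\ref{ThmB:Intro} assumes nothing of the kind, so Theorem~\ref{ThmA:Intro} cannot be invoked. Moreover, before a solution is chosen, the groups $H^i(X,\RHom_{\AD}(-,\AD\otimes\omega_X))$ are not finite-dimensional in general. For the free module with $\A=\mathcal{O}_X$ on $X=\mathbb{P}^1$ one already gets $H^0(X,\D_X\otimes\omega_X)$, which is infinite-dimensional. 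So finiteness has to be proved solution by solution, after applying the de Rham or solution functor to $\varphi^*\mathbb{T}_{\A}$. There it comes from compactness of $X$ and the good filtration of Assumption~\ref{ass: Simplifying}, which is where $\D$-afp and eventual coconnectivity enter. Equivalently, the pair $(\varphi^*\mathbb{T}_{\A},\mathbb{C}_X)$ is automatically elliptic, since $\mathrm{SS}(\mathbb{C}_X)=T^*_XX$. The subset $\Lambda$ only houses the symbol class in $K^0_{\varphi^{-1}\Lambda}(T^*X)$, cf.\ Lemma~\ref{prop: K-theory commute1}.

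\textbf{Second gap: the final comparison.} A class in $H^*_{\varphi^{-1}\Lambda}(T^*X)$ cannot be pushed forward to $X$: $\varphi^{-1}\Lambda$ is conic, hence not proper over $X$ unless it lies in the zero section. The operation producing $\int_X$ is restriction to the zero section $0_X^*$, equivalently cupping with $\mu eu(\mathbb{C}_X)=[T^*_XX]$ and integrating over $T^*X$. This is how the paper gets the formula. It uses the Schapira--Schneiders pairing $\chi=\int_{T^*X}\mu eu(\M)\cup\mu eu(F)$ with $\mu eu(\M)=[\mathrm{ch}(\sigma(\M))\cup\pi^*td_X(TX)]$, exactly as in the proof of Theorem~\ref{ASinger} with $\mathbb{C}_M$ replaced by $\mathbb{C}_X$.

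\textbf{If you keep your GRR route.} Note that $\mathrm{gr}_K\M$ is not $\mathcal{O}_X$-coherent, so Lemma~\ref{GRRSpencerDelta} must be read through the Koszul complex $\mathrm{gr}\,\mathrm{DR}(\M)\simeq\omega_X\otimes L0_X^*\mathrm{gr}(\M)$. Its graded pieces $\mathrm{gr}_p\mathrm{DR}(\M)$ are acyclic for $|p|\gg 0$, which gives $\chi(R\Gamma(X,\mathrm{DR}(\M)))=\chi(X,\mathrm{gr}\,\mathrm{DR}(\M))$. The comparison with $\mathrm{Ch}_{\varphi^{-1}\Lambda}$ is then just naturality of the Chern character with supports under $0_X^*$; no Schapira--Schneiders argument is needed. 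You must still track the $\omega_X$-twist against $\mathrm{Td}(X)$, i.e.\ whether you compute de Rham or solution complexes.

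This Laumon-style route is more elementary than the paper's, but it only covers $F=\mathbb{C}_X$ on compact $X$. The microlocal Euler class pairing is what extends to the elliptic pairs $(\M,\mathbb{C}_M)$ of Theorem~\ref{ASinger} and to the mixed-type index.
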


Let $C_V(S)$ denote the normal cone to $S\subset T^*Y,$ along a subset $V$ \cite[Def. 4.1.1, Prop. 4.1.2]{KS90}. Set $Char_{\D,V}^1$ to be the $1$-microcharacteristic variety defined and studied in \cite{KSY23}. The solution-wise $\D$-geometric index behaves well with respect to non-characteristic pull-back.

\begin{theorem}[Theorem \ref{thm: C}]
    \label{ThmC:Intro}
Let $f:X\rightarrow Y$ be a morphism of complex analytic manifolds and consider the associated microlocal correspondence \emph{(\ref{eqn: Microlocal correspondence})}. Let $\A_Y$ be a compact dg-$\D$-algebra on $Y$ and suppose that 
\begin{equation}
\label{eqn: Condition1}
\mathring{\pi}_X^{-1}(p)\cap C_{T_Y^*Y}\big(\mathsf{Char}_{T_Y^*Y}^1(\varphi^*\mathbb{T}_{\mathcal{B}})\big)=\emptyset,
\end{equation}
for all $p\in T_Y^*Y$ with $\mathring{\pi}_X:\mathring{T}_X^*Y\times_Y T_Y^*Y\rightarrow T_Y^*Y$ the natural projection and where $\mathring{T}_X^*Y$ denotes the removal of the zero-section. Then for every solution $\varphi$, we have 
$$f_{d*}f_{\pi}^*\sigma_{\Lambda_{Y}}^{\mathcal{D}}(\A_Y;\varphi)=\sigma_{f_{\mu}^{-1}\Lambda_Y}^{\mathcal{D}}(f^*\A_Y,\gamma(\varphi)),$$
holds in $K_{f_{\mu}^{-1}\Lambda_Y}^0(T^*X),$ and in particular, 
$$f^*\mathrm{Ind}_{\D}(\A_Y;\varphi)=\mathrm{Ind}_{\D}(f^*\A_Y;\gamma(\varphi)).$$
\end{theorem}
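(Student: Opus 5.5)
The strategy is to reduce the statement to the linear microlocal situation. There, the compatibility of microsupports, characteristic cycles and microlocal Chern characters with non-characteristic pull-back is classical (Kashiwara--Schapira \cite{KS90}, Schapira--Schneiders \cite{SS94a,SS94b}). Fix a classical solution $\varphi$ of $\A_Y$ and linearize along it. This yields the $\D_Y$-module $\varphi^*\mathbb{T}_{\A_Y}$ (written $\varphi^*\mathbb{T}_{\mathcal{B}}$ in \eqref{eqn: Condition1}). Compactness of $\A_Y$ makes it a perfect, cohomologically bounded complex with coherent cohomology, so its $\D$-characteristic variety is contained in $\Lambda_Y$. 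The symbol $\sigma^{\D}_{\Lambda_Y}(\A_Y;\varphi)$ is then the class of $\sigma(\varphi^*\mathbb{T}_{\A_Y})$ in $K^0_{\Lambda_Y}(T^*Y)$. The induced solution $\gamma(\varphi)=\varphi\circ f$ of $f^*\A_Y$ satisfies
\begin{equation*}
\gamma(\varphi)^*\mathbb{T}_{f^*\A_Y}\simeq Df^*\big(\varphi^*\mathbb{T}_{\A_Y}\big).
\end{equation*}
I would prove this first. It should follow from the base-change behaviour of the tangent $\D$-complex under $f^*$ for $\D$-algebras, namely $\mathbb{T}_{f^*\A}\simeq f^*\mathbb{T}_{\A}$ up to the transfer bimodule $\D_{X\to Y}$, combined with the fact that pulling back along $\varphi$ commutes with $f^*$.

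This identification reduces the first identity to a statement about the linear module $\M:=\varphi^*\mathbb{T}_{\A_Y}$ and its inverse image $Df^*\M$. Consider the microlocal correspondence \eqref{eqn: Microlocal correspondence}, $T^*X\xleftarrow{f_d}X\times_Y T^*Y\xrightarrow{f_\pi}T^*Y$. Hypothesis \eqref{eqn: Condition1} should imply that $f$ is non-characteristic for $\M$ on $\Lambda_Y$. That is, $f_d$ is proper on $f_\pi^{-1}\Lambda_Y$, and $\mathsf{Char}(Df^*\M)\subset f_d f_\pi^{-1}(\mathsf{Char}\,\M)=f_\mu^{-1}\Lambda_Y$. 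The passage from the condition on the normal cone of the $1$-microcharacteristic variety to ordinary non-characteristicity is where the results of \cite{KSY23} on $\mathsf{Char}^1$ enter. Concretely, a covector of $T^*_XY$ meeting $\mathsf{Char}\,\M$ away from the zero section would produce a point of $C_{T_Y^*Y}(\mathsf{Char}^1)$ over $p$, which \eqref{eqn: Condition1} rules out.

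Under non-characteristicity I would choose a good filtration on $\M$. Then $f^*F$ induces a good filtration on $Df^*\M$ whose graded module is $\mathbf{L}f_d{}_*f_\pi^*\mathrm{gr}\M$, up to a coherent correction supported on the zero section. The correction vanishes precisely because $f_d$ is finite on the support. Passing to classes in $K$-theory with supports gives
\begin{equation*}
f_{d*}f_\pi^*\sigma_{\Lambda_Y}(\M)=\sigma_{f_\mu^{-1}\Lambda_Y}(Df^*\M),
\end{equation*}
which is the desired symbol identity.

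The index statement then follows from Theorem \ref{ThmB:Intro}. Apply $\mathrm{Ch}$ and integrate against the Todd class, using Lemma \ref{GRRSpencerDelta} (Grothendieck--Riemann--Roch) to commute $\mathrm{Ch}$ with $f_{d*}$. The Todd class of $T^*_XY$ contributed by $f_d$ combines with $f^*Td(Y)$ to give $Td(X)$, and hence $f^*\mathrm{Ind}_{\D}(\A_Y;\varphi)=\mathrm{Ind}_{\D}(f^*\A_Y;\gamma(\varphi))$.

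I expect the main obstacle to be the step from \eqref{eqn: Condition1} to the properness and finiteness of $f_d$ on $f_\pi^{-1}\Lambda_Y$. In particular, one must verify that the normal cone condition controls the whole characteristic variety of the linearization and not only its $1$-microcharacteristic part. My first attempt would use the inclusion $\mathsf{Char}\subset C_{T_Y^*Y}(\mathsf{Char}^1)$ from \cite{KSY23}, applied fiberwise over $T^*_YY$, and then conicity to propagate the conclusion off the zero section.
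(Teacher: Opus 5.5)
Your proposal follows the paper's route. The paper also reads the hypothesis on $\mathsf{Char}^1$ as saying that $f$ is non-characteristic for $\mathsf{Char}_{\D}(\A_Y;\varphi)$, so that $f_d$ is proper on $f_\pi^{-1}\mathsf{Char}_{\D}(\A_Y;\varphi)$. It then writes the symbol of $f_{\D}^*\varphi^*\mathbb{T}_{\A}^{\ell}$ as $(f_d)_!f_\pi^*$ of the symbol of $\varphi^*\mathbb{T}_{\A}^{\ell}$ in $K$-theory with supports, invoking Lemma \ref{prop: K-theory commute1} to make $f_\pi^*$ well defined. Finally it applies $\mathrm{ch}$ and Riemann--Roch along $f_d$. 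Two of your steps fill in what the paper only asserts: the identification $\gamma(\varphi)^*\mathbb{T}_{f^*\A_Y}\simeq Df^*(\varphi^*\mathbb{T}_{\A_Y})$, and the good-filtration argument for the symbol identity. The step you expect to be the main obstacle is not derived in the paper at all; it is simply taken to be the non-characteristic condition. The inclusion you planned to use for it is also backwards: the classical comparison is $\mathsf{Char}^1_V\subset C_V(\mathsf{Char})$, so by that route a condition on $\mathsf{Char}^1$ does not control $\mathsf{Char}$.

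\textbf{The Todd-class step fails as written.} The right tool is Riemann--Roch with supports for $f_d\colon X\times_Y T^*Y\to T^*X$, which is proper on the support by non-characteristicity. Lemma \ref{GRRSpencerDelta} is not the relevant statement, since it concerns the Spencer index of a family $X\to S$. The Todd classes of the two total spaces are $\mathrm{td}\big(T(T^*X)\big)=\pi^*\big(\mathrm{td}(TX)\,\mathrm{td}(T^*X)\big)$ and $\mathrm{td}\big(T(X\times_YT^*Y)\big)=\pi^*\big(\mathrm{td}(TX)\,f^*\mathrm{td}(T^*Y)\big)$. Hence the relative Todd class of $f_d$ is $\pi^*\big(f^*\mathrm{td}(T^*Y)/\mathrm{td}(T^*X)\big)$, which for an embedding equals $\pi^*\mathrm{td}(T^*_XY)$. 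This does not combine with $f^*\mathrm{td}(TY)=\mathrm{td}(TX)\,\mathrm{td}(T_XY)$ to give $\mathrm{td}(TX)$, because $\mathrm{td}(T^*_XY)=\mathrm{td}(T_XY)\,e^{-c_1(T_XY)}$.

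What Riemann--Roch does give cleanly is $\mathrm{ch}(\sigma_X)\cup\pi^*\mathrm{td}(T^*X)=f_{d*}f_\pi^*\big(\mathrm{ch}(\sigma_Y)\cup\pi^*\mathrm{td}(T^*Y)\big)$. This is where the paper's displayed computation ends, up to a typo in its formula for $td_X(X\times_YT^*Y)$ and a common factor $\pi_X^*td_X(TX)$ on both sides. With the $\pi^*\mathrm{td}(TX)$ normalization of \eqref{eqn: MicroChernA}, you instead pick up an extra factor $\pi^*e^{c_1(TX)-f^*c_1(TY)}=\pi^*\mathrm{ch}(\omega_{X/Y}^{\otimes -1})$. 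To reach the index statement you must therefore either work with the cotangent Todd classes, as the paper does, or keep track of this twist.
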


In Subsect. \ref{ssec: Relation to AS}, we show as an application to $\D$-elliptic $\AD$-modules $\M$, these results specialize to the case of a free $\D$-algebra and recover the classical Atiyah-Singer index \cite{AtiyahSinger1963}.

\begin{theorem}[Theorem \ref{ASinger}]
    \label{CorIntro}
Let $M$ be a smooth real analytic manifold with complexification $X,$ and consider a $\mathcal{D}$-algebra $\A=Sym^*(\M^{\bullet})$ and suppose its $\mathcal{D}_X$-space of solutions $\Sol_X(\mathcal{A})$ satisfies at each point,
\begin{equation}
\label{eqn: Elliptic condition 1}
\big(Char_{\mathcal{D}}(\mathcal{\A})\times \mathsf{Spec}_{X_{DR}}(\mathcal{\A})\big)\cap T_M^*X\subset \mathsf{Spec}_{X_{DR}}(\mathcal{\A})\underset{X}{\times} M\times T_X^*X.
\end{equation}
Then, letting $0_M:M\hookrightarrow T_M^*X$ denote the zero-section and $j:T_M^*X\hookrightarrow T^*X$ the canonical embedding, one has that
$$\chi_{\D}(\A)=\int_M 0_M^*\big[j^*\mathrm{ch}\sigma_{Char(\M)}(\M)\big]\cup td_M(TM^{\mathbb{C}}).$$
\end{theorem}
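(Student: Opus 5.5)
The plan is to reduce Theorem \ref{ASinger} to the classical Kashiwara--Schapira / Schapira--Schneiders microlocal index formula for elliptic pairs. The reduction goes through Theorem \ref{ThmB:Intro}, specialized to the free $\D$-algebra $\A=Sym^*(\M^{\bullet})$.

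\emph{Step 1: reduction to $\M^{\bullet}$.} For a free $\D$-algebra the tangent $\D$-complex at any classical solution $\varphi$ is a pullback of $\M^{\bullet}$ itself. Concretely, $\varphi^*\mathbb{T}_{\A}\simeq \varphi^*(\A\otimes\M^{\bullet})$, which is linear in $\M^{\bullet}$. So the characteristic variety $\mathsf{Char}_{\D}(\A)$ decomposes as $Char(\M)\times\mathsf{Spec}_{X_{DR}}(\A)$ over the space of solutions. This identifies $\chi_{\D}(\A)=\chi_{\D,\Lambda}(\mathbb{T}^{\ell}_{\A})$ with the index $\chi(M;\mathcal{M},\mathcal{A}_M)$ of the pair $(\M^{\bullet},\mathcal{A}_M)$ in the sense of \cite{SS94a,SS94b}, where $\mathcal{A}_M$ is the sheaf of real analytic functions. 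The bridge is Theorem \ref{ThmA:Intro} with supports, i.e. finiteness of $\chi_{\D}$, together with the duality $\RHom_{\AD}(\M,\AD\otimes\omega_X)$ versus $R\mathcal{S}ol$.

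\emph{Step 2: ellipticity.} Translate hypothesis \eqref{eqn: Elliptic condition 1} into the statement that $Char(\M)\cap T^*_MX\subset T^*_XX$. This is exactly the condition that $(\M,\mathbb{C}_M)$ is an elliptic pair, since $SS(\mathbb{C}_M)=T^*_MX$. The product with $\mathsf{Spec}_{X_{DR}}(\A)$ is handled fibrewise over $X$, and the inclusion into $M\times T^*_XX$ says precisely that the intersection lies in the zero section. Ellipticity then gives finiteness via the regularity theorem $R\mathcal{H}om(\M,\mathcal{B}_M)\simeq R\mathcal{H}om(\M,\mathcal{A}_M)$ when $M$ is compact.

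\emph{Step 3: microlocal index formula.} Apply Theorem \ref{ThmB:Intro} with $\Lambda=Char(\M)$. This gives $\int_X \mathrm{Ch}_{\Lambda}(\sigma(\M))Td(X)$ with supports in $\Lambda$, or more precisely the microlocal Euler class $\mu eu(\M)$ paired against $\mu eu(\mathbb{C}_M)$. The intersection $\Lambda\cap T^*_MX$ is compact in the zero section. So the pairing is computed by restricting along $j:T^*_MX\hookrightarrow T^*X$ and then along $0_M$. The Todd class $Td(X)|_M$ becomes $td_M(TM^{\mathbb{C}})$ because $TX|_M\simeq TM\otimes\mathbb{C}$.

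The main obstacle is the normalization in Step 3. One must match the $\D$-Chern character $\mathrm{Ch}_\Lambda$ of Definition \ref{defn: D-Chern character} with $\mathrm{ch}\,\sigma_{Char(\M)}(\M)$ in $K^0_{Char(\M)}(T^*X)$, using a good filtration and the symbol class of $\mathrm{gr}\M$ as in Lemma \ref{GRRSpencerDelta}. One must also check that the contribution of $\mathsf{Spec}_{X_{DR}}(\A)$ integrates out, i.e. that the index is independent of the chosen solution $\varphi$ for free algebras. For this I would use the comparison of microlocal Euler classes with Chern characters of symbols (Schapira--Schneiders), and sign conventions tracked via the Thom isomorphism for $T^*_MX\to M$.
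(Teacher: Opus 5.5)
Your proposal follows essentially the same route as the paper. You use $\mathbb{L}_{\A}\simeq \A\otimes\M$ to reduce $\chi_{\D}(\A)$ to the solution complex of $\M$, read the hypothesis as ellipticity of the pair $(\M,\mathbb{C}_M)$, apply the Schapira--Schneiders formula $\int_{T^*X}\mu eu(\M)\cup \mu eu(\mathbb{C}_M)$, localize to the zero section of $T^*_MX$, and restrict $\pi^*td_X(TX)$ to $td_M(TM^{\mathbb{C}})$. The one difference is your detour through Theorem~\ref{ThmB:Intro}: the paper does not use it, and it gives an index over $X$ rather than the pairing against $\mu eu(\mathbb{C}_M)$, so invoke the elliptic-pair index theorem directly as the paper does.
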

As an application, we compute the Spencer-index relative to a compact elliptic boundary $\partial X\hookrightarrow X.$
\begin{lemma}[Lemma  \ref{prop: BoundaryIndex}]
    \label{BoundaryIntro}
    We have 
$\mathrm{Ind}^{rel}(\mathcal{B}_{X/S})=\mathrm{Ind}(\mathcal{B}_{X/S})-\mathrm{Ind}^{\partial}(\mathcal{B}|_{\partial X}).$
\end{lemma}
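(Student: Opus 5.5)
The plan is to derive the formula from additivity of the Spencer index along the excision (long exact) sequence of a pair. Write $\mathcal{B}_{X/S}$ for the $\mathcal{D}_{X/S}$-module, or $\D$-algebra, of the elliptic boundary problem. Let $i:\partial X\hookrightarrow X$ be the closed embedding of the compact elliptic boundary and $j:X\setminus\partial X\hookrightarrow X$ the open complement. The relative index $\mathrm{Ind}^{rel}(\mathcal{B}_{X/S})$ should be defined through the relative Spencer complex with vanishing boundary data, i.e.\ through $j_!j^{-1}\mathcal{S}p^\bullet_{X/S}(\mathcal{B})$.

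\textbf{Step 1: the distinguished triangle.} Take the standard triangle
$$j_!j^{-1}\mathcal{S}p^\bullet_{X/S}(\mathcal{B})\to \mathcal{S}p^\bullet_{X/S}(\mathcal{B})\to i_*i^{-1}\mathcal{S}p^\bullet_{X/S}(\mathcal{B})\xrightarrow{+1}.$$
Apply $\mathbf{R}f_*$, which is exact on triangles.

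\textbf{Step 2: identify the boundary term.} Show that $i^{-1}\mathcal{S}p^\bullet_{X/S}(\mathcal{B})$ computes the Spencer complex of the restricted system $\mathcal{B}|_{\partial X}$ on $\partial X\to S$. This is a non-characteristic restriction. It uses the standing relative non-characteristic assumption on $f$, together with the hypothesis that $\partial X$ is non-characteristic for $\mathcal{B}$; this is what "elliptic boundary" should supply. One then invokes the non-characteristic pullback compatibility of Theorem \ref{ThmC:Intro} (the index statement $f^*\mathrm{Ind}_{\D}=\mathrm{Ind}_{\D}(f^*\cdot)$ for $f=i$) to identify the boundary term with $\mathrm{Ind}^{\partial}(\mathcal{B}|_{\partial X})$.

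\textbf{Step 3: finiteness and additivity.} Check that all three pushforwards have coherent cohomology, or finite-dimensional cohomology fiberwise, so that their classes are defined in $K_0(S)$. For the interior and full terms this is the ellipticity finiteness of Theorem \ref{ThmA:Intro} (its version with supports). For the boundary term, compactness of $\partial X$ together with ellipticity of the restricted system gives finiteness. The long exact sequence of $\mathbf{R}^qf_*$ then yields additivity in $K_0(S)$:
$$\mathrm{Ind}(\mathcal{B})=\mathrm{Ind}^{rel}(\mathcal{B})+\mathrm{Ind}^{\partial}(\mathcal{B}|_{\partial X}),$$
and rearranging gives the claim. As a consistency check, apply Lemma \ref{GRRSpencerDelta} to all three terms; the Todd classes must match via $T_{X/S}|_{\partial X}=T_{\partial X/S}\oplus N$, with $N$ trivial.

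\textbf{Main obstacle.} The hard part is Step 2. One must justify that restricting the Spencer complex to $\partial X$ gives exactly the boundary Spencer complex, and not one shifted or twisted by the conormal direction. The natural first attempt is the Cauchy–Kovalevskaya–Kashiwara theorem for non-characteristic restriction, $i^{-1}\mathbf{R}\mathcal{H}om(\M,\mathcal{O})\simeq \mathbf{R}\mathcal{H}om(i^*\M,\mathcal{O}_{\partial X})$. It would be applied to the linearization $\varphi^*\mathbb{T}_{\mathcal{B}}$, using condition \eqref{eqn: Condition1} to guarantee non-characteristicity. Any shift by the codimension would only introduce a global sign, which is absorbed into the convention defining $\mathrm{Ind}^\partial$.
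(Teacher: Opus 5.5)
\textbf{There is a genuine gap, and it comes from your choice of triangle.}

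\emph{What the paper does.} The paper \emph{defines} $\mathrm{Ind}^{rel}$ as the class of $\mathbf{R}f_*\mathrm{Cone}(\widetilde{i}^*_\partial)$. Here $\widetilde{i}^*_\partial:\mathcal{S}p^\bullet_{X/S}(\mathcal{B}_{X/S})\to i_*\mathcal{S}p^\bullet_{\partial X/S}(\mathbf{L}i^*_\partial\mathcal{B}_{X/S})$ is the map induced by pulling back forms along $i_\partial$. So the boundary term is the Spencer complex of $\mathbf{L}i^*_\partial\mathcal{B}$ by construction. The identity in $K_0(S)$ is then just additivity along the triangle of $\widetilde{i}^*_\partial$ after applying $\mathbf{R}f_*$; with the sign as stated, the relevant object is the fibre $\mathrm{Cone}(\widetilde{i}^*_\partial)[-1]$. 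The rest of the paper's proof applies $\mathrm{ch}$ and Lemma \ref{GRRSpencerDelta} termwise, pushing the boundary term through $i_*$ with trivial normal bundle; that is your consistency check. No restriction theorem is needed.

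\emph{What your route requires.} By using $j_!j^{-1}\to\mathrm{id}\to i_*i^{-1}$ instead, your relative index is a different object from the paper's. Everything then rests on Step 2: you need $i^{-1}\mathcal{S}p^\bullet_{X/S}(\mathcal{B})\simeq\mathcal{S}p^\bullet_{\partial X/S}(\mathbf{L}i^*_\partial\mathcal{B})$, or at least equality of their classes after $\mathbf{R}f_*$.

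\emph{Why Step 2 fails as proposed.}
\begin{itemize}
\item Since $\partial X$ is a boundary, $i^{-1}$ takes one-sided germs. So $i^{-1}$ of the interior Spencer complex resolves germs along $\partial X$ of solutions on one-sided collars. The Spencer complex of the restricted system instead resolves essentially free Cauchy data on $\partial X$. For an elliptic system the Cauchy problem from the boundary is ill-posed, so these differ already for the Laplacian.
\item The Cauchy--Kovalevskaya--Kashiwara theorem does not bridge this. It is a statement about solution complexes along a non-characteristic submanifold of a complex manifold, i.e.\ about two-sided analytic germs.
\item Theorem \ref{ThmC:Intro} compares microlocal symbol classes and numerical indices under non-characteristic pullback between complex manifolds. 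It gives no comparison of these complexes, nor of their classes in $K_0(S)$.
\item The shift cannot be waved away. The codimension is $1$, so an odd shift negates the boundary class and yields $\mathrm{Ind}+\mathrm{Ind}^\partial$. That sign is part of the statement, not a free convention.
\end{itemize}

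\emph{Step 3 is also off.} Ellipticity does not pass to the restricted system. For $\mathcal{M}=\mathcal{D}_X/\mathcal{D}_XP$ with $P$ of order $m$ and a non-characteristic hypersurface $i:Y\hookrightarrow X$, one has $\mathbf{L}i^*\mathcal{M}\simeq\mathcal{D}_Y^{m}$, whose characteristic variety is all of $T^*Y$. Theorem \ref{ThmA:Intro} does not treat manifolds with boundary either. The paper simply takes the three classes in $K_0(S)$ as given.

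\emph{Repair.} Replace $j_!j^{-1}\mathcal{S}p^\bullet_{X/S}(\mathcal{B})$ by the fibre of $\widetilde{i}^*_\partial$. Step 2 then disappears, and additivity gives the statement directly.
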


The mixed hyperbolic/elliptic index is 
\begin{theorem}[Theorem \ref{theorem: Mixed index}]
    Suppose $Z\to X_{DR}$ is mixed-type with respect to $\Omega$. Suppose $\gamma\subset T_NL$ is an open convex cone such that $\overline{\gamma}$ contains the zero-section $N$ and $g_{N\pi}^{-1}(\lambda)=g_{Nd}^{-1}(\gamma^{\circ a}).$
    Then, the hypercohomologies 
    $$H^i\big(R\Gamma(M;R\mathrm{Sol}_{X}(T_{Z,\phi},\mathscr{B}_{M,\lambda}))\big),$$
    are finite-dimensional, thus the Euler-characteristic is well-defined and coincides with the index,
    $$\mathrm{Ind}^{hyp}_{(M,\lambda)}(Z,\phi)=\int_{T^*X} \mu\mathrm{Eu}(T_{Z,\phi})\cup \mu \mathrm{Eu}\big(\Gamma_{\gamma}(\mathscr{B}_{NL})\big).$$
\end{theorem}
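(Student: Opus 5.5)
The plan is to follow the Schapira--Schneiders route for elliptic pairs, replacing the elliptic condition by the mixed-type condition of Definition~\ref{MixedDef}. The argument has three stages: finiteness, identification of the Euler characteristic with a microlocal pairing, and the local computation.

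\emph{Step 1: reduction to a linear $\D_X$-module.} Linearize $Z\to X_{DR}$ at the solution $\phi$. This gives the tangent complex $T_{Z,\phi}$, a coherent (or perfect) $\D_X$-module complex. Its characteristic variety is controlled by $\mathsf{Char}_{\D}$ of the $\D$-scheme, by the construction used for Theorem~\ref{ThmB:Intro}. The mixed-type hypothesis with respect to $\Omega$ should then give two transversality statements:
\begin{itemize}
\item $\mathrm{Char}(T_{Z,\phi})\cap \mathrm{SS}(\Gamma_\gamma(\mathscr{B}_{NL}))\subset T^*_XX$ on the elliptic part;
\item hyperbolicity in the codirections $\gamma^{\circ a}$ on the hyperbolic part.
\end{itemize}
The compatibility $g_{N\pi}^{-1}(\lambda)=g_{Nd}^{-1}(\gamma^{\circ a})$ is exactly what makes these two regimes glue along the microlocal correspondence attached to $N\subset L$.

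\emph{Step 2: finiteness.} Write
\[
R\mathrm{Sol}_X(T_{Z,\phi},\mathscr{B}_{M,\lambda})\simeq R\mathcal{H}om(T_{Z,\phi},\mathcal{O}_X)\otimes \Gamma_\gamma(\mathscr{B}_{NL}),
\]
up to the usual shift and orientation twist, valid by the non-characteristic condition. Then apply the finiteness theorem for elliptic pairs in the form used for Theorem~\ref{ThmA:Intro}, with supports as in Theorem~\ref{ThmB:Intro}. The key estimate is
\[
\mathrm{SS}(\mathcal{F})\cap \mathrm{Char}(\mathcal{M})\subset T^*_XX,
\]
where $\mathcal{F}$ is the $\mathbb{R}$-constructible sheaf $\Gamma_\gamma(\mathbb{C}_{NL})$ and $\mathcal{M}=T_{Z,\phi}$. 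I would prove it by estimating $\mathrm{SS}(\Gamma_\gamma)$ through \cite[Prop.~5.3.2, Thm.~6.3.1]{KS90}. Since $\gamma$ is open convex with $N\subset\overline{\gamma}$, this wave-front set is contained in the polar $\gamma^{\circ a}$, adjoined to the conormal of $N$. Compactness of $M$, together with the cohomological finiteness of coherent $\D$-modules under these estimates, then yields finite-dimensional hypercohomology. This makes the Euler characteristic well-defined.

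\emph{Step 3: the index formula.} I would invoke the microlocal Riemann--Roch / index theorem for elliptic pairs in \cite{SS94a,SS94b}:
\[
\chi\big(R\Gamma(M;\mathcal{M}\otimes\mathcal{F})\big)=\int_{T^*X}\mu\mathrm{eu}(\mathcal{M})\cup\mu\mathrm{eu}(\mathcal{F}).
\]
This is well-defined because the intersection of supports is compact, by Step~2. Substituting $\mathcal{M}=T_{Z,\phi}$ and $\mathcal{F}=\Gamma_\gamma(\mathscr{B}_{NL})$ gives the stated integral. The left-hand side is, by definition, $\mathrm{Ind}^{hyp}_{(M,\lambda)}(Z,\phi)$.

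\emph{Main obstacle.} I expect the hardest step to be the wave-front estimate in Step~2 on the hyperbolic part. There $\mathrm{Char}(T_{Z,\phi})$ is genuinely nontrivial, so transversality must come from the cone $\gamma^{\circ a}$ and not from ellipticity. I would first try to reduce to the classical hyperbolicity condition
\[
\mathrm{Char}\cap \gamma^{\circ a}\subset \text{zero section},
\]
using the hypothesis on $\lambda$, and then apply the propagation theorem \cite[Prop.~5.4.17]{KS90} to obtain the needed estimate.
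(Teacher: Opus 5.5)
\paragraph{Where the argument breaks.} The gap is the bridge between the hyperbolic problem and an elliptic-type one. In Step~2 you write $R\mathrm{Sol}_X(T_{Z,\phi},\mathscr{B}_{M,\lambda})\simeq R\mathcal{H}om(T_{Z,\phi},\mathcal{O}_X)\otimes\Gamma_\gamma(\mathscr{B}_{NL})$, but this is not a valid identity.
\begin{itemize}
\item The left side is a complex on $M$ of hyperfunction solutions microsupported in $\lambda$. The right side tensors holomorphic solutions with a hyperfunction-type sheaf living over $N$.
\item No non-characteristic condition produces it. Read as the standard formula $R\mathcal{H}om(\mathcal{F},\mathcal{S}ol(\mathcal{M}))\simeq D'\mathcal{F}\otimes\mathcal{S}ol(\mathcal{M})$, it would need exactly the transversality that fails below.
\end{itemize}
Everything afterwards depends on this step. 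It is also exactly where the hypothesis $g_{N\pi}^{-1}(\lambda)=g_{Nd}^{-1}(\gamma^{\circ a})$ must be used; in your proposal it only appears as a slogan.

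\paragraph{What the paper does instead.} The paper's proof is essentially this bridge. Set $\mathcal{S}(Z,-)=R\mathrm{Sol}_X(T_{Z,\phi},-)$.
\begin{itemize}
\item It pulls $R\Gamma_\lambda\mathcal{S}(Z,\mathscr{C}_M)$ back through $T^*_NL\xleftarrow{g_{Nd}}N\times_MT^*_MX\xrightarrow{g_{N\pi}}T^*_MX$.
\item It uses the hypothesis to rewrite $Rg_{Nd!}R\Gamma_{g_{N\pi}^{-1}(\lambda)}g_{N\pi}^{-1}$ as $R\Gamma_{\gamma^{\circ a}}Rg_{Nd!}g_{N\pi}^{-1}$.
\item It identifies $Rg_{Nd!}g_{N\pi}^{-1}\mathcal{S}(Z,\mathscr{C}_M)\simeq\mathcal{S}(Z,\mu_N\mathscr{B}_L)\otimes\omega_{L/N}$.
\item It applies $R\pi_*$, with $g_{Nd}$ proper on the support, to get the boundary-value isomorphism $R\mathcal{H}om_{\mathcal{D}_X}(T_{Z,\phi},\mathscr{B}_{M,\lambda})|_N\simeq R\mathcal{H}om_{\mathcal{D}_X}(T_{Z,\phi},\Gamma_\gamma\mathscr{B}_{NL})$.
\end{itemize}
The integral formula is not taken from an elliptic-pair index theorem. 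It comes from transporting classes along the same correspondence: write $\mu\mathrm{Eu}(\Gamma_\gamma\mathscr{B}_{NL})=g_{Nd*}\nu$ with $\nu$ supported on $g_{N\pi}^{-1}(\lambda)=g_{Nd}^{-1}(\gamma^{\circ a})$. The projection formula for $g_{Nd}$ and $g_{N\pi}$ then moves the pairing with $\mu\mathrm{Eu}(T_{Z,\phi})$ onto $T^*_MX$, i.e.\ onto $\mu\mathrm{Eu}(\mathscr{B}_{M,\lambda})$.

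\paragraph{Why the elliptic-pair route cannot be repaired.} Your key estimate $\mathrm{SS}(\mathcal{F})\cap\mathrm{Char}(\mathcal{M})\subset T^*_XX$ fails in the hyperbolic regime, and so does the fallback ``$\mathrm{Char}\cap\gamma^{\circ a}\subset$ zero section''. Take Example~\ref{globhyp} with $P$ the wave operator, and consider the light-cone covectors $(x,0;i\eta,i\sigma)$ with $|\sigma|=|\eta|\neq0$.
\begin{itemize}
\item They lie in $\mathrm{Char}(\mathcal{D}_X/\mathcal{D}_XP)\cap(N\times_MT^*_MX)$.
\item $g_{Nd}$ sends them to $(x;\sigma)\neq0$, so for the appropriate sign of $\sigma$ they lie in $g_{Nd}^{-1}(\gamma^{\circ a})$.
\item Being conormal to $N$, they are not excluded by your bound on $\mathrm{SS}(\Gamma_\gamma)$. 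They do lie in the microsupport of the constant sheaf on the closed wedge $\overline{L^+}$.
\end{itemize}
Hyperbolicity is a condition on the normal cone $C_{T^*_MX}(\mathrm{Char})$, not a transversality of $\mathrm{Char}$ with a microsupport. So the finiteness and index theorems of \cite{SS94a,SS94b} do not apply. They also need an $\mathbb{R}$-constructible $\mathcal{F}$ and compactness of $\mathrm{supp}(\mathcal{M})\cap\mathrm{supp}(\mathcal{F})$, whereas $M$ is not assumed compact and $M=N\times\mathbb{R}$ in the model case.

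Step~3 has two further problems. It switches from the constructible sheaf of Step~2 to the hyperfunction sheaf $\Gamma_\gamma(\mathscr{B}_{NL})$. And the claim that the left-hand side is ``by definition'' $\mathrm{Ind}^{hyp}_{(M,\lambda)}(Z,\phi)$ presupposes the missing boundary-value identification.

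\paragraph{Finiteness.} Finite-dimensionality cannot come from any transversality estimate in this setting. In Example~\ref{globhyp} the cone $\lambda$ contains the zero section. Hence $\Gamma(M;\mathcal{H}om_{\mathcal{D}_X}(\mathcal{D}_X/\mathcal{D}_XP,\mathscr{B}_{M,\lambda}))$ contains every real-analytic solution of $Pu=0$ on $M$, an infinite-dimensional space. For instance, with $N$ compact and $P=\partial_t^2-\Delta_N$, it contains $\phi(x)\cos(\sqrt{\mu}\,t)$ for every $\phi$ with $-\Delta_N\phi=\mu\phi$.
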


\subsection{Elliptic Lie equations and torsion invariants}
Throughout this paper, we apply the general sheaf-theoretic and global derived geometric methods to compute index theorems in several examples, following \cite{LR}. 
For an \emph{elliptic geometric structure} e.g. an elliptic formally integrable PDE imposed in the Atiyah algebroid of a holomorphic vector bundle $E,$ we have 
\begin{lemma}[Lemma \ref{prop: TopIndex}]
\label{prop: TopIndexIntro}
Given $\EQ^1\subset \mathrm{Jets}^1(DerE),$ then $\EQ_{f}^1\subset \mathrm{Jets}^1(DerE|_s),$ and recalling 
$\mathrm{Ind}(\EQ):=\sum_{i\geq 0}(-1)^i\mathrm{dim}_{\mathbb{C}}\mathcal{H}^i(\EQ),$ we have 
$$\mathrm{ch}[\chi(\EQ^1)]=(-1)^{\mathrm{dim}(X_{\infty})}\chi(S)\sum_{r=0}^{2k+\ell_0}\sum_{i=0}^{2k}
(-1)^{i+k}\int_{X_{\infty}}\mathrm{ch}(\mathcal{H}_{\mathbb{C}}^{r-i,i}(\mathfrak{g}_f)\wedge \mathrm{Td}_{\mathbb{C}}(T_{X_{\infty}}X)\frac{\mathrm{Td}_{\mathbb{C}}S}{\widetilde{e}(S)},$$
where $\ell_0=\ell_0(rankE,dimX,order\EQ),$ is the degree of involutivity, obtained via the $\delta$-Poincare lemma in stable spencer cohomology.
\end{lemma}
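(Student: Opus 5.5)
The plan is to reduce Lemma \ref{prop: TopIndexIntro} to the Grothendieck--Riemann--Roch type formula of Lemma \ref{GRRSpencerDelta}. I would apply it to the $\D_{X/S}$-module $\M$ attached to the linearized family $\EQ^1_f\subset \mathrm{Jets}^1(DerE|_s)$. First I would check that restriction to fibers is compatible with prolongation. Because formal integrability and involutivity hold fiberwise, and $f$ is relatively non-characteristic, the restriction $\EQ^1\mapsto \EQ^1_f$ commutes with taking $\ell$-th prolongations. So the fiberwise Spencer complex \eqref{eqn: FiberwiseRelSpener} of $\EQ_f$ is the restriction of that of $\EQ$. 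Ellipticity then gives finite-dimensional cohomology, so $\chi(\EQ^1)$ and $\mathrm{Ind}(\EQ)$ are well-defined, and by \eqref{eqn:DHilbIndexSpectral} the index equals the alternating sum over the $E_1$ page.

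Next I would identify $\mathrm{gr}_K\M$ concretely. For a Lie equation in $\mathrm{Jets}^1(DerE)$ the symbol modules are the stable Spencer cohomology groups of the symbol Lie algebra $\mathfrak{g}_f$. By the $\delta$-Poincar\'e lemma these vanish beyond the degree of involutivity $\ell_0=\ell_0(\mathrm{rank}E,\dim X,\mathrm{order}\,\EQ)$. This truncates the sum over $r$ at $2k+\ell_0$, where $2k$ is the real fiber dimension. Splitting the symbol complex into Dolbeault $(p,q)$ types gives $\mathrm{ch}(\mathrm{gr}_K\M)=\sum_{r}\sum_{i}(-1)^{i}\mathrm{ch}\,\mathcal{H}^{r-i,i}_{\mathbb{C}}(\mathfrak{g}_f)$. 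The factor $(-1)^k$ comes from the orientation convention relating the complex fiber to its underlying real manifold, as in \cite{LR}.

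Then I would insert this into \eqref{eqn:DHilbIndexGRR} and compute $f_*$. The total space decomposes, up to homotopy, into fibers $X_\infty$ over $S$. Taking integration along the fiber of $\mathrm{ch}(\cdot)\mathrm{Td}_{\mathbb{C}}(T_{X_\infty}X)$ and then pairing with the base contribution should give the factor $\chi(S)\,\mathrm{Td}_{\mathbb{C}}S/\widetilde{e}(S)$. The Euler class enters through the Gauss--Bonnet/self-intersection of the zero section: the normal bundle contribution $\mathrm{Td}/e$ arises when the Koszul part $\wedge^i\Theta_S$ of $E_1^{p,q}$ is summed, since $\sum(-1)^i\mathrm{ch}\wedge^i\Theta_S^\vee=e\cdot\mathrm{Td}^{-1}$. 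The sign $(-1)^{\dim X_\infty}$ comes from Verdier duality between Spencer and de Rham hypercohomology \cite[Thm.~5.12]{KR}.

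I expect the main obstacle to be this last step: making precise how the $\wedge^i\Theta_S$ factors in $E_1^{p,q}$ produce exactly $\chi(S)\,\mathrm{Td}_{\mathbb{C}}S/\widetilde{e}(S)$, with the correct signs. I would first do this in the product case $X=X_\infty\times S$ and check it there. Then I would extend to the general case by deforming to the normal cone of a fiber, using non-characteristicity to keep the index constant along the deformation.
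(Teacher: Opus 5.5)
There is a genuine gap, and it sits exactly at the step you flag. Your route also differs from the paper's. The paper obtains this lemma, in outline, from the families formula of Lemma \ref{NC Rel Elliptic}, following \cite{LR}. The input there is the compactly supported symbol class $[\sigma(\EQ_{X/S})]\in K^0(T_{X_\infty}X)$. Its Chern character is computed from the $\delta$-Spencer complex of $\mathfrak{g}_f$, which collapses by the $\delta$-Poincar\'e lemma to the finitely many $\mathcal{H}^{r-i,i}(\mathfrak{g}_f)$. The quotient $\mathrm{Td}_{\mathbb{C}}/\widetilde{e}$ is the imprint of the Thom isomorphism in the Atiyah--Singer cohomological formula: the Thom class restricts to the Euler class on the zero section, so transporting $\mathrm{ch}(\sigma)$ to the base formally divides $\mathrm{ch}$ of the alternating sum of the terms by $\widetilde{e}$.

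You never form this class, and Lemma \ref{GRRSpencerDelta} cannot replace it, because no GRR/HRR computation divides by an Euler class. The identity you quote, $\sum_i(-1)^i\mathrm{ch}(\wedge^i\Theta_S^\vee)=e(T_S)\,\mathrm{Td}(T_S)^{-1}$, has the Euler class in the numerator and the Todd class in the denominator, the opposite of what you need. Fed through HRR on $S$ it gives $\int_S\mathrm{ch}(V)\,e(T_S)=\mathrm{rk}(V)\,\chi(S)$. Equivalently: $d_1$ on the $E_1$ page comes from the Gauss--Manin connection, so the horizontal direction is a de Rham complex with coefficients in a flat bundle, whose Euler characteristic is its rank times $\chi(S)$. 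Made rigorous, your mechanism therefore produces $\chi(S)$ times the fiber index, uses up $\mathrm{Td}(T_S)$, and leaves no factor $\mathrm{Td}_{\mathbb{C}}S/\widetilde{e}(S)$. Checking the product case will expose this mismatch rather than settle it, and deformation to the normal cone does not address it.

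There is also a problem upstream: $\mathrm{ch}(\mathrm{gr}_K\M)$ is not a class on $X$ in the way you use it. The module $\mathrm{gr}_K\M$ is a graded $\mathrm{Sym}\,\Theta_{X/S}$-module, in general with infinitely many nonzero pieces. Only its derived restriction to the zero section, the $\delta$-Spencer complex, has a Chern character on $X$. That restriction is exactly where $\sum(-1)^i\mathcal{H}^{r-i,i}(\mathfrak{g}_f)$ comes from. The bigrading is the Spencer one (prolongation order $r-i$, form degree $i\le 2k$), not a Dolbeault splitting.

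After that step, GRR yields the holomorphic class $\mathrm{Td}(T_{X/S})$ with no Euler denominator. The target instead has the shape of the Atiyah--Singer formula for an elliptic complex: $\mathrm{Td}_{\mathbb{C}}$ of a complexified real tangent bundle over $\widetilde{e}$. Similarly, $(-1)^{\dim X_\infty}$ is the sign already built into Lemma \ref{NC Rel Elliptic}, not a Verdier-duality sign.

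The missing idea is to work with the symbol class in $K^0(T_{X_\infty}X)$ and apply Lemma \ref{NC Rel Elliptic} together with the Thom isomorphism, as in \cite{LR}. That should replace Lemma \ref{GRRSpencerDelta} and the Koszul factors of the $E_1$ page.
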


We obtain a formula for the Ray-Singer analytic torsion invariant \cite{RaySinger1971} associated to any elliptic and formally integrable non-linear PDE in the analytic category.

\begin{theorem}[Theorem \ref{thm: RSingerSpencer}]
\label{RSTheoremIntro}
    Given a formally integrable involutive system of PDEs, $\{Z_{\ell}\}_{\ell\geq 0}.$ Assume $(g,M)$ is Riemannian and $E$ is given a Hermitian metric inducing metrics $\{h_k\}$ on each $Z_k$, compatible with $g.$ Then there is a family of local Laplacian operators,
    $$\Delta_k^{r-i,i}:=D_{k}^{(i-1)}\circ D_{k}^{(i-1),*}+D_k^{(i),*}\circ D_k^{(i)}:\mathcal{S}_{k}^{r-i,i}\to \mathcal{S}_{k}^{r-i,i},$$
    for each $k,$ where $D_{-1}:=\emptyset$ and $D_{2n+1}:=\emptyset.$ 
    We have
    $$\zeta_{k}^{r-i,i}(s):=\sum_{\lambda \in Spec(\Delta_k^{r-i,i})\backslash\{0\}}\lambda^{-s},\mathrm{Re}(s)>n,$$
    and the regularized $\zeta$-determinant.
    $$\mathrm{log}\big(\det\Delta_k^{r-i,i}\big):=-\frac{d}{ds}\zeta_k^{r-i,i}(s)\big|_{s=0},$$
    is well-defined since $\zeta_k(s)$ is holomorphic at $s=0.$ The Ray-Singer analytic torsion is 
    $$T_{Z}^{RS}(M)=\prod_{k=0}^{2n}\prod_{r=0}^{2n+\ell_0}\prod_{i=0}^{2n}\big(\det \Delta_k^{r-i,i})^{(-1)^{i+2}i/2},$$
    with $\ell_0$ the integer from Lemma \ref{prop: TopIndexIntro}.
\end{theorem}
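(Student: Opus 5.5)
The construction of the Laplacians comes first. I take the stable Spencer complex of the involutive system, i.e. the complex \eqref{eqn: FiberwiseRelSpener} in the absolute case. In the bigraded form $\mathcal{S}_k^{r-i,i}$ its pieces are the $i$-form parts with values in the symbol spaces $g_{k}$ of $Z_k$, and $D_k^{(i)}:\mathcal{S}_k^{r-i,i}\to\mathcal{S}_k^{r-i-1,i+1}$ is the Spencer operator. Formal integrability gives $D^{(i+1)}_k\circ D^{(i)}_k=0$.

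The Riemannian metric $g$ and the Hermitian metrics $h_k$ induce $L^2$ inner products on the spaces of smooth sections of the bundles $\mathcal{S}_k^{r-i,i}$. With respect to these I form the formal adjoints $D_k^{(i),*}$, and then $\Delta_k^{r-i,i}$ as in the statement; the conventions $D_{-1}=D_{2n+1}=\emptyset$ handle the two ends of the complex. Each $\Delta_k^{r-i,i}$ is then formally self-adjoint and nonnegative.

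The key analytic input is ellipticity. By involutivity, the $\delta$-Poincaré lemma invoked in Lemma \ref{prop: TopIndexIntro} shows that beyond the degree $\ell_0$ the symbol sequence of the Spencer complex is exact off the zero section. I plan to use this as follows:
\begin{itemize}
\item the Spencer complex is then an elliptic complex in the sense of Atiyah–Bott;
\item consequently each $\Delta_k^{r-i,i}$ has principal symbol $\sigma(D^{(i-1)})\sigma(D^{(i-1)})^*+\sigma(D^{(i)})^*\sigma(D^{(i)})$, which is positive definite for $\xi\neq0$;
\item this gives a discrete nonnegative spectrum, finite-dimensional kernel (harmonic representatives of $\mathscr{H}_D^\bullet$), and a Hodge decomposition on the compact $M$.
\end{itemize}
One caveat: the Spencer operators have mixed order in general. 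I would reduce to first order by the usual weighting of $\mathcal{S}_k^{r-i,i}$, or simply use the first-order Spencer operator $D$ directly, since $D$ is first order on the stable range.

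Next come the spectral estimates. For a nonnegative elliptic self-adjoint operator of order $m$ on an $n$-dimensional manifold, Weyl's law gives $\lambda_j\sim Cj^{m/n}$. Hence the zeta function $\zeta_k^{r-i,i}(s)$ converges for $\mathrm{Re}(s)>n/m$; for first-order $D$ one has $m=2$, and the region is contained in the stated $\mathrm{Re}(s)>n$. I then write
$$\zeta(s)=\Gamma(s)^{-1}\int_0^\infty t^{s-1}\big(\mathrm{Tr}\,e^{-t\Delta}-\dim\ker\Delta\big)\,dt.$$
I split the integral at $t=1$. The large-$t$ part is entire, by exponential decay coming from the spectral gap. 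For the small-$t$ part I use the Minakshisundaram–Pleijel/Seeley heat asymptotic expansion $\mathrm{Tr}\,e^{-t\Delta}\sim\sum_j a_j t^{(j-n)/2}$. This yields a meromorphic continuation with at most simple poles, located at $s=(n-j)/2$. The factor $1/\Gamma(s)$ cancels the potential pole at $s=0$, so $\zeta$ is holomorphic at $0$. This makes $\log\det\Delta_k^{r-i,i}=-\zeta'(0)$ well defined.

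Finally, the torsion is defined as the alternating weighted product, by analogy with Ray–Singer. It is a finite product because the ranges of $k$, $r$ and $i$ are bounded by $2n$ and $2n+\ell_0$, again by the $\delta$-Poincaré lemma. One can also check that the relevant factors of the torsion depend only on the stable range.

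The main obstacle is the ellipticity step: verifying that each stable Spencer sequence has exact symbol sequence, uniformly in $k$. This requires identifying the symbol of $D$ with the Koszul/$\delta$-differential on $g_k\otimes\Lambda^\bullet$. For this I plan to use the known fact that the Spencer $\delta$-complex of an involutive symbol is acyclic, combined with the ellipticity hypothesis on the system, which gives injectivity of the characteristic map at $\xi\ne0$. Everything after that step is standard Seeley theory.
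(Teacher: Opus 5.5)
\textbf{The ellipticity step fails.} The step you flag as the main obstacle does not need more work; it is false for the operators in the statement. The Spencer operator satisfies $D(fu)=df\otimes\pi(u)+f\,Du$. So on $\wedge^iT^*M\otimes Z_m\to\wedge^{i+1}T^*M\otimes Z_{m-1}$ its principal symbol is $\sigma_\xi(D)(\omega\otimes u)=\xi\wedge\omega\otimes\pi(u)$, where $\pi:Z_m\to Z_{m-1}$ is the projection. This symbol kills the whole top piece $\wedge^iT^*M\otimes g_m$, with $g_m=\ker\pi$. Splitting $Z_m\simeq Z_{m-1}\oplus g_m$, the symbol sequence has cohomology $\wedge^i(T^*M/\mathbb{R}\xi)\otimes g_m$ in degree $i$. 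This is nonzero whenever $g_m\neq 0$ and $i<\dim M$.

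The $\delta$-differential is not the principal symbol of $D$; it is the zeroth-order part. On sections of $\wedge^iT^*M\otimes g_m$, $D$ is $C^\infty$-linear and equals $\pm\delta$. The $\delta$-Poincar\'e lemma is a pointwise algebraic statement that does not involve $\xi$. So neither involutivity nor ellipticity of the system can make $\sigma_\xi(D)$ exact, and $\sigma(\Delta_k^{r-i,i})$ is not positive definite. Your caveat about mixed order misdiagnoses the problem: $D$ is uniformly first order, but its symbol is degenerate. If you really take the pieces to be $\wedge^iT^*M\otimes g$ with differential $\delta$, the ``Laplacian'' is a bundle endomorphism and has no zeta function at all.

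\textbf{Consequence for the spectrum.} The defect is in the operators whenever the system has infinite type, which is exactly the case the theorem is meant to add to \cite{LR}.
\begin{itemize}
\item In degree $0$, $\langle\Delta u,u\rangle=\|Du\|^2=\|\delta u\|^2\le C\|u\|^2$ for all $u\in L^2(g_m)$, an infinite-dimensional space. Then $\Delta$ cannot have compact resolvent: min--max gives infinitely many spectral values, with multiplicity, in $[0,C]$, and the series for $\zeta$ diverges for every $s$.
\item In degrees $1\le i<\dim M$ the same happens for high-frequency wave packets valued in $\wedge^iT^*M\otimes g_m$ with amplitude in $\ker\iota_\xi$. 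On such sections $D$ is bounded, and $D^*$ acts through a first-order operator whose symbol vanishes on $\ker\iota_\xi$.
\item Concrete check: take $\bar\partial f=0$ on a flat torus. Then $Z_m$ consists of holomorphic jets $(f_0,\dots,f_m)$, with $Df=\big((\partial_z f_j-f_{j+1})\,dz+\partial_{\bar z} f_j\,d\bar z\big)_{j<m}$, and $D(0,\dots,0,f_m)=-f_m\,dz$ in the slot $j=m-1$.
\end{itemize}

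\textbf{What would repair it.} A Douglis--Nirenberg weighting (giving $g_j$ weight $-j$) does make the symbol $i\,\xi\wedge(\cdot)+\delta$. But the $L^2$-adjoint Laplacian of the statement stays degenerate on the top pieces. You would need weighted, pseudodifferential adjoints, or better, a genuinely elliptic complex. One candidate is the Spencer sequence on $C_i=\wedge^iT^*M\otimes Z_m/\delta(\wedge^{i-1}T^*M\otimes g_{m+1})$, or the Janet sequence, which is elliptic for elliptic involutive systems. Your Seeley/heat-kernel paragraph then goes through as written, but the determinants entering $T^{RS}_Z$ change.

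For finite-type systems ($g_m=0$ in the stable range) your argument does work, since the complex is the de Rham complex of the flat connection of Lemma~\ref{FTypeVBundle}. The paper's own proof does not address this point either. It only writes down $D_k^{(i),*}=\pm\star D_k^{(i)}\star$ and the Laplacian, and takes the holomorphy of $\zeta_k$ at $s=0$ for granted.
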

The cohomological geometry of non-linear PDEs \cite{Vin01} provides a formalism unifying important curvature formulas for BCOV-like invariants across a broad class of elliptic geometric PDEs.

While a full treatment lies outside the scope of the current paper, we mention that via Spencer hypercohomology index \eqref{eqn:DHilbIndexK0} and using Lemma \ref{GRRSpencerDelta}, one may follow the construction of 
\cite{GilletSoule1991}, as follows. Consider $\mathscr{H}_{Sp}^\bullet(X/S)
=
\mathbb{H}^\bullet
\big(
S,\mathbf{R}f_*\mathcal{S}p^\bullet(Z)
\big),$ and if $\mathbf{R}f_*\mathcal{S}p^\bullet(Z)$ is a perfect complex on $S$, we obtain the \emph{determinant line}
$$
\lambda_{Sp}(Z):=
\det\mathbf{R}f_*\mathcal{S}p^\bullet(Z)
=
\bigotimes_k
\left(
\det\mathscr{H}^k_{Sp}(X/S)
\right)^{(-1)^k},$$
which is endowed with an associated metric,
\begin{equation}
\label{eqn: QuillenMetric}||-||_{\mathrm{Quillen}}:=||-||_{L^2}\cdot \exp\bigg[\frac{1}{2}\sum_{k=0}^d(-1)^{k+1}k\log\det' \Delta_k^{L}\bigg].
\end{equation}
Such objects will be described in a future work, but encode several key objects scattered throughout the literature. We briefly summarize by stating the PDE, the Laplacian, the torsion invariant and Quillen norm, arising from Theorem \ref{RSTheoremIntro}.

\subsubsection{de Rham system}
Let $X$ be a compact oriented Riemannian manifold and consider the de Rham PDE: For $d_{DR}\omega=0,$ the Spencer complex is the de Rham complex so $H_{Sp}^k(d_{DR}=0)\simeq H_{DR}^k(X)$ and the corresponding Laplacians are the standard ones, $
\Delta_k = dd^*+d^*d:\Omega^k(X)\to\Omega^k(X).$ Zeta functions
$
\zeta_k(s)=\sum_{\lambda>0}\lambda^{-s},
\lambda\in\mathrm{Spec}(\Delta_k),$ give the ordinary RS analytic torsion 
$
T_X
=
\sum_{k\ge0}(-1)^k k\,\zeta_k'(0).
$ whose determinant line, and Quillen-metric are the classical ones (see e.g, \cite{Quillen1,Quillen2}):
$$
\lambda_{Sp}
=
\bigotimes_k(\det H^k_{dR}(X))^{(-1)^k},\hspace{2mm}\text{ with }
\|\cdot\|_Q
=
\|\cdot\|_{L^2}\exp(-T_X).$$

\subsubsection{Dolbeault/Cauchy--Riemann equation}
Consider a holomorphic vector bundle on a compact K\"ahler manifold $E\to X.$ Consider $\overline{\partial}s=0.$
Let $(X,J,\omega)$ be a compact K\"ahler manifold and $E\to X$ a holomorphic
vector bundle.
The Spencer complex coincides with the Dolbeault complex and thus
$
H^q_{Sp}(X,E)=H^{0,q}(X,E).$ The corresponding Laplacians are given by $
\Delta_{\bar\partial,q}
=
\bar\partial\bar\partial^*
+
\bar\partial^*\bar\partial,$ with $
\zeta_q(s)
=
\sum_{\lambda>0}\lambda^{-s},
\lambda\in\mathrm{Spec}(\Delta_{\bar\partial,q}).$
The Ray-Singer torsion \cite{RaySinger1971}, determinant and metric are the classical Quillen metric on Dolbeault cohomology i.e 
$
T_E^0
=
\sum_{q\ge0}(-1)^q q\,\zeta_q'(0),$ such that 

$
\lambda_{Sp}
=
\bigotimes_q(\det H^{0,q}(X,E))^{(-1)^q},$ and $
\|\cdot\|_Q
=
\|\cdot\|_{L^2}\exp(-T_E^0).$

\begin{remark}
Let $(X,\mathcal F)$ be a smooth foliation with compact leaves. Let $d_{\mathcal{F}}$ be the leaf-wise (transversal) de Rham differential and consider $d_{\mathcal F}\omega=0.$ Spencer cohomology coincides with the leaf-wise de Rham cohomology and one has a leaf-wise local Laplacian; however, $\Delta_{\mathcal F}
=
d_{\mathcal F}d_{\mathcal F}^*
+
d_{\mathcal F}^*d_{\mathcal F},$ is defined with additional extra structure e.g. the leaves are K\"ahler, otherwise the formal adjoint $d_{\mathcal{F}}^*$ does not exist. Note moreover that if leaves are non-compact one must work with the von Neumann
determinant and $L^2$--torsion.
\end{remark}

\subsubsection{Invariants}
We can compute BCOV-type invariants associated to PDEs over Calabi-Yau $n$-folds $X$, since the
the Spencer cohomologies of the PDE $d_{DR}f=0$ are isomorphic to Hodge bundles, it is tempting ask: given
a Calabi-Yau $n$-fold $(X,\omega)$, does one have that 
$\tau_{\mathrm{Sp}}(X)=\tau_{BCOV}(X)?$
If so, this would be intriguing as it shows that BCOV theory is encoded by Spencer theory for the de Rham PDE and in turn, suggests a new class of invariants can be defined and studied for other formally integrable and involutive non-linear PDEs.

\subsection{Global index via categorical traces}
The above results are part of a larger class of \emph{secondary non-linear trace formulas} \cite{BenZviNadler2013SecondaryTraces}\cite{BenZviNadler2013NonlinearTraces}, arising by combining the theory of traces in homotopical algebra with sheaf-theory and derived algebraic geometry, which we discuss briefly in the final Subsection \ref{sec: Virtual}.

Assume the linearization complex, given by the $\D$-geometric tangent complex $\mathbb{T}_{\EQ}$,is represented by a homotopy cokernel of a morphism $\mathsf{P}$ in the $\infty$-category of ind-coherent complexes on $X_{DR}$. For a solution $\varphi$, its linearization is given by pull-back $\varphi^*\mathbb{T}_{\EQ}$, which is a $\D$-module on $X.$ 
We are interested in computing the following quantity.
\begin{definition}
\label{DFredIndex}
    \emph{The $\mathcal{D}$-Fredholm index} of $\EQ$ (along $\varphi$) is the Euler
characteristic of its linearization complex
$Index_{\varphi}(\EQ):=\chi\big(a_*(\varphi^*\mathbb{T}_{\EQ}^{\ell})\big),$
where $a:X\rightarrow \mathrm{Spec}(k)$ is the canonical map.
\end{definition}
Similarly, letting $\RS_X(\EQ)$ denote its pre-stack of solutions, the $\mathcal{D}$\emph{-geometric Euler-Poincaré} index (along $\varphi)$ is the corresponding index for the pre-stack $\RS_X(\EQ).$

The following example serves to describe the above introduced objects for the PDE whose solutions are of the form $\varphi=(P,A)$, with $P$ a principle $G$-bundle with flat connection $A.$ 

\begin{example}
Given an algebraic group $G$ with $C$ a smooth projective curve, consider the stack $[*/G]$ and the $C_{dR}$-prestack $Z:=q_*([*/G]\times C)$, given by Weil-restriction. Then $\RS(Z)$ is just the stack of $G$-bundles on $C$.
Considering also $\RS_C(pt/G\times C_{dR}),$ a $T$-parametrized solution, with $T=Spec(R)$ an affine dg-scheme, 
is a map
$\varphi_T:Spec(R)\rightarrow \RS_C(pt/G\times C_{dR}),$ specifying a principal $G$-bundle with flat connection $(P,A)_T$ on $T\times C.$
Thus,
\begin{align}
    \mathbb{T}_{(P,A)}\RS_C(pt/G\times C_{dR})&= R\Gamma(T\times C_{dR},\mathfrak{g}_P)[1],d_{T})\nonumber
    \\
    &=R\Gamma_{dR}(T\times C;\mathfrak{g}_P)[1]\nonumber
    \\
    &= \big(\Omega^{\bullet}(T\times C;\mathfrak{g}_P)[1],d_{A}+d_T\big).
\end{align}
The differential $d_T$ is that coming from the dg-$R$-module determined by $P.$ In this case, the ($T$-parameterized) index 
$Index_{(P,A)}(Z)_T$ is given by
$$\sum_i(-1)^i\mathrm{dim}_{\mathbb{C}}H_{dR}^{i}(T\times C;\mathfrak{g}_P).$$
\end{example}
With the above assumptions, we may interpret $\mathsf{P}$ as a linear differential operator of a finite order, while the induced morphism $\overline{a}_*(\mathsf{P})$ on (compactly) supported global sections should be interpreted as a linear differential operator which is Fredholm (in the usual sense). The induced morphism $Gr(\mathsf{P})$, defined as a morphism of crystals over the Hodge stack of $X$, is interpreted as the associated symbol \cite{KSY23}. 

These notions were motivated by their analogs in the underived setting of linear equations e.g. $\mathcal{D}$-modules $\mathcal{M}$ and their microlocalizations,  for which one associates a certain microlocal index 
$\mu eu(\mathcal{M}),$ \cite{SS94a,SS94b} 
and micro-local Hochschild class $hh_{\mathcal{E}}(\mathcal{M})$ \cite{KashiwaraSchapira2014}.

Hochschild homology of a dualizable object in the context of higher algebra
provides a framework for realizing Euler-characteristics, Chern characters, characters of group representatsion and more, via functorial properties. Geometric setting of correspondence categories, one realizes them with derived analogs of loop-spaces and fixed point loci. Universal nonlinear versions of Grothendieck-Riemann-Roch
 theorems, Atiyah-Bott-Lefschetz trace formulas, and Frobenius-Weyl character formulas are obtained this way. We now recast the above results as a trace formula in the context of homotopical algebra and derived geometry.

To this end, consider 
$$HH_*\big(\mathsf{IndCoh}(Y)\big)\simeq \Gamma(\mathcal{L}Y,\omega_Y)\simeq \omega(\mathcal{L}Y),$$
be the Hochschild homology. Let $M\in \mathsf{IndCoh}(Y)$ be dualizable. The \emph{categorical character} is the Hochschild trace map, i.e. the image of the identity endomorphism under
$$\mathrm{Tr}:\mathrm{End}_{\mathsf{IndCoh}(Y)}(M)\to HH_*\big(\mathsf{IndCoh}(Y)\big).$$
We apply this to the global setting of not necessarily affine $X_{DR}$-spaces $Z\in \mathrm{PreStk}_{/X_{DR}}$ admitting deformation theory whose derived moduli stack of solutions $R\mathrm{Sol}(Z)$ is locally almost of finite-type (laft), by putting $Y:=X_{DR}$, viewed as a (derived) ind-inf scheme. We compute the index in Defn.~\ref{DFredIndex}.
\begin{theorem}
    Let $Z\to X_{DR}$ be $D$-afp and bounded. Then, for each solution $u:*\to \RS(Z),$ we have a categorical character $[T_{Z/X_{DR},u}]\in HH_*(D_X-\mathsf{Mod}(X)),$ and 
    $p_*[T_{Z/X_{DR}},u]\simeq \int_{\mathcal{L}p}\big[T_{Z/X,u}]\big]\in HH_*(D-\mathsf{Mod}(pt))\simeq HH_*(\mathsf{Vect}_k),$
    for $p:X\to pt,$ with $p_*$ the de Rham push-forward.
\end{theorem}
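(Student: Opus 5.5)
The plan is to realize the statement as an instance of the functoriality of categorical traces, in the sense of Ben-Zvi--Nadler \cite{BenZviNadler2013SecondaryTraces,BenZviNadler2013NonlinearTraces}, applied to the de Rham push-forward $p_*:D_X\text{-}\mathsf{Mod}(X)\to \mathsf{Vect}_k$. I would proceed in three steps.

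\emph{Step 1: produce the categorical character.} Since $Z\to X_{DR}$ is $D$-afp, for a solution $u:*\to\RS(Z)$ the pulled-back tangent $\D$-complex $T_{Z/X_{DR},u}:=u^*\mathbb{T}_{Z/X_{DR}}$ is a $\D$-module on $X$ of finite presentation. Boundedness of $Z$ makes this complex bounded with coherent cohomology, hence compact. As an object of $\mathsf{IndCoh}(X_{DR})\simeq D_X\text{-}\mathsf{Mod}(X)$, a compact object is dualizable in the relevant sense, namely it is a $1$-morphism $\mathsf{Vect}_k\to D_X\text{-}\mathsf{Mod}(X)$ admitting a continuous right adjoint. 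The trace of its identity endomorphism therefore defines a class $[T_{Z/X_{DR},u}]\in HH_*(\mathsf{IndCoh}(X_{DR}))\simeq\omega(\mathcal{L}X_{DR})$. This uses the displayed identification $HH_*(\mathsf{IndCoh}(Y))\simeq\Gamma(\mathcal{L}Y,\omega_Y)$ with $Y=X_{DR}$.

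\emph{Step 2: functoriality of traces.} The de Rham push-forward $p_*$ for $p:X\to pt$ is the $\mathsf{IndCoh}$ push-forward along $X_{DR}\to pt$. I would check that $p_*$ preserves compact objects, using properness of $X$, implicit in the projective setting fixed earlier. It then has a continuous right adjoint $p^!$, so by the Ben-Zvi--Nadler formalism it induces a map $HH_*(p_*)$ on Hochschild homology that carries characters to characters: $HH_*(p_*)[M]=[p_*M]$ for dualizable $M$. Geometrically, $HH_*(p_*)$ is integration along the loop map $\mathcal{L}p:\mathcal{L}X_{DR}\to\mathcal{L}pt=pt$, i.e.\ push-forward of $\omega$-sections. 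This yields $p_*[T_{Z/X_{DR},u}]\simeq\int_{\mathcal{L}p}[T_{Z/X,u}]$ in $HH_*(\mathsf{Vect}_k)\simeq k$.

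\emph{Step 3: consistency with the index.} Finally I would note that the right-hand side is the Euler characteristic $\chi(a_*(u^*\mathbb{T}_Z))$, which recovers Definition \ref{DFredIndex}, since the character of a perfect complex in $\mathsf{Vect}_k$ is its Euler characteristic.

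\emph{Main obstacle.} I expect the hardest part to be the identification in Step 2 of the abstract trace map with the geometric integral over $\mathcal{L}p$. For the ind-inf scheme $X_{DR}$, the loop space $\mathcal{L}X_{DR}$ is formally $\widehat{T}X[-1]$ completed at the diagonal, and one must verify that the $\mathsf{IndCoh}$ push-forward commutes with forming loop spaces, via base change for ind-inf schemes, which are laft. My first attempt would be to reduce to the de Rham-shifted formal groupoid $X\to X_{DR}$ and apply Gaitsgory--Rozenblyum base change to the Cartesian square defining $\mathcal{L}X_{DR}=X_{DR}\times_{X_{DR}\times X_{DR}}X_{DR}$.
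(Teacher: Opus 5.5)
Your proposal is correct and follows the paper's argument. Both reduce the claim to functoriality of traces, $p_*\mathrm{Tr}(\mathrm{id}_M)\simeq\mathrm{Tr}(\mathrm{id}_{p_*M})$ for $M=T_{Z/X_{DR},u}$, together with the identification of $HH_*$ with $\omega$ of the loop space, which the paper simply asserts via a commuting square; the paper's additivity remark along the cotangent sequence of $Z\to Z^{\leq n}$ plays no role in the final identity.

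One correction to your ``main obstacle'': because $(-)_{DR}$ commutes with fiber products and $X_{DR}$ takes discrete values, $\mathcal{L}(X_{DR})=X_{DR}\times_{X_{DR}\times X_{DR}}X_{DR}\simeq X_{DR}$, not a completion of $T[-1]X$. Hence $HH_*(D_X\text{-}\mathsf{Mod}(X))\simeq\omega(X_{DR})$ is de Rham homology, and $\int_{\mathcal{L}p}$ is just the de Rham trace, whose compatibility with $p_*$ is the standard proper functoriality.
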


In Lemma \ref{prop: BC-micro} we prove a base-change result for non-linear microlocalizations which allows us to prove Lemma \ref{prop: BC-solution}, establishing homotopy-invariance of linearization complexes along solutions as they vary along smooth morphisms of complex manifolds. In Lemma \ref{Chern of decomposition} we compute the micro-local Chern character of the decomposition, $E_T\boxtimes M_X$.

\subsection{Motivation and relation to other works}
The purpose of the current  paper is twofold. 
First, we provide a sheaf-theoretic description of index theorems for Spencer hypercohomology via the geometric theory of non-linear PDEs based on the jet-bundle formalism. This makes central use of microlocal sheaf-theory on manifolds \cite{KS90}.
Various analogs of the Atiyah-Singer index, appearing in the form of \emph{algebraic} index theorems due to Fedosov \cite{Fedosov1996DQ} and Nest–Tsygan \cite{NestTsygan1995AlgebraicIndex}, are related to the classical index theorem through deformation quantization \cite{FormalAnalytic} and non-commutative geometry. This fact demonstrates the index is fundamentally quantization-invariant.

Building on this, a mathematically rigorous bridge between the algebraic index theorem and topological quantum mechanics was constructed in \cite{BVIndex} using the Batalin–Vilkovisky formalism. This program was extended to two-dimensional chiral theories via effective BV quantization \cite{Li2023VertexQME}. The result produces a chiral algebraic index theorem on the torus, using the formalism of \cite{BD04}. We also mention the recent developments, expanding on \cite{FG12}, appearing in \cite{GuiWangWilliams2025Jouanolou}),
which extend chiral algebras to higher-dimensions.

Secondly, we extend and interpret our constructions within a moduli theoretic setting. 

In a follow-up (however, essentially independent) work, we generalize our results to the case of \emph{derived} non-linear PDEs, modeled by sub-crystals $\iota:Z\hookrightarrow q^*q_*E$ of prestacks relative to $X_{DR}$ in the sense of \cite{KSY23,KSY2} (see also \cite{Kry2026}), which model the formal geometry of infinitely prolonged systems of overdetermined non-linear PDEs as differentially structured moduli problem in (formal) derived algebraic geometry.

We then compute the so-called \emph{virtual index theorems} for the corresponding derived moduli spaces of solutions $R\mathrm{Sol}(Z)$.

Within this perspective we use the methods of (formal) shifted symplectic geometry, the theory of categorified traces, and the geometry of derived loop spaces. As observed in \cite[Sect.7]{KSY23}, the derived linearization of a formally integrable non-linear PDE lives naturally over the derived loop stack $\mathscr{L}X$, thus canonically produces a categorified index class, formulated in terms of higher $S^1$-equivariant traces over $\mathscr{L}X.$

This class refines the classical numerical index in much the same way that categorified Donaldson–Thomas invariants refine numerical DT invariants. In this sense, classical Atiyah-Singer type index theorems play the role of “DT classes,” while the categorified index supplied by derived loop geometry plays the role of a “categorified DT invariant” in the realm of PDEs.

This perspective leads naturally to a Grothendieck–Riemann–Roch–type formula over derived loop spaces, expressing the categorified index as the secondary trace of the derived linearization. The construction is homotopy-invariant and dimension-independent, and therefore provides a geometric and deformation-invariant model for Witten’s loop-space index in a rigorous setting. We expect that this framework will ultimately clarify the two-dimensional, and possibly higher-dimensional, refinements anticipated in Witten’s proposals.

\subsection{Context and Motivation}
A second main direction of the present article concerns the microlocal and regularity-theoretic properties of configuration spaces of points. Following \cite[Rmk.~1.2, Thm.~1.5]{KSY2}, we compute higher elliptic regularity for such configuration spaces, extending classical Cauchy problems to factorization spaces on Lorentzian manifolds, with the intended application the global Cauchy problem in General Relativity.

Configuration spaces play a central role in both mathematics and physics, providing a natural geometric setting for subvarieties defined by diagonal conditions in cotangent bundles, such as those of the form, 
\begin{equation}
\label{eqn: Diagonals}
\{(x_i,\xi_i,y_i,\eta_i)| x_i=y_i,\xi_i=\eta_i\},
\end{equation}
for microlocal coodinates $(x;\xi)\in T^*X.$ 

In the context of PDE geometry, they are used to study the propagation of singularities for microfunction solutions
$f=f(x_1,\xi_1,\ldots,x_n,\xi_n),$
and microlocal parametrices on globally hyperbolic spaces. This is achieved by working on complements of the characteristic variety $S:=\mathrm{Char}(P)\subset T^*X$ of a differential operator $P$ i.e, microfunctions on $n$-fold product manifolds $X^n\backslash S^n.$

Mathematically, this is realized by employing the analytic derived Ran space $\mathrm{Ran}(X)$ and microlocal algebraic analysis to study factorization algebras in $D$-modules and their non-linear analogues. Recall that a \emph{factorization $D$-module} \cite{BD04}, is a family $\{M_{X^I}:i\in \mathrm{fSet}\}$ of $D_{X^I}$-modules such that for each $\alpha:I\rightarrow J,$ there are homotopy-coherent system of equivalences, $\Delta(\alpha)^*M_{X^I}\xrightarrow{\simeq} M_{X^J},$ and for each $I=I_1\sqcup I_2,$ we have 
\begin{equation}
\label{eqn: Factorization property}
M_{X^I}\xrightarrow{\simeq} M_{X^{I_1}}\boxtimes M_{X^{I_2}}.
\end{equation}
Microlocal analysis over $\mathrm{Ran}_X$ introduced in \cite{Pau14}, provides a formulation of the causal treatment (in the sense of Epstein-Glaser) of the quantization problem for factorization algebras on Lorentzian manifolds via purely (derived) geometric and categorical language.
Working in this level of generality avoids heavy functional-analytic machinery and instead relies on derived geometry and microlocal methods, extending new methods to approach the general factorization quantization problem of Costello–Gwilliam \cite{CG16,CG16b}. 
We emphasize that the latter approach, originally formulated in the $C^{\infty}$-setting, is suitable for elliptic PDEs on Euclidean manifolds, and works perturbatively i.e. around a fixed background classical solution. 

Since many physically significant systems, such as those arising in General Relativity, are governed by non-linear hyperbolic PDEs or more general mixed-type systems, new methods are needed (cf. \cite{JS16, BMS23}). The methods of algebraic microlocal analysis (e.g., \cite{SS94a, SS94b}) discussed here provide a step towards addressing this gap, providing tools to study:
\begin{itemize}
    \item (\textit{Elliptic regularity}): Roughly speaking, elliptic regularity for a $\D$-module $\mathcal{M}_{\mathsf{P}}$ with underlying (elliptic) operator $\mathsf{P}$ is an equivalence
\begin{equation}
    \label{eqn: Elliptic Regularity}
\mathbb{R}Sol_{X}(\mathcal{M}_{\mathsf{P}},\mathscr{A}_{M})\simeq \mathbb{R}Sol_{X}(\mathcal{M}_{\mathsf{P}},\mathscr{B}_M),
\end{equation}
solutions sheaves valued in analytic functions $\mathcal{A}_M$ on a manifold $M$ with complexification $X$, and solutions valued in Sato's sheaf of hyperfunctions $\mathscr{B}_M$ (\cite{Sato}). Equivalence (\ref{eqn: Elliptic Regularity}) means $\mathsf{P}$ induces an isomorphism 
$\mathsf{P}:\mathscr{B}_M/\mathscr{A}_M\xrightarrow{\simeq} \mathscr{B}_M/\mathscr{A}_M,$
with inverse $\mathsf{H}_{\mathsf{P}}.$ Microlocally, this means microfunction solutions 
annihilate outside of the zero-section of $T_M^*X,$ i.e.
$$\mathbb{R}Sol_{\mathcal{E}}(\mathcal{M}_{\mathsf{P}},\mathscr{C}_M)\simeq 0.$$

For non-elliptic (e.g., hyperbolic) operators, this equivalence fails, but a microlocal reformulation remains viable in both elliptic and hyperbolic contexts.

\item (\textit{Micro-hyperbolicity}):
By viewing $\mathsf{H}_{\mathsf{P}}$ not as an operator acting on a space of distributions, but rather as a micro-differential operator given by a kernel on $X\times X$, supported on the diagonal $\Delta_X.$ 
One then inverts $\mathsf{P}$ outside $\mathrm{Char}(\mathsf{P}),$ providing a microdifferential operator $\mathsf{H}\in \mathcal{E}_X.$ An analog of regularity (\ref{eqn: Elliptic Regularity}), is given an isomorphism 
$$\mathbf{R}\mathcal{H}om_{\mathcal{E}_X}(\mathcal{M}_{\mathsf{P}},\mathcal{C}_M)|_{\mathrm{Char}(\mathcal{M}_{\mathsf{P}})^c}\simeq 0,$$
reflecting that the microlocalization of $\mathcal{M}_{\mathsf{P}}$ has support in $S.$
\end{itemize}

In physical terms, the microlocal kernel $H$ serves as the propagator. It appears in the $\star$-products of quantum field theory. In this picture, they are obtain as pullbacks to the diagonals (\ref{eqn: Diagonals}) of expressions like 
$$g(y_1,\eta_1,\ldots,y_n,\eta_n)\mathsf{H}(x_1,\xi_1)\cdots \mathsf{H}(x_n,\xi_n)f(x_1,\xi_1,\cdots,x_n,\xi_n).$$
However, pull-back (and products of distributions) are not defined unless one imposes conditions on their wave front sets\footnote{This leads to the infamous UV problem in quantum field theory.}. Causal renormalization schemes address this by extending algebra-valued distributions from the complement of the thin diagonal $\Delta_n\subset X^n$, to all $X^n.$ 

The possible extensions are governed by a microlocal scaling degree measuring singularity of the distribution transversal to the submanifold $\Delta_n.$ In this framework, $\mathrm{Ran}(X)$ emerges as the natural geometric setting for organizing these extensions and the associated renormalization.

\subsubsection{Relation to other works}
The Mukai pairing \cite{Caldararu2003I,CaldararuWillerton2010} on $HH_*(X)$ can be written in terms of the HKR isomorphism, 
$\mathrm{hkr}:HH_*(X)\simeq \bigoplus_i H^{i-*}(X,\Omega_X^i),$ based on the commuting diagram in hypercohomologies
\[
\begin{tikzcd}
 \mathbb{H}^{-\bullet}(X,\Delta^*\Delta_*\mathcal{O}_X)\arrow[d]\arrow[r,"D"] &\mathbb{H}^{-\bullet}(X,\Delta^!\Delta_*\omega_X)
 \\
 \bigoplus_{i}H^{i-\bullet}(X,\Omega_X^i)\arrow[r,"\iota"] & \bigoplus_i H^{i-\bullet}(X,\Omega_X^i)\arrow[u,"\mathrm{hkr}"],
\end{tikzcd}
\]
wheren $\Delta:X\to X\times X$ is the diagonal, $\omega_X:=\Omega_X^{\dim X}[\dim X],$ and $D$ is the map in hyperbcohomology indules by $\mathrm{td}:\Delta^*\Delta_*\mathcal{O}_X\to \Delta^!\Delta_*\omega_X.$
Kashiwara-Schapira define Hochschild-classes to various classes of sheaves on $X$ (c.f. \cite[Definition 4.1.3]{KSDQMods}).
This was later used, along with deformation quantization arguments to confirm a conjecture of Schapira-Schneiders concerning the micro-local Euler-class of elliptic pairs \cite{SS94a,SS94b}. It is a consequence of a ``non-commutative'' Riemann-Roch-theorem (eg, \cite{Ramadoss2008}).
One may also associated to an holomorphic bundle $E\to X$, with a global differential operator $P\in \Gamma(X,\mathcal{D}_X(E,E)),$ acting on sections of $E$ a Leftschetz number (supertrace), 
$$L:\Gamma(X,\mathcal{D}_X(E,E))\to \mathbb{C},\hspace{1mm} P\mapsto L(P):=\sum_{j=0}^n(-1)^j\mathrm{Tr}H^j(P),$$
where $H^j(P)$ is the image of $P$ under the algebra map $H^j:\Gamma(X,\mathcal{D}_X(E,E))\to End(H^j(X,E)),$ where $H^j(X,E)$ is the (finite-dimensional) sheaf cohomology. When $P$ is just the identity, this gives the holomorphic Euler characteristic of $E$, expressed through the Riemann–Roch–Hirzebruch theorem as the integral over $X$ of a
 characteristic class. A generalization of this formula was studied in \cite{Felder}, and classical analogs for Spencer complexes appear in \cite{LR}.

A Grothendieck-Riemann-Roch formual for filtered coherent crystals along derived foliations is given in \cite[Theorem 6.1.1]{TVGRR}.
Additionally, letting $H_{\F}(X,E)$ be the foliated/transversal cohomology, then \cite[Corollary 6.1.2]{TVGRR} establishes a Hirzeburch-Riemann-Roch formula for crystals along the foliation $\F$.

\subsection{Organization}
The article is structured as follows:
\begin{itemize}
\item 
\textit{Section~\ref{sec: MicrolocalAnalysis}}: we recall the basic tools from microlocal sheaf theory. The main reference is \cite{KS90}.
\item \textit{Section~\ref{sec: PDEGeometry}}: we recall basic tools of $D$-geometry and homotopical $D$-geometry. We summarize results of \cite{KSY23,KSY2} we will be using.
In Subsect.~\ref{ssec: PDEGeometryIndexes} we prove several of the main theorems.
The main applications e.g. to manifolds with boundary, are found in Subsect.~\ref{ssec: PDEGeometryApplications}.

\item 
\textit{Section~\ref{sec: Analytic Torsion of NPDEs}}: we discuss analytic torsion invariants for formally integrable involutive systems of non-linear PDEs.
\item \textit{Section~\ref{sec: MixedType}}: we propose a definition of mixed-type PDE using microlocal sheaf theory. In Subsect.~\ref{ssec: MixedTypeIndex} we prove an index theorem for mixed-type systems.

\item \textit{Section~\ref{sec: ConfigurationSpaces}}: we extend the results to the configuration space of points.

\item \textit{Section~\ref{sec: GlobalAbstract}}: we compute the $D$-index and conclude by phrasing the moduli-theoretic index for derived spaces of flat sections $R\mathrm{Sol}(Z)$ of $D$-spaces $Z\to X_{DR}$ in the context of categorified enumerative geometry. We have in mind future relations to categorified Donaldson-Thomas invariants, Seiberg-Witten invariants and the virtual dimension of the moduli stacks of coherent sheaves and more generally, rigidified perfect complexes on smooth Calabi-Yau varieties.  
\end{itemize}
\subsection*{Acknowledgments}
The first author is supported by the Simons Foundation, grant SFI-MPS-T-Institutes-00007697, and the Ministry of Education and Science of the Republic of Bulgaria, grant DO1-239/10.12.2024. J.K would like to thank Prof. Kashiwara and Prof. Schapira for an invitation to visit Kyoto University in April 2025, and for helpful discussions.
The third author is supported by grants from Beijing Institute of Mathematical Sciences and Applications (BIMSA),
the Beijing NSF BJNSF-IS24005,
and the China National Science Foundation (NSFC) NSFC-RFIS program W2432008.
He would also like to thank China's National Program of Overseas High Level Talent for generous support.
Finally, he would like to thank NSF AI Institute for Artificial Intelligence and Fundamental Interactions at
Massachusetts Institute of Technology (MIT) which is funded by the US NSF grant under Cooperative Agreement
PHY-2019786.

\section{Microlocal analysis}
\label{sec: MicrolocalAnalysis}
We recall basic microlocal geometry, following \cite{KS90,KSDQMods}. The results are well-known, but we recall the characterization of ellipticity \cite{SS94a,SS94b} and hyperbolicity \cite{JS16} of $\D$-modules (linear PDEs) and extend this to $\D$-algebras (non-linear PDEs). We propose a novel mathematical definition of \emph{mixed-type} system via microlocal stratifications of the derived characteristic variety.

Here and throughout $f:X\to S$ will denoted a smooth projective morphism of smooth proper schemes over $\mathbb{C}.$ In some places, where it will be specified, it will represent a relative analytic submersion over $S$. 
Thus, it is smooth and we have the short exact sequence 
$$0\rightarrow X\times_YT_{Y/S}^*\rightarrow T_{X/S}^*\rightarrow T_{X/Y}^*\rightarrow 0,$$ of vector bundles on $X$. 
For example, in the absolute case when $S$ is a point, by the \emph{microlocal-correspondence} associated with $f$, we mean
\begin{equation}
    \label{eqn: Microlocal correspondence}
    \begin{tikzcd}
    & \arrow[dl,"f_d"] X\times_y T^*Y\arrow[dr,"f_{\pi}"] & 
    \\ 
    T^*X & & T^*Y.
    \end{tikzcd}
\end{equation}

Our conventions and notations follow that in \cite{KS90} for specific details e.g. of non-characteristic morphisms, and the notion of Whitney cone $C_{T_M^*X}(\Lambda)$ along the conormal bundle $T_M^*X$ of a real manifold $M$ with complexification $X$, associated to a close conic subset $\Lambda\subset T^*X.$ See e.g. \cite[Definition 4.1.1, Proposition 4.1.2]{KS90}.

\subsubsection{A microlocal diagram and ellipticity}
\label{ssec: A microlocal diagram and ellipticity}
Let $f:N\rightarrow M$ be a morphism of real analytic manifolds for which $Y=N_{\mathbb{C}},X=M_{\mathbb{C}}$ and $f_{\mathbb{C}}:Y\rightarrow X$ are complexifications. Let $\pi_M:T_M^*X\rightarrow M$ be the conormal bundle to $M$ in $X$. The microlocal correspondence (\ref{eqn: Microlocal correspondence}) for $f$ is then
\begin{equation}
    \label{eqn: Microlocal correspondence complexified}
    T_N^*Y\xleftarrow{f_d}N\times_M T_M^*X\xrightarrow{f_{\pi}}T_M^*X.
\end{equation}
Assume that $N\subset L$ and $g:L\rightarrow X$ gives a closed embedding $g|_N:N\hookrightarrow M:$
\begin{equation}
\label{setupdiag}
\begin{tikzcd}
T^*L& \arrow[l,"g_d"] L\times_XT^*X\arrow[r,"g_\pi"] & T^*X
\\
T_N^*L\arrow[u,hook] & \arrow[l,"(g_N)_d"] N\times_M T_M^*X\arrow[u,hook] \arrow[r,"(g_N)_{\pi
}"] & T_M^*X\arrow[u, hook].
\end{tikzcd}
\end{equation}
Give a coherent $\D_X$-module $\M$, let $\mathrm{Char}(\M)$ be its characteristic variety, a closed $\mathbb{C}^*$-conic cositoropic subset of $T^*X.$
\begin{definition}
A coherent $\mathcal{D}_X$-module $\mathcal{M}$ is \emph{elliptic} (on $M$) if
$\mathring{T}_M^*X\cap \mathrm{Char}(\mathcal{M})=\emptyset$, or equivalently,
$T_M^*X\cap \mathrm{Char}(\mathcal{M})\subset T_X^*X.$
\end{definition}
More generally, a coherent $\D_X$-module $\M$ is said to be elliptic on $L\subset X$ if 
\begin{equation}
    \label{eqn: Elliptic condition}
\mathrm{Char}(\M)\cap T_L^*X\subset T_X^*X.
\end{equation}
\begin{remark}
\label{Remark: Elliptic to sub-man}
Given a coherent $\D_X$-module and $L\subset X$ for which (\ref{eqn: Elliptic condition}) holds, we will say that the pair $(\M,L)$ is elliptic.
\end{remark}

From the short-exact sequence
$$0\rightarrow T_M^*X\rightarrow M\times_XT^*X\rightarrow T^*M\rightarrow 0,$$
where the middle term is isomorphic to $T^*M\oplus iT^*M$ so $T_M^*X\simeq i T^*M,$ an operator $P\in \D_X$ is \emph{elliptic} if $\sigma(P)(x;i\xi)\neq 0$ for $\xi\neq 0.$ 
Let $\mathbf{R}\mathcal{S}ol_{X}(\M):=\mathbf{R}\mathcal{H}om_{\mathcal{D}_X}(\M,\mathcal{O}_X),$ be the functor of derived holomorphic solutions, $\mathscr{A}_M$ the sheaf of complex valued real analytic functions on $M$, that is, $\mathscr{A}_M:=\mathcal{O}_X\otimes\mathbb{C}_M=\mathcal{O}_X|_{M},$ and $\mathscr{B}_M$ the sheaf of Sato's hyperfunctions. Recall the latter is a flabby sheaf concentrated in degree $0$, defined by
$\mathscr{B}_M:=\mathbf{R}\mathcal{H}om(D_X'\mathbb{C}_M,\mathcal{O}_X).$ Note that since
$$D_X'\mathbb{C}_M\simeq or_M[-\mathrm{dim}(X)]\simeq \omega_{M/X}\simeq \omega_M^{\otimes -1},$$
we equivalently have that $$\mathscr{B}_M\simeq \mathbb{R}\Gamma_M(\mathcal{O}_X)\otimes\omega_M\simeq H_M^{\mathrm{dim}(X)}(\mathcal{O}_X)\otimes or_M.$$

Then, the condition of ellipticity implies 
$g:L\rightarrow X$ is non-characteristic for $\mu supp\big(\mathbf{R}\mathcal{S}ol_X(\M)\big).$
Note that here, this sheaf enjoys the well-known elliptic regularity i.e.
$$\mathbf{R}\mathcal{S}ol_X(\M,\mathscr{A}_M)\xrightarrow{\simeq}\mathbf{R}\mathcal{S}ol_X(\M,\mathscr{B}_M),$$
is an isomorphism.
Letting $S_\phi^\bullet := \mathbf{R}\mathcal H om_{\D_X}(\mathbb T_{Z/X_{DR},\phi}, \mathcal{O}_{T\times X})$ be the solution complex of a linearized PDE along a solution $\phi:T\to \RS(Z)$, consider the following. Fix $(x_0;\xi_0)\in T^*X\setminus \{0\}$. Let $f:X\to \mathbb{R}$ be smooth with $
f(x_0)=0,\quad df(x_0)=\xi_0.$
Let $B_\epsilon(x_0)$ be a small ball of radius $\epsilon>0$ around $x_0$. The microlocal stalk of $S_\phi^\bullet$ at $(x_0;\xi_0)$
is
\[
\mu_{(x_0,\xi_0)}(S_\phi^\bullet) := \varinjlim_{\epsilon\to 0} R\Gamma_{\{f\ge 0\}\cap B_\epsilon(x_0)}(S_\phi^\bullet) \in D^b(\mathbb{C}),
\]
which is independent of the choice of $f$ with $df(x_0)=\xi_0$.
\begin{remark}
If $(x_0;\xi_0)$ lies outside $\mathbf{Ch}^{\mathrm{der}}(\mathbb T_{Z/X_{DR},\phi})$, then $\mu_{(x_0,\xi_0)}(S_\phi^\bullet)\simeq 0$.  
- If $(x_0;\xi_0)$ lies in a hyperbolic cone $\lambda$, the stalk is concentrated in degree 0 (finite-dimensional hyperfunction solutions).  
- If $(x_0;\xi_0)$ lies in an elliptic cone $\gamma$, the stalk is finite-dimensional by ellipticity.
\end{remark}
We describe the microlocal stalks of $\Sol(\mathbb{T}_{A,\phi})$ along $N,M,L$ and in the cones $\gamma,\lambda$.
To first describe microloclaization along a real submanifold, let $i_N:N\hookrightarrow X$ be the inclusion. Recall the functor of \emph{specialization} from \cite{KS90}, which yields a complex
\[
\nu_N(\mathscr{B}_M) \in D^b(\mathbf{Sh}(T_NX)),
\]
whose stalk at $(x,\xi)\in T_NX$ is
\[
\bigl(\nu_N \mathscr{B}_M\bigr)_{(x,\xi)} \simeq \varinjlim_{U\ni x} R\Gamma_{ \{ y\in U \mid \eta(y)\in \mathbb{R}_{>0}\xi\} } (U;\mathscr{B}_M),
\]
where the colimit is over neighborhoods $U$ and $\eta$ is a local coordinate lift. 
We also will need to compute stalks at points valued in (compliments of) cones.
For a convex cone $\gamma \subset T_N L$, the microlocal stalk is
\[
\bigl(R\Gamma_\gamma \nu_N \mathscr{B}_M\bigr)_{(x,\xi)}
\simeq
\varinjlim_{V \ni (x,\xi)} R\Gamma_{\gamma\cap V}(\nu_N \mathscr{B}_M|_V),
\]
with $V$ conic neighborhoods of $(x,\xi)$ in $T_N L$.  
If $\xi\notin \gamma$, the stalk vanishes.

\subsubsection{Relation to characteristic variety}

Let $\mathcal{M}^\bullet = \mathbb{T}_{A,\phi}$. Then the microlocal stalk of the hyperfunction solution complex satisfies:
\[
R\mathscr{H}om_{\mathcal D_X}(\mathcal{M}^\bullet, \Gamma_\gamma \mathscr{B}_{NL})_{(x,\xi)}
\;\simeq\;
0, \quad \text{for } \xi \notin g_{Nd}(g_{N\pi}^{-1}(\mathrm{Char}^{\mathrm{der}}(\mathcal{M}^\bullet))).
\]
Similarly for $\mathscr{B}_{M,\lambda}$, we have vanishing outside the preimage of $\lambda$.  
Thus the support of the microlocal Euler class is exactly
\[
\mathrm{supp}(\mu eu(\Gamma_\gamma \mathscr{B}_{NL})) \subset S_\gamma \cap \mathrm{Char}^{\mathrm{der}}(\mathbb{T}_{A,\phi}),
\]
\[
\mathrm{supp}(\mu eu(\mathscr{B}_{M,\lambda})) \subset \lambda \cap \mathrm{Char}^{\mathrm{der}}(\mathbb{T}_{A,\phi}).
\]

\subsubsection{Hyperbolicity}
Let $M,X$ be as in Subsect.~\ref{ssec: A microlocal diagram and ellipticity}. 
Recall the natural embedding 
$T^*M\hookrightarrow T_{T_M^*X}T^*X$, arising from the embedding $T^*M\hookrightarrow T^*T_M^*X$ and the natural isomorphism $T_{T_M^*X}T^*X\simeq T^*T_M^*X.$  
The hyperbolic characteristic variety of $\mathcal{M}$ along $M$,
\begin{equation}
\label{eqn:Hyperbolic Char}
Char_M^{hyp}(\mathcal{M}):=T^*M\cap C_{T_M^*X}\big(\mathrm{Char}(\mathcal{M})\big),
\end{equation}
then $\theta \in T^*M$ is hyperbolic for $\mathcal{M}$ when $\theta  \notin Char_M^{hyp}(\mathcal{M}).$
\begin{remark}
Say $\theta \in T_{T_M^*X}T^*X$ is microhyperbolic for a $\mathcal{D}_X$-module $\mathcal{M}$ if $\theta \notin C_{T_M^*X}\big(\mathrm{Char}(\mathcal{M})\big).$ Under $\theta \in T^*M\hookrightarrow T_{T_M^*X}T^*X$, one says $\theta$ is hyperbolic. If $\theta$ is microhyperbolic (resp. hyperbolic)
$\theta\notin \mu\mathrm{supp}\big(\mathbf{R}\mathcal{S}ol_X(\mathcal{M},\mathcal{C}_M)\big)$ 
(resp. $\theta\notin \mu\mathrm{supp}\big(\mathbf{R}\mathcal{S}ol_X(\mathcal{M},\mathcal{B})\big)$).
\end{remark}

\begin{example}
\label{Hyperbolic example 1}
If $z=x+iy\in X$ so that $M=\{z\in X|y=0\},$ and $\zeta=\xi+i\eta\in T^*X$ we get $T_M^*X=\{y=\xi=0\}.$ A direction
$p_0=(x_0;\theta_0)\in T^*M$ with $\theta_0\neq 0$ is hyperbolic if and only if there exist an open neighbourhood $U$ of $x_0$ in $M$ and open conic neighbourhood $\gamma$ of $\theta_0\in \mathbb{R}^n$ such that
$\sigma(P)(x;\theta+i\eta)\neq 0,$
for all $\eta \in \mathbb{R}^n,x\in U,\theta\in \gamma.$
\end{example}
Analagously to condition \eqref{eqn: Elliptic condition}, following the terminology of Remark \ref{Remark: Elliptic to sub-man}, a sub-manifold $N\subset M$ will be called \emph{hyperbolic} for $\mathcal{M}$ (alternatively, we say that $\M$ is hyperbolic along $N$) if 
\begin{equation}
    \label{eqn: Hyperbolic condition}
T_N^*M\cap Char_M^{hyp}(\mathcal{M})\subset T_M^*M.
\end{equation}
Any nonzero covector of $T_N^*M$ is hyperbolic for $\mathcal{M}.$ Since hyperbolicity is not an absolute notion, as opposed to ellipticity, and requires specifying directions, we use the following

\begin{definition}
    Let $\M$ be a coherent $\D_X$-module, with $X$  a complexification of $M$. For a given sub-manifold $N\subset M$ satisfying \eqref{eqn: Hyperbolic condition}, we will say that the pair $(\M,N\subset M)$ is hyperbolic.
\end{definition}

\begin{example}
\label{Hyperbolic example 2}
Let $N\subset M$ be a real submanifold with complexification $Y\subset X$ and $\mathcal{M}=\mathcal{D}_X/\mathcal{D}_X\cdot P$ a coherent $\mathcal{D}_X$-module determined by $P$ of order $\leq k.$ Assume $(x)=(x_1,x')\in M$ such that $N=\{x_1=0\}$ and suppose that $\mathrm{Char}(\mathcal{M})\cap \mathring{T}^*X \cap \mathring{T}_M^*X$ is a regular involutive submanifold of $\mathring{T}_M^*X$ and $(\M,N\subset M)$ is hyperbolic. These assumptions imply that $\sigma(P)$ is a hyperbolic polynomial with respect to $N$ with real-constant multiplicities. In other words, $\sigma(P)(x;\tau,\eta')$ has $k$ real roots $\tau$ with constant multiplicites where $x,\eta'$ are real.
\end{example}

Hyperfunction and real-analytic solutions 
propagate in hyperbolic directions \cite{KS90}.

\begin{lemma}
\label{Propagation for hyperbolic directions}
Suppose that $P\in\mathcal{D}_X$ with $\mathcal{M}:=\mathcal{D}_X/\mathcal{D}_X\cdot P\in\mathrm{Coh}(\mathcal{D}_X).$ Then

$\mu\mathrm{supp}\big(\mathbf{R}\mathcal{S}ol_X(\mathcal{M},\mathcal{A}_M)\big),\mu\mathrm{supp}\big(\mathbf{R}\mathcal{S}ol_X(\mathcal{M},\mathcal{B}_M)\big)\subset Char_M^{hyp}(\mathcal{M}).$
\end{lemma}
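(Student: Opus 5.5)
The plan is to reduce both inclusions to the general microsupport estimate of Kashiwara--Schapira for solution complexes with values in $\mathscr{C}_M$, and then pass to $\mathscr{A}_M$ and $\mathscr{B}_M$ through the Sato triangle.

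First, since $\mathcal{M}=\mathcal{D}_X/\mathcal{D}_X\cdot P$ has the free resolution $0\to\mathcal{D}_X\xrightarrow{\cdot P}\mathcal{D}_X\to\mathcal{M}\to 0$, we have
$$\mathbf{R}\mathcal{S}ol_X(\mathcal{M},\mathscr{F})\simeq\big(\mathscr{F}\xrightarrow{P}\mathscr{F}\big)$$
for $\mathscr{F}\in\{\mathscr{A}_M,\mathscr{B}_M\}$. So we only need to bound the microsupport of the kernel--cokernel complex of $P$ acting on these sheaves.

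Next, write $F:=\mathbf{R}\mathcal{S}ol_X(\mathcal{M})=\mathbf{R}\mathcal{H}om_{\mathcal{D}_X}(\mathcal{M},\mathcal{O}_X)$. By \cite[Thm.~11.3.3]{KS90} we have $SS(F)\subset\mathrm{Char}(\mathcal{M})$. We then use the identifications
$$\mathbf{R}\mathcal{S}ol_X(\mathcal{M},\mathscr{A}_M)\simeq F|_M,\qquad \mathbf{R}\mathcal{S}ol_X(\mathcal{M},\mathscr{B}_M)\simeq \mathbf{R}\Gamma_M(F)\otimes or_M[n].$$
These follow from the definitions of $\mathscr{A}_M$ and $\mathscr{B}_M$ recalled above, since $\mathcal{M}$ is coherent and $\mathbf{R}\mathcal{H}om_{\mathcal{D}_X}(\mathcal{M},-)$ commutes with $\mathbf{R}\Gamma_M$ and with restriction.

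The core estimate is the functorial bound for restriction and local cohomology along the closed embedding $i:M\hookrightarrow X$ \cite[Thm.~6.3.1, Cor.~6.4.4]{KS90}:
$$SS(F|_M),\ SS(i^!F)\subset i^{\#}\big(SS(F)\big).$$
Here $i^{\#}(A)=T^*M\cap C_{T_M^*X}(A)$, viewing $T^*M\hookrightarrow T_{T_M^*X}T^*X\simeq T^*T_M^*X$. These sheaves live on $M$, and the microsupport of a sheaf on $M$ is the set $\mu\mathrm{supp}$ appearing in the statement. Applying the bound with $A=\mathrm{Char}(\mathcal{M})$, monotonicity of the normal cone gives
$$i^{\#}\big(SS(F)\big)\subset T^*M\cap C_{T_M^*X}\big(\mathrm{Char}(\mathcal{M})\big)=Char_M^{hyp}(\mathcal{M}),$$
which is exactly \eqref{eqn:Hyperbolic Char}. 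This handles both $\mathscr{A}_M$ and $\mathscr{B}_M$, since tensoring with $or_M[n]$ does not change microsupport.

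The main obstacle is justifying the estimate $i^{\#}$ for a non-transversal embedding. The bound $SS(i^{-1}F)\subset i^{\#}SS(F)$ holds in general, but it requires the normal-cone formalism of \cite[Sect.~6.2--6.3]{KS90}, not merely the non-characteristic pullback theorem, because $M$ is totally real and typically characteristic for $\mathcal{M}$. If a direct citation is unavailable, I would argue microlocally. Take $\theta\notin C_{T_M^*X}(\mathrm{Char}(\mathcal{M}))$. Use $\theta$ to build an open convex cone $\gamma$ with $\gamma^{\circ}$ avoiding $\mathrm{Char}(\mathcal{M})$ near $T_M^*X$, as in Example \ref{Hyperbolic example 1}, where the condition is $\sigma(P)(x;\theta+i\eta)\neq 0$. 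The microlocal Cauchy--Kowalevski/propagation theorem \cite[Prop.~5.2.1, Thm.~11.5.3]{KS90} then shows that the microlocal stalks of $\mu_M(F)$, equivalently of the $\mathscr{C}_M$-valued solutions, vanish in a conic neighbourhood of $\theta$. Since $\mathscr{A}_M$ and $\mathscr{B}_M$ are recovered from $\mathscr{C}_M$ via the Sato triangle $\mathscr{A}_M\to\mathscr{B}_M\to\dot\pi_*\mathscr{C}_M\xrightarrow{+1}$, this gives the claimed inclusions.
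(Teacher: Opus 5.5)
Your proposal is correct and follows the paper's own proof. Both arguments identify the $\mathscr{A}_M$- and $\mathscr{B}_M$-valued solution complexes with $F|_M$ and $\mathbf{R}\Gamma_M(F)$ (the latter up to the orientation shift), and then apply the normal-cone estimate \cite[Cor.~6.4.4]{KS90} together with $SS(F)=\mathrm{Char}(\mathcal{M})$.

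Your worry about the embedding being non-transversal is unfounded, since Cor.~6.4.4 holds for arbitrary morphisms. The fallback argument via $\mathscr{C}_M$ and the Sato triangle can therefore be dropped; as written, it also blurs points of $T_M^*X$ with covector directions in $T^*M$.
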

\begin{proof}
    Follows from the isomorphisms,
$$R\Gamma_MR\mathcal{H}om_{\D_X}(\M,\mathcal{O}_X)\simeq R\mathcal{H}om_{\D_X}(\M,R\Gamma_M\mathcal{O}_X),$$
and $R\mathcal{H}om_{\D_X}(\M,\mathcal{O}_X)|_{M}\simeq R\mathcal{H}om_{\D_X}(\M,\mathcal{O}_X|_M),$ by applying \cite[Corollary 6.4.4]{KS90}, i.e.
$$\mathrm{SS}\big(R\Gamma_M R\mathcal{H}om_{\D_X}(\M,\mathcal{O}_X)\big)\subset T^*M\cap C_{T_M^*X}\big(\mathrm{SS}(R\mathcal{H}om_{\D_X}(\M,\mathcal{O}_X)\big).$$
\end{proof}

\subsubsection{Mixed-type $D$-modules}
Letting $M,N,L,Y,X$ be as before. We come to the linear version of a mixed-type PDE. As discussed above, we specify a priori, the regions of microlocal type.
\begin{definition}
\label{defn: LinearMixedType}
Consider a coherent $\D_X$-module $\M$ with
\[
\begin{tikzcd}
& \arrow[dl,"\rho_L", labels=above left] N:=L\cap M\arrow[dr,"\rho_M"]
\\
L& & M,
\end{tikzcd}\hspace{2mm} \text{ such that } TN\simeq \rho_L^*T_L\cap \rho_M^*T_M.
\]
Then $\M$ is a $\D_X$-module of
\emph{mixed-type with domain of definition $N$} if:
    \begin{itemize}
        \item $(\M,L)$ is elliptic,
    
        \item $(\M,N)$ is hyperbolic,

        \item The map $i:Y\rightarrow X$ is non-characteristic for $\M.$
    \end{itemize}
\end{definition}
Equivalently, Definition \ref{defn: LinearMixedType} means, that on the one hand, $g:L\rightarrow X$ is non-characteristic for $\mathbb{R}\mathcal{S}ol_X(\M)$, so in particular $g_d:L\times_X T^*X\rightarrow T^*L$ is proper on $g_{\pi}^{-1}\big(\mu\mathrm{supp}(\mathbb{R}\mathcal{S}ol_X(\M))\big).$ Similarly, for the map induced from $g_N.$ On the other hand, we have that 
$(g_N)_{\pi}:N\times_MT_M^*X\rightarrow T_M^*X,$ in diagram (\ref{setupdiag}) is non-characteristic for 
$C_{T_M^*X}\big(\mu \mathrm{Supp}(\mathbb{R}\mathcal{S}ol_X(\M))\big).$
This last statement has two equivalent reformulations.

\begin{lemma}
The morphism $(g_N)_{\pi}$ is non-characteristic for $C_{T_M^*X}\big(\mu \mathrm{Supp}(\mathbb{R}\mathcal{S}ol_X(\M))\big)$ is equivalent to either one of the following equivalent statements:
\begin{itemize}
    \item $\mathring{T}_N^*M\cap C_{T_M^*X}\big(\mu\mathrm{supp}(\mathbb{R}\mathcal{S}ol_X(\M))\big)=\mathring{T}_N^*M\cap C_{T_M^*X}\big(\mathrm{Char}(\M)\big)=\emptyset,$

    \item $T_N^*X\cap \mathrm{Char}(\M)\subset T_M^*X$ and $\mathring{T}_N^*M\cap C_{\mathring{T}_M^*X}(\mathring{\mathrm{Char}}(\M)).$
\end{itemize}
\end{lemma}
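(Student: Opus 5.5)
The plan is to reduce everything to the conormal-geometry definition of non-characteristic maps in \cite[Def.~5.4.12, Prop.~5.4.13]{KS90}, and then to decompose the normal cone $C_{T_M^*X}(\mathrm{Char}(\M))$ over the zero-section and over $\mathring{T}_M^*X$.

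\emph{Step 1.} The equality
$$\mathring{T}_N^*M\cap C_{T_M^*X}\big(\mu\mathrm{supp}(\mathbb{R}\mathcal{S}ol_X(\M))\big)=\mathring{T}_N^*M\cap C_{T_M^*X}\big(\mathrm{Char}(\M)\big)$$
inside the first bullet is immediate from Kashiwara's theorem $\mathrm{SS}(\mathbf{R}\mathcal{H}om_{\D_X}(\M,\mathcal{O}_X))=\mathrm{Char}(\M)$ \cite[Thm.~11.3.3]{KS90}. So only sets of the form $C_{T_M^*X}(\mathrm{Char}(\M))$ need to be handled.

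\emph{Step 2 (main equivalence).} I will identify $C_{T_M^*X}(\mathrm{Char}(\M))$ with a closed conic subset of $T^*(T_M^*X)$, using the Hamiltonian isomorphism $T_{T_M^*X}T^*X\simeq T^*T_M^*X$ recalled before \eqref{eqn:Hyperbolic Char}.

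The map $(g_N)_\pi:N\times_MT_M^*X\to T_M^*X$ is a closed embedding. For an embedding, non-characteristic for a conic set $A$ means $A\cap T^*_{N\times_MT_M^*X}(T_M^*X)$ is contained in the zero-section.

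The key computation is that, under the embedding $T^*M\hookrightarrow T^*T_M^*X$ induced by $\pi_M:T_M^*X\to M$, the conormal bundle $T^*_{N\times_MT_M^*X}(T_M^*X)$ is exactly $\pi_M^{*}T_N^*M$. Indeed, $N\times_MT_M^*X=\pi_M^{-1}(N)$, and $\pi_M$ is a submersion, so the conormal of a preimage is the pull-back of the conormal. Substituting $A=C_{T_M^*X}(\mathrm{Char}(\M))$ then turns the non-characteristic condition into
$$\mathring{T}_N^*M\cap C_{T_M^*X}(\mathrm{Char}(\M))=\emptyset,$$
which is the first bullet.

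\emph{Step 3 (second bullet).} I will split the fibres of $C_{T_M^*X}(\mathrm{Char}(\M))\to T_M^*X$ over points $p$ of the zero-section $M$ and over points $p\in\mathring{T}_M^*X$.

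Over $\mathring{T}_M^*X$, the normal cone is local in $p$. Hence the intersection with $\mathring{T}_N^*M$ only sees $\mathring{\mathrm{Char}}(\M)$, which gives the term $\mathring{T}_N^*M\cap C_{\mathring{T}_M^*X}(\mathring{\mathrm{Char}}(\M))=\emptyset$. I read the second bullet as asserting this emptiness.

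Over the zero-section, $\mathrm{Char}(\M)$ is $\mathbb{C}^*$-conic. By \cite[Prop.~4.1.2]{KS90} in local coordinates $(x+iy;\xi+i\eta)$, a normal vector $\theta\in T_N^*M$ at a point of $M$ lies in the cone exactly when there are sequences in $\mathrm{Char}(\M)$ converging to the zero-section along the direction $\theta$. Using conicity, one can rescale such sequences. This shows that a nonzero such $\theta$ exists iff $\mathrm{Char}(\M)$ meets $T_N^*X\setminus T_M^*X$ near $N$. That is the statement $T_N^*X\cap\mathrm{Char}(\M)\subset T_M^*X$, and combining the two pieces gives the second bullet.

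I expect Step 3 to be the main obstacle. The difficulty is the precise bookkeeping of the normal cone over the zero-section: I must check that the conic rescaling argument really converts limit points of $C_{T_M^*X}(\mathrm{Char}(\M))$ lying over $M$ into points of $T_N^*X\cap\mathrm{Char}(\M)$, and conversely. I would first carry this out in the flat model of Example~\ref{Hyperbolic example 1}, with $N=\{x_1=0\}$, and then globalize, since all the conditions are local and conic.
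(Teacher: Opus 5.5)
Steps 1 and 2 are fine. One caveat on Step 2: the paper embeds $T^*M$ over the zero section, as in the definition of $Char_M^{hyp}(\M)$, whereas you read $\mathring{T}^*_NM$ as $\pi_M^*$-covectors over all of $\pi_M^{-1}(N)$. To match the two readings you need one more fact. If a direction $\theta$ lies in the cone over $(x;i\eta)$, then it also lies in the cone over $(x;0)$. This follows from invariance of $C_{T^*_MX}(\mathrm{Char}(\M))$ under $\zeta\mapsto t\zeta$ and closedness, letting $t\to0^+$.

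The direction ``non-characteristic $\Rightarrow$ second bullet'' of Step 3 also works. A point $(x_0;\xi_1dx_1+i\eta)\in\mathrm{Char}(\M)$ with $x_0\in N$ and $\xi_1\neq0$, scaled by $t\to0^+$, puts $\xi_1dx_1$ in the zero-section fibre of the cone.

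The genuine gap is the converse in Step 3. It is \emph{not} true that a nonzero $\theta\in T^*_NM$ in the zero-section fibre forces $\mathrm{Char}(\M)\cap(T^*_NX\setminus T^*_MX)\neq\emptyset$. So your argument for ``second bullet $\Rightarrow$ non-characteristic'' fails. Here is a counterexample:
\begin{itemize}
\item Take $X=\mathbb{C}^2$, $M=\mathbb{R}^2$, $N=\{x_1=0\}$, and $\M=\D_X/\D_XP$ with $P=\partial_{z_1}+iz_1\partial_{z_2}$, so $\sigma(P)=\zeta_1+iz_1\zeta_2$.
\item Over $N$ we have $z_1=0$, so $\zeta_1=0$ on $\mathrm{Char}(\M)$, and hence $T^*_NX\cap\mathrm{Char}(\M)\subset T^*_MX$.
\item Yet the points $((s,0);(s^2,is))\in\mathrm{Char}(\M)$ tend to the origin of the zero section as $s\to0^+$, with normal part $(\delta y,\delta\xi)=(0,(s^2,0))$. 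So $dx_1$ lies in the zero-section fibre of $C_{T^*_MX}(\mathrm{Char}(\M))$.
\item Rescaling this sequence conically by $s^{-1}$ gives $((s,0);(s,i))\to((0,0);(0,i))$. The rescaled limit lands in $\mathring{T}^*_MX$, not in $T^*_NX\setminus T^*_MX$.
\end{itemize}

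The missing idea is a two-regime analysis. Take a sequence $a_n=(z_n;\xi_n+i\eta_n)\in\mathrm{Char}(\M)$ with $z_n\to x_0\in N$ and $c_n(y_n,\xi_n)\to(0,\theta)$, where $\theta\in\mathring{T}^*_NM$.
\begin{itemize}
\item If $|\eta_n|=O(|\xi_n|)$, rescale by $|\xi_n|^{-1}$. By closedness, a subsequence converges to $(x_0;\theta/|\theta|+i\eta^*)\in\mathrm{Char}(\M)\cap(T^*_NX\setminus T^*_MX)$.
\item If $|\xi_n|=o(|\eta_n|)$, rescale by $|\eta_n|^{-1}$ and replace $c_n$ by $c_n|\eta_n|$; note that $c_n|\eta_n|y_n\to0$. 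The points then converge to some $(x_0;i\hat\eta)\in\mathring{T}^*_MX$, and $\theta\in C_{\mathring{T}^*_MX}(\mathring{\mathrm{Char}}(\M))$ there.
\end{itemize}
So the zero-section fibre is controlled by the \emph{conjunction} of the two conditions in the second bullet, not by $T^*_NX\cap\mathrm{Char}(\M)\subset T^*_MX$ alone. In fact, by the propagation remark above, the zero-section condition by itself is already equivalent to the full non-characteristic condition. You must replace your fibrewise matching (zero section $\leftrightarrow$ first condition, $\mathring{T}^*_MX\leftrightarrow$ second condition) with this case split; after that, the equivalence goes through.
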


\subsection{Relative characteristic varieties}

Fix a relative analytic submersion over $S$, denoted $f:X\rightarrow Y.$ In particular, it is smooth and we have the short exact sequence $0\rightarrow X\times_YT_{Y/S}^*\rightarrow T_{X/S}^*\rightarrow T_{X/Y}^*\rightarrow 0,$ of vector bundles on $X$. 
\subsubsection{Relative ellipticity} We follow \cite{SS94a,SS94b}. As $f:X\rightarrow S$ is smooth, there exists a subsheaf of algebras $\D_{X|S}$ of $\D_X$ generated by $\Theta_{X/S}$.

Letting $D^b(\D_{X|S}\otimes \D_{X|S})$ be the derived category of perfect left $\D_{X|S}$-bimodules, consider $\M^{\bullet}$ a perfect complex of $\D_X$-modules. We may choose a coherent $\D_{X|S}$-subcomplex $\M_{S}^{\bullet}$ generating $\M^{\bullet}$ and one defines the $f$-\emph{relative characteristic variety}
$$Char_{X|S}(\M^{\bullet}):=Char(\D_X\otimes_{\mathcal{D}_{X|S}}\mathcal{M}_{S}^{\bullet}).$$

If $f:X\rightarrow S$ is not necessarily smooth, decompose it according to its graph embedding
$X\xrightarrow{i}X\times S\xrightarrow{q}S,$
and put 
$$Char_{X|S}(\M^{\bullet}):=i_di_{\pi}^{-1}Char_{X\times S|S}(\mathbf{R}i_!\M^{\bullet}).$$
Since $i$ is a closed embedding we write $i_!$ for $\mathbf{R}i_!$, the $\D$-module push-forward and so explicitly,
$$Char_{X|S}(\M^{\bullet})=i_di_{\pi}^{-1}\big(Char(\mathcal{D}_{X\times S}\otimes_{\mathcal{D}_{X/S|S}}(i_!\mathcal{M}^{\bullet})_S\big),$$
where $(i_!\mathcal{M}^{\bullet})_S$ denotes the corresponding generating sub-module of the $\D_{X\times S}$-module $i_!\mathcal{M}^{\bullet}.$
\begin{definition}
    \label{defn: f-elliptic}
    \normalfont 
    Let $M$ be a real analytic manifold with complexification $X$ and consider $f:X\rightarrow S$ as above. A $\D_{X/S}^{\bullet}$-complex $\M^{\bullet}$ is \emph{$f$-elliptic} if
$Char_{X|S}(\M^{\bullet})\cap T_M^*X\subset T_X^*X.$
\end{definition}
Note that $\D_{X/S}$ is naturally an $\mathcal{O}_X$-module, and there exists a relative induction functor $Mod(\mathcal{O}_X)\rightarrow Mod(\D_{X/S}^{op}),\F\mapsto \mathrm{Ind}_{\D_{X/S}}(\F):=\F\otimes_{\mathcal{O}_X}\D_{X/S},$ with respect to the obvious right $\D_{X/S}$-module structure.
Relative dualizing sheaves and dualities are defined i.e. $D_{X/S}'(\F):=\RHom_{\mathcal{O}_X}(\F,\omega_{X/S}[d_{X/S}]),$ and there is a canonical isomorphism 
$D_{X/S}'(\F)\times_{\mathcal{O}_X}\D_{X/S}\simeq \mathbb{D}_{X/S}(\mathrm{Ind}_{\D_{X/S}}(\F)),$ with 
$\mathbb{D}_{X/S}(\M):=\RHom_{\D_{X/S}}(\M,\mathcal{K}_{X/S}),$ 
where $\mathcal{K}_{X/S}:=\omega_{X/S}[d_{X/S}]\otimes_{\mathcal{O}_X}\D_{X/S}$ is the relative dualzing $\D_{X/S}^{op}$-module.

Consider $g:S_b\rightarrow S$ and consider base-change functor 
$$(-)_b:Sch_{/S}\rightarrow Sch_{/S_b},$$
sending $(X/S)\mapsto (X\times_S S_b/S_b).$
Then, there exists $g_b:X_b:=X\times_S S_b\rightarrow X$ given by the obvious Cartesian square. In particular, we have $X_b\xrightarrow{\epsilon_{X_s}}S_b$ and $\epsilon:X\rightarrow S.$
This induces a ring-homomorphism $g_b^{-1}\D_{X/S}\rightarrow \D_{X_b/S_b},$
thus a base-change for relative $\D^{op}$-modules:
$$D^b(\D_{X/S}^{op})\rightarrow D^b(\D_{X_b/S_b}^{op}),\M\mapsto g_b^{-1}(\M)\otimes_{g_b^{-1}\D_{X/S}}^L\D_{X_b/S_b}.$$

\begin{lemma}
When $S_b=\{s\}$ for some $s\in S$, let $i_s:\{s\}\rightarrow S$ and thus if $\M$ is elliptic, then $\M_{i_s}^{\bullet}$ is relatively elliptic and 
$$\chi\big(R\Gamma(X_{s},\M_{i_s}\otimes_{\D_{X/S}}^L\mathcal{O}_{X_s})\big),$$
is a locally constant function on $S.$
\end{lemma}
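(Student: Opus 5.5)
The plan has two parts. First we show that the fiberwise solution complex is cohomologically finite, using the Schapira--Schneiders theory of elliptic pairs. Then we show that its Euler characteristic is constant in $s$, by exhibiting the family of fiber complexes as the fibers of a perfect complex on $S$.

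\emph{Step 1: base change preserves ellipticity.} We would check that ellipticity survives restriction to a fiber. Since $f$ is smooth, the restriction $i_s:X_s\hookrightarrow X$ is non-characteristic for $\D_{X|S}$-modules. We expect the relative characteristic variety to satisfy
\[
Char(\M^\bullet_{i_s})\subset (i_s)_d(i_s)_\pi^{-1}Char_{X|S}(\M^\bullet),
\]
by the standard estimate for non-characteristic pull-back of characteristic varieties in \cite{KS90}. Intersecting with $T^*_{M_s}X_s$ and using the $f$-ellipticity of Definition \ref{defn: f-elliptic}, namely $Char_{X|S}(\M^\bullet)\cap T_M^*X\subset T_X^*X$, we would conclude that $\M^\bullet_{i_s}$ is elliptic on $M_s$. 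The point to verify is that $(i_s)_d$ maps the conormal $T_M^*X$ restricted over $M_s$ onto $T_{M_s}^*X_s$. This holds because $M\to f(M)$ is a submersion transverse to the fibers.

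\emph{Step 2: finiteness on each fiber.} Because $X$ is proper and $\M^\bullet_{i_s}$ is coherent and elliptic, the pair $(\M^\bullet_{i_s},\mathcal{O}_{X_s})$ is an elliptic pair with compact support. The finiteness theorem of \cite{SS94a} then shows that $R\Gamma(X_s,\M_{i_s}\otimes^L_{\D_{X/S}}\mathcal{O}_{X_s})$ has finite-dimensional cohomology. Concretely, this complex is the de Rham/Spencer complex $\mathcal{S}p^\bullet_{X_s}$ of \eqref{eqn: RelSp} with coefficients, and ellipticity lets one replace hyperfunction solutions by analytic ones. So $\chi_s$ is well-defined.

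\emph{Step 3: local constancy.} We would prove that $\mathbf{R}f_*\big(\M^\bullet\otimes^L_{\D_{X/S}}\mathcal{O}_X\big)=\mathbf{R}f_*\mathcal{S}p^\bullet_{X/S}(\M)$ is a perfect complex of $\mathcal{O}_S$-modules, and that it satisfies base change:
\[
Li_s^*\,\mathbf{R}f_*\mathcal{S}p^\bullet_{X/S}(\M)\simeq R\Gamma\big(X_s,\M_{i_s}\otimes^L\mathcal{O}_{X_s}\big).
\]
Base change comes from properness together with the base-change functor for relative $\D^{op}$-modules defined just above the statement. Perfectness is the relative finiteness theorem of \cite{SS94b} for $f$-elliptic pairs: the complex is $\mathcal{O}_S$-coherent and locally of finite Tor-dimension. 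Granting perfectness, $s\mapsto\chi(Li_s^*\mathcal{F})$ is locally constant on $S$, since a perfect complex is locally a bounded complex of free modules. Equivalently, its class in $K_0(S)$ has locally constant rank. This is consistent with Lemma \ref{GRRSpencerDelta}, where the degree-zero part of $\mathrm{ch}(\mathrm{Ind}_D(f,\M))$ is the rank.

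\emph{Main obstacle.} The hardest step is the coherence and perfectness of the relative direct image over $S$. Fiberwise finiteness alone does not give upper semicontinuity that is constant. To handle it, we would filter $\M$ by a good filtration $K$, as in \eqref{eqn:DHilbIndexSpectral}, and reduce to $\mathrm{gr}_K\M$. Ellipticity makes the graded Spencer complex exact outside a compact region, so the pushforward is $\mathcal{O}_S$-coherent by Grauert. We would then lift this conclusion through the spectral sequence.
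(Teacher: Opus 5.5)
Your proposal is correct and is essentially the paper's argument: the paper's proof consists only of the base-change identity $R\Gamma(X_s,\M_{i_s}\otimes^L_{\D_{X_s}}\mathcal{O}_{X_s})\simeq[\mathcal{O}_S/I_s\otimes^L_{\mathcal{O}_S}R\epsilon_*(\M\otimes^L_{\D_{X/S}}\mathcal{O}_X)]_s$, which is your Step 3. The paper leaves implicit the perfectness of $R\epsilon_*(\cdots)$, which comes from the Schapira--Schneiders relative finiteness theorem (in \cite{SS94a}, not \cite{SS94b}), and it asserts the ellipticity transfer of your Step 1 without proof. One small remark: with $\mathcal{O}_{X_s}$ coefficients the relevant pair is $(\M_{i_s},\mathbb{C}_{X_s})$, which is automatically elliptic, so ellipticity plays no role in Steps 2--3. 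For the same reason your fallback sketch via ``exactness outside a compact region'' is unnecessary; the standard argument is that $\mathrm{gr}$ of the de Rham complex with respect to a good filtration is acyclic in large filtration degree.
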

\begin{proof}
Indeed, by base-change, if $I_s$ is the ideal of holomorphic functions vanishing at $s\in S,$ then 
$$R\Gamma\big(X_s,\M_{i_s}\otimes_{\D_{X_s}}^{L}\mathcal{O}_{X_s}\big)=[\mathcal{O}_{X_s}/I_s\otimes_{\mathcal{O}_S}^LR\epsilon_*(\M\otimes_{\D_{X/S}}^L\mathcal{O}_X)]_s.$$
\end{proof}
\section{Non-linear PDEs via algebraic analysis}
\label{sec: PDEGeometry}
In this section we extend the microlocal analysis of mixed-type $\D$-modules to the setting of $D$-geometry \cite{KSY23}. See also \cite{Kry2026}.
\begin{remark}[Notations]
\label{SupportNotation}
Let $X$ be smooth finite-type. Its $\infty$-category of $D$-modules is dentoed $\mathrm{Mod}(D_X),$ with natural $t$-structure. Let $\Lambda\subseteq T^*X$ be a Zariski closed conical subset and let
$\mathrm{Mod}_{\Lambda}(D_X)^{\heartsuit,c},$ denote coherent $D_X$-modules $M\in \mathrm{Mod}(D_X)^{\heartsuit,c},$ whsoe singular support $\mathrm{SS}(M)\subset \Lambda.$
Set 
$$\mathrm{Mod}_{\Lambda}(D_X)^{\heartsuit}:=\mathrm{Ind}\big(\mathrm{Mod}_{\Lambda}(D_X)^{\heartsuit,c}\big),$$
for its ind-completion. One has a subcategory $i_{\Lambda}:\mathrm{Mod}_{\Lambda}(D_X)\subset\mathrm{Mod}(D_X)$ consisting of those $D_X$-modules $M$ for which $H^n(M)\in \mathrm{Mod}_{\Lambda}(D_X)^{\heartsuit}.$
Thus $\mathrm{Mod}_{\Lambda}(D_X)$ has an induced $t$-structure, and $i_{\Lambda}$ is $t$-exact.
\end{remark}
The canonical morphism $\mathcal{O}_X\to \mathcal{D}_X$ of sheaves of algebras on $X$ induces maps in Hochschild homology as well as negative cyclic and periodic cyclic homologies, to be denoted $\iota.$
In particular, the following diagram commutes,
\[
\begin{tikzcd}
  \mathrm{HC}_0^{per}(\mathcal{O}_X)\arrow[r,"(-)^{an}\circ \iota"]\arrow[d,"\mathrm{hkr}"] & \mathrm{HC}_0^{per}(\D_X)\arrow[d,"\chi"]
  \\
  \prod_{p=-\infty}^{\infty} H^{2p}(X^{an};\mathbb{C})\arrow[r,"(-\wedge \mathrm{Td}T_X)"]& \prod_{p=-\infty}^{\infty} H^{2n-2p}(X^{an};\mathbb{C}).
\end{tikzcd}
\]
Furthermore, there is a canonical map in $\mathrm{Ho}(\mathrm{DGCat}_{\mathrm{cont}}),$ given by 
$$a_*:\mathsf{Perf}(\D_X^{op})\to \mathsf{Perf}(pt),$$
induced by $\D$-module pushforwards to the point, which is given by $a_*(\M^{an}\otimes_{\D_{X^{an}}}^L\mathcal{O}_{X^{an}}).$ The induced map on Hochschild homology is written the same,
$$a_*:HH_0(\mathsf{Perf}(\D_X^{op}))\to HH_0(pt)\simeq \mathbb{C}.$$
Since $X$ is projective, it is $D$-affine, hence the following diagram commutes,
\[
\begin{tikzcd}
HH_{\bullet}\big(\mathsf{Perf}(\Gamma(X,\D_X)^{op})\big)\arrow[r,"\simeq"] \arrow[d,"\simeq"] & HH_{\bullet}\big(\mathsf{Perf}(\D_X^{op})\big)\arrow[d]
\\
HH_{\bullet}\big(\Gamma(X,\D_X)\big)\arrow[r] & HH_{\bullet}(\D_X).
    \end{tikzcd}
\]
Note also, analytification induced a morphism $(-)^{an}:HH_{\bullet}(\D_X)\to HH_{\bullet}(\D_{X^{an}}),$ and there is a corresponding isomorphism 
$$HH_{\bullet}\big(\mathsf{Perf}(\D_X^{op})\big)\to HH_{\bullet}(\D_X)\xrightarrow{(-)^{an}}HH_{\bullet}(\D_{X^{an}}).$$

\subsection{Microlocal index for $D$-algebras}
\label{ssec: PDEGeometryIndexes}
Let $X$ be a topological space $Z\subset X$ a closed subset and $\A$ a sheaf of algebras. Let $K_Z^i(\A)$ be the $i$-th $K$-group in $\mathrm{Perf}(\A),$ acyclic on the compliment of $Z.$

The Chern map is
$$\mathrm{Ch}_{Z,i}^{\A}:K_Z^i(\A)\rightarrow H_{Z}^{-i}(X,\mathrm{CC}_{\bullet}^-(\A)),$$
and the Euler-class map is 
\begin{equation}
    \label{eqn: Eui}
\mathrm{Eu}_{Z,i}^{\A}:K_Z^i(\A)\rightarrow H_Z^{-i}(X;C_{\bullet}(\A)).
\end{equation}
Both are natural in $X,Z,\A$ and (\ref{eqn: Eui}) coincides with the composition
$$K_Z^i(\A)\xrightarrow{\mathrm{Ch}_{Z,i}^{\A}}H_Z^{-i}(X;\mathrm{CC}_{\bullet}^-(\A))\rightarrow H_Z^{-i}(X;C_{\bullet}(\A)).$$
\subsubsection{$\D$-ellipticity}
Let $\mathrm{Fl}\M$ be an $\AD$-module of finite presentation endowed with a good-$\AD$-filtration. Let $\mathrm{Gr}^F\M$ be the associated graded module, viewed as a finitely-presented $\mathcal{O}_{T^*(X,\EQ)}$-module, with $\EQ=Spec_{\mathcal{D}}(\A).$
Let us set 
$$\mathsf{Char}_{\D}(\M):=supp\big(\mathrm{Gr}^F\M\big)\subset T^*X\times_X Spec_{\D}(\A).$$
Let $M$ be a real-analytic manifold with complexification $X.$ Then $\M$ is \emph{$\D$-elliptic} (relative to $p_{\infty}:Spec(\A)\rightarrow X$) if 
\begin{equation}
    \label{eqn: D-elliptic}
\mathsf{Char}_{\D}(\M)\cap T_M^*X\times_X Spec(\A)\subset M\times T_X^*X\times_X Spec(\A).
\end{equation}
Remark that $\mathsf{Char}_{\D}(Sym(\M))$ is the smallest conic, additively closed subset containing 
$\mathsf{Char}_{\D}(\M)$ for the fiberwise additive group structure of $T^*X$. That is, as $\D$-modules:
$Ch(\M)\subset Ch\big(Sym(\M)$. In particular, for $\A=Sym(\M)$, since $\mathbb{L}_{\A}\simeq Sym(\M)\otimes \M,$ one has 
$$\mathsf{Char}_{\D}(\A)=Ch(Sym(\M)\otimes \M)\subset Ch\big(Sym(\M))+_XCh(\M),$$
where for any two $V_1,V_2\subset T^*X,$ we have 
$V_1+_XV_2:=p_0(V_1\times_X V_2)$ where $p_0:T^*X\times_X T^*X\rightarrow T^*X$ is the addition map $(x;\xi_1),(x;\xi_2)\mapsto (x;\xi_1+\xi_2).$
Thus, 
$$V_1+_XV_2\simeq \big\{(x;\xi_1+\xi_2)| (x;\xi_i)\in V_i,i=1,2\big\}.$$

\subsection{$K$-theoretic compatibility}
There is an identification
$\mathrm{Spec}_X\big(\mathrm{Gr}\A[\mathcal{D}_X]\big)\simeq T^*\mathrm{Spec}_{\mathcal{D}_X}(\mathcal{A}),$ and there exists an embedding 
$T^*X\hookrightarrow T^*\mathrm{Spec}_{\mathcal{D}_X}(\mathcal{A}),$ which at the level of algebras, is a morphism of Poisson algebras since the procedure of lifting $\D_X$-valued sections $P$ induces
an embedding of graded Lie algebras
$\mathrm{Gr}(\mathcal{D}_X)\hookrightarrow \mathrm{Gr}(\mathcal{A}[\mathcal{D}_X]).$

We impose a simplifying condition.

\begin{assumption}
\label{ass: Simplifying}
Let $X$ be quasi-projective and suppose $\mathcal{A}$ has the properties that: (i) $\mathbb{T}_{\A}^{\bullet}$ has a good filtration $\mathrm{Fl}(\mathbb{T}_{\A}^{\bullet})$ by vector $\D_X$-bundles compatible with the dg-$\A[\mathcal{D}_X]$-module structure and with the action of $\mathrm{Fl}(\D_X)$ and (ii) for each solution $\varphi,$ that  $\mu(\mathbb{T}_{\A,\varphi})$ has bounded $\mathcal{O}_{T^*X}$-coherent cohomology. 
\end{assumption}

The Assumptions \ref{ass: Simplifying} are satisfied, for example, if $\A$ is homotopically of finite $\D$-presentation i.e. a compact $\D$-algebra, since in this case we may write $\mathbb{T}_{\A}$ as a bounded complex of finite rank $\A[\D]$-modules.

Let $\Lambda\subset T^*X$ and consider 
$K_{\Lambda}^0(T^*X),$ the Grothendieck group of the category of perfect complexes of $\mathcal{O}_{T^*X}$-modules supported in $\Lambda.$ Equivalently, those complexes which are acyclic on $T^*X\backslash \Lambda.$ Set $K^0(T^*X):=K_{T^*X}^0(T^*X).$

Suppose that $Char_{\mathcal{D},\varphi}(\A)\subset \Lambda.$ There is an induced class 
$$\mu_{\Lambda}(\mathbb{T}_{\A,\varphi})\in K_{\Lambda}^0(T^*X),$$
and by considering the Chern character $ch:K^0(T^*X)\rightarrow \bigoplus_i H^{2i}(T^*X;\mathbb{C}),$ or rather its natural extension with supports
$$\mathrm{Ch}_{\Lambda}:K_{\Lambda}^0(T^*X)\rightarrow \bigoplus_{i} H_{\Lambda}^{2i}(T^*X;\mathbb{C}),$$
we have a class
$\mathrm{Ch}_{\Lambda}\big(\mu_{\Lambda}(\mathbb{T}_{\A,\varphi})\big).$
\begin{definition}
\label{defn: D-Chern character}
Let $\A$ be a commutative dg-$\D_X$-algebra satisfying Asssumptions \ref{ass: Simplifying}. Then the class $\mathrm{Ch}_{\Lambda}\big(\mu_{\Lambda}(\mathbb{T}_{\A,\varphi})\big)$ is called the \emph{$\D$-geometric Chern character} of $\A$ along $\varphi.$
\end{definition} 
The \emph{microlocal Chern character} of $\A$ along $\varphi$ is:
\begin{equation}
    \label{eqn: MicroChernA}
\mu \mathrm{Ch}_{\Lambda}(\A;\varphi):=\mathrm{Ch}_{\varphi^{-1}\Lambda}\big(\sigma_{\Lambda}(\mathbb{T}_{\A,\varphi})\big)\cup \pi^* td_X(TX).
\end{equation}
Here, we have used an isomorphism $\pi^*:H_{supp}^0(X;\omega_X)\xrightarrow{\simeq} H_{\pi^{-1}supp}^0(T^*X;\pi^{-1}\omega_X).$
\begin{remark}
For $\F\in Coh(X)$ with $supp(\F)\subset X,$ since $char(\F\otimes_{\mathcal{O}_X}\D_X)\simeq \pi^{-1}supp(\F),$ then $\mu eu(\F\otimes_{\mathcal{O}_X}\D_X)\simeq \pi^* eu(\F\otimes_{\mathcal{O}_X}\D_X).$
\end{remark}
We now give a compatible formulation via the $K$-theory of perfect $\mathcal{A}[\mathcal{D}_X]$-modules. 
\begin{lemma}
\label{prop: K-theory commute1}
Let $\A$ be a compact $\D$-algebra and suppose $\mathsf{Char}_{\D}(\A)\subset \Lambda\subset Spec_X(\pi^*\mathcal{A}).$
Then, for each solution $\varphi\in \mathsf{Spec}_{X_{DR}}(\mathcal{A}),$ one has a commuting diagram:
\[
\begin{tikzcd}
K_{\Lambda}^0(\mathcal{A}[\mathcal{D}_X])\arrow[r]\arrow[d,"\phi^*"]& K_{\Lambda}^0(\mathcal{A}[\mathcal{E}_X])\arrow[d,"\phi^*"]
\\
K_{\phi^{-1}\Lambda}^0(\mathcal{D}_X)\arrow[r] & K_{\phi^{-1}\Lambda}^0(\mathcal{E}_X),
\end{tikzcd}
\]
where both horizontal arrows are extension of scalars.
\end{lemma}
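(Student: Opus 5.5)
The plan is to realize all four maps in the square as maps on $K$-groups induced by exact functors of derived base change along ring homomorphisms. Commutativity will then follow from a canonical natural isomorphism of composite tensor functors, compatible with supports.

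\textbf{Step 1: the two extensions of scalars.} The horizontal arrows come from the flat ring extensions $\AD\to\mathcal{A}[\mathcal{E}_X]:=\pi^{-1}\A\otimes_{\pi^{-1}\mathcal{O}_X}\mathcal{E}_X$ and $\pi^{-1}\D_X\to\mathcal{E}_X$. Since $\mathcal{E}_X$ is flat over $\pi^{-1}\D_X$, the functor $\M\mapsto \mathcal{E}_X\otimes_{\pi^{-1}\D_X}\pi^{-1}\M$ is exact and preserves perfect complexes.

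\textbf{Step 2: the two vertical maps $\phi^*$.} A classical solution $\varphi\in\mathsf{Spec}_{X_{DR}}(\A)$ is a $\D_X$-algebra map $\varphi^\sharp:\A\to\mathcal{O}_X$. This induces ring maps $\AD\to\D_X$ and $\mathcal{A}[\mathcal{E}_X]\to\mathcal{E}_X$, obtained by applying $\varphi^\sharp$ on the $\A$-factor. The vertical arrows are the derived base changes $\M\mapsto \D_X\otimes^L_{\AD}\M$ and $\mathcal{N}\mapsto\mathcal{E}_X\otimes^L_{\mathcal{A}[\mathcal{E}_X]}\mathcal{N}$. Because $\A$ is compact, $\mathbb{T}_\A$ and perfect $\AD$-modules are bounded complexes of finite-rank free modules (Assumption~\ref{ass: Simplifying}). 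Both functors therefore preserve perfectness and induce maps on $K^0$ without further hypotheses.

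\textbf{Step 3: commutativity before supports.} The square of rings
\[
\begin{tikzcd}
\pi^{-1}\AD\arrow[r]\arrow[d] & \mathcal{A}[\mathcal{E}_X]\arrow[d]\\
\pi^{-1}\D_X\arrow[r] & \mathcal{E}_X
\end{tikzcd}
\]
commutes, because both composites send $a\otimes P\mapsto \varphi^\sharp(a)P$. Associativity of derived tensor products then gives a natural isomorphism
$\mathcal{E}_X\otimes^L_{\mathcal{A}[\mathcal{E}_X]}(\mathcal{A}[\mathcal{E}_X]\otimes_{\pi^{-1}\AD}\pi^{-1}\M)\simeq \mathcal{E}_X\otimes_{\pi^{-1}\D_X}\pi^{-1}(\D_X\otimes^L_{\AD}\M)$.
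Isomorphic exact functors induce equal maps on $K^0$.

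\textbf{Step 4: supports, the main obstacle.} One must check that both composites send complexes acyclic off $\Lambda$ to complexes acyclic off $\phi^{-1}\Lambda$, where $\phi^{-1}\Lambda$ is the pullback of $\Lambda\subset T^*X\times_X\mathrm{Spec}(\A)$ along the section $\mathrm{id}\times\varphi$. The plan has two parts.

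First, for extension of scalars, use the standard equality $\mathrm{supp}(\mathcal{E}_X\otimes\pi^{-1}\M)=\mathrm{Char}(\M)$ outside the zero section, which is a consequence of faithful flatness of $\mathcal{E}_X$ over $\pi^{-1}\D_X$ on $\mathring T^*X$. Over the zero section, $\mathcal{E}_X|_{T^*_XX}=\D_X$.

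Second, for $\phi^*$, choose a good filtration and compare associated graded objects. We have $\mathrm{Gr}(\D_X\otimes^L_{\AD}\M)\simeq \mathcal{O}_{T^*X}\otimes^L_{\mathcal{O}_{T^*(X,\EQ)}}\mathrm{Gr}\M$, with $\mathcal{O}_{T^*(X,\EQ)}\to\mathcal{O}_{T^*X}$ given by restriction along $\varphi$. Hence the support of the pullback lies in $\phi^{-1}\mathsf{Char}_\D(\M)\subset\phi^{-1}\Lambda$, and the characteristic variety is bounded by the support of the graded module.

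The delicate point is that filtrations need not be strictly compatible with derived tensor products. I would handle this by working with the filtered resolution by finite free filtered $\AD$-modules supplied by compactness. On such a resolution the graded tensor product is computed termwise, and a spectral sequence argument bounds the support.

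With supports controlled, Step~3 upgrades to an equality of maps $K^0_\Lambda(\AD)\to K^0_{\phi^{-1}\Lambda}(\mathcal{E}_X)$. This proves the lemma.
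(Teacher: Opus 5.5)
Your outline is sound and reaches the lemma by a different route from the paper. The paper's proof is a sketch, in three pieces:
\begin{itemize}
\item faithful flatness of $\pi^{-1}\mathcal{D}_X\to\mathcal{E}_X$ gives the bottom arrow with supports, and the top arrow is obtained by tensoring up;
\item the right column is handled by the inclusion $\mathrm{supp}(\varphi^*\mu_{\mathcal{D},V}\mathbb{T}^{\ell}_{\mathcal{A}})\subset\varphi^{-1}\mathrm{supp}(\mu_{\mathcal{D},V}\mathbb{T}^{\ell}_{\mathcal{A}})$;
\item the left column is handled by asserting that $\mathrm{Char}(\mathbb{T}^{\ell}_{\mathcal{A},\varphi})$ and $\varphi^{-1}\mathsf{Char}_{\mathcal{D}}(\mathcal{A})$ coincide, justified by perfectness of $\mathbb{L}_{\mathcal{A}}$ and surjectivity of $\varphi^*$.
\end{itemize}
Commutativity itself is left implicit. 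You instead verify commutativity at the level of rings: both composites $\pi^{-1}\mathcal{A}[\mathcal{D}_X]\to\mathcal{E}_X$ are $a\otimes P\mapsto\varphi^\sharp(a)P$, and associativity of $\otimes^L$ does the rest. You also replace the asserted equality by a proved containment $\mathrm{Char}(\mathcal{D}_X\otimes^L_{\mathcal{A}[\mathcal{D}_X]}\mathcal{M})\subset\phi^{-1}\mathsf{Char}_{\mathcal{D}}(\mathcal{M})$, obtained via associated graded modules. This holds for every perfect $\mathcal{M}$, not just the tangent complex. Containment is all the lemma needs, and the bound for the other composite then follows from commutativity.

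Two points need repair.

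\textbf{The top-right corner.} Bounding only the two composites does not show that the top arrow lands in $K^0_\Lambda(\mathcal{A}[\mathcal{E}_X])$, or that the right arrow is defined on that group. Your flatness identity $\mathrm{supp}(\mathcal{E}_X\otimes\pi^{-1}\mathcal{M})=\mathrm{Char}(\mathcal{M})$ is a statement about $\mathcal{D}_X$-coherent modules. But $\mathcal{A}[\mathcal{D}_X]$-modules are typically not $\mathcal{D}_X$-coherent: as a left $\mathcal{D}_X$-module, $\mathcal{A}[\mathcal{D}_X]\simeq\mathcal{D}_X\otimes_{\mathcal{O}_X}\mathcal{A}$. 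Their supports also live in $T^*X\times_X\mathrm{Spec}(\mathcal{A})$, not in $T^*X$. You should define supports of $\mathcal{A}[\mathcal{E}_X]$-modules through $\mathrm{Gr}\,\mathcal{A}[\mathcal{E}_X]\simeq\pi^{-1}\mathcal{A}\otimes_{\pi^{-1}\mathcal{O}_X}\mathrm{Gr}\,\mathcal{E}_X$, and run your graded argument on the right column as well. This is exactly the step the paper covers with its inclusion for $\varphi^*\mu$.

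\textbf{Strictness of the resolution.} The graded bound only works for a strictly filtered resolution, i.e.\ one whose associated graded resolves $\mathrm{Gr}^F\mathcal{M}$. A finite free resolution coming from compactness need not be strict, and then $H^\bullet(\mathrm{Gr}\,K)$ can have full support. For example, take the two-term complex given by the identity of $\mathcal{A}[\mathcal{D}_X]$, with the source filtration shifted by one. It is filtered and acyclic, but its graded differential is zero, so its graded cohomology is supported everywhere. Build the resolution instead by lifting a graded free resolution of $\mathrm{Gr}^F\mathcal{M}$. This uses coherence of $\mathcal{A}\otimes_{\mathcal{O}_X}\mathrm{Sym}\,\Theta_X$, not compactness.
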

\begin{proof}
Since $\pi^{-1}\D_X\hookrightarrow \mathcal{E}_X,$ is a faithfully flat morphism of filtered $\pi^{-1}\mathcal{O}_X$-algebras,
by extension of scalars, we have both $K^0(\D_X)\rightarrow K^0(\mathcal{E}_X),$ a map from Grothendieck group of perfect complexes of $\D$-modules to perfect complexes of $\mathcal{E}_X$-modules, as well as 
$K_{\Lambda}^0(\D_X)\rightarrow K_{\Lambda}^0(\mathcal{E}_X),$
for perfect complexes of $\D$-modules $\mathcal{M}^{\bullet}$ with $Char(\mathcal{M}^{\bullet})\subset \Lambda,$ and perfect complexes of $\mathcal{E}_X$-modules supported on $\Lambda.$
Similarly, given a compact object $\mathcal{A}^{\bullet}\in \mathrm{cdga}_{\mathcal{D}_X}^{\leq 0},$ with $\mathcal{A}^{\bullet}[\mathcal{D}_X]:=\mathcal{A}^{\bullet}\otimes_{\mathcal{O}_X}\mathcal{D}_X,$ it follows from the inclusion $\pi^{-1}\mathcal{O}_X\hookrightarrow \pi^{-1}\mathcal{D}_X,$ we have a canonical morphism
$$\mathcal{A}^{\bullet}\otimes_{\mathcal{O}_X}\mathcal{D}_X\rightarrow \pi_{\mathcal{D}}^*\mathcal{A}^{\bullet}\otimes_{\pi^{-1}\mathcal{O}_X}\pi^{-1}\mathcal{D}_X,$$
which extends further to coefficients in $\mathcal{E}_X$, due to the embedding $\pi^{-1}\mathcal{D}_X\hookrightarrow \mathcal{E}_X.$
$$\pi_{\mathcal{D}}^*\mathcal{A}^{\bullet}\otimes_{\pi^{-1}\mathcal{O}_X}\pi^{-1}\mathcal{D}_X\hookrightarrow \pi_{\mathcal{D}}^*\mathcal{A}^{\bullet}\otimes_{\pi^{-1}\mathcal{D}_X}\mathcal{E}_X.$$

    Note $\mathrm{supp}(\varphi^*\mu_{\mathcal{D},V}\mathbb{T}_{\mathcal{A}}^{\ell})\subset\varphi^{-1} \mathrm{supp}(\mu_{\mathcal{D},V}\mathbb{T}_{\mathcal{A}}^{\ell}).$ 
By our assumptions on $\mathcal{A}$, $\mathbb{L}_{\mathcal{A}}$ is perfect with dual $\mathbb{T}_{\mathcal{A}}^{\ell},$ and since $\varphi^*$ is a surjection, $Char_V^1(\mathbb{T}_{\mathcal{A},\varphi}^{\ell})$ and $\varphi^{-1}Char_{\mathcal{D},V}^1(\mathcal{A})$ coincide as analytic sub-spaces.
\end{proof}

\begin{theorem}
    \label{thm: C}
    Consider a morphism $f:X\rightarrow Y$ of complex analytic manifolds and consider the associated microlocal correspondence \emph{(\ref{eqn: Microlocal correspondence})} . Let $\A_Y$ be a compact dg-$\D$-algebra on $Y$ and suppose that 
\begin{equation}
\label{eqn: Condition1}
\mathring{\pi}_X^{-1}(p)\cap C_{T_Y^*Y}\big(\mathsf{Char}_{T_Y^*Y}^1(\varphi^*\mathbb{T}_{\mathcal{B}})\big)=\emptyset,
\end{equation}
for all $p\in T_Y^*Y$ with $\mathring{\pi}_X:\mathring{T}_X^*Y\times_Y T_Y^*Y\rightarrow T_Y^*Y$ the natural projection and where $\mathring{T}_X^*Y$ denotes the removal of the zero-section. Then for every solution $\varphi$, we have 
$$f_{d*}f_{\pi}^*\sigma_{\Lambda_{Y}}^{\mathcal{D}}(\A_Y;\varphi)=\sigma_{f_{\mu}^{-1}\Lambda_Y}^{\mathcal{D}}(f^*\A_Y,\gamma(\varphi)),$$
holds in $K_{f_{\mu}^{-1}\Lambda_Y}^0(T^*X),$ and in particular, 
$$f^*\mathrm{Ind}_{\D}(\A_Y;\varphi)=\mathrm{Ind}_{\D}(f^*\A_Y;\gamma(\varphi)).$$
\end{theorem}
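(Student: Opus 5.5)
The plan is to reduce the statement to the classical functoriality of the microlocal symbol class under non-characteristic pull-back for coherent $\D$-modules, in the form of \cite[Ch.~VI]{KS90} and \cite{SS94a,SS94b}. We apply it to the linearization $\varphi^*\mathbb{T}_{\A_Y}^{\ell}$. First we use that $\A_Y$ is compact, so Assumption \ref{ass: Simplifying} holds. Then $\mathbb{T}^{\ell}_{\A_Y}$ is a bounded complex of finite rank $\A_Y[\D_Y]$-modules with a good filtration. For a solution $\varphi$, Lemma \ref{prop: K-theory commute1} identifies the class $\sigma^{\D}_{\Lambda_Y}(\A_Y;\varphi)$ with the image under $\varphi^*$ of the corresponding class in $K^0_{\Lambda_Y}(\A_Y[\D_Y])$. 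It also identifies $\mathsf{Char}^1(\varphi^*\mathbb{T}_{\A_Y})$ with $\varphi^{-1}\mathsf{Char}^1_{\D}(\A_Y)$ as analytic subspaces. So everything is expressed via a perfect complex of $\D_Y$-modules $\M_\varphi:=\varphi^*\mathbb{T}^{\ell}_{\A_Y}$ with $\mathrm{Char}(\M_\varphi)\subset\varphi^{-1}\Lambda_Y$.

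Second, we compare with the pulled-back algebra. Since $f^*$ of $\D$-algebras commutes with formation of the cotangent complex, we get $\mathbb{T}^{\ell}_{f^*\A_Y}\simeq Df^*\mathbb{T}^{\ell}_{\A_Y}$ (up to $\A$-linear base change along $f^*\A_Y$), and $\gamma(\varphi)$ is the induced solution $\varphi\circ f$. This gives $\gamma(\varphi)^*\mathbb{T}^{\ell}_{f^*\A_Y}\simeq Df^*\M_\varphi$. Condition (\ref{eqn: Condition1}) is the requirement that $f$ be non-characteristic for the normal cone, hence in particular for $\mathrm{Char}(\M_\varphi)$. We would verify this by noting that $\mathsf{Char}\subset C_{T^*_YY}(\mathsf{Char})$ near the zero section, with $\mathsf{Char}^1$ controlling the higher-order part. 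By Kashiwara's non-characteristic pull-back theorem, $Df^*\M_\varphi$ is then coherent with $\mathrm{Char}(Df^*\M_\varphi)\subset f_d f_\pi^{-1}\mathrm{Char}(\M_\varphi)$, and $f_d$ is proper on $f_\pi^{-1}\Lambda_Y$. This properness is what makes $f_{d*}$ defined on $K$-theory with supports, with target $K^0_{f_\mu^{-1}\Lambda_Y}(T^*X)$ where $f_\mu^{-1}\Lambda_Y:=f_df_\pi^{-1}\Lambda_Y$.

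Third, we prove the $K$-theoretic identity. Choose a good filtration on $\M_\varphi$ and take the induced filtration on $Df^*\M_\varphi=\mathcal{O}_X\otimes_{f^{-1}\mathcal{O}_Y}f^{-1}\M_\varphi$. Under non-characteristicity, the associated graded satisfies $\mathrm{gr}(Df^*\M_\varphi)\simeq \mathbf{R}f_{d*}Lf_\pi^*\mathrm{gr}(\M_\varphi)$ as $\mathcal{O}_{T^*X}$-modules. The Tor-vanishing needed for this comes from the transversality encoded in non-characteristicity, and finiteness comes from properness of $f_d$. Passing to classes in $K^0$ with supports gives $f_{d*}f_\pi^*\sigma^{\D}_{\Lambda_Y}(\A_Y;\varphi)=\sigma^{\D}_{f_\mu^{-1}\Lambda_Y}(f^*\A_Y;\gamma(\varphi))$. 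We then apply $\mathrm{Ch}$ with supports and Riemann--Roch for the proper map $f_d$ together with the smooth pull-back $f_\pi$. Using the Todd class compatibility $td_X=f^*td_Y\cdot td(T_{X/Y})$ absorbed by the GRR correction for $f_d$, exactly as in Lemma \ref{GRRSpencerDelta}, and integrating as in Theorem \ref{ThmB:Intro}, this yields $f^*\mathrm{Ind}_{\D}(\A_Y;\varphi)=\mathrm{Ind}_{\D}(f^*\A_Y;\gamma(\varphi))$.

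The main obstacle is the graded identification in the third step. The graded pull-back of a filtered module generally differs from the filtered pull-back by higher Tor terms, and the derived ($\mathsf{Char}^1$, normal-cone) form of the hypothesis must be shown to kill these terms for the whole complex $\M_\varphi$, not only its cohomology. I would first treat a closed embedding by reducing along the graph factorization $X\to X\times Y\to Y$, since the projection case is exact. In the embedding case I would use the characterization of non-characteristic embeddings via Koszul regularity of the defining equations on $\mathrm{gr}$, and deduce it from (\ref{eqn: Condition1}) through the specialization $C_{T^*_YY}$, following \cite[Prop.~4.1.2]{KS90}.
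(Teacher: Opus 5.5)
Your proposal follows essentially the same route as the paper. You reduce via Lemma~\ref{prop: K-theory commute1} to the linearization $\varphi^*\mathbb{T}^{\ell}_{\A_Y}$, use the hypothesis to get non-characteristicity and properness of $f_d$, identify the symbol class of the pulled-back linearization with $f_{d*}f_\pi^*$ of the original one in $K$-theory with supports, and then apply the Chern character with supports and Riemann--Roch for $f_d$ with the Todd-class bookkeeping. The only real difference is that you sketch a justification (filtered pull-back, graph factorization, Koszul regularity on $\mathrm{gr}$) for the identity $\sigma\big(f_{\D}^*\varphi^*\mathbb{T}^{\ell}_{\A}\big)=(f_d)_!f_\pi^*\sigma\big(\varphi^*\mathbb{T}^{\ell}_{\A}\big)$, which the paper simply asserts as the first equality of its computation.
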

\begin{proof}
Recall
\begin{equation*}
    \begin{tikzcd}
    Sol_{\mathcal{D}_Y}(\mathcal{A}^{\bullet})\times_Y T^*X\arrow[d] & \arrow[l,"f_d"] Sol_{\mathcal{D}_Y}(\mathcal{A}^{\bullet})\times_Y X\times_Y T^*Y\arrow[d]\arrow[r,"f_{\pi}"] & Sol_{\mathcal{D}_Y}(\mathcal{A}^{\bullet})\times_Y T^*Y\arrow[d]
    \\
    X\arrow[r,"\mathbf{1}_X"] & X\arrow[r,"f"] & Y
    \end{tikzcd}
\end{equation*}
By assumptions on $f:X\rightarrow Y$, we have for any $\varphi:Y\rightarrow \A_Y,$ that $SS(\mathcal{A}_Y;\varphi)\subset T^*Y.$ Then, by (\ref{eqn: Condition1}) i.e $f$ is $\D_X$-NC and  $f_d$ is proper on $f_{\pi}^{-1}(\mathsf{Char}_{\D}(\A,\varphi)).$ Therefore, by Proposition \ref{prop: K-theory commute1} there exists a well-defined morphism in $K$-theory with supports
$$f_{\pi}^*:K_{\mathsf{Char}_{\D}(\A,\varphi)}^{an}(T^*Y)\rightarrow K^{an}(X\times_Y T^*Y).$$
For convenience of notation, set 
$$Char_{\D;X}(\A_Y;\varphi):=f_d\big(f_{\pi}^{-1}\varphi^{-1}\mathsf{Char}_{\D}(\A)\big)=f_d\big(f_{\pi}^{-1}\mathsf{Char}_{\D}(\A;\varphi)\big).$$
Then, 
\begin{align*}
&\mathrm{ch}_{Char_{\D;X}(\A_Y;\varphi)}\sigma_{Char_{\D;X}(\A_Y;\varphi)}(f_{\D}^*\varphi^*\mathbb{T}_{\A}^{\ell})\cup td_X(T^*X)
\\
&=\mathrm{ch}_{Char_{\D;X}(\A_Y;\varphi)}\big[(f_d)_!f_{\pi}^*\sigma_{\mathsf{Char}_{\D}(\A_Y;\varphi))}(\varphi^*\mathbb{T}_{\A}^{\ell})\big]\cup td_X(T^*X)
\\
&=(f_d)_!\big[f_{\pi}^*\mathrm{ch}_{\mathsf{Char}_{\D}(\A_Y;\varphi)}\sigma_{\mathsf{Char}_{\D}(\A_Y;\varphi)}(\varphi^*\mathbb{T}_{\A}^{\ell})\cup td_X(X\times_Y T^*Y)\big].
\end{align*}
But since
$$td_X(X\times_YT^*Y)=\pi_X^*td_X(T^*X)\cup\pi^*f^*td_Y(T^*Y),$$
one has that 
\begin{align*}
&\mathrm{ch}_{Char_{\D;X}(\A_Y;\varphi)}\sigma_{Char_{\D;X}(\A_Y;\varphi)}(f_{\D}^*\varphi^*\mathbb{T}_{\A}^{\ell})\cup \pi_X^*td_X(T^*X)\cup \pi_X^*td_X(TX)
\\
&= f_{d!}\big[f_{\pi}^*\mathrm{ch}_{\mathsf{Char}_{\D}(\A;\varphi)}\sigma_{\mathsf{Char}_{\D}(\A;\varphi)}(\varphi^*\mathbb{T}_{\A}^{\ell})\cup \pi^*td_X(TX)\cup \pi^*f^*td_Y(T^*Y)\big]
\\
&=(f_d)_!f_{\pi}^*\big[\mathrm{ch}_{\mathsf{Char}_{\D}(\A_Y;\varphi)}\sigma_{\mathsf{Char}_{\D}(\A_Y;\varphi)}(\varphi^*\mathbb{T}_{\A}^{\ell})\cup \pi_Y^*td_Y(T^*Y)\big]\cup \pi_X^*td_X(TX).
\end{align*}

\end{proof}

We will now give two applications of this result.

\subsection{$\D$-geometric Atiyah-Singer index}
\label{ssec: Relation to AS}
Consider $\mathcal{A}=Sym^{*}(\M^{\bullet}),$ for a complex of $\D$-modules $\M^{\bullet}$ with
bounded good cohomology. Let $\Lambda$ be a given closed conic subvariety of $T^*X$ containing $Char(\M^{\bullet}).$ Theorem \ref{thm: C} agrees with the non-commutative notion of micro-local Euler class
$$\mu eu(-):K_{\Lambda}^0(\D_X)\rightarrow K_{\Lambda}^0(\mathcal{E}_X)\rightarrow H_{\Lambda}^{2d}(T^*X;\mathbb{C}).$$
Indeed in this case $\mathbb{L}_{\A}\simeq Sym^*(\M^{\bullet})\otimes \M^{\bullet},$ and since $\A$ is $X$-flat, by pull-back along $\varphi,$ and by letting $[-]^h$ be the homogeneous component of a graded object of degree $h$, we reduce to
\begin{equation}
 \mu eu(\M^{\bullet}):=\big[ch_{\Lambda}\big(\pi^{-1}\mathrm{Gr}_{\bullet}^{\mathrm{F}}(\M^{\bullet})\otimes_{\pi^{-1}\mathrm{Gr}_{\bullet}^{\mathrm{F}}\D_X}\mathcal{O}_{T^*X}\big)\cup \pi^* Td(TX)\big]^{2d}\in H_{\Lambda}^{2d}(T^*X;\mathbb{C}).
 \end{equation}

For $\D$-elliptic $\AD$-modules $\M$, we may specialize to the case of a free $\D$-algebra and recover a version of the classical Atiyah-Singer index \cite{AtiyahSinger1963}.
\begin{theorem}
\label{ASinger}
Let $M$ be a smooth real analytic manifold with complexification $X,$ and consider a $\mathcal{D}$-algebra $\A=Sym^*(\M^{\bullet})$ and suppose its $\mathcal{D}_X$-space of solutions $\mathsf{Spec}_{X_{DR}}(\mathcal{A})$ satisfies at each point,
\begin{equation}
\label{eqn: Elliptic condition 1}
\big(Char_{\mathcal{D}}(\mathcal{\A})\times \mathsf{Spec}_{X_{DR}}(\mathcal{\A})\big)\cap T_M^*X\subset Sol_{\mathcal{D}}(\mathcal{\A})\underset{X}{\times} M\times T_X^*X.
\end{equation}
Then, letting $0_M:M\hookrightarrow T_M^*X$ denote the zero-section and $j:T_M^*X\hookrightarrow T^*X,$ the canonical embedding, the
$$\chi_{\D}(\A)=\int_M 0_M^*\big[j^*\mathrm{ch}\sigma_{\mathrm{Char}(\M)}(\M)\big]\cup td_M(TM^{\mathbb{C}}).$$
\end{theorem}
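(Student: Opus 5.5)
The strategy is to reduce the statement to the Schapira–Schneiders index theorem for elliptic pairs \cite{SS94a,SS94b}, applied to the linearization of $\A=Sym^*(\M^{\bullet})$ along a solution. Then Theorem \ref{thm: C} and the description of $\mu eu$ above collapse the $\D$-geometric index to the classical Atiyah–Singer integrand \cite{AtiyahSinger1963}.

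\emph{Step 1: linearize.} Since $\A$ is free, the identification $\mathbb{L}_{\A}\simeq Sym^*(\M^{\bullet})\otimes\M^{\bullet}$ holds, and $\A$ is $X$-flat. Hence for any solution $\varphi$ the pulled-back tangent complex $\varphi^*\mathbb{T}^{\ell}_{\A}$ is canonically equivalent to the $\D_X$-module complex built from $\M^{\bullet}$ (up to the duality $\mathbb{D}$), and it is independent of $\varphi$. Likewise $\varphi^{-1}\mathsf{Char}_{\D}(\A)$ reduces to $\mathrm{Char}(\M^{\bullet})$, since the extra summand $Ch(Sym(\M))$ in $Ch(Sym(\M))+_X Ch(\M)$ disappears after restriction along the section $\varphi$. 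Under this reduction, condition \eqref{eqn: Elliptic condition 1} becomes $\mathrm{Char}(\M^{\bullet})\cap T_M^*X\subset T_X^*X$, so $\M^{\bullet}$ is elliptic on $M$. By Theorem \ref{ThmA:Intro} (with supports), $\chi_{\D}(\A)$ is well defined and equals $\chi\big(R\Gamma(M;\mathbf{R}\mathcal{S}ol_X(\M^{\bullet},\mathscr{B}_M))\big)$. By elliptic regularity this is the same as the Euler characteristic of the real-analytic solutions.

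\emph{Step 2: microlocal index formula.} Theorem \ref{ThmB:Intro} (equivalently the formula for $\mu eu$ displayed above) gives
$$\chi_{\D}(\A)=\int_{T^*X}\mu eu(\M^{\bullet})\cup\mu eu(\mathbb{C}_M),$$
which is the Schapira–Schneiders formula for the elliptic pair $(\M^{\bullet},\mathbb{C}_M)$. Here $\mu eu(\M^{\bullet})=[ch_{\Lambda}(\sigma_\Lambda(\M^{\bullet}))\cup\pi^*Td(TX)]^{2d}$ with $\Lambda=\mathrm{Char}(\M^{\bullet})$. Also $\mu eu(\mathbb{C}_M)$ is the Thom-type class of $T_M^*X$: the pushforward $j_*$ of the orientation class of $T_M^*X$. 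Ellipticity says $\Lambda\cap T^*_MX$ is compact over $M$, so the cup product has compact support and the integral makes sense.

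\emph{Step 3: localize to $M$.} Because $\mu eu(\mathbb{C}_M)$ is the pushforward along $j$, the projection formula rewrites the integral as $\int_{T_M^*X} j^*\big[\mathrm{ch}\,\sigma_{\Lambda}(\M)\cup\pi^*Td(TX)\big]$. Next restrict $Td(TX)|_M=td_M(TM^{\mathbb{C}})$, using $TX|_M\simeq TM\otimes\mathbb{C}$. Finally, integrate over the fibres of $T_M^*X\simeq iT^*M\to M$. Since $j^*\mathrm{ch}\,\sigma$ has support proper over $M$, the fibre integral equals $0_M^*$ composed with the Thom isomorphism. This yields the stated formula $\int_M 0_M^*[j^*\mathrm{ch}\,\sigma_{\mathrm{Char}(\M)}(\M)]\cup td_M(TM^{\mathbb{C}})$.

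The main obstacle is Step 3. Signs and orientation conventions must be matched: $T_M^*X\simeq iT^*M$ versus $T^*M$, and the orientation twist $or_M$ hidden in $\mathscr{B}_M$. One must also check that the degree-$2d$ truncation in $\mu eu$ is compatible with the Thom isomorphism. I would first verify the resulting formula on a single elliptic operator $\M=\D_X/\D_XP$. In that case $\sigma$ is the symbol complex of $\sigma(P)(x;i\xi)$, and the formula should reduce literally to Atiyah–Singer, which fixes the normalisations. Step 1's identification of $\varphi^*\mathbb{T}_{\A}$ with $\M^{\bullet}$ relies on Lemma \ref{prop: K-theory commute1} to compare classes in $K^0_\Lambda$, and this comparison is routine.
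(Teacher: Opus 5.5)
Your proposal follows essentially the same route as the paper's proof. Both use $\varphi^*\mathbb{L}_{\A}\simeq\M$ to reduce the hypothesis to $\mathrm{Char}(\M)\cap T^*_MX\subset T^*_XX$, apply the Schapira--Schneiders formula $\int_{T^*X}\mu eu(\M)\cup\mu eu(\mathbb{C}_M)$ for the elliptic pair $(\M,\mathbb{C}_M)$, and then localize to $T^*_MX$ and to $M$ via $Td(TX)|_M=td_M(TM^{\mathbb{C}})$. The paper dispatches your Step 3 with ``one may readily verify,'' so the normalizations you flag are left implicit there as well: reading $0_M^*$ as the Thom/fibre-integration identification $H^*_M(T^*_MX)\simeq H^{*-n}(M;or_M)$ for a class supported on the zero section, and the orientation of $T^*_MX\simeq iT^*M$.
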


\begin{proof}
Consider  
$\chi_{\D}(\A)=\chi(R\Gamma(X,\RHom_{\AD}(\mathbb{L}_{\A},\mathcal{O}_X)\big).$
Now, condition (\ref{eqn: Elliptic condition 1}), reduces to stating that 
\begin{equation}
\label{eqn: Elliptic condition 2}
supp\big(\mu\mathbb{T}_{\mathcal{\A},\varphi}^{\ell})\cap T_M^*X\subset M\times_X T_X^*X,
\end{equation}
holds for every solution $\varphi$.
Since $\A$ is compact, the argument may be reduced to the case of a free $\D$-algebra. 
Thus, we may assume that $\A=Sym^*(\M)$ for a perfect $\D_X$-module $\M,$ and can even assume $\M$ is itself elliptic as a $\D$-module. Indeed, condition (\ref{eqn: Elliptic condition 2}) directly implies $\M$ is an elliptic $\D$-module since for each $\varphi,$  pull-back of $\mathsf{Char}_{\D}\big(Sym^*(\M))$ gives 
$supp(\mu \varphi^*Sym^*(\M)\otimes \M),$ which reduces directly to $\mathrm{Char}(\M).$ 
Therefore consider the pair $(\M,\mathbb{C}_M)$ with $\M$ elliptic on $M$, with complexification $X.$ Let $0_M:M\hookrightarrow T_M^*X$ denote the zero-section and consider the embedding $j:T_M^*X\hookrightarrow T^*X.$ Then, to prove our result, we need to show that 
$$\chi_{\D}(\A)=\chi_{AS}(\M),$$
with $\chi_{AS}$ the Atiyah-Singer index. 
Since
$$\RHom_{\AD}(\mathbb{L}_{\A},\mathcal{O}_X)\simeq \RHom_{\mathrm{Sym}_{\mathcal{O}_X}^*(\M)\otimes \D}(\mathrm{Sym}_{\mathcal{O}_X}^*(\M)\otimes \M,\mathcal{O}_X),$$
we see that 
$\chi_{\D}(\A)=\chi\big(RSol_{X}(\M,\mathcal{O}_X)\big),$ which by duality and by taking global sections over $M$, is given by 
$$\chi\big(R\Gamma(M,\M\otimes_{\D}^L\mathcal{O}_X)\big)=\int_{T^*X}\mu eu(\M)\cup \mu eu(\mathbb{C}_M),$$
for the constant sheaf $\mathbb{C}_M.$
Then from (\ref{eqn: Elliptic condition 1}) condition (\ref{eqn: Elliptic condition 2}) implies $T_M^*X\cap \mathrm{Char}(\M)\subset M,$ and therefore we have that 
$$\int_{T_M^*X}j^*\mu eu(\M)=\int_M 0_M^* j^*\mu eu(\M).$$
One may readily verify this is given by
$$\int_M 0_M^*j^*\big[\mathrm{ch}(\sigma_{\mathrm{Char}(\M)})\cup \pi^* td_X(TX)\big] =\int_M 0_M^*\big[j^*\mathrm{ch}\sigma_{\mathrm{Char}(\M)}(\M)\big]\cup td_M(TM^{\mathbb{C}}).$$
Then, the result follows since the right-hand side is exactly $\chi_{AS}(\M).$
\end{proof}

\subsubsection{Boundary}
\label{ssec: Boundary}
Let $\partial X\subset X$ be the boundary. In this subsection, we give an index via relative Spencer hypercohomology but mention for completeness this does in fact belong to the larger class of \emph{boundary solution problems}. If this boundary problem is elliptic, in the sense its linearization
 at each solution determines an elliptic boundary problem, then the tangent complex is perfect and its Euler-characteristic produces boundary index values at each solution.
 
Let $E,F\in VBun(X),H\in VBun(\partial X).$ Consider $P:J_X^kE\to F$, a $k$-th order PDE and boundary condition is a map 
$B:J_X^rE\times_X \partial X\to H, r\leq k.$ Homotopy pullback $Sol_{\partial}:=0\times_{RMaps_{/X}(X,F)\times RMaps_{/\partial X}(\partial X,H)}^h RMaps_{/X}(X,E),$ via the zero map and $P\times B,$ is the space of solutions to the boundary problem.

 Let $\mathcal{D}_{\partial X\to X}:=\mathcal{O}_{\partial X}\otimes_{i^{-1}\mathcal{O}_X}i^{-1}\D_X$ be the $(\mathcal{D}_{\partial X},i^{-1}\D_X)$-bimodule and denote the pull-back and push-forwards as 
$$\mathbf{L}i_{\partial}^*\M:=\D_{\partial X\to X}\otimes_{i^{-1}\D_X}^Li^{-1}\M,\hspace{2mm} i_{*}\M:=\mathbf{R}i_*\big(\M\otimes_{\D_{\partial X}}^L\D_{\partial X\to X}\big),$$
respectively.
Analogously to the usual Spencer resolution $\mathrm{DR}_X^{\bullet}(\mathcal{D}_X)$ we consider $$DR_{\partial X}^{\bullet}(\mathcal{D}_X)\simeq i_{\partial *}\big(\Omega_{\partial X}^{\bullet}\otimes_{i_{\partial}^{-1}\mathcal{O}_X}^{\mathbb{L}}i_{\partial}^{-1}\mathcal{D}_X\big),$$ and note the existence of a morphism of sheaves on $X,$
$$\widetilde{i}_{\partial}^*:\mathrm{DR}_X^{\bullet}(\mathcal{D}_X)\rightarrow DR_{\partial X}^{\bullet}(\mathcal{D}_X),$$
induced by the pull-back of forms along $i_{\partial}.$ 
The following is given by \cite[Prop.~9.1]{KSY2}.
\begin{lemma}
    The sheaf of smooth densities $Dens(X/\partial X)$ on $X$ relative to the boundary $\partial X$, has a Spencer resolution as a $\mathcal{D}_X^{op}$-module and there is a quasi-isomorphism $Cone(\widetilde{i}_{\partial}^*)[d_X-1]\xrightarrow{\simeq} Dens(X/\partial X).$

\end{lemma}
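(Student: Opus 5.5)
We prove both assertions by showing that the densities relative to the boundary are resolved by the cone of the restriction map between the two Spencer (de Rham) resolutions. This cone should be viewed as the $\D$-module incarnation of the long exact sequence of the pair $(X,\partial X)$.

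\emph{Step 1: the resolutions.} First recall that the usual Spencer resolution $\mathrm{DR}_X^{\bullet}(\D_X)=\Omega_X^{\bullet}\otimes_{\mathcal{O}_X}\D_X[d_X]$ is a resolution of $\omega_X$ (equivalently $Dens(X)$, after twisting by the orientation sheaf in the real setting) by locally free right $\D_X$-modules. Applying the same construction on $\partial X$ and transferring with the bimodule $\D_{\partial X\to X}$, we identify $DR_{\partial X}^{\bullet}(\D_X)$ with $i_{\partial *}\big(\omega_{\partial X}\otimes^{\mathbb{L}}_{\D_{\partial X}}\D_{\partial X\to X}\big)[d_X-1]$, i.e.\ with the $\D$-module direct image $i_{\partial*}\omega_{\partial X}$, suitably shifted. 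Since $i_\partial$ is a closed embedding, $i_{\partial*}$ is exact, and Kashiwara's equivalence guarantees that the complex is concentrated in a single degree.

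\emph{Step 2: identify the map.} Next we check that $\widetilde{i}_{\partial}^*$ is the natural adjunction morphism $\omega_X\to i_{\partial*}\omega_{\partial X}$, lifted to the resolutions. Locally it is restriction of top-degree forms: the pullback of $\Omega_X^{d_X-1}$ to $\Omega^{d_X-1}_{\partial X}$ composed with contraction against the inward normal. We verify this in local coordinates $(x_1,x')$ with $\partial X=\{x_1=0\}$. There $\Omega^\bullet_X\otimes\D_X$ splits as the tensor product of the Koszul complex in $\partial_{x_1}$ with the one in $\partial_{x'}$. The map $\widetilde{i}_\partial^*$ kills the $dx_1$-components and sets $x_1=0$. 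This is where the shift $[d_X-1]$ comes from.

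\emph{Step 3: compute the cone.} The cone then sits in a distinguished triangle
$$\mathrm{Cone}(\widetilde{i}_\partial^*)[-1]\to\mathrm{DR}_X^\bullet(\D_X)\to DR^\bullet_{\partial X}(\D_X)\xrightarrow{+1}.$$
After the identifications of Steps 1 and 2 this becomes $\omega_X\to i_{\partial*}\omega_{\partial X}$. Its fiber is the sheaf of top forms (densities) vanishing on, i.e.\ relative to, the boundary; this is $Dens(X/\partial X)$, the dual version of $j_!\mathbb{C}_{X\setminus\partial X}$. We establish exactness locally through the coordinate computation: the restriction map on top cohomology is surjective, because each boundary density extends to a collar, and the kernel consists precisely of the relative densities. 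Since $\mathrm{Cone}$ is built from locally free right $\D_X$-modules, it yields the asserted Spencer resolution of $Dens(X/\partial X)$ as a $\D_X^{op}$-module, and the quasi-isomorphism follows.

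\emph{Main obstacle.} The difficult point is Step 3 at the boundary. In the smooth setting of manifolds with boundary, surjectivity of restriction and the identification of the kernel depend on collar extension. We also have to track the shift $[d_X-1]$ and the orientation twist carefully, so that the cone sits in the right degree. As the statement is exactly \cite[Prop.~9.1]{KSY2}, we plan to follow that argument for these verifications, while checking compatibility with the relative Spencer conventions of \eqref{eqn: RelSp}.
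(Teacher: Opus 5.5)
The paper itself only cites \cite[Prop.~9.1]{KSY2}. Your sketch, however, has a genuine error in Steps 2--3, and it comes from degree bookkeeping.

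\textbf{What goes wrong.} Since $\widetilde{i}^*_{\partial}$ is plain pull-back of forms, it preserves form degree. So both complexes must be indexed the same way. You cannot normalize them by different shifts so that $\omega_X$ and $i_{\partial*}\omega_{\partial X}$ both sit in degree $0$, as your Step 1 does.

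With a common indexing, $\mathrm{DR}^\bullet_X(\D_X)$ has its only cohomology, $\omega_X$, in form degree $d_X$. By contrast, $DR^\bullet_{\partial X}(\D_X)$ has its only cohomology, $i_{\partial*}\omega_{\partial X}$, in form degree $d_X-1$, because $\partial X$ carries no $d_X$-forms. Hence $\widetilde{i}^*_{\partial}$ induces zero on cohomology sheaves. In the derived category it is a class in $\mathrm{Ext}^1_{\D_X^{op}}(\omega_X,i_{\partial*}\omega_{\partial X})$, not a morphism $\omega_X\to i_{\partial*}\omega_{\partial X}$.

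In fact no nonzero right $\D_X$-linear map $\omega_X\to i_{\partial*}\omega_{\partial X}$ exists even locally. The image of $dx$ would have to be killed by right multiplication by $\partial_{x_1}$, and that multiplication is injective on $i_{\partial*}\omega_{\partial X}\simeq\omega_{\partial X}\otimes\mathbb{C}[\partial_{x_1}]$. Moreover, ``top forms vanishing on $\partial X$'' is not a right $\D_X$-submodule of $\omega_X$, since $(x_1\,dx)\cdot\partial_{x_1}=-dx$.

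Consequences for your sketch:
\begin{itemize}
\item The kernel in Step 3 is the wrong object.
\item Your surjectivity claim concerns a map that is zero, since a top form pulls back to zero on $\partial X$.
\item The contraction with the normal in Step 2 is not part of $\widetilde{i}^*_{\partial}$, and it does not commute with the Spencer differentials.
\item Your shifts do not match the statement. The statement identifies $Dens(X/\partial X)$ with $\mathrm{Cone}(\widetilde{i}^*_{\partial})[d_X-1]$, i.e.\ the fiber shifted by $[d_X]$, not with the fiber itself.
\end{itemize}

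\textbf{How to repair it.} Your collar argument does belong in the proof, but one level down. It shows $\widetilde{i}^*_{\partial}$ is surjective in each degree. Since $\D_X$ is $\mathcal{O}_X$-flat, this gives
$\mathrm{Cone}(\widetilde{i}^*_{\partial})[-1]\simeq\ker\widetilde{i}^*_{\partial}=\Omega^\bullet(X,\partial X)\otimes_{\mathcal{O}_X}\D_X$.
Here $\Omega^k(X,\partial X)$ denotes the forms with vanishing pull-back to $\partial X$. It is locally free, spanned by $dx_1\wedge dx_J$ and $x_1\,dx_J$ with $J\subset\{2,\dots,d_X\}$.

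This kernel complex is the Spencer resolution by locally free right $\D_X$-modules. The cone itself is not: its boundary terms are locally $\D_X/x_1\D_X$.

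The long exact sequence of the triangle shows the only cohomology is in degree $d_X$, and it fits into
\[
0\to i_{\partial*}\omega_{\partial X}\to H^{d_X}\big(\Omega^\bullet(X,\partial X)\otimes_{\mathcal{O}_X}\D_X\big)\to\omega_X\to 0 .
\]
So $Dens(X/\partial X)$ is $\Omega^{d_X}_X\otimes\D_X$ modulo the Spencer image of relative $(d_X-1)$-forms. It is an extension of $\omega_X$ by boundary densities, not a submodule of $\omega_X$.

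Applying $-\otimes^{\mathbb{L}}_{\D_X}\mathcal{O}_X$ turns this resolution into $\Omega^\bullet(X,\partial X)$. That complex computes $H^\bullet(X,\partial X)$, and $\int_X$ descends to it by Stokes. This, rather than vanishing along $\partial X$, is what justifies calling it the module of densities relative to the boundary. It is the precise form of your $j_!\mathbb{C}_{X\setminus\partial X}$ heuristic.
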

We then compute that
\begin{lemma}
    \label{prop: BoundaryIndex}
   Consider $X,\partial X$ and $f:X\to S$ as above, with $\mathcal{B}_{X/S}$ the $D$-algebra of functions of a formally integrable $D$-scheme. Then, the relation 
$$\big[\mathbf{R}f_*\mathrm{Cone}(\widetilde{i}_{\partial}^*)\big]=\big[\mathbf{R}f_*\mathcal{S}p_{X/S}^{\bullet}(\mathcal{B}_{X/S})\big]-\big[\mathbf{R}f_*i_{*}\mathcal{S}p_{\partial X/S}^{\bullet}(\mathbf{L}\iota_{\partial}^*\mathcal{B}_{X/S})\big],$$
holds in $K_0(S).$
\end{lemma}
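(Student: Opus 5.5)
The identity will come from the distinguished triangle attached to a mapping cone, combined with additivity of $K_0$ along triangles; once the cone has been identified with the correct objects, the proof is largely formal.

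\textbf{Step 1 (the triangle).} The morphism $\widetilde{i}_{\partial}^*$ is a map of complexes of sheaves on $X$. Taking coefficients in $\mathcal{B}_{X/S}$ and working relative to $S$, we regard it as a map from the relative Spencer complex $\mathcal{S}p_{X/S}^{\bullet}(\mathcal{B}_{X/S})$ to $i_{*}\mathcal{S}p_{\partial X/S}^{\bullet}(\mathbf{L}\iota_{\partial}^*\mathcal{B}_{X/S})$. This gives a distinguished triangle
$$\mathrm{Cone}(\widetilde{i}_{\partial}^*)[-1]\to \mathcal{S}p_{X/S}^{\bullet}(\mathcal{B}_{X/S})\xrightarrow{\widetilde{i}_{\partial}^*} i_*\mathcal{S}p_{\partial X/S}^{\bullet}(\mathbf{L}\iota_{\partial}^*\mathcal{B}_{X/S})\to \mathrm{Cone}(\widetilde{i}_{\partial}^*).$$
To justify that the target really is the pushforward of the boundary Spencer complex, we use the formula $DR_{\partial X}^{\bullet}(\mathcal{D}_X)\simeq i_{\partial*}(\Omega^{\bullet}_{\partial X}\otimes^{\mathbb{L}} i_\partial^{-1}\mathcal{D}_X)$. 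Tensoring over $\mathcal{D}_X$ with $\mathcal{B}_{X/S}$ and using the projection formula for $i_*$ with the transfer bimodule $\mathcal{D}_{\partial X\to X}$ then identifies the target with $i_*\mathcal{S}p_{\partial X/S}^{\bullet}(\mathbf{L}i_\partial^*\mathcal{B}_{X/S})$.

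\textbf{Step 2 (pushforward).} We apply the exact functor $\mathbf{R}f_*$, which preserves distinguished triangles.

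\textbf{Step 3 (finiteness).} We check that all three pushed-forward terms are perfect on $S$, so that they define classes in $K_0(S)$. For the interior term this is the non-characteristic and formal-integrability hypothesis already used to define $\mathrm{Ind}_D(f,-)$ in \eqref{eqn:DHilbIndexK0}. For the boundary term we need $\partial X\to S$ to be non-characteristic for $\mathbf{L}\iota_\partial^*\mathcal{B}_{X/S}$; this holds by the ellipticity of the boundary problem, which says the linearization is elliptic along $\partial X$. The cone term is then perfect by the 2-out-of-3 property.

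\textbf{Step 4 (additivity).} In $K_0(S)$ the triangle gives $[A]-[B]+[C]=0$ with $A$ the shifted cone, so $[\mathrm{Cone}]=-[\mathrm{Cone}[-1]]=[B]-[C]$. This is exactly the claimed relation.

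The main obstacle is Step 3, and in particular the coherence of $\mathbf{R}f_*$ of the boundary Spencer complex. This needs a relative non-characteristic condition for $\partial X\to S$, which the text only implicitly assumes through the phrase ``elliptic boundary''. I would state this as a hypothesis, deriving it from Lemma \ref{GRRSpencerDelta}-type finiteness applied fiberwise to $\partial X_s$. Local constancy in $s$ of the fiberwise Euler characteristics then comes from the base-change lemma of the relative ellipticity subsection.

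A secondary check is compatibility of $\mathbf{L}\iota_\partial^*$ with the $\mathcal{D}$-algebra structure. Since the Spencer complex only uses the underlying $\mathcal{D}_{X/S}$-module of $\mathcal{B}_{X/S}$, it suffices to work at the module level.
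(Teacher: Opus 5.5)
\textbf{Sign error in Step~4.} Your strategy is the right mechanism for a $K_0$-level identity: the triangle attached to $\widetilde{i}_{\partial}^*$, exactness of $\mathbf{R}f_*$, perfectness, and additivity in $K_0(S)$. But Step~4 has a sign error, and correcting it changes what is proved.

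Write $B:=\mathbf{R}f_*\mathcal{S}p_{X/S}^{\bullet}(\mathcal{B}_{X/S})$ and $C:=\mathbf{R}f_*i_*\mathcal{S}p_{\partial X/S}^{\bullet}(\mathbf{L}\iota_{\partial}^*\mathcal{B}_{X/S})$. Your triangle $\mathrm{Cone}[-1]\to B\to C\to\mathrm{Cone}$ gives $[\mathrm{Cone}[-1]]-[B]+[C]=0$. Hence $[\mathrm{Cone}[-1]]=[B]-[C]$, and so $[\mathrm{Cone}]=-[\mathrm{Cone}[-1]]=[C]-[B]$, not $[B]-[C]$. You can also see this directly: for $\phi:B\to C$ one has $\mathrm{Cone}(\phi)^n=B^{n+1}\oplus C^n$, so its Euler characteristic is $\chi(C)-\chi(B)$.

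So, with the standard cone convention you yourself used, your argument proves the displayed relation for the mapping fiber $\mathrm{Cone}(\widetilde{i}_{\partial}^*)[-1]$. For the unshifted cone it gives the opposite sign. The fiber is the genuinely ``relative'' object. After the shift by $d_X$ it is the one appearing in the quasi-isomorphism $\mathrm{Cone}(\widetilde{i}_{\partial}^*)[d_X-1]\simeq Dens(X/\partial X)$, and it is what $\mathrm{Ind}^{rel}=\mathrm{Ind}-\mathrm{Ind}^{\partial}$ is meant to compute. You should therefore state explicitly that the left-hand side is read as the cocone (or carry the sign through), rather than arrive at the formula for $\mathrm{Cone}$ through the slip $-[\mathrm{Cone}[-1]]=[B]-[C]$.

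\textbf{Comparison with the paper.} The paper does not argue through the triangle at all. It invokes additivity of $\mathrm{ch}:K_0(S)\to H^{2*}(S,\mathbb{Q})$, which takes the same $K_0$ relation, with the same convention, for granted. It then applies Lemma~\ref{GRRSpencerDelta} to the interior term. For the boundary term it uses Grothendieck--Riemann--Roch for the closed embedding $i$ to rewrite it as $f_*\big(\mathrm{ch}(i_*\mathcal{S}p^{\bullet}_{\partial X/S}(\mathbf{L}i_{\partial}^*\mathcal{B}_{X/S}))\cdot\mathrm{Td}(T_{X/S})\big)$. It ends with an integral formula for $\mathrm{Ind}^{rel}$ over $T^*(X/S)$ and $T^*(\partial X/S)$.

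That route buys an explicit cohomological expression, but it only controls the image under $\mathrm{ch}$, which kills torsion. Your route, once the sign is fixed, is what actually justifies the identity in $K_0(S)$. Your Step~3 also supplies the perfectness of the three terms, which the paper leaves implicit.
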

In other words, we have
$\mathrm{Ind}^{rel}(\mathcal{B}_{X/S})=\mathrm{Ind}(\mathcal{B}_{X/S})-\mathrm{Ind}^{\partial}(\mathcal{B}|_{\partial X}).$
\begin{proof}
   Since $\mathrm{ch}:K_0(S)\to H^{2*}(S,\mathbb{Q})$ is additive we obtain 
   $$\mathrm{ch}(\mathbf{R}f_*\mathrm{Cone}(\widetilde{i}_{\partial}^*))=\mathrm{ch}\big(\mathbf{R}f_*\mathcal{S}p_{X/S}(\mathcal{B}_{X/S})\big)-\mathrm{ch}\big(\mathbf{R}f_*i_*\mathcal{S}p_{\partial X/S}^{\bullet}(\mathbf{L}i_{\partial}^*\mathcal{B}_{X/S})\big).$$
   Now, Proposition \ref{GRRSpencerDelta} gives that 
   $$\mathrm{ch}(\mathbf{R}f_*\mathcal{S}p_{X/S}^{\bullet}(\mathcal{B}_{X/S})\big)=f_*\big(\mathrm{ch}(\mathcal{S}p_{X/S}(\mathcal{B}))\cdot \mathrm{Td}(T_{X/S})\big),$$
   and similarly, letting $f^{\partial}:\partial X\xrightarrow{i_{\partial}}X\xrightarrow{f}S,$ we have 
   $$\mathrm{ch}\big(\mathbf{R}f_*^{\partial}\mathcal{S}p_{\partial X/S}^{\bullet}(\mathbf{L}i^*\mathcal{B})\big)=f_{*}^{\partial}\big(\mathrm{ch}(\mathcal{S}p_{\partial X/S}^{\bullet}(\mathbf{L}i^*\mathcal{B}))\cdot \mathrm{Td}(T_{\partial X/S})\big).$$
   We may write
   $$f_{*}^{\partial}(\mathrm{ch}(\mathcal{S}p_{\partial X/S}(\mathbf{L}i^*\mathcal{B}))\cdot \mathrm{Td}(T_{\partial X/S})\big)=f_*\bigg(i_*(\mathrm{ch}(\mathcal{S}p_{\partial X/S}^{\bullet}(\mathbf{L}i^*\mathcal{B})))\cdot \mathrm{Td}_{T_{\partial X/S}}\bigg),$$
   which is equivalent to $f_*\big(\mathrm{ch}(i_*\mathcal{S}p_{\partial X/S}^{\bullet}(\mathbf{L}i^*\mathcal{B})\cdot \mathrm{Td}(T_{X/S})\big).$
   Then, we see that 
   $$\mathrm{ch}\big(\mathbf{R}f_*\mathrm{Cone}(\widetilde{i}_{\partial}^*))=f_*\big(\mathrm{ch}(\mathcal{S}p_{X/S}(\mathcal{B}))-\mathrm{ch}(i_{*}\mathcal{S}p_{\partial X/S}^{\bullet}(\mathbf{L}i^*\mathcal{B}))\cdot \mathrm{Td}(T_{X/S})\big),$$
   thus we obtain
\begin{align*}
\mathrm{Ind}^{rel}&=\int_{T^*(X/S)}\mathrm{ch}(\mathcal{S}p_{X/S}(\mathrm{gr}_K(\mathcal{B}))\wedge \mathrm{Td}(\pi^*T_{X/S})
\\
&-\int_{T^*(\partial X/S)}\mathrm{ch}(\mathcal{S}p_{\partial X/S}^{\bullet}(\mathbf{L}i^*\mathrm{gr}_{K}(\mathcal{B})))\wedge \mathrm{Td}(T_{\partial X/S}).\end{align*}
\end{proof}
To conclude, recall the notion of boundary $\D$-scheme given in \cite[Sect 9.1]{KSY2}.
It defines an object of $\mathrm{CAlg}(\D_{\partial X}).$ One may compute its index from the absolute hypercohomology over $\partial X.$ In general, it does not agree with the index above, which is taken relative to the boundary.

\subsection{Applications: Elliptic Lie-systems}
\label{ssec: PDEGeometryApplications}
Let $(X,\omega)$ be a compact Kähler manifold, and $E\to X$ a holomorphic vector bundle. 
Let $\EQ^k\subset \mathrm{Jets}_X^k(Der(E))$ be a formally integrable geometric structure of order $k$, and let
\begin{equation}
\label{eq:D-ideal}
0 \longrightarrow \mathfrak{I}_k \longrightarrow \mathcal{O}(\mathrm{Jets}_X^k(Der(E))) \longrightarrow \mathcal{O}_{\EQ^k} \longrightarrow 0
\end{equation}
be the associated $\D_X$-ideal sequence. Assume $\EQ^k$ is elliptic.

Via the standard order filtration $F^\bullet$ on the jet algebra:
\[
F^j \mathcal{O}(\mathrm{Jets}_X^\infty(Der(E))) := \mathcal{O}(\mathrm{Jets}_X^j(Der(E))).
\]

The ideal inherits the filtration $
F^j \mathfrak{I}_k := F^j \mathcal{O}(\mathrm{Jets}_X^\infty(Der(E))) \cap \mathfrak{I}_k$ and we let the associated graded to this filtration be $
\mathrm{gr}^j_F(\mathfrak{I}_k) := F^j \mathfrak{I}_k / F^{j-1} \mathfrak{I}_k.$

The geometric symbol is then $
\mathfrak{g}^k := \mathrm{gr}^k_F(\mathfrak{I}_k) \subset Sym^k(T_X^*)\otimes Der(E),$
which is coherent over $\mathcal{O}_X$.
In other words,  Its conormal sheaf
\[
\mathcal{C}_{Z^k/\mathrm{Jets}_X^k(Der(E))} := \mathcal{I}_k/\mathcal{I}_k^2 \subset \Omega_{\mathrm{Jets}_X^k(Der(E))}^1 \otimes \mathcal{O}_{Z^k}
\]
encodes the infinitesimal structure of the PDE. Denote by $Pr_\ell(Z^k)\subset \mathrm{Jets}_X^{k+\ell}(Der(E))$ the $\ell$-th prolongation. The geometric symbol is
\[
\mathfrak{g}^k := \ker\big(\eta_{k,k-1}|_{Z^k}:\mathrm{Jets}_X^k(Der(E))\to \mathrm{Jets}_X^{k-1}(Der(E))\big) \subset Sym^k(\Omega_X^1)\otimes Der(E).
\]
Its $\ell$-th prolongation is
\[
(\mathfrak{g}^k)^{(\ell)} \subset Sym^{k+\ell}(\Omega_X^1) \otimes Der(E),
\]
 total characteristic $\D$-module defined by all $Ch_k\subset Sym^k(\Omega_X^1)\otimes Der(E),$ with all prolongations: 
$$Pr_1(Ch_k):=(T^*X\otimes Ch_k)\cap Sym^{k+1}T^*X\otimes Der(E),$$
and more generally,
$$Pr_{r+1}(Ch)=Pr_1\big(\otimes_{i=1}^rT^*X\otimes Ch_k)\cap (Sym^{k+r}T^*X\otimes DerE).$$

\subsubsection{Topological index}
We work relative to a fibration $f:X\rightarrow S$ and consider the restriction of the $\D$-subscheme $\EQ$ to fibers of $f$. Noting the exact sequence of vector bundles $0\rightarrow \pi^{-1}E\rightarrow TE\rightarrow \pi^{-1}TX\rightarrow 0,$ then in terms of $\mathcal{I}$ which defines $\EQ,$ the restriction $\EQ_{X/S}$ over the fibers is given by the image of $\mathcal{I}$ in $\D_{X/S}\otimes \Omega_{X/S}^1.$
Indeed, $0\rightarrow \D_X\otimes f^*\Omega_S^1\rightarrow \D_X\otimes\Omega_X^1\xrightarrow{r}\D_X\otimes\Omega_{X/S}^1\rightarrow 0,$ then $r(\mathcal{I})\cap \D_{X/S}\otimes \Omega_{X/S}^1$ is the $\D$-submodule associated to the equations of restriction to fibers.
Let $\EQ_{X/S}$ denote the family of fiber-wise restricted system (cf, \eqref{FamilyPDE}) namely, $\{\EQ_s^{(i)}\hookrightarrow \mathrm{Jets}_{X_s}^i(E_s)|s\in S\}.$
If $\EQ$ is $\D$-involutive, then $\EQ_{X/S}$ is $\D$-involutive. Indeed, if $\EQ$ is involutive (resp. formally integrable) in the standard manner, then $\EQ_s$ is involutive (resp. formally integrable) for all $s\in S.$

Consider the fiber-wise Spencer complex:
$$0\rightarrow \EQ_{s}^{(\ell)}\xrightarrow{D_{s}}\EQ_{s}^{(\ell-1)}\otimes \Omega_{X/S}^1\rightarrow\cdots\rightarrow \EQ_{s}^{(\ell-r)}\otimes\Omega_{X/S}^r\rightarrow 0,$$
where $r=rank(f).$ There is a natural symbol class
$[\sigma(\EQ_{X/S})]\in K^0(T_{X/S}),$ and supposing the fibers are compact, denoting a generic fiber by $X_{\infty}$, then this class yields an element of $K^0(T_{X_{\infty}}X).$

The index formula for $\EQ_{X/S}$ in families is the following topological index.
\begin{lemma}
\label{NC Rel Elliptic}
Let $\EQ$ be a formally integrable $\D$-algebraic PDE and $f:X\rightarrow S$ a non-characteristic fiber bundle. Suppose that $\EQ_{X/S}$ is relatively elliptic for $f.$ Then
\begin{equation}
\label{eqn: TopInd}
Ch\big(\mathrm{ind}(\EQ_{X/S})\big)=(-1)^{\mathrm{dim}(X_{\infty})}\int_{X_{\infty}}ch(\sigma(\EQ_{X/S}))\wedge Td_{\mathbb{C}}(T_{X_{\infty}}X),
\end{equation}
holds, where $ch:K^0(S)\rightarrow H^*(S),$ and with 
$$\mathrm{ind}(\EQ_{X/S}):=\big[\chi(\EQ_{X/S})\big]=\sum_{i\geq 0}(-1)^i\big[\mathcal{H}^i(\EQ_{X/S})\big]\in K^0(S),$$
where we have set 
$$\int_{X_{\infty}}:H^*(T_{X_{\infty}}X)\rightarrow H^{*-\mathrm{dim}(X_{\infty})}(S),$$
the fiber integration and with $Td_{\mathbb{C}}(T_{X_{\infty}}X)$ the Todd class of the complexification of the fiber-wise tangent bundle.
\end{lemma}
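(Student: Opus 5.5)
The strategy is to reduce Lemma \ref{NC Rel Elliptic} to the Atiyah--Singer families index theorem. The route goes through the fiberwise Spencer complex of $\EQ_{X/S}$ and the Grothendieck--Riemann--Roch relation of Lemma \ref{GRRSpencerDelta}.

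First, fix a fiber $X_s$. Formal integrability and involutivity of $\EQ$ pass to $\EQ_s$, as noted above. The $\delta$-Poincar\'e lemma then shows that the Spencer sequence
\[
0\rightarrow \EQ_{s}^{(\ell)}\rightarrow\EQ_{s}^{(\ell-1)}\otimes \Omega_{X/S}^1\rightarrow\cdots\rightarrow \EQ_{s}^{(\ell-r)}\otimes\Omega_{X/S}^r\rightarrow 0
\]
is, for $\ell$ large, a complex of first-order operators whose symbol sequence is the Koszul--Spencer $\delta$-complex of the symbol $\mathfrak{g}$. Relative ellipticity of $\EQ_{X/S}$ (Definition \ref{defn: f-elliptic}) says that this symbol sequence is exact off the zero section of $T^*_{X/S}$. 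Hence each fiberwise Spencer complex is an elliptic complex, and its symbol defines the class $[\sigma(\EQ_{X/S})]\in K^0(T_{X/S})$. Compactness of the fibers then gives $K_c$-theory classes on $T_{X_\infty}X$.

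Second, identify the analytic index in $K^0(S)$. The fibration $f$ is non-characteristic, so the relative Spencer complex commutes with base change; this is the locally-constant-Euler-characteristic lemma in the relative ellipticity subsection. Consequently $\mathbf{R}f_*\mathcal{S}p^\bullet_{X/S}$ is a perfect complex on $S$, and its class equals $\sum_i(-1)^i[\mathcal{H}^i(\EQ_{X/S})]=\mathrm{ind}(\EQ_{X/S})$. This is exactly the $\mathrm{Ind}_D(f,\mathcal{M})$ of \eqref{eqn:DHilbIndexK0} for $\mathcal{M}$ the $\D_{X/S}$-module linearizing $\EQ_{X/S}$.

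Third, apply the families index theorem to this family of elliptic complexes, or equivalently rewrite Lemma \ref{GRRSpencerDelta}. The term $\mathrm{ch}(\mathrm{gr}_K\mathcal{M})$ is a class on $T^*_{X/S}$ supported on the characteristic variety. By relative ellipticity, its restriction to the real (conormal) directions has compact support in the fiber directions, and it equals $\mathrm{ch}(\sigma(\EQ_{X/S}))$ in $H^*_c(T_{X_\infty}X)$. The proof of Theorem \ref{ASinger} carries out the same identification through $0_M^*j^*$ and $T_M^*X\simeq iT^*M$. Pushing forward from $T_{X_\infty}X$ to $S$ by fiber integration gives the right-hand side of \eqref{eqn: TopInd}. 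The Todd class of the complexified tangent bundle appears because $\pi^*\mathrm{Td}(T_{X/S})$ on $T^*_{X/S}$ restricts to $\mathrm{Td}_{\mathbb{C}}(T_{X_\infty}X)$.

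The main obstacle is the sign $(-1)^{\dim X_\infty}$ and the orientation conventions. The Thom isomorphism for $T_{X_\infty}X$ versus $T^*_{X_\infty}X$ and the identification of the real conormal directions with $iT^*$ each contribute signs, and these must match the Atiyah--Singer normalization. I would settle this by checking the case of the fiberwise de Rham complex $d_{DR}\omega=0$. There the Spencer complex is the de Rham complex, the index is the Euler characteristic of the fiber, and the symbol class is the Euler/Thom class, so the sign is forced. A secondary point is the passage from the finitely many nonzero Spencer cohomologies, via stability for $\ell\ge\ell_0$, to a genuine $K^0(S)$-class. This requires local freeness of the $\mathcal{H}^i$, or at least perfectness of the complex, which I would take from the non-characteristic hypothesis and base change.
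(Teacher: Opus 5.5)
The paper gives no separate proof of this lemma. The formula is the Atiyah--Singer index theorem for families, applied to the fiberwise Spencer complex in the spirit of \cite{LR}; the $(-1)^{\dim X_\infty}$ and the complexified Todd class are its normalization. The only supporting argument is the following subsection, whose filtration spectral sequence and non-characteristic base change correspond to your second step. So your overall route matches what the paper relies on.

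\textbf{The gap is in your first step, which is exactly what makes the families theorem applicable.} Write $\mathfrak{g}_j:=\ker(\mathcal{E}^{(j)}\to\mathcal{E}^{(j-1)})$. The principal symbol of the Spencer operator $D:\Lambda^iT^*\otimes\mathcal{E}^{(\ell-i)}\to\Lambda^{i+1}T^*\otimes\mathcal{E}^{(\ell-i-1)}$ is $\omega\otimes u\mapsto \xi\wedge\omega\otimes\pi(u)$, where $\pi$ is the jet projection; it is not $\delta$.
\begin{itemize}
\item The $\delta$-complex is, up to sign, the zeroth-order restriction of $D$ to $\Lambda^iT^*\otimes\mathfrak{g}_{\ell-i}$. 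It does not depend on $\xi$, so ``exact off the zero section'' has no meaning for it.
\item The $\delta$-Poincar\'e lemma gives formal exactness (the Spencer sequence resolves the solution sheaf), not ellipticity.
\item Worse, the complex you display is never elliptic for a system of infinite type. In degree $0$ the symbol $u\mapsto\xi\otimes\pi(u)$ kills $\mathfrak{g}_\ell\neq 0$ for every $\xi$. It is elliptic only in finite type with $\ell$ large, where it becomes a twisted de Rham complex as in Lemma \ref{FTypeVBundle}.
\end{itemize}
What is missing is the Spencer--Quillen input \cite{Sp2,Q}. For a formally integrable involutive system, take the sophisticated Spencer sequence with terms $C^i_\ell=\Lambda^iT^*\otimes\mathcal{E}^{(\ell)}/\delta(\Lambda^{i-1}T^*\otimes\mathfrak{g}_{\ell+1})$, or equivalently the Janet/compatibility complex of the linearization. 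This complex is elliptic exactly when $\mathfrak{g}$ has no real characteristic covectors. It is also a fine resolution of the same solution sheaf, so on compact fibers it has the same cohomology as your complex. Its symbol class is what $[\sigma(\mathcal{E}_{X/S})]$ has to mean, and it is to this family that Atiyah--Singer applies. Alternatively, you can bypass elliptic complexes and use the relative index theorem for elliptic pairs \cite{SS94a,SS94b}, where Definition \ref{defn: f-elliptic} enters directly.

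Three smaller points.
\begin{itemize}
\item Non-characteristic base change only identifies the fibers of $\mathbf{R}f_*\mathcal{S}p^\bullet_{X/S}$. Perfectness comes from relative ellipticity plus properness of $f$ (relative finiteness). Since fiber dimensions can jump, $\sum_i(-1)^i[\mathcal{H}^i]$ must be read as the index bundle.
\item ``Equivalently rewrite Lemma \ref{GRRSpencerDelta}'' is not a formal step. GRR on $X\to S$ computes a holomorphic direct image along complex fibers. Trading $\mathrm{ch}(\mathrm{gr}_K\M)$ on $T^*_{X/S}$ for $\mathrm{ch}\,\sigma$ on $T_{X_\infty}X$ is precisely the elliptic-pair index theorem: $\mu eu(\mathbb{C}_M)$ and restriction to $T^*_MX$, as in the proof of Theorem \ref{ASinger}.
\item The sign needs no test case. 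Once you cite the families theorem, $(-1)^{\dim X_\infty}$ is simply its normalization.
\end{itemize}
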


\subsection{Hypercohomology of geometric structures}
\label{ssec: Hyp of geom str}
 begin by considering the order filtration $F^\bullet \mathcal{B}_{X/S}$ on $\mathcal{B}_{X/S}$, which is exhaustive, bounded below, and coherent over $\mathcal{O}_X$. Tensoring with the relative Spencer complex $\mathcal{S}p_{X/S}^\bullet$ gives a filtered complex of $\mathcal{O}_X$-modules
\[
K^p \mathcal{S}p_{X/S}^\bullet(\mathcal{B}_{X/S}) := \sum_{q+k \ge p} \mathcal{S}p_{X/S}^k(F^q \mathcal{B}_{X/S}).
\]
The associated graded pieces are
\[
\mathrm{gr}_K^q \mathcal{S}p_{X/S}^\bullet(\mathcal{B}_{X/S}) \simeq \mathcal{S}p_{X/S}^\bullet(\mathrm{gr}_K^q \mathcal{B}_{X/S}), \quad \mathrm{gr}_K^q \mathcal{B}_{X/S} := F^q \mathcal{B}_{X/S}/F^{q+1} \mathcal{B}_{X/S},
\]
which are coherent $\mathcal{O}_X$-modules. By formal integrability, the Spencer differential preserves this filtration.

Next, consider the exact sequence of tangent sheaves
\[
0 \to \Theta_{X/S} \to \Theta_X \to f^*\Theta_S \to 0.
\]
This induces a Leray-type filtration $L^\bullet$, as originally defined by Katz-Oda \cite{KatzOda1968}, on the Spencer complex by
\[
L^p \mathcal{S}p_{X}^\bullet(\mathcal{B}_{X/S}) := \mathrm{Ker}\Big(\wedge^p \Theta_X \otimes \mathcal{S}p_X^{\bullet-p} \to \bigoplus_{r>p} f^* \wedge^r \Theta_S \otimes \wedge^{r-p} \Theta_X \otimes \mathcal{S}p_X^\bullet\Big),
\]
so that $
\mathrm{gr}_L^p \mathcal{S}p_X^\bullet(\mathcal{B}_{X/S}) \simeq \wedge^p f^*\Theta_S \otimes \mathcal{S}p_{X/S}^{\bullet-p}(\mathcal{B}_{X/S}).$

The total filtration $F_\mathrm{Tot}^p := \sum_{i+j=p} L^i \cap K^j$ is bounded, exhaustive, and preserved by the Spencer differential. Therefore, it defines a spectral sequence converging to the hypercohomology
\[
\mathbb{H}^\bullet(S, \mathbf{R}f_* \mathcal{S}p_{X/S}^\bullet(\mathcal{B}_{X/S})).
\]

The associated graded of $F_\mathrm{Tot}^\bullet$ is
\[
\mathrm{gr}_{F_\mathrm{Tot}}^p \mathcal{S}p_{X/S}^\bullet(\mathcal{B}_{X/S}) \simeq \bigoplus_{i+j=p} f^* \wedge^i \Theta_S \otimes \mathcal{S}p_{X/S}^{\bullet-i}(\mathrm{gr}_K^j \mathcal{B}_{X/S}).
\]

Applying $\mathbf{R}f_*$ and the projection formula gives
\[
\mathbf{R}f_* \mathrm{gr}_{F_\mathrm{Tot}}^p \mathcal{S}p_{X/S}^{p+q}(\mathcal{B}_{X/S}) \simeq \bigoplus_{i+j=p} \wedge^i \Theta_S \otimes \mathbf{R}^{q-i} f_* \mathcal{S}p_{X/S}^{\bullet-i}(\mathrm{gr}_K^j \mathcal{B}_{X/S}).
\]
Taking hypercohomology over $S$ identifies the $E_1$-page:
\[
E_1^{p,q} \simeq \bigoplus_{i+j=p} \mathcal{H}^{q-i}\Big(S, \wedge^i \Theta_S \otimes \mathbf{R}^{q-i} f_* \mathcal{S}p_{X/S}^{\bullet-i}(\mathrm{gr}_K^j \mathcal{B}_{X/S})\Big),
\]
with differential $d_1$ induced by the Spencer differential along horizontal vector fields, which reduces to the Gauss–Manin connection for $p=0$.

Finally, non-characteristic base change ensures that for each $s\in S$, the restriction
\[
\mathcal{S}p_{X/S}^\bullet(\mathcal{B}_{X/S})|_{X_s} \simeq \mathcal{S}p_{X_s}^\bullet(i_s^*\mathcal{B}_{X/S}),
\]
and hence $
\mathbf{R}f_* \mathcal{S}p_{X/S}^\bullet(\mathcal{B}_{X/S}) \otimes_{\mathcal{O}_S} k(s) \simeq \mathscr{H}_{Sp}^\bullet(X_s; i_s^*\mathcal{B}_{X/S}),$
as required.

\subsubsection{Ellipticity}
Let $M$ be smooth real analytic manifold with complexification $X$. Let $\mathcal{A}_E\rightarrow \mathcal{B}$ be the $\D$-PDE associated to a geometric structure of order $k$ i.e. 
$$A_E:=\mathcal{O}\big(\mathrm{Jets}_X^{\infty}Der(E)\big),$$
and at the defining level $k$:
\begin{equation}
    \label{eqn: Gauge structure diagram}
\begin{tikzcd}
    \mathfrak{g}^k\simeq Ker(\hat{\sigma}_k^{\EQ})\arrow[d]\arrow[r]& \EQ_k\arrow[d]\arrow[dr,"\hat{\sigma}_k",dotted] \arrow[r]& \EQ_k^0 \arrow[d]& 
    \\
    \mathrm{Jets}_X^kEnd(E)\simeq ker(\hat{\sigma}_k)\arrow[d]\arrow[r]& \mathrm{Jets}_X^k(DerE)\arrow[d] \arrow[r,"\hat{\sigma}_k"]& \mathrm{Jets}_X^k(\Theta_X)\arrow[d]
    \\
    End(E)\arrow[r] & Der(E)\arrow[r,"\sigma"]& \Theta_X
\end{tikzcd}
\end{equation}
where $\EQ_k^0$ is the Lie-equation on $X$ i.e. $\EQ_k^0\rightarrow \mathrm{Jets}_X^k(T_X)$ is a PDE and its sheaf of sections is an $\mathcal{O}_X$-Lie subalgebra of $\mathrm{Jets}_X^k(\Theta_X).$

\begin{definition}
    \normalfont 
Let $k\in \mathbb{N}$, and consider $\mathrm{Spec}_{\mathcal{D}}(\mathcal{B}),$ be the $\D$-scheme associated to a holomorphic geometric structure of order $k$ in $E.$
It is \emph{elliptic} if for every solution $\varphi:X\rightarrow \mathrm{Spec}_{\mathcal{D}}(\mathcal{B})$, the support condition $supp(\mu\Theta_{\mathcal{B},\varphi})\cap T_M^*X\subset M\times_X T_X^*X$ holds.
\end{definition}

More invariantly, we ask that
$$Char_{\mathcal{D}}(\mathcal{B})\times \mathrm{Spec}_{\mathcal{D}}(\mathcal{B})\cap T_M^*X\subset \mathrm{Spec}_{\mathcal{D}}(\mathcal{B})\times_XM\times T_X^*X.$$

This gives a global geometric characterization of ellipticity: if $\mathcal{B}$ is explicity given by an operator $P$, it means the symbol of the operator has no real characteristic covectors.

Suppose $\EQ_0\subset \mathrm{Jets}_X^k(DerE)$ is a $k$-th order geometric structure. Then $\EQ_0$ is elliptic if its geometric symbol $\mathcal{N}^k\subset Sym^k(\Omega_X^1)\otimes Der(E)$ has no real characteristic covectors, i.e. for any $\xi_x\neq 0\in \Omega_{X,x}^1$ we have that 
$\mathcal{N}_x^k\cap Der_x(E)\otimes \xi_x^k=0,$
for any $x\in X.$
\subsubsection{}
The index of a $k$-th order geometric structure with defining ideal $\mathfrak{I}_k$ is the Euler-characteristic of the Spencer complex:
$$\chi(\mathfrak{I}_k):=\sum_{i\geq 0}(-1)^i\mathrm{dim} H^i(\mathfrak{I}_k).$$
Via the Spencer complex for $\mathrm{Jets}_{X}^k(\Theta_X)$ and for the gauge structure $\mathfrak{g}^k$ by restriction of Spencer complex of $ker(\hat{\sigma}_k)$, as in (\ref{eqn: Gauge structure diagram}), we have the following.
\begin{lemma} The following relation holds,
$Ind(\EQ_k)=Ind(\mathfrak{g}^k)+ Ind(\EQ_k^0).$
\end{lemma}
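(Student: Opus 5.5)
The plan is to derive the identity $Ind(\EQ_k)=Ind(\mathfrak{g}^k)+Ind(\EQ_k^0)$ from additivity of Euler characteristics in a short exact sequence of Spencer complexes. This sequence comes from the top row of diagram \eqref{eqn: Gauge structure diagram}. The exact sequence of bundles $0\to End(E)\to Der(E)\xrightarrow{\sigma}\Theta_X\to 0$ (the Atiyah sequence) prolongs to jets. Since $\mathrm{Jets}_X^k$ is an exact functor on vector bundles, we get $0\to \mathrm{Jets}_X^k End(E)\to \mathrm{Jets}_X^k(DerE)\xrightarrow{\hat\sigma_k}\mathrm{Jets}_X^k(\Theta_X)\to 0$. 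Restricting to $\EQ_k$ gives the top row
$$0\to \mathfrak{g}^k\to \EQ_k\xrightarrow{\hat\sigma_k}\EQ_k^0\to 0,$$
with $\mathfrak{g}^k=Ker(\hat\sigma_k|_{\EQ_k})$ and $\EQ_k^0=\hat\sigma_k(\EQ_k)$ by definition.

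The first step is to check that this sequence persists under all prolongations: $0\to(\mathfrak{g}^k)^{(\ell)}\to\EQ_k^{(\ell)}\to(\EQ_k^0)^{(\ell)}\to 0$ should be exact for every $\ell$. Injectivity and the identification of the kernel are formal, since prolongation is the intersection of $\mathrm{Jets}^{k+\ell}$ with the iterated jets. Surjectivity is where formal integrability of $\EQ_k$ enters. The projections $\EQ_k^{(\ell+1)}\to\EQ_k^{(\ell)}$ are surjective, and $\hat\sigma$ commutes with the prolongation maps. So the image of $\EQ_k^{(\ell)}$ is a formally integrable Lie equation whose order $k$ part is $\EQ_k^0$, and hence it coincides with $(\EQ_k^0)^{(\ell)}$.

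The second step is compatibility with the Spencer operators. The Spencer operator $D$ on jets of a bundle is natural in bundle maps, so $\hat\sigma$ and the inclusion of $End(E)$ commute with $D$. This gives a short exact sequence of Spencer complexes
$$0\to \mathcal{S}p^\bullet(\mathfrak{g}^k)\to \mathcal{S}p^\bullet(\EQ_k)\to\mathcal{S}p^\bullet(\EQ_k^0)\to 0,$$
since tensoring with $\Omega^i_X$ is exact. The corresponding long exact sequence in (hyper)cohomology then yields additivity of $\chi$, provided all three cohomologies are finite-dimensional.

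The main obstacle is this finiteness, which requires each of the three complexes to be elliptic. Ellipticity of $\EQ_k$ is assumed. From the symbol condition $\mathcal{N}^k_x\cap Der_x(E)\otimes\xi^k=0$, restriction to $End(E)\otimes\xi^k$ gives ellipticity of $\mathfrak{g}^k$, and projection along $\sigma$ gives that of $\EQ_k^0$. I would check the latter via additivity of characteristic varieties in short exact sequences, $\mathsf{Char}(\EQ_k)=\mathsf{Char}(\mathfrak{g}^k)\cup\mathsf{Char}(\EQ_k^0)$, together with the definition of ellipticity via $T_M^*X$. With finiteness established, compactness of $X$ and Theorem \ref{ThmA:Intro} (or Lemma \ref{NC Rel Elliptic} in the relative setting) make each index well defined, and additivity gives the claim. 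Equivalently, one can argue in $K_0$ through Lemma \ref{GRRSpencerDelta}, since $\mathrm{ch}$ is additive.
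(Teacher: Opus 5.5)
Your overall route is the paper's. Its proof reads the identity off the exact top row $0\to\mathfrak{g}^k\to\EQ_k\to\EQ_k^0\to 0$ of diagram \eqref{eqn: Gauge structure diagram} and invokes additivity of Euler characteristics. The material on prolongations, Spencer operators and finiteness is therefore your own elaboration, and one piece of it does not hold as argued.

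\emph{The surjectivity step on prolongations.} Suppose the image system $\{\hat\sigma_{k+\ell}(\EQ_k^{(\ell)})\}_{\ell}$ has surjective projections and order-$k$ part $\EQ_k^0$. This only gives $\hat\sigma_{k+\ell}(\EQ_k^{(\ell)})\subseteq(\EQ_k^0)^{(\ell)}$, not equality. Eliminating the $End(E)$-components at higher order can impose new conditions on the $\Theta_X$-components.

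For example, take $\mathbb{R}^2$ with $V=U\oplus W$ a sum of trivial line bundles, and $R_1=\{u=w_x,\ u_x=w_y\}\subset J^1(V)$. It is formally integrable, since $R_1^{(1)}\to R_1$ is onto and the symbol is involutive. It projects onto all of $J^1(W)$. But the projection of $R_1^{(1)}$ satisfies $w_{xx}=w_y$, so it is a proper subbundle of $J^2(W)$.

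Nothing in your argument uses the Lie-algebroid structure to rule this out. When it happens, the quotient of Spencer complexes is not the Spencer complex of $\EQ_k^0$. The kernel side is fine: $\EQ_k^{(\ell)}\cap \mathrm{Jets}_X^{k+\ell}(End E)=(\mathfrak{g}^k)^{(\ell)}$ is indeed formal.

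\emph{How to close the gap.} There are two options.
\begin{enumerate}
\item Take $\EQ^0$ to mean the image system at every order. Equivalently, it is the $\mathcal{D}$-submodule of the $\mathcal{D}$-module of $\EQ_k$ generated by $\Theta_X^{\vee}$ through the dual anchor. This is the reading under which the paper's one-line appeal to the diagram is literally correct. You then have a genuine short exact sequence of coherent $\mathcal{D}$-modules, and the quotient complex is by definition the Spencer complex of $\EQ^0$. Your characteristic-variety additivity then gives ellipticity of both outer terms.
\item Add the hypothesis suggested by the paper's phrase ``split-exact'': a splitting $s$ of $\sigma$ (a connection on $E$) with $\mathrm{Jets}_X^k(s)(\EQ_k^0)\subseteq\EQ_k$. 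Functoriality of prolongation under bundle maps then gives $\mathrm{Jets}_X^{k+\ell}(s)\big((\EQ_k^0)^{(\ell)}\big)\subseteq\EQ_k^{(\ell)}$. Surjectivity follows from $\sigma\circ s=\mathrm{id}$.
\end{enumerate}

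\emph{Ellipticity of $\EQ_k^0$.} Drop the symbol-level claim that ``projection along $\sigma$'' transfers ellipticity to $\EQ_k^0$. A preimage of $\xi^k\otimes w$ in the symbol of $\EQ_k$ need not be of the form $\xi^k\otimes v$. So the condition $\mathcal{N}^k_x\cap Der_x(E)\otimes\xi^k=0$ says nothing directly about the quotient. The characteristic-variety argument is the one that works.
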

\begin{proof}
This is immediate from the the usual additivity of Euler-characteristics for split-exact sequences and thus 
$Ind(\EQ_k)= Ind(\mathfrak{g}^k)+ Ind(\EQ_k^0),$
follows directly from (\ref{eqn: Gauge structure diagram}), where we set $Ind(\EQ_k):=\chi(\mathfrak{I}_k),$
 for $\EQ_k$ with ideal $\mathfrak{I}_k.$
\end{proof}

\subsubsection{Topological index of elliptic geometric structures}
Following \cite{LR} closely, we consider an elliptic geometric structure $\EQ$ in a holomorphic vector bundle $E\rightarrow X$. Without loss of generality we may take $\EQ$ to be of order $\leq 1.$ The following result computes the topological index in terms of its stable Spencer cohomology (cf.  \ref{eqn: FiberwiseRelSpener}), which follows from Lemma \ref{NC Rel Elliptic}.
\begin{lemma}
\label{prop: TopIndex}
Given $\EQ^1\subset \mathrm{Jets}^1(DerE),$ then $\EQ_{f}^1\subset \mathrm{Jets}^1(DerE|_s),$ and recalling 
$\mathrm{Ind}(\EQ):=\sum_{i\geq 0}(-1)^i\mathrm{dim}_{\mathbb{C}}\mathcal{H}^i(\EQ),$ we have 
$$\mathrm{ch}[\chi(\EQ^1)]=(-1)^{\mathrm{dim}(X_{\infty})}\chi(S)\sum_{r=0}^{2k+\ell_0}\sum_{i=0}^{2k}
(-1)^{i+k}\int_{X_{\infty}}\mathrm{ch}(\mathcal{H}_{\mathbb{C}}^{r-i,i}(\mathfrak{g}_f)\wedge \mathrm{Td}_{\mathbb{C}}(T_{X_{\infty}}X)\frac{\mathrm{Td}_{\mathbb{C}}S}{\widetilde{e}(S)},$$
where $\ell_0=\ell_0(rankE,dimX,order\EQ),$ is the degree of involutivity, obtained via the $\delta$-Poincare lemma in stable spencer cohomology.
\end{lemma}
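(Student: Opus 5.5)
The plan is to obtain the formula as a specialization of the families index formula, Lemma \ref{NC Rel Elliptic}, applied to the fiberwise restricted system $\EQ^1_f=\EQ^1_{X/S}$. The symbol class $[\sigma(\EQ_{X/S})]$ is then computed through the stable Spencer cohomology of the geometric symbol $\mathfrak{g}_f$.

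\textbf{Step 1: reduction to the fibers.} First, I will check that restricting $\EQ^1\subset \mathrm{Jets}^1(DerE)$ to the fibers gives $\EQ^1_f\subset \mathrm{Jets}^1(DerE|_s)$. This uses the description of $\EQ_{X/S}$ as the image of $\mathcal{I}$ under $r:\D_X\otimes\Omega^1_X\to\D_X\otimes\Omega^1_{X/S}$. Formal integrability and involutivity pass to the fibers, as noted before Lemma \ref{NC Rel Elliptic}, and ellipticity is inherited fiberwise because the symbol of the restriction is the restriction of the symbol. Hence Lemma \ref{NC Rel Elliptic} applies and gives
$$\mathrm{ch}[\chi(\EQ^1)]=(-1)^{\dim X_\infty}\int_{X_\infty}\mathrm{ch}(\sigma(\EQ_{X/S}))\wedge\mathrm{Td}_{\mathbb{C}}(T_{X_\infty}X).$$

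\textbf{Step 2: the symbol class.} Next, I will identify $\mathrm{ch}(\sigma(\EQ_{X/S}))$ with an alternating sum of Chern characters of the stable Spencer cohomology bundles $\mathcal{H}^{r-i,i}_{\mathbb{C}}(\mathfrak{g}_f)$. Here $i$ is the form degree and $r-i$ the symbol degree along the $\delta$-Spencer sequence $\mathfrak{g}^{(r-i)}\otimes\Lambda^i\to\mathfrak{g}^{(r-i-1)}\otimes\Lambda^{i+1}$. Let $\ell_0$ be the degree of involutivity given by the $\delta$-Poincaré lemma. Then $\mathcal{H}^{r-i,i}=0$ for $r>2k+\ell_0$, and the form degree is bounded by the real fiber dimension $2k$, so both sums are finite. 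Since $K$-theory classes depend only on the alternating sum of the graded pieces of a complex, the symbol class in $K^0(T_{X_\infty}X)$ equals $\sum_{r,i}(-1)^{i}[\pi^*\mathcal{H}^{r-i,i}]$, up to the Thom-type twist. The sign convention $(-1)^{i+k}$ comes from the degree shift between Spencer and symbol gradings, following \cite{LR}.

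\textbf{Step 3: the base factor.} Finally, I will account for the factor $\chi(S)\,\mathrm{Td}_{\mathbb{C}}S/\widetilde{e}(S)$. This factor comes from the horizontal part $f^*\Theta_S$ in the Leray filtration of Subsect.~\ref{ssec: Hyp of geom str}. Its graded pieces $\wedge^i f^*\Theta_S$ contribute an Euler class of $S$. Dividing by the (modified) Euler class converts the Thom isomorphism on $T S$ back to a Todd factor, as in the Lychagin–Rubtsov formula. The factor $\chi(S)$ arises from integrating the Euler class over the base.

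I expect Step 2 to be the main obstacle, specifically matching the symbol class with stable Spencer cohomology under the exact index ranges and signs. My first attempt will be to filter the symbol complex by symbol degree and invoke the $\delta$-Poincaré lemma to truncate it. Step 3 is the least rigid part, and there I would follow \cite{LR} verbatim.
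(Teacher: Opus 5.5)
Your Step~1 is the same reduction the paper makes. The paper gives no proof of this lemma beyond the remark that it follows from Lemma~\ref{NC Rel Elliptic}, following \cite{LR}. One caveat: relative ellipticity of $\EQ_{X/S}$ has to be assumed, as in that lemma, not derived from ellipticity of $\EQ$. The fiberwise system $r(\mathcal{I})\cap \D_{X/S}\otimes\Omega^1_{X/S}$ keeps only equations with purely vertical derivatives, so it can lose equations. For example, no nonzero element of the $\D_X$-ideal generated by $\partial_{z_1}+i\partial_{z_2}$ involves only $\partial_{z_2}$; for $f(z_1,z_2)=z_1$ the fiberwise system is therefore empty, although the operator is elliptic on $\mathbb{R}^2$.

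The first genuine gap is in Step~2. The class $[\sigma(\EQ_{X/S})]\in K^0(T_{X_\infty}X)$ is a compactly supported difference class built from the symbols of the Spencer operators. Your filtration argument (filter each prolongation by the $\mathfrak{g}_f^{(j)}$ and take Euler characteristics of the $\delta$-sequences) only gives $\sum_i(-1)^i[\EQ_s^{(\ell-i)}\otimes\Omega^i_{X/S}]=\sum_{r,i}(-1)^i[\mathcal{H}^{r-i,i}(\mathfrak{g}_f)]$ in $K^0$ of the fiber. The pullbacks $\pi^*\mathcal{H}^{r-i,i}$ are not compactly supported, so their alternating sum is not the symbol class, and its integral over $T_{X_\infty}X$ is not even defined. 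What connects the two is the Thom isomorphism of the vertical bundle $T_{X_\infty}X$. In Atiyah--Singer's cohomological form, for real fiber dimension $n=2k$, it turns the fiber integral of Step~1 into $(-1)^{n(n+1)/2}\sum_{r,i}(-1)^i\int_{X_\infty}\mathrm{ch}(\mathcal{H}^{r-i,i}_{\mathbb{C}}(\mathfrak{g}_f))\,\mathrm{Td}_{\mathbb{C}}(T_{X_\infty}X)/e(T_{X_\infty}X)$, and $(-1)^{n(n+1)/2}=(-1)^k$. So the ``Thom-type twist'' you set aside is the substance of Step~2. It is where $(-1)^k$ comes from, not a shift between Spencer and symbol gradings. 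The Euler class it introduces is that of the fibers, not of $S$.

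Step~3 then cannot be reconciled with Step~1. Step~1 already writes $\mathrm{ch}$ of the family index as a fiber integral with no base factor. The horizontal part $f^*\Theta_S$ of the Leray filtration never enters the fiberwise system, so multiplying by $\chi(S)\,\mathrm{Td}_{\mathbb{C}}S/\widetilde{e}(S)$ simply changes the answer.

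If instead $\chi(\EQ^1)$ means the absolute index on $X$, the horizontal part does enter, through the spectral sequence of Subsect.~\ref{ssec: Hyp of geom str}. Its graded pieces $\wedge^p f^*\Theta_S\otimes\mathcal{S}p^{\bullet-p}_{X/S}$ form a de Rham/Spencer-type complex on $S$ with coefficients $F$ in the fiberwise Spencer cohomology. For such a complex, $\mathrm{Td}_{\mathbb{C}}S/e(S)$ is integrated against $\mathrm{ch}(F)$ times the Chern character of the alternating exterior powers, and this collapses, up to sign, to $\mathrm{ch}(F)\,e(TS)$. Only $\mathrm{rank}\,F$ survives, and the base contributes the number $\chi(S)$. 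Thus $\chi(S)$ is what remains after $\mathrm{Td}_{\mathbb{C}}S/\widetilde{e}(S)$ has been integrated out. Your Step~3 both divides by the Euler class of $S$ and integrates it, which double counts.

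Before any proof can close, you must fix which index the left-hand side denotes. In the absolute reading, the consistent outcome is $\chi(S)$ times the fiber index from Step~2, with no class on $S$ left over. Following \cite{LR} verbatim will not settle this, and the paper offers no further argument to rely on.
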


\subsection{Application: Finite-type systems}
A \emph{compatibility condition} of a system of PDE's 
$E\subset J^k(n,m)$ is an algebraic equation defining 
$\pi_{k+\ell,k}(E)$ which is algebraically independent of $F=0$ and 
which is satisfied by all formal solutions. The Frobenius theorem states the following.
\begin{theorem}
\label{FTypeTheorem}
Let $E\subset J^k(n,m)$ be a PDE of finite type $\ell$. Then every 
solution is uniquely determined by its $(k+\ell-1)$-jet. If in 
addition $E$ has no compatibility conditions, then for every 
$\xi\in E^{(k+\ell)}$ there exists a local solution 
$u\in Sol(E)$ satisfying $j_x^{k+\ell}u=\xi$.
\end{theorem}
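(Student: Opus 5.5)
The plan is to reduce Theorem \ref{FTypeTheorem} to linear algebra on the jet tower, using the notion of finite type: the geometric symbol vanishes from order $k+\ell$ on, i.e.\ $\mathfrak{g}^{(k+\ell+j)}=\ker\big(\pi_{k+\ell+j,k+\ell+j-1}|_{E^{(\ell+j)}}\big)=0$ for all $j\ge 0$. Here $E^{(r)}\subset J^{k+r}(n,m)$ denotes the $r$-th prolongation, as in the prolongation tower used for \eqref{eqn: FiberwiseRelSpener}.

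\emph{Uniqueness.} First, since the symbol kernel vanishes from order $k+\ell$ onward, each projection $\pi_{k+\ell+j,k+\ell+j-1}:E^{(\ell+j)}\to E^{(\ell+j-1)}$ is injective. Hence a point of $E^{(\ell-1)}$ has at most one lift to each higher prolongation, and so at most one formal power series solution (a point of $E^{(\infty)}$). Next, two solutions $u,v$ with $j^{k+\ell-1}_xu=j^{k+\ell-1}_xv$ at every point define the same point of $E^{(\infty)}$. In the analytic category this already gives $u=v$ by unique continuation of Taylor series. To avoid using analyticity, I would instead rewrite $E^{(\ell)}$ as a Cartan-type first-order system in the unknowns $w=j^{k+\ell-1}u$: injectivity means every derivative $\partial_i w$ is expressed as $\Phi_i(x,w)$. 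This is a total differential equation $dw=\Phi(x,w)$, and uniqueness follows from ODE uniqueness along rays from $x$.

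\emph{Existence.} Assume there are no compatibility conditions, so that $\pi_{k+\ell+1,k+\ell}:E^{(\ell+1)}\to E^{(\ell)}$ is surjective. Combined with injectivity, this makes it a diffeomorphism onto its image, and $E^{(\ell)}$ is the graph of the section $w\mapsto\Phi(x,w)$ described above. The plan is to show that the distribution on $E^{(\ell-1)}$ spanned by $D_i=\partial_{x_i}+\Phi_i\cdot\partial_w$ (the Cartan distribution restricted to the system) is involutive. I would compute $[D_i,D_j]$ and observe that its vanishing is equivalent to the equalities $D_j\Phi_i=D_i\Phi_j$. These equalities are precisely the second-order compatibility equations obtained by cross-differentiating $\partial_iw=\Phi_i$. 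If some such equation failed to hold identically on $E^{(\ell)}$, it would cut out a proper algebraic condition on $\pi_{k+\ell+1,k+\ell}(E^{(\ell+1)})$ that is independent of $F=0$, contradicting the hypothesis. The classical Frobenius theorem then gives, for each $\xi\in E^{(\ell)}$, an integral $n$-manifold through the projected point $\pi(\xi)$ that is transverse to the fibres, hence a graph $x\mapsto w(x)$. Since the $w$-components satisfy $\partial_iw=\Phi_i$, the graph is holonomic, i.e.\ $w=j^{k+\ell-1}u$ for some $u$. This $u$ solves $E$ and, by construction, $j^{k+\ell}_xu=\xi$.

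\emph{Main obstacle.} The key step, and the one I expect to be hardest, is the identification of $[D_i,D_j]=0$ with the absence of compatibility conditions. The indexing must be matched carefully: the statement is phrased with $\xi\in E^{(k+\ell)}$ as a jet of order $k+\ell$, whereas the argument naturally works with $\xi\in E^{(\ell)}\subset J^{k+\ell}$, and "algebraically independent of $F=0$" has to be translated into exactly "vanishes on $E^{(\ell)}$." A second point needs checking: the holonomy constraints, namely that the components of $w$ of lower order have derivatives equal to the components of higher order, must be part of $\Phi$. They are, because $E^{(\ell)}$ sits inside the holonomic jet space, so this part should be routine.
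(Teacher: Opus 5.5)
Your proposal is correct and follows the standard argument behind the classical Frobenius theorem, which the paper simply quotes without proof. Finite type makes $E^{(\ell)}$ the graph of $\Phi$ over its image in $J^{k+\ell-1}$; absence of compatibility conditions, read as you do as surjectivity of $E^{(\ell+1)}\to E^{(\ell)}$, is then exactly the involutivity of the rank-$n$ distribution spanned by the $D_i$, and one small adjustment is needed: that distribution lives on $\pi_{k+\ell,k+\ell-1}(E^{(\ell)})$ rather than on all of $E^{(\ell-1)}$, and its tangency there is supplied by the same surjectivity.
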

If a PDE is of finite type, then all of its 
prolongations are also of finite type. An analogous 
generalization holds for infinite type systems under the assumption 
of analyticity of $E$.

\begin{theorem}
\label{FTypeTheorem2}
    If $\mathcal{E}$ is finite-type, 
    $\mathrm{dim}\mathrm{Sol}(\mathcal{E})\leq \mathrm{sup}_{x\in X}\sum_i \mathrm{dim} g_i(x).$
\end{theorem}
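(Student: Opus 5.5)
The plan is to reduce the statement to a dimension count on the formal solution space, using the symbol sequence together with Theorem \ref{FTypeTheorem}. Let $\EQ\subset J^k(n,m)$ have finite type $\ell$, and write $g_i=g_i(x)$ for the symbols of the prolongations $\EQ^{(i)}$, with $g_i\subset Sym^{k+i}T_x^*\otimes E_x$ for $i\ge 0$ and the lower-order pieces $g_i=Sym^iT^*_x\otimes E_x$ for $i<k$. Finite type means $g_i(x)=0$ for all $i\ge \ell$. The sum $\sum_i\dim g_i(x)$ is therefore finite, and the supremum on the right-hand side makes sense.

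\emph{Step 1: bound the jets.} For each $x$ and each $N$, the fibers of the projections $\pi_{N,N-1}:\EQ^{(N)}_x\to \EQ^{(N-1)}_x$ are affine spaces modeled on $g_N(x)$, wherever they are nonempty. Inducting on $N$ then gives
\[
\dim \EQ^{(N)}_x\le \sum_{i\le N}\dim g_i(x).
\]
For $N\ge k+\ell-1$ the fibers are single points, so this bound is uniform in $N$ and equals $\sum_i\dim g_i(x)$. Note that the bound holds even when compatibility conditions cut down the image, since they can only shrink the fibers.

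\emph{Step 2: injectivity.} Theorem \ref{FTypeTheorem} says that every solution is uniquely determined by its $(k+\ell-1)$-jet. Hence, for a fixed point $x$, the map $u\mapsto j^{k+\ell-1}_xu$ is injective from the solutions defined near $x$ into $\EQ^{(k+\ell-1)}_x$. When $X$ is connected, I would make this global by analytic continuation, since the jet at one point determines the solution on the whole connected domain. This yields
\[
\dim\Sol(\EQ)\le \dim \EQ^{(k+\ell-1)}_x\le \sum_i\dim g_i(x)\le \sup_{x\in X}\sum_i\dim g_i(x).
\]

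\emph{Main obstacle.} The difficulty is giving ``$\dim\Sol(\EQ)$'' a meaning in the nonlinear setting, where $\Sol(\EQ)$ is not a vector space. I would interpret it as a manifold dimension, or more generally as the dimension of any finite-dimensional parameter family of solutions. An injective map from a manifold into the finite-dimensional manifold $\EQ^{(k+\ell-1)}_x$ can only bound dimension if it is smooth. To get smoothness I would use the Frobenius structure, namely the Cartan distribution on $\EQ^{(k+\ell-1)}$. This distribution is flat of rank $n$, its integral leaves are exactly the jets of solutions, and so the evaluation map is a smooth injective immersion. In the linear case the argument simplifies: $\Sol(\EQ)$ is a vector space, the jet map is linear and injective, and the inequality is just a comparison of linear dimensions.
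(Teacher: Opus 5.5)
The paper gives no proof of this statement. It is quoted as a classical fact next to the Frobenius theorem (Theorem~\ref{FTypeTheorem}) and later invoked for $\mathrm{DHilb}^{P,\mathrm{f.t}}$. The nearest argument in the paper is Lemma~\ref{FTypeVBundle}, which turns a linear finite-type system into a connection on a finite-rank bundle, i.e.\ your closing linear-case remark. So your proposal has to stand on its own.

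Its core is the standard argument and is complete for linear systems: bound the fiber of the equation at jet level $k+\ell-1$ by the symbol sequence, then inject solutions on a connected domain into it by taking jets at $x$. Two small fixes:
\begin{itemize}
\item Clean up the indexing. Use degree indexing, $g_j\subset Sym^jT^*_x\otimes E_x$ with $g_j=Sym^jT^*_x\otimes E_x$ for $j<k$. Then finite type $\ell$ means $g_j=0$ for $j\ge k+\ell$. Your ``$\mathcal{E}^{(k+\ell-1)}$'' is the prolongation $\mathcal{E}^{(\ell-1)}\subset J^{k+\ell-1}$, and the fibers of $\mathcal{E}^{(N)}\to\mathcal{E}^{(N-1)}$ are zero-dimensional for $N\ge\ell$.
\item In the $C^\infty$ setting, globalization is an open--closed argument, not analytic continuation. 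The locus where two solutions have equal jets is closed, and it is open by ODE uniqueness for the first-order system satisfied by the jet section.
\end{itemize}

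In the nonlinear case there are two genuine errors.

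\emph{First: injectivity at level $k+\ell-1$.} Only prolongation steps have affine fibers. The bottom projection $\mathcal{E}\to J^{k-1}$ has curved fibers whose tangent spaces are the symbols $g_k(\theta)$, and these depend on $\theta\in\mathcal{E}$, not just on $x$. The dimension count survives via tangent spaces. However, injectivity of $u\mapsto j^{k+\ell-1}_xu$ can fail when $\ell=0$. For $(u')^2=1$ one has $g_1=0$, yet $u=x+c$ and $u=-x+c$ share their $0$-jet at $0$. So Theorem~\ref{FTypeTheorem} as quoted needs linearity or $\ell\ge1$. The fix is to evaluate $j^{k+\ell}_xu\in\mathcal{E}^{(\ell)}_x$ instead. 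The map $\mathcal{E}^{(\ell+1)}\to\mathcal{E}^{(\ell)}$ has affine fibers modeled on $g_{k+\ell+1}=0$, so this evaluation is injective. The bound is unchanged because $g_{k+\ell}=0$.

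\emph{Second: the ``main obstacle'' fix is wrong.} At jet level $k+\ell-1$ the Cartan distribution has rank $n+\dim g_{k+\ell-1}$, not $n$. The rank-$n$ distribution sits at level $k+\ell$. It is flat only when there are no compatibility conditions, which the statement does not assume. For example, take $u_x=au$, $u_y=bu$ with $a_y-b_x$ nowhere zero. Here $g_1=0$, but
\[
[\partial_x+au\,\partial_u,\;\partial_y+bu\,\partial_u]=(b_x-a_y)u\,\partial_u\neq0,
\]
and $u=0$ is the only solution. This is exactly the strict-inequality case.

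Smoothness is also beside the point: $\mathrm{Sol}(\mathcal{E})$ has no smooth structure until you transport one from the image of $\mathrm{ev}_x$. None of this machinery is needed. Either define $\dim\mathrm{Sol}(\mathcal{E})$ as the dimension of $\mathrm{ev}_x(\mathrm{Sol}(\mathcal{E}))\subset\mathcal{E}^{(\ell)}_x$. Or take a family $\{u_t\}_{t\in B}$ with $(t,y)\mapsto j^{k+\ell}u_t(y)$ continuous. Then $t\mapsto j^{k+\ell}_xu_t$ is a continuous injection $B\to\mathcal{E}^{(\ell)}_x$, so $\dim B\le\dim\mathcal{E}^{(\ell)}_x$ by invariance of domain. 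Flatness is what upgrades the inequality to an equality, not what proves it.
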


An alternative characterization is as follows: $P:\Gamma(E)\to \Gamma(V)$ is finite-type if there exists $\ell_0<\infty$ and a bundle morphism $k:J^{\ell_0}E\to J^{\ell_0+1}V,$ and a differential operator $Q:\Gamma(V)\to \Gamma(J^{\ell_0+1}E),$ for which 
\begin{equation}
    \label{eqn: FT-condition}
j^{\ell_0+1}s-k(j^{\ell_0}s)=Q\big(P(s)\big),
\end{equation}
for every $s\in \Gamma(E).$
\begin{lemma}
\label{FTypeVBundle}
    Suppose $P:E\to V$ is a differential operator between vector bundles over $X.$ Assume $P(s)=0$ is finite type. Then it is equivalent to a flat connection equation $Du'=0,$ on a canonical finite–rank bundle $U'\to X.$
\end{lemma}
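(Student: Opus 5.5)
We prove Lemma \ref{FTypeVBundle} by prolonging the equation until it closes up, using the finite-type identity \eqref{eqn: FT-condition}. Set $U':=J^{\ell_0}E$, a finite-rank bundle determined by $P$ and $\ell_0$. The bundle morphism $k:J^{\ell_0}E\to J^{\ell_0+1}V$ in \eqref{eqn: FT-condition} should be read, after the standard identification, as a splitting of the jet projection $\pi_{\ell_0+1,\ell_0}:J^{\ell_0+1}E\to J^{\ell_0}E$, valued in $J^{\ell_0+1}E$. Define the operator
\[
D:\Gamma(U')\to \Gamma(T^*X\otimes U'),\qquad D u := D^{\mathrm{Sp}}\bigl(k(u)\bigr),
\]
where $D^{\mathrm{Sp}}:J^{\ell_0+1}E\to T^*X\otimes J^{\ell_0}E$ is the first Spencer operator from \eqref{eqn: FiberwiseRelSpener}. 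Since $D^{\mathrm{Sp}}$ satisfies the Leibniz rule relative to $\pi_{\ell_0+1,\ell_0}$ and $\pi\circ k=\mathrm{id}$, this $D$ is a connection on $U'$.

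The argument then has three steps. First, we show that solutions of $P$ give flat sections. If $P(s)=0$, then \eqref{eqn: FT-condition} gives $j^{\ell_0+1}s=k(j^{\ell_0}s)$. Because $D^{\mathrm{Sp}}$ kills holonomic jets, we get $D(j^{\ell_0}s)=0$, so $s\mapsto j^{\ell_0}s$ is an injective map $\mathrm{Sol}(P)\to\{u: Du=0\}$. Second, we prove the converse. Given $u$ with $Du=0$, the vanishing of the Spencer operator on $k(u)$ forces $u=j^{\ell_0}s$ for $s:=\pi_{\ell_0,0}u$. This holds by the usual induction on jet order: $D^{\mathrm{Sp}}w=0$ says that each component of $w$ is the derivative of the lower one. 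Then \eqref{eqn: FT-condition}, together with the fact that $Q$ is a differential operator, should give $P(s)=0$. Third, we check canonicity. We replace $U'$ by the subbundle $\pi_{\ell_0}(\EQ^{(\ell_0)})\subset J^{\ell_0}E$, the $\ell_0$-th prolongation of the equation. Theorem \ref{FTypeTheorem} shows this is a bundle on which $k$ restricts, independent of choices, and Theorem \ref{FTypeTheorem2} bounds its rank by $\sum_i\dim g_i$.

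The main obstacle is the converse direction: showing that $D$-flat sections of the full $J^{\ell_0}E$ actually come from solutions. In general they do not, because \eqref{eqn: FT-condition} only controls $P$ through $Q\circ P$. This is also why flatness, i.e.\ $D^2=0$, is not automatic. We handle this by restricting to the prolonged equation $U'=\EQ^{(\ell_0)}$ and, where compatibility conditions are present, further to the stable subbundle cut out by all of them. On $U'$ we would prove that the curvature $D^2$ takes values in compatibility conditions, which vanish by formal integrability. Flatness then holds, and the Frobenius part of Theorem \ref{FTypeTheorem} supplies local solutions through every point. If this curvature computation resists a direct approach, the fallback is to identify $D^2$ with the second Spencer operator, whose vanishing on $\EQ^{(\ell_0)}$ is exactly the involutivity of the stable Spencer sequence.
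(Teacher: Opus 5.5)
Your connection $D=D^{\mathrm{Sp}}\circ k$ and your first step ($D(j^{\ell_0}s)=D^{\mathrm{Sp}}(j^{\ell_0+1}s)=0$ for solutions) are exactly the paper's argument. The bundles differ, however. The paper takes the quotient $U'=J^{\ell_0}E/\pi_{\ell_0}(\mathrm{Im}\,Q)$, and it stops after this forward direction: it never checks $D^2=0$, nor that flat sections come from solutions.

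Your subbundle $\EQ_{\ell_0}\subset J^{\ell_0}E$ is the better choice, because the quotient depends on how $(k,Q)$ are normalised. For the Killing operator on $\mathbb{R}^n$ one may take $k(X,A)=(X,A,0)$ and $Q(h)=\bigl(0,0,\tfrac12(\partial_ah_{bc}+\partial_bh_{ac}-\partial_ch_{ab})\bigr)$. Then $\pi_{1}(\mathrm{Im}\,Q)=0$, and the $D$-flat sections of $J^1T\mathbb{R}^n$ correspond to all affine vector fields, not to Killing fields. This happens even though that system is formally integrable.

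On $\EQ_{\ell_0}$, by contrast, $k$ is forced. If $u=j^{\ell_0}_xs$ with $s$ a formal solution, then $(QPs)(x)=0$, so $k(u)=j^{\ell_0+1}_xs$, i.e.\ $k|_{\EQ_{\ell_0}}=(\pi|_{\EQ_{\ell_0+1}})^{-1}$. Your splitting reading of $k$, which \eqref{eqn: FT-condition} does not imply in general, thus becomes a consequence, and $D$ is independent of $(k,Q)$. This is also the form $\nabla=D\circ(\pi^{\ell_0+1}_{\ell_0})^{-1}$ on $\mathscr{E}_{\ell_0}$ that the paper itself uses later, in the moduli subsection.

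The genuine gap is that the content of the lemma is only announced (``we would prove''): flatness, and the identification of flat sections with solutions. Moreover, it cannot be derived from finite type alone. Take $E$ trivial of rank one on $\mathbb{R}$ and $P(s)=(s',fs)$, with $f$ smooth, $f=0$ on $(-\infty,0]$ and $f>0$ on $(0,\infty)$. Then \eqref{eqn: FT-condition} holds with $\ell_0=0$, $k(s)=(s,0)$ and $Q(v_1,v_2)=(0,v_1\,dx)$. Yet the solution sheaf has stalk $\mathbb{C}$ for $x<0$ and $0$ for $x\ge 0$. It is not locally constant, so it is not the sheaf of flat sections of any flat bundle. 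Your ``stable subbundle cut out by all compatibility conditions'' has jumping rank here, and Theorem \ref{FTypeTheorem} gives no constant-rank statement.

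So you must add formal integrability as a hypothesis, i.e.\ no compatibility conditions, as in the second half of Theorem \ref{FTypeTheorem}. Your curvature sketch already uses it. With it, the two missing steps close as follows.

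Flatness: finite type plus surjectivity make $\EQ_{\ell_0+2}\to\EQ_{\ell_0+1}\to\EQ_{\ell_0}$ isomorphisms. The Spencer operator maps $\Gamma(\EQ_{r+1})$ into $\Omega^1(\EQ_r)$ and commutes with projections. Hence $D^2u=D_2D_1w$, where $w$ is the lift of $u$ to $\EQ_{\ell_0+2}$, and this vanishes because the Spencer sequence is a complex. Involutivity and the ``vanishing of the second Spencer operator'' in your fallback play no role.

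Converse: the route through $Q$ in your second step only yields $Q(P(s))=0$. Use uniqueness instead. A flat section is determined by its value at one point. By Theorem \ref{FTypeTheorem}, each $u\in\EQ_{\ell_0,x}$ equals $j^{\ell_0}_xs$ for a local solution $s$, and $j^{\ell_0}s$ is flat by your first step. Hence every flat section of $\EQ_{\ell_0}$ is $j^{\ell_0}$ of a solution.
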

\begin{proof}
    The bundle $U'$ is the canonical quotient bundle determined by 
    $$\mathrm{coker}(\mathrm{Im}(Q)\subset J^{\ell_0+1}E\xrightarrow{\pi^{\ell_0+1}_{\ell_0}}J^{\ell_0}E).$$
    Namely, it is the quotient bundle obtained by modding out relations appearing in images of $Q$ after projection to order $\ell_0$,
    $$U'\simeq J^{\ell_0}E/\pi_{\ell_0}(\mathrm{Im}Q).$$
 Thus, on the equation manifold $Z_P$ corresponding to $P(s)=0,$ from \eqref{eqn: FT-condition} one has $j^{\ell_0+1}s=k(j^{\ell_0}s),$ so indeed, any $(\ell_0+1)$-st jet is determined by its $\ell_0$-th jet. Define
 \begin{equation}
     \label{eqn: FT Conn}
     D:=S\circ \big(\pi^{\ell_0+1}_{\ell_0}\circ k)^{-1}:U'\to \Omega_X^1\otimes U'.
 \end{equation}
 Hence, $k$, after modding out $\mathrm{Im}(Q),$ induces $U'\to J^{\ell_0+1}E/\mathrm{Im}Q,$which is composed with the Spencer differential $S:J^{\ell_0+1}E\to \Omega_X^1\otimes J^{\ell_0}E,$ and projecting back to $U'.$
 Since local sections $s$ coming from actual solutions of $P(s)=0,$ give $j^{\ell_0+1}s=k(j^{\ell_0}s),$ then 
 $S(j^{\ell_0+1}s)=S\circ k(j^{\ell_0}s),$ but since $S(j^{\ell_0+1}s)=d_{DR}(j^{\ell_0}s),$ sections coming from solutions thus correspond to $u'\in \Gamma(U')$ satisfying
 $D u’=0.$
\end{proof}

\subsection{Moduli interpretation}
We briefly interpret the above result via the moduli of $D$-schemes \cite{KSh1}.
The \emph{stable locus of the $D$-Hilbert functor}, is explicitly understood as the set of points $[\mathcal{E}]$ with continuous rational Poincaré series,
\begin{equation}
    \label{eqn: Rational Poincare}
\mathrm{P}_{G}^{\mathcal{E}}(z)=\sum_{k=0}^{\infty}\mathrm{tr.deg}\big(\mathfrak{R}(\mathcal{E}_\theta^k)^{G_{\theta}^k})-\mathrm{tr.deg}(\mathfrak{R}(\mathcal{E}_{\theta}^{k-1})^{G_{\theta}^{k-1}})\cdot z^k.
\end{equation}

We will put,
$$\mathrm{DHilb}^{P,{\mathrm{f.t}}}(m,n):=\{[\mathscr{E}_{\infty}]\in \mathrm{DHilb}^{P}(m,n) | \exists \ell_0 \text{ with } \mathfrak{g}^{\ell_0+1}=0,\mathfrak{g}^{\ell_0}\neq \emptyset\},$$
thus consisting of those points for which, $\pi_{\ell_0}^{\ell_0+1}:\mathscr{E}_{\ell_0+1}\simeq \mathscr{E}_{\ell_0},$ and from Lemma \ref{FTypeVBundle}, there exists a connection
$\nabla:=D\circ (\pi_{\ell_0}^{\ell+1})^{-1}:\mathscr{E}_{\ell_0}\to \Omega_X^1\otimes \mathscr{E}_{\ell_0},$
satisfying
$\nabla\circ\nabla=0.$
\begin{lemma}
$\mathrm{DHilb}^{P,\mathrm{f.t}}(n,m)$ is finite-type and representable by a quasi-projective scheme.
\end{lemma}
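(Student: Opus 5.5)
The plan is to realize $\mathrm{DHilb}^{P,\mathrm{f.t}}(n,m)$ as a locally closed subscheme of a finite disjoint union of Grassmannian (or Quot) schemes of jet bundles. First, a uniform bound on the involutivity degree: for a point $[\mathscr{E}_\infty]$ in the finite-type locus we have $\mathfrak{g}^{\ell_0+1}=0$, so every higher symbol vanishes. The rational Poincaré series \eqref{eqn: Rational Poincare} is therefore a polynomial of degree $\ell_0$. Since $P$ is fixed, its degree $N(P)$ and its coefficients are fixed, which gives $\ell_0\le N(P)$. Hence the finite-type locus decomposes as a finite disjoint union
$$\mathrm{DHilb}^{P,\mathrm{f.t}}(n,m)=\bigsqcup_{\ell_0\le N(P)}\mathrm{DHilb}^{P,\mathrm{f.t}}_{\ell_0}(n,m),$$
and it suffices to treat each stratum.

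Second, fix $\ell_0$. By Lemma \ref{FTypeVBundle} and Theorem \ref{FTypeTheorem}, the infinite prolongation $\mathscr{E}_\infty$ is uniquely determined by the finite-level datum $\mathscr{E}_{\ell_0+1}\subset J^{\ell_0+1}(n,m)$. Indeed, the isomorphism $\pi^{\ell_0+1}_{\ell_0}:\mathscr{E}_{\ell_0+1}\simeq\mathscr{E}_{\ell_0}$ produces the flat connection $\nabla$, and all higher prolongations are then recovered by iterating $\nabla$. The rank of $\mathscr{E}_{\ell_0+1}$ is the sum $\sum_{i\le\ell_0}\dim\mathfrak{g}^i$, which is read off from $P$. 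I will therefore define a morphism of functors
$$[\mathscr{E}_\infty]\mapsto[\mathscr{E}_{\ell_0+1}]\in\mathrm{Gr}\big(r(P),J^{\ell_0+1}(n,m)\big),$$
working over the relevant jet fiber (or the relative Quot scheme if the base is not a point). I will check that this morphism is a monomorphism, using the uniqueness statement of Theorem \ref{FTypeTheorem} applied in families.

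Third, I will identify the image by conditions on the subbundle $\mathscr{E}_{\ell_0+1}$:
\begin{itemize}
\item vanishing of the $(\ell_0+1)$-symbol, $\ker\pi^{\ell_0+1}_{\ell_0}|_{\mathscr{E}_{\ell_0+1}}=0$, which is an open condition (maximal rank of the projection);
\item $\mathfrak{g}^{\ell_0}\neq0$, which is a closed rank-drop condition;
\item flatness $\nabla\circ\nabla=0$, equivalently the absence of compatibility conditions in Theorem \ref{FTypeTheorem}, which is closed because the curvature is a section depending algebraically on the Plücker coordinates;
\item matching of the Poincaré series with $P$, which is a finite list of rank equalities and hence locally closed.
\end{itemize}
Together these cut out a locally closed subscheme of a projective Grassmannian, which is quasi-projective. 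Taking the finite union over $\ell_0\le N(P)$ then gives finite type and quasi-projectivity.

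The main obstacle I expect is representability: showing that this locally closed subscheme genuinely represents the $D$-Hilbert subfunctor over arbitrary test schemes. That requires the prolongation procedure and the formation of $\nabla$ to commute with base change. I would try to handle this first by using the non-characteristic base-change results above, so that $\mathbf{R}f_*$ of the relevant Spencer complexes commutes with $\otimes k(s)$. I would then argue that the flat-connection description is functorial in the base, so that families of such $\mathscr{E}_{\ell_0+1}$ flatly prolong to families of $\mathscr{E}_\infty$.
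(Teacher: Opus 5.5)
Your proposal is correct and follows the paper's route. The paper's proof uses Lemma \ref{FTypeVBundle} (together with Theorems \ref{FTypeTheorem} and \ref{FTypeTheorem2}) to embed $\mathrm{DHilb}^{P,\mathrm{f.t}}(n,m)$ into $\mathrm{Quot}^P(J^{\ell_0}(E,n))$ and then invokes quasi-projectivity of Quot; your uniform bound on $\ell_0$, your locally closed description of the image, and your base-change discussion supply what that two-line argument leaves implicit.

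One step needs a small repair. The series \eqref{eqn: Rational Poincare} counts transcendence degrees of fields of invariants, not $\dim\mathfrak{g}^k$, so $\mathfrak{g}^{\ell_0+1}=0$ does not by itself give it degree exactly $\ell_0$. Derive the bound instead from the fixed rank $\sum_{i\le\ell_0}\dim\mathfrak{g}^i\geq\ell_0+1$. This inequality holds because no $\mathfrak{g}^i$ with $i\le\ell_0$ can vanish: a zero symbol prolongs to zero, and $\mathfrak{g}^{\ell_0}\neq0$.
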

\begin{proof}
    The result follows from Theorems \ref{FTypeTheorem} and \ref{FTypeTheorem2}. By Lemma \ref{FTypeVBundle}, we have that $\mathrm{DHilb}^{P,\mathrm{f.t}}(n,m)\hookrightarrow \mathrm{Quot}^P\big(J^{\ell_0}(E,n)).$ Since $J^{\ell_0}(E,n)$ is quasi-projective, the Quot-functor is representable by a quasi-projective scheme.
\end{proof}

Fix $\Lambda\subset T^*X$ a closed $\mathbb{C}^{\times}$-conic analytic susbet. Look at PDEs with $\mathrm{Char}\subset \Lambda$
$$\mathrm{DHilb}^{P,\mathrm{ft},\Lambda}(n,m):=\mathrm{DHilb}^{P,\mathrm{ft}}(n,m)\cap \mathrm{DHilb}_{\Lambda}^P(n,m),$$
with
$\mathrm{DHilb}_{\Lambda}^P(m,n):=\{\mathscr{E}_{\infty}\subset J^{\infty}(m,n),\text{algebraic }P^{\mathcal{E}}=P,\mathrm{Ch}(\mathcal{E},\theta)\subset \Lambda\}.$

In this case, the Poincar\'e series \eqref{eqn: Rational Poincare} decomposed by the corresponding microlocal stratification, 
$$\mathrm{DHilb}_{\Lambda}^P(m,n)\simeq\coprod_{\alpha\subset \{1,\ldots,r\}}\mathrm{DHilb}_{\alpha}^P(m,n),\hspace{2mm}P_{\mathrm{DHilb}_{\Lambda}^P}=\sum_{\alpha\subset \{1,\ldots,r\}}P_{\alpha}(z),z\in\mathbb{C}.$$

For recovering interesting index theorems from the Euler-characteristic of the tangent complex, we equip the space of dependent variables and independent variables with additional geometric information. 
For example, assume the manifold of independent variables $(g,M)$ is Riemannian manifold of dimension $2n$, with a complexification $X$ and $(E,h)\to X$ is a hermitian bundle. 
    
For a point
$[\mathscr{E}]\in\mathrm{DHilb}^{P,\mathrm{ft},\mathrm{ell}}(2n,m),$ Spencer-complex has a simple form:
$$0\to \Gamma^{\infty}(\mathscr{E}_{}^{\ell})\to \Gamma(T_M^*\otimes \mathscr{E}_{}^{\ell})\to \cdots \to \Gamma(\wedge^{2n}T^*M\otimes \mathscr{E}_{}^{\ell})\to 0.$$

Ellipticity implies we may introduce
$$\Delta_k^L:=\nabla_k^*\circ \nabla_k+\nabla_{k-1}\circ \nabla_{k-1}^*,\hspace{2mm} \nabla_k:\mathscr{E}\otimes \wedge^k T_M^*\to \mathscr{E}\otimes \wedge^{k+1}T_M^*.$$

Following \cite{RaySinger1971},
for each $s\in\mathbb{C}$ define regularized determinant via $\zeta$-functions,
$$\zeta_k(s):=\sum_{\lambda_i\in \mathrm{Spec}\Delta_k\backslash \{0\}}\lambda_i^{-s},\hspace{2mm} at\hspace{1.5mm} \Re(s)> \dim M.$$

The second author gave a formula for the Ray-Singer analytic torsion of the Spencer complex of an elliptic finite-type PDE.
\begin{lemma}[\cite{LR}] There is a real number, 
$T_X(\mathscr{E}):=\prod_{k=0}^{\dim M}\big(\det \Delta_k^L\big)^{(-1)^{k+1}k/2},$
 which is an invariant of $X.$
\end{lemma}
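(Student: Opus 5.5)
The argument has three steps: well-definedness of each $\zeta$-determinant, independence of the auxiliary metric data, and identification of the product as an invariant of $X$ and of the system $\mathscr{E}$.

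\textbf{Finite-type reduction.} By Lemma \ref{FTypeVBundle}, the point $[\mathscr{E}]\in\mathrm{DHilb}^{P,\mathrm{ft},\mathrm{ell}}(2n,m)$ is equivalent to a flat connection $\nabla=D\circ(\pi^{\ell_0+1}_{\ell_0})^{-1}$ on the finite-rank bundle $\mathscr{E}_{\ell_0}=U'$, with $\nabla\circ\nabla=0$. The Spencer complex displayed above is therefore the twisted de Rham complex $(\Omega^\bullet(M;U'),\nabla)$ of a flat bundle. It is elliptic, since its symbol sequence is the Koszul complex tensored with $U'$, which is exact off the zero section. Consequently each $\Delta_k^L=\nabla_k^*\nabla_k+\nabla_{k-1}\nabla_{k-1}^*$ is a nonnegative, formally self-adjoint, second-order elliptic operator whose principal symbol is $|\xi|_g^2\cdot\mathrm{id}$. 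These adjoints use the metric $g$ on $M$ and the Hermitian metric induced by $h$ on $U'\subset J^{\ell_0}E$.

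\textbf{Zeta regularization.} Seeley's theory and the heat-kernel asymptotics $\mathrm{Tr}\,e^{-t\Delta_k}\sim\sum_j a_j t^{(j-\dim M)/2}$ show that $\zeta_k(s)$ converges for $\Re(s)>\dim M/2$ and continues meromorphically to $\mathbb{C}$, with $s=0$ a regular point. Hence $\det\Delta_k^L=\exp(-\zeta_k'(0))$ is a well-defined positive real number. Since $M$ is compact, $\ker\Delta_k^L$ is finite-dimensional and isomorphic, by Hodge theory, to $H^k(M;U')$, which is the Spencer cohomology. The product $T_X(\mathscr{E})$ is therefore a finite product of positive reals.

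\textbf{Metric independence.} This step is the main obstacle. I would follow Ray--Singer \cite{RaySinger1971}. Take a smooth family $g_u,h_u$ of metrics and let $\alpha_u=\star_u^{-1}\dot\star_u$ be the variation of the Hodge star, including the fibre metric. The standard variation formula gives
$$\frac{d}{du}\log T=\tfrac12\sum_k(-1)^k\Big(\mathrm{FP}_{t\to0}\mathrm{Tr}\big(\alpha_u e^{-t\Delta_k}\big)-\mathrm{Tr}(\alpha_u P_k)\Big),$$
where $P_k$ is the orthogonal projection onto the harmonic forms. The finite-part term is a local integral of heat invariants. When $\dim M=2n$ and $U'$ is flat and unitary, these local contributions cancel in the alternating sum by Poincaré duality, as in the classical case. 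The remaining harmonic-projection term is exactly the variation of the $L^2$ metric on $\det H^\bullet(M;U')$.

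Invariance therefore holds in one of two forms. If $H^\bullet(M;U')=0$, $T_X(\mathscr{E})$ itself is invariant. In general, $T_X(\mathscr{E})$ becomes invariant as an element of the determinant line, i.e. for the Quillen metric \eqref{eqn: QuillenMetric}.

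The delicate point is that $h$ need not make $\nabla$ unitary, because $h$ comes from $E$ rather than from a flat structure. In that case I would either use the Bismut--Zhang anomaly formula, where the extra term involves $\theta(h)=h^{-1}\nabla h$ and integrates to zero on an even-dimensional $M$ since the Euler form pairs with an odd-degree class, or assume $\nabla$ unitary, as in \cite{LR}. Finally, invariance of the flat bundle $U'$ under equivalence of PDEs, which is canonical by Lemma \ref{FTypeVBundle}, shows that $T_X(\mathscr{E})$ depends only on $X$ and on the class $[\mathscr{E}]$.
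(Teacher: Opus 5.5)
The paper gives no proof of this lemma (it is quoted from \cite{LR}), so your argument has to stand on its own. Your first two steps are fine. Lemma \ref{FTypeVBundle} turns the Spencer complex into the twisted de Rham complex $(\Omega^\bullet(M;U'),\nabla)$, and Seeley's heat-kernel theory makes each $\det\Delta_k^L$ a well-defined positive number. The genuine gap is in the invariance step, and it has two parts.

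First, your fallback for a non-unitary $\nabla$ is wrong. The Bismut--Zhang anomaly has two terms: $\int_M\log\big(\|\cdot\|^2_{\det U',h'}/\|\cdot\|^2_{\det U',h}\big)\,e(TM)$ and $\int_M\theta(h')\,\widetilde{e}(TM;g,g')$. Both vanish in \emph{odd} dimension, because the Euler form and its transgression vanish identically there. On $M$ of dimension $2n$ both integrands are top-degree forms, so nothing forces them to vanish, and your parity argument is backwards.

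In fact the number itself moves. Take the round $S^2$ and the finite-type system $dw=0$ for a scalar $w$, so $U'=\mathbb{C}$ and $\nabla=d$. Give it the non-parallel fibre metric $e^{\epsilon\phi}|\cdot|^2$ with $\int\phi=0$.
\begin{itemize}
\item The local term in the variation formula is $\int\phi\,e(TS^2)=0$.
\item The harmonic projections onto $\mathbb{C}\cdot 1$ and $\mathbb{C}\cdot e^{-\epsilon\phi}dA$ change $\log T$ (normalized as in the lemma) by $\tfrac12\big(\log\mathrm{av}(e^{\epsilon\phi})-\log\mathrm{av}(e^{-\epsilon\phi})\big)=\tfrac{\epsilon^3}{6}\,\mathrm{av}(\phi^3)+O(\epsilon^5)$.
\end{itemize}
This is nonzero, e.g.\ for $\phi=P_2(\cos\vartheta)$. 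So for an arbitrary Hermitian $h$ coming from $E$ the asserted invariance fails, and you must add the hypothesis that the induced metric on $U'$ is $\nabla$-parallel.

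Second, even under that hypothesis your conclusion is too weak when $H^\bullet(M;U')\neq 0$. There you only obtain invariance of the Ray--Singer (Quillen) metric on the determinant line, whereas the lemma asserts that the real number $T_X(\mathscr{E})$ is invariant. The fix is the duality you already invoke, applied to the whole spectrum rather than only to the heat coefficients.
\begin{itemize}
\item For oriented $M$ and parallel $h$ one has $\nabla^*=\pm\star\nabla\star$, so $\star\otimes\mathrm{id}_{U'}$ is an isometry $\Omega^k(M;U')\to\Omega^{2n-k}(M;U')$ intertwining $\Delta_k^L$ with $\Delta_{2n-k}^L$. Hence $\zeta_k=\zeta_{2n-k}$.
\item $\sum_k(-1)^k\zeta_k\equiv 0$, since $\nabla$ matches the coexact nonzero spectrum in degree $k$ with the exact spectrum in degree $k+1$.
\end{itemize}
Because $2n$ is even, these combine to $2\sum_k(-1)^k k\,\zeta_k=2n\sum_k(-1)^k\zeta_k=0$. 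Thus $T_X(\mathscr{E})=1$ for every $g$ and every parallel $h$; this is Ray--Singer's even-dimensional vanishing. It proves invariance directly, with no variation formula and no acyclicity dichotomy. It also shows that, in the even-dimensional setting written here, the invariant is trivial in the unitary case and in general not invariant otherwise.
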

We now extend this to the infinite-type setting and in the following section, give a formula for the Ray-Singer analytic torsion for a formally integrable PDE.

\section{Analytic torsion invariants of integrable PDEs}
\label{sec: Analytic Torsion of NPDEs}
In this section we compute Ray-Singer torsion for Spencer cohomology of formally integrable and involutive PDEs. We compute Spencer (hypercohomology) under degenerations and relate these two results to the computation of the BCOV invariant of a Calabi–Yau manifold \cite{eriksson2018bcov}. We first recall some basics facts, following closely the treatment in \cite{Yoshikawa2004,Yoshikawa2007,Yoshikawa2015},\cite{FangLuYoshikawa2008}, and \cite{Zucker1979} and the original works \cite{Quillen2,Quillen1}.
\subsection{Analytic torsion and Quillen metrics}
\label{ssec: AnalyticTorsion}
Let $(M,g)$ be a compact K\"ahler manifold of dimension $d$ with K\"ahler form $\omega$. Let 
$
\Delta_{p,q} = (\bar\partial + \bar\partial^*)^2$
be the Hodge-Kodaira Laplacian acting on $C^\infty$ $(p,q)$-forms on $M$, or equivalently $(0,q)$-forms on $M$ with values in $\Omega^p_M$, where $\Omega^1_M$ is the holomorphic cotangent bundle of $M$ and $
\Omega^p_M := \Lambda^p \Omega^1_M.$ 
Let $\sigma(p,q) \subset \mathbb{R}_{\ge 0}$ be the set of eigenvalues of $\Delta_{p,q}$. The spectral zeta function of $\Delta_{p,q}$ is defined as
\[
\zeta_{p,q}(s) := \sum_{\lambda \in \sigma(p,q) \setminus \{0\}} \lambda^{-s} \dim E(\lambda, p,q),
\]
where $E(\lambda, p,q)$ is the eigenspace of $\Delta_{p,q}$ corresponding to the eigenvalue $\lambda$. Then $\zeta_{p,q}(s)$ converges on the half-plane $\{ s \in \mathbb{C} \mid \Re s > \dim M \}$, extends to a meromorphic function on $\mathbb{C}$, and is holomorphic at $s=0$.

By Ray--Singer \cite{RaySinger1971}, the analytic torsion of $(M, \Omega^p_M)$ is the real number defined as
\[
\tau(M, \Omega^p_M) := \exp \Biggl\{- \sum_{q \ge 0} (-1)^q q \, \zeta'_{p,q}(0) \Biggr\}.
\] 
Obviously, $\tau(M, \Omega^p_M)$ depends not only on the complex structure of $M$ but also on the metric $g$. When emphasizing the dependence on the metric, we write $\tau(M, \Omega^p_M, g)$.
The following combination of analytic torsions was introduced in \cite{BCOV1994}.

\begin{definition}
The \emph{BCOV torsion} of $(M,g)$ is the real number defined as
\[
T_{\rm BCOV}(M,g) := \prod_{p \ge 0} \tau(M, \Omega^p_M)^{(-1)^p} 
= \exp \Biggl\{- \sum_{p,q \ge 0} (-1)^{p+q} pq \, \zeta'_{p,q}(0) \Biggr\}.
\]
If $\gamma$ is the K\"ahler form of $g$, we often write $T_{\rm BCOV}(M,\gamma)$ for $T_{\rm BCOV}(M,g)$.
\end{definition}

In general, $T_{\rm BCOV}(M,g)$ depends on the choice of K\"ahler metric $g$ and hence is not a holomorphic invariant of $M$. When $M$ is a Calabi--Yau threefold, it is possible to construct a holomorphic invariant of $M$ from $T_{\rm BCOV}(M,g)$ by multiplying a correction factor.

\subsection{Calabi--Yau threefolds and BCOV invariants}

A compact connected K\"ahler manifold $X$ is \emph{Calabi--Yau} if $h^{0,q}(X) = 0$ for $0 < q < \dim X$ and $K_X \cong \mathcal{O}_X$, where $K_X$ is the canonical line bundle of $X$. Our particular interest is the case where $X$ is a threefold.

Let $X$ be a Calabi--Yau threefold. Let $g = \sum_{i,j} g_{i\bar{j}} dz_i \otimes d\bar{z}_j$
be a K\"ahler metric on $X$, with $
\gamma = \gamma_g := \sqrt{-1} \sum_{i,j} g_{i\bar{j}} dz_i \wedge d\bar{z}_j$ the corresponding K\"ahler form. Define $\mathrm{Vol}(X,\gamma) := \frac{1}{(2\pi)^3} \int_X \frac{\gamma^3}{3!}.$

The covolume of $H^2(X,\mathbb{Z})_{\rm free} := H^2(X,\mathbb{Z}) / \mathrm{Torsion}$ with respect to $[\gamma]$ is defined as
\[
\mathrm{Vol}_{L^2}(H^2(X,\mathbb{Z}),[\gamma]) := \det \bigl( \langle e_i, e_j \rangle_{L^2,[\gamma]} \bigr)_{1 \le i,j \le b_2(X)},
\]
where $\{ e_1, \dots, e_{b_2(X)} \}$ is a basis of $H^2(X,\mathbb{Z})_{\rm free} = \mathrm{Im}\{ H^2(X,\mathbb{Z}) \to H^2(X,\mathbb{R}) \}$ over $\mathbb{Z}$ and $\langle \cdot, \cdot \rangle_{L^2,[\gamma]}$ is the inner product on $H^2(X,\mathbb{R})$ induced by integration of harmonic forms. Namely, if $H e_i$ denotes the harmonic representative of $e_i \in H^2(X,\mathbb{R})$ with respect to $\gamma$ and $\ast$ denotes the Hodge star operator, then
\begin{equation}
    \label{L2Metric1}
\langle e_i, e_j \rangle_{L^2,[\gamma]} := \frac{1}{(2\pi)^3} \int_X H e_i \wedge \ast H e_j.
\end{equation}

\begin{remark}
The covolume $\mathrm{Vol}_{L^2}(H^2(X,\mathbb{Z}),[\gamma])$ is then the volume of the real torus 

$H^2(X,\mathbb{R}) / H^2(X,\mathbb{Z})_{\rm free}$ with respect to the $L^2$-metric \eqref{L2Metric1}.
\end{remark}

As the correction term to the BCOV torsion $T_{\rm BCOV}(X,\gamma)$, we introduce a Bott--Chern term.

\begin{definition}
For a Calabi--Yau threefold $X$ equipped with a K\"ahler form $\gamma$, define
\[
A(X,\gamma) := \exp \Biggl[
- \frac{1}{12} \int_X \log \frac{ \sqrt{-1} \eta \wedge \bar{\eta} }{ \frac{\gamma^3}{3!} / \mathrm{Vol}(X,\gamma) \, \|\eta\|_{L^2}^2 } \, c_3(X,\gamma)
\Biggr],
\]
where $c_3(X,\gamma)$ is the top Chern form of $(X,\gamma)$, $\eta \in H^0(X,K_X) \setminus \{0\}$ is a nowhere vanishing canonical form on $X$, and
\[
\|\eta\|_{L^2} := \frac{1}{(2\pi)^3} \int_X \sqrt{-1} \, \eta \wedge \bar{\eta}.
\]
\end{definition}

Obviously, $A(X,\gamma)$ is independent of the choice of $\eta \in H^0(X,K_X) \setminus \{0\}$. Note that $A(X,\gamma) = 1$ if $\gamma$ is Ricci-flat.

\begin{definition}
The \emph{BCOV invariant} of $X$ is the real number defined as
\[
\tau_{\rm BCOV}(X) := \mathrm{Vol}(X,\gamma)^{-3+\chi(X)/12} \, \mathrm{Vol}_{L^2}(H^2(X,\mathbb{Z}),[\gamma])^{-1} \, T_{\rm BCOV}(X,\gamma) \, A(X,\gamma),
\]
where $\chi(X)$ denotes the topological Euler number of $X$.
\end{definition}

Applying the curvature formula for Quillen metrics, given a Calabi--Yau threefold $X$, $\tau_{\rm BCOV}(X)$ is independent of the choice of a K\"ahler form on $X$.

\subsection{Equivariant Analytic Torsion and Quillen Metrics}

Let $G$ be a compact Lie group. Let $\mathcal{G}^\vee$ denote the set of equivalence classes of complex irreducible representations of $G$. For $W \in \mathcal{G}^\vee$, the corresponding irreducible character is denoted by $\chi_W$.  

Let $V$ be a compact K\"ahler manifold, and assume that $G$ acts on $V$ as holomorphic automorphisms. Let $h_V$ be a $G$-invariant K\"ahler metric on $V$. Let $F$ be a $G$-equivariant holomorphic vector bundle on $V$ endowed with a $G$-invariant Hermitian metric $h_F$. We denote for simplicity these pairs by $
V := (V,h_V), F := (F,h_F).$
Then $A^{p,q}(V,F)$ is endowed with the $L^2$ metric $(\cdot,\cdot)_{L^2}$ with respect to $h_V$ and $h_F$, which is $G$-invariant under the standard $G$-action on $A^{p,q}(V,F)$.

Let 
\[
\Box^F_{p,q} := (\bar\partial_F + \bar\partial_F^*)^2
\] 
be the Hodge--Kodaira Laplacian acting on $A^{p,q}(V,F)$. As in Subsect. \ref{ssec: AnalyticTorsion}, denote by $\sigma(\Box^F_{p,q})$ the eigenvalues of $\Box^F_{p,q}$ and let $E(\lambda; \Box^F_{p,q})$ be the eigenspace corresponding to $\lambda \in \sigma(\Box^F_{p,q})$. Since $G$ preserves the metrics $h_V$ and $h_F$, $\Box^F_{p,q}$ commutes with the $G$-action on $A^{p,q}(V,F)$, and thus $G$ acts on $E(\lambda; \Box^F_{p,q})$.

For $g \in G$ and $\mathrm{Re}\, s \gg 0$, define the equivariant zeta function
\[
\zeta^G(g)(s) := \sum_{q \ge 0} (-1)^q q \sum_{\lambda \in \sigma(\Box^F_{p,q})\setminus\{0\}} \lambda^{-s} \, \mathrm{Tr}\, g|_{E(\lambda;\Box^F_{p,q})}.
\]
It is classical that $\zeta^G(g)(s)$ extends to a meromorphic function on $\mathbb{C}$ and is holomorphic at $s=0$. For $g \in G$, the \emph{equivariant analytic torsion} of $(V,F)$ is defined by
\[
\log \tau^G(V,F)(g) := - \zeta^{G\,\prime}(g)(0).
\]
Since $F$ is $G$-equivariant, $G$ acts on the cohomology $H(V,F) := \bigoplus_{q\in\mathbb{Z}} H^q(V,F)$
and preserves the $\mathbb{Z}$-grading. Consider the isotypic decomposition of the $\mathbb{Z}$-graded $G$-space:
\[
H(V,F) \simeq \bigoplus_{W \in \mathcal{G}^\vee} \mathrm{Hom}_G(W,H(V,F)) \otimes W.
\]
Since $H(V,F)$ is finite-dimensional, $\mathrm{Hom}_G(W,H(V,F)) = 0$ except for finitely many $W$. Define
$$
\lambda_W(F) := \bigotimes_{q \ge 0} \big(\det \mathrm{Hom}_G(W,H^q(V,F)) \otimes W\big)^{(-1)^q},$$
and consider the equivariant determinant of the cohomology of $F$ given by
$$
\lambda_G(F) := \bigotimes_{W \in \mathcal{G}^\vee} \lambda_W(F).
$$
A vector $\alpha = (\alpha_W)_{W \in \mathcal{G}^\vee} \in \lambda_G(F)$ is said to be \emph{admissible} if $\alpha_W \neq 0$ for all $W$ and $\alpha_W = 1_{\lambda_W(F)}$ for all but finitely many $W$. The set of admissible elements of $\lambda_G(F)$ is identified with $
\bigoplus_{W \in \mathcal{G}^\vee} \lambda_W(F)^\times,$ with $\lambda_W(F)^\times := \lambda_W(F) \setminus \{0\}.$
By Hodge theory, there is an isomorphism of $\mathbb{Z}$-graded $G$-spaces
\[
H(V,F) \simeq \bigoplus_{q \ge 0} E(0;\Box^F_{0,q}),
\] 
and the $G$-invariant metric induced from the $L^2$ metric is denoted $h_{H(V,F)}$. The isotypic decomposition is orthogonal with respect to $h_{H(V,F)}$. Let $\|\cdot\|_{L^2,\lambda_W(F)}$ be the Hermitian metric on $\lambda_W(F)$ induced from $h_{H(V,F)}$.  

For an admissible $\alpha = (\alpha_W)_{W \in \mathcal{G}^\vee} \in \bigoplus_{W \in \mathcal{G}^\vee} \lambda_W(F)^\times$, define
\[
\log \|\alpha\|^2_{Q,\lambda_G(F)}(g) := -\zeta^{G\,\prime}(g)(0) + \sum_{W \in \mathcal{G}^\vee} \frac{\chi_W(g)}{\dim W} \log \|\alpha_W\|^2_{L^2,\lambda_W(F)}.
\]
This function is called the \emph{equivariant Quillen metric} on $\lambda_G(F)$ with respect to $h_V$ and $h_F$.

\begin{notate}
Let $c_i(F,h_F) \in \bigoplus_{p\ge 0} A^{p,p}(V)$ denote the $i$-th Chern form of $(F,h_F)$. For $g \in G$, let $Td_g(F,h_F), \mathrm{ch}_g(F,h_F) \in \bigoplus_{p\ge 0} A^{p,p}(V^g)$ denote the equivariant Todd form and equivariant Chern character form of $(F,h_F)$, respectively.
\end{notate}
Let $M$ be a connected complex manifold with a holomorphic $G$-action and $B$ a connected complex manifold with trivial $G$-action. Let $\pi: M \to B$ be a flat proper morphism, $G$-equivariant, with a $G$-equivariant ample line bundle. Let $F$ be a $G$-equivariant holomorphic vector bundle on $M$, and let $h_M$ and $h_F$ be $G$-invariant metrics. Define fibers $
M_b := \pi^{-1}(b), \qquad F_b := F|_{M_b}, \quad b \in B,$ and consider the following isotypic decomposition of direct image sheaves. 
For $W \in \mathcal{G}^\vee$ and $U \subset B$ open, define
$$
\mathrm{Hom}_G(W,R^q\pi_* F)(U) := \{\varphi \in \mathrm{Hom}(W|_{\pi^{-1}(U)}, R^q\pi_* F|_{\pi^{-1}(U)}): g\varphi = \varphi g, \forall g\in G\}.
$$
Direct image sheaves $R^q \pi_* F$ decompose according to $$
R^q \pi_* F \simeq \bigoplus_{W \in \mathcal{G}^\vee} (R^q \pi_* F)_W, \qquad (R^q \pi_* F)_W := \mathrm{Hom}_G(W,R^q\pi_*F) \otimes W.$$
For each $W \in \mathcal{G}^\vee$, define
$
\lambda_W(F) := \bigotimes_{q\ge 0} \det(R^q \pi_* F)^{(-1)^q}_W,$ where $\lambda_G(F) := \bigotimes_{W\in\mathcal{G}^\vee} \lambda_W(F).$
For an admissible holomorphic section $\sigma = (\sigma_W)_{W \in \mathcal{G}^\vee}$ of $\lambda_G(F)$ over $U \subset B$, $\sigma$ is nowhere vanishing for all $W$ and $\sigma_W = 1_{\lambda_W(F)}$ for all but finitely many $W$.

\subsubsection{Smoothness}
Assume $\pi:M\to B$ is a $G$-equivariant proper holomorphic submersion. Let $TM/B$ denote the relative tangent bundle and $g \in G$. Then $\lambda_G(F)$ carries the equivariant Quillen metric $\|\cdot\|_{Q,\lambda_G(F)}(g)$ such that
\[
\|\cdot\|_{Q,\lambda_G(F)}(g)(b) := \|\cdot\|_{Q,\lambda_G(F_b)}(g), \quad b\in B.
\]

\begin{lemma}
Let $U \subset B$ be a small open set such that $\lambda_W(F_i)|_U \simeq \mathcal{O}_U$ for all $i$ and $W \in \mathcal{G}^\vee$. Let $\sigma = (\sigma_W)_{W\in\mathcal{G}^\vee}$ be an admissible holomorphic section of $\lambda_G(F)|_U$. Then
\[
\log \|\sigma\|^2_{Q,\lambda_G(F)}(g)
\]
is a smooth function on $U$.
\end{lemma}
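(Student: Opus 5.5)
The plan is to reduce smoothness of $\log\|\sigma\|^2_{Q,\lambda_G(F)}(g)$ to the non-equivariant smoothness of Quillen metrics due to Bismut--Gillet--Soul\'e, applied isotypic component by isotypic component. We use the definition
\[
\log\|\sigma\|^2_{Q,\lambda_G(F)}(g)(b)=-\zeta^{G\,\prime}(g)(0)(b)+\sum_{W}\frac{\chi_W(g)}{\dim W}\log\|\sigma_W\|^2_{L^2,\lambda_W(F_b)} .
\]
First, since $G$ is compact and preserves $h_M,h_F$, the Laplacians $\Box^{F_b}_{0,q}$ commute with $G$. Hence $A^{0,q}(M_b,F_b)$ splits orthogonally into isotypic pieces $A^{0,q}_W$, each preserved by $\bar\partial$ and $\bar\partial^*$. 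On each piece $\mathrm{Tr}\,g|_{E(\lambda)\cap A_W}=\frac{\chi_W(g)}{\dim W}\dim(E(\lambda)\cap A_W)$. Consequently $\zeta^G(g)(s)=\sum_W\frac{\chi_W(g)}{\dim W}\zeta_W(s)$, where $\zeta_W$ is the torsion zeta function of the $W$-isotypic subcomplex. Summing the pieces therefore gives
\[
\log\|\sigma\|^2_{Q}(g)=\sum_W\frac{\chi_W(g)}{\dim W}\log\|\sigma_W\|^2_{Q,W},
\]
where $\|\cdot\|_{Q,W}$ is the Quillen metric of the Dolbeault complex twisted by the projector $P_W$.

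Second, we show that each $\log\|\sigma_W\|^2_{Q,W}$ is smooth on $U$. The isotypic projector $P_W=\frac{\dim W}{\mathrm{vol} G}\int_G\overline{\chi_W(h)}\,h\,dh$ is a smoothing-free, fibrewise zeroth order operator that commutes with the superconnection. So the family $\{(A^{0,\bullet}_W(M_b,F_b),\bar\partial)\}_b$ is a smooth family of elliptic complexes in the Bismut--Freed / Bismut--Gillet--Soul\'e sense, and their theorem applies verbatim. It asserts that the Quillen metric on the determinant of the direct image is smooth, with curvature $\big[\int_{M/B}\mathrm{Td}\,\mathrm{ch}\big]^{(1,1)}$.

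The concrete argument I would give is the standard one. Near $b_0\in U$, choose $\epsilon>0$ not in the spectrum of $\Box_{W,b_0}$; by continuity of the spectrum, $\epsilon$ stays outside the spectrum for $b$ near $b_0$. The spectral subcomplex $K^{<\epsilon}_{W,b}$ is then a smooth finite-rank vector bundle. Its $L^2$-determinant metric is smooth by finite-dimensional linear algebra, and it is identified with $\lambda_W(F)$ through the canonical isomorphism $\det K^{<\epsilon}\cong\det H$. The remaining factor is $-\zeta^{>\epsilon\,\prime}_W(0)$, which is smooth in $b$ because the heat kernel $\mathrm{Tr}[P_W e^{-t\Box_b}]$ has a small-$t$ asymptotic expansion with coefficients depending smoothly on $b$, and the large-$t$ part decays exponentially, uniformly in $b$.

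Finally, we handle the infinite sum over $W$. By admissibility, $\sigma_W=1_{\lambda_W}$ for all but finitely many $W$. This step is the main obstacle: infinitely many $W$ still contribute to $\zeta^{G}$ through nonzero eigenvalues, so the decomposition above is a formal infinite sum. I would avoid this by not summing at the end. Instead, I would apply the spectral-cut argument directly to the full equivariant quantities. The cut $K^{<\epsilon}_b$ involves only finitely many $W$, so the $L^2$ term is a finite sum of smooth functions. The term $\zeta^{G,>\epsilon\,\prime}(g)(0)$ is handled through $\mathrm{Tr}[g\,e^{-t\Box_b}]$, whose small-$t$ expansion holds by the equivariant heat kernel asymptotics of Donnelly and Bismut, localized near the fixed-point set $M^g$. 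The coefficients of this expansion are smooth in $b$ because $\pi$ is a submersion and $M^g\to B$ is again a submersion. Thus $\log\|\sigma\|^2_Q(g)$ is smooth on $U$.
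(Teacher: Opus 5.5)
The paper states this lemma without proof. It is recalled from the equivariant Quillen-metric literature that the section follows: Yoshikawa's treatment, resting ultimately on Bismut's equivariant extension of the Bismut--Gillet--Soul\'e smoothness theorem. So your argument has to stand on its own.

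Your final route is the standard proof and is sound in outline. It consists of a spectral cut at $\epsilon$ for the whole $G$-equivariant Dolbeault complex, the equivariant $L^2$ metric on $\det K^{<\epsilon}$ (only finitely many $W$ occur), and $\zeta^{G,>\epsilon\,\prime}(g)(0)$ handled via $\mathrm{Tr}[g\,e^{-t\Box_b}]$ with the fixed-point heat asymptotics along $M^g$. Your claim that $M^g\to B$ is a submersion is correct: average a lift of a base vector over the compact group $\overline{\langle g\rangle}$.

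You should delete the isotypic detour rather than merely bypass it. Once $G$ moves points of the fibres, $P_W$ is an average of pull-backs by diffeomorphisms, not a zeroth-order (pseudo)differential operator. Consequently:
\begin{itemize}
\item $A^{0,\bullet}_W(M_b,F_b)$ is not the Dolbeault complex of a vector bundle, and the Bismut--Freed/BGS theorem does not apply ``verbatim'' to it.
\item $\mathrm{Tr}[P_We^{-t\Box_b}]=\frac{\dim W}{\mathrm{vol}\,G}\int_G\overline{\chi_W(h)}\,\mathrm{Tr}[h\,e^{-t\Box_b}]\,dh$ mixes the fixed-point asymptotics of all $h\in G$, so even regularity of $\zeta_W$ at $s=0$ is not automatic.
\end{itemize}
Your last paragraph needs none of this.

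The step your final route leaves implicit is the actual content of the lemma. You must show that the holomorphic section $\sigma_W$ of $\lambda_W(F)$, transported by the canonical isomorphism $\lambda_W(F)_b\cong\det K^{<\epsilon}_{W,b}$, is a $C^\infty$ section of the smooth line bundle $\det K^{<\epsilon}_W$.

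This is not a pointwise matter. Where $h^q(M_b,F_b)$ jumps, $\log\|\sigma_W\|^2_{L^2}$ on $\det H_b$ and the finite torsion of $K^{(0,\epsilon)}_b$ are separately not even continuous; only their combination is smooth. Nothing in your final argument relates holomorphy for the Knudsen--Mumford structure on $\det(R\pi_*F)_W$ to the spectral-cut smooth structure. That relation is the Bismut--Gillet--Soul\'e comparison theorem, and you need its equivariant form. Build the local finite-rank holomorphic model of $R\pi_*F$ $G$-equivariantly, which is possible by averaging over the compact group $G$. The comparison map then commutes with $G$ and splits over $W$; this is what Bismut's equivariant treatment supplies.

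With that input, your argument proves the lemma, provided you also have uniformity in $b$, including $b$-derivatives, of two things: the remainder in the small-$t$ expansion, and the $e^{-\epsilon t}$ bound for large $t$.
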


\subsection{Analytic-Torsion for infinite-type PDEs}
For a given formally integrable PDE which is not necessarily of finite-type, the topological Spencer complex defines the following Ray-Singer torsion via the zeta-regularized Laplacians.
\begin{theorem}
\label{thm: RSingerSpencer}
    Given a formally integrable involutive system of PDEs, $\{Z_{\ell}\}_{\ell\geq 0}.$ Assume $(g,M)$ is a Riemannian manifold of dimension $2n$, and $E$ is given a Hermitian metric $h$ inducing metrics $\{h_{\ell}\}$ on each $Z_{\ell}$, compatible with $g.$ Then there is a family of local Laplacian operators,
    $$\Delta_k^{r-i,i}:=D_{k}^{(i-1)}\circ D_{k}^{(i-1),*}+D_k^{(i),*}\circ D_k^{(i)}:\mathcal{S}_{k}^{r-i,i}\to \mathcal{S}_{k}^{r-i,i},$$
    for each $k,$ where $D_{-1}:=\emptyset$ and $D_{2n+1}:=\emptyset.$ 
    We have
    $$\zeta_{k}^{r-i,i}(s):=\sum_{\lambda \in Spec(\Delta_k^{r-i,i})\backslash\{0\}}\lambda^{-s},\mathrm{Re}(s)>n,$$
    and the regularized $\zeta$-determinant.
    $$\mathrm{log}\big(\det\Delta_k^{r-i,i}\big):=-\frac{d}{ds}\zeta_k^{r-i,i}(s)\big|_{s=0},$$
    is well-defined since $\zeta_k(s)$ is holomorphic at $s=0.$ The Ray-Singer analytic torsion is 
    $$T_{Z}^{RS}(M)=\prod_{k=0}^{2n}\prod_{r=0}^{2n+\ell_0}\prod_{i=0}^{2n}\big(\det \Delta_k^{r-i,i})^{(-1)^{i+2}i/2}.$$
\end{theorem}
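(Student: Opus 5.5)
The plan is to build the Laplacians out of the bigraded stable Spencer complex of the involutive system and then apply the standard Seeley/Ray--Singer heat-kernel analysis to each of them. Three things need checking: the bigraded Spencer sequences really are complexes, the resulting Laplacians are elliptic, and there are only finitely many of them.

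\textbf{Step 1: the bigraded complexes.} Formal integrability gives surjectivity of the projections $Z_{\ell+1}\to Z_\ell$. Involutivity, via the $\delta$-Poincar\'e lemma behind Lemma \ref{prop: TopIndex}, shows that the Spencer $\delta$-cohomology of the symbols $\mathfrak g^{(r)}$ vanishes once $r>\ell_0$. For each $k$ I will set
$$\mathcal S_k^{r-i,i}:=\Gamma\big(M,\wedge^i T^*M\otimes \mathrm{gr}^{r-i}Z_k\big),$$
the bigraded pieces of the fiberwise Spencer complex \eqref{eqn: FiberwiseRelSpener}, filtered by the order filtration as in Subsect.~\ref{ssec: Hyp of geom str}. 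The differentials $D_k^{(i)}:\mathcal S_k^{r-i,i}\to\mathcal S_k^{r-i-1,i+1}$ come from the Spencer operator. By formal integrability $D_k^{(i+1)}\circ D_k^{(i)}=0$, and the truncation conventions at $i=-1$ and $i=2n+1$ are the stated ones. Because of the stabilization at $\ell_0$, only the finitely many indices $0\le r\le 2n+\ell_0$ and $0\le i\le 2n$ contribute nontrivially, which is why the product in the statement is finite.

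\textbf{Step 2: adjoints and ellipticity.} The Hermitian metric $h$ induces metrics $h_\ell$ on each $Z_\ell$, compatible with $g$, hence $L^2$ inner products on the $\mathcal S_k^{r-i,i}$. These give formal adjoints $D_k^{(i),*}$, and with them the Laplacians $\Delta_k^{r-i,i}$ as written. Each $\Delta_k^{r-i,i}$ is a nonnegative, formally self-adjoint differential operator.

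\textbf{Step 3: the main obstacle.} The hard point is ellipticity of $\Delta_k^{r-i,i}$. Its principal symbol at $\xi\ne0$ is $\sigma(D)\sigma(D)^*+\sigma(D)^*\sigma(D)$, and this is invertible exactly when the symbol sequence is exact at $\xi$. I will reduce that exactness to the ellipticity hypothesis as formulated in Subsect.~\ref{ssec: Hyp of geom str}: the geometric symbol has no real characteristic covectors, equivalently $\mathrm{Char}\cap T_M^*X\subset T_X^*X$. Involutivity ensures that the symbol sequence of the Spencer complex is the Koszul-type complex of $\mathfrak g^{(r)}$ localized at $\xi$, and this complex is acyclic off the characteristic variety. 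I expect this compatibility between involutivity and the grading to be where real care is needed. In particular, operators of different orders on different bigraded pieces may force the use of Douglis--Nirenberg weights, and I would deal with this by rescaling with powers of $(1+\Delta)$ on each graded piece.

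\textbf{Step 4: zeta functions and the torsion.} Given ellipticity on the compact manifold $M$, Seeley's theorem applies: $\Delta_k^{r-i,i}$ has discrete nonnegative spectrum, and the sum $\zeta_k^{r-i,i}(s)$ converges for $\mathrm{Re}(s)>n=\dim M/2$ (for a second-order operator). Via the heat-kernel expansion, $\zeta_k^{r-i,i}$ continues meromorphically to $\mathbb C$ and is regular at $s=0$. This makes $\log\det\Delta_k^{r-i,i}$ well-defined, and $T_Z^{RS}(M)$ is then the stated finite product of these regularized determinants.
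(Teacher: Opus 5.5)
\textbf{There is a genuine gap: your construction in Step 1 breaks Steps 3 and 4.} You take $\mathcal S_k^{r-i,i}$ to be sections of $\wedge^iT^*M\otimes\mathrm{gr}^{r-i}Z_k$, i.e.\ you pass to the associated graded of the order filtration, and you let $D_k^{(i)}$ be the induced map of bidegree $(-1,+1)$.

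On the associated graded, however, the Spencer operator has no derivative part left. In coordinates, $D\big((u_\alpha)_{|\alpha|\le j}\big)=(\partial_\mu u_\alpha-u_{\alpha+1_\mu})_{|\alpha|\le j-1}$. On a section of the top graded piece, all $u_\alpha$ with $|\alpha|\le j-1$ vanish, so only the algebraic term survives and the induced map is $\pm\delta$. Your $\Delta_k^{r-i,i}$ is therefore $\delta\delta^*+\delta^*\delta$ on $\wedge^\bullet T^*M\otimes\mathfrak g$, a zeroth-order bundle endomorphism.

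Such an operator has no discrete spectrum of finite multiplicity on $C^\infty(M,\cdot)$. Consequently:
\begin{itemize}
\item $\zeta_k^{r-i,i}(s)$, counted with multiplicity as the Ray--Singer construction requires, diverges for every $s$;
\item Seeley's theorem is unavailable;
\item the second-order symbol $\sigma(D)\sigma(D)^*+\sigma(D)^*\sigma(D)$ in Step 3 is not the symbol of these operators.
\end{itemize}
No condition on $\mathrm{Char}$ repairs this, because the defect is the order of the operator, not its symbol. Separately, $\delta$-acyclicity for $r>\ell_0$ does not make those pieces drop out of a torsion product, since acyclic complexes generally have nontrivial torsion. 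In the statement the range $r\le 2n+\ell_0$ is imposed, not derived.

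\textbf{What the paper does instead.} The paper never passes to graded pieces. It takes $\mathcal S_k^{r-i,i}=\wedge^iT_M^*\otimes Z^{k-(r-i)}$, the prolongations themselves with the genuine first-order Spencer operator. It equips them with the pairing $\int_M(\omega_1\wedge\star\overline{\omega}_2)\,h_{k-i}(\theta_1,\theta_2)$ and writes the adjoint explicitly as $D_k^{(i),*}=(-1)^{n(i+1)+1}\star D_k^{(i)}\star$. Beyond these definitions, its proof asserts rather than derives the analytic properties of $\zeta$.

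\textbf{How to supply the missing ellipticity argument.} You rightly observe that an ellipticity hypothesis is needed, which the statement leaves implicit. To supply it, keep the filtered, ungraded complex and let the graded data enter only through its Douglis--Nirenberg principal symbol. With weights dropping by one per filtration level, both the first-order part $\xi\wedge(\cdot)$ and the order-zero part $\delta$ are principal. The exactness of that combined symbol for real $\xi\neq0$ is what you would have to deduce from the no-real-characteristics condition and involutivity. Alternatively, work with the quotient Spencer sequence $\wedge^iT^*M\otimes Z_q/\delta(\wedge^{i-1}T^*M\otimes\mathfrak g_{q+1})$, whose operators are homogeneous of first order. Replacing the complex by its associated graded discards exactly the derivative part that the zeta-function analysis needs.
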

\begin{proof}
    The formal Laplacian is defined on each $\mathcal{S}_{k}^{r-i,i}:=\wedge^iT_M^*\otimes Z^{k-(r-i)},$ by taking the level-wise formal adjoint of the Spencer operator with respect to an $L^2$-norm defined by the choices of metrics:
    $$\big<\omega_1\otimes \theta_1,\omega_2\otimes\theta_2\big>_{L^2}:=\int_M(\omega_1\wedge \star\overline{\omega}_2)h_{k-i}(\theta_1,\theta_2),$$
    for $\omega\otimes\theta \in \wedge^iT_M^*\otimes Z^{k-i}.$
    Then, 
    $$\big<D_k^{(i)}(\varphi),\psi\big>_{L^2}=\big<\varphi,D_{k}^{(i),*}(\psi)\big>_{L^2}.$$
    Explicitly, we define $D_k^{(i),*}:=(-1)^{n(i+1)+1}\star D_k^{(i)}\star :\wedge^{i+1}T_M^*\otimes Z^{k-i-1}\to \wedge^iT_M^*\otimes Z^{k-i}.$
    Then,
    $$D_k^{(i-1),*}:\wedge^iT_M^*\otimes Z^{k-i}\to \wedge^{i-1}T_M^*\otimes Z^{k-i+1}.$$
    Recall that 
    $$\mathrm{Spec}(\Delta_k^{r-i,i})=\big\{\lambda\in\mathbb{R}_{\geq 0}|\exists s\neq 0,\hspace{2mm}with\hspace{1mm} \Delta_k^{r-i,i}(s)=\lambda s,s\in C^{\infty}(M,\mathcal{S}_k^{r-i,i})\big\}.$$
\end{proof}
The main example to connect with known results, is that Theorem \ref{thm: RSingerSpencer} permits us to recover Quillen's metric on the canonical line bundle associated with the zeta regularized determined of the $\overline{\partial}$-Laplacian actin on $(p,q)$-forms on a Calabi-Yau $n$-fold $X.$ Consequently, we may reproduce the so-called Quillen-BCOV metric introduced in \cite{FangLuYoshikawa2008}, on the so-called BCOV-line bundle i.e. the weighted product of determinants of Hodge bundles,
\begin{equation}
\label{eqn: BCOVLine}
\lambda_{\mathrm{BCOV}}(X):=\bigotimes_{p,q}\mathrm{det} H^q(X,\Omega_X^p)^{(-1)^{p+q}p}.
\end{equation}
Note that \eqref{eqn: BCOVLine} is independent of the choice of K\"ahler structure, and in \cite{eriksson2018bcov} a renormalized $L^2$-metric on \eqref{eqn: BCOVLine} was introduced, called the \emph{$L^2$-BCOV metric}; for an element $s\in \lambda_{BCOV}(X)$ define the following norm,
$$h_{L^2-BCOV}(s,s):=h_{L^2}(s,s)\cdot \prod_{k=1}^{2\dim_{\mathbb{C}}X}\mathrm{Vol}_{L^2}\big(H^k(X,\mathbb{Z}),\omega)^{(-1)^{k+1}k/2},$$
which is the product of $L^2$-metirc on $\lambda_{BCOV}$ provided by Hodge theory, and $\mathrm{Vol}_{L^2}$ is the covolume of the lattice $H^k(X,\mathbb{Z})_{nt}\subset H^k(X,\mathbb{R})$ with repscet to the $L^2$-product, where $H_{nt}$ is the maximal torsion free quotient. 
Following \cite[Proposition 5.6]{eriksson2018bcov}, this is invariant of the choice of K\"ahler metric and consequently we have a new invariant of a system of PDEs,
$$\tau_{BCOV}(Z):=h_{Q,BCOV}/h_{L^2-BCOV}.$$
We will compute the behaviour of Spencer complex of formally integrable PDEs under semistable degeneration, and give a curvature formula for the case of a holomorphic elliptic geometric structure $\mathcal{E}^1\subset J^1(\mathscr{D}_X(E)).$
\subsection{Degenerations}
Let $\pi:\mathcal{X}\to S$ be a proper holomorphic submersion of complex manifolds, with compact K\"ahler fibers $X_s:=\pi^{-1}(s),s\in S.$ Consider the PDE
$$\mathcal{E}^1:\big\{\nabla_{\mathcal{X}/S}(f)=0,f\in \Gamma_S(X,E),$$
for a bundle $E$. The Spencer complex $\mathrm{Sp}(\mathcal{E}^1)$ is a resolution of $\mathrm{Sol}(\mathcal{E}^1)$ i.e. of $\ker\nabla_{\mathcal{X}/S}.$
It is an elliptic complex of $\mathcal{O}_{\mathcal{X}}$-modules whose terms are locally free. Thus, $R\pi_*\mathrm{Sp}^{\bullet}(\mathcal{E}^1)$ is a perfect complex of $\mathcal{O}_S$-modules, with determinant line $\lambda_{Sp}(\mathcal{E}).$
It is endowed with a $L^2$-metric $h_{L^2}$ by choosing a K\"ahler metric on the fibers, and a hermitian metric $h_E$ on $E$, compatible with the connection $\nabla$. Then, each term $\mathrm{Sp}^k(\mathcal{E})|_{X_s}$ obtains a fiber-wise $L^2$-inner product via Hodge theory. The associated Quillen metric determined for each $s\in S$ by the Spencer-type Laplacians are given by Theorem \ref{thm: RSingerSpencer}.

\begin{lemma}
   Let $\pi:\mathcal{X}\to S$ be a proper holomorphic submersion of complex manifolds, with compact K\"ahler fibers $X_s:=\pi^{-1}(s),s\in S$ of dimension $n$. Consider a holomorphic vector bundle $E\to (\mathcal{X}/S)$ equipped with a relative flat connection $\nabla.$
Then the curvature of the Quillen metric on $\lambda_{Sp}(\mathcal{E})$ is given by
$$c\big(\lambda_{Sp}(\mathcal{E}),h_{Q}\big)=\int_{\mathcal{X}/S}\mathrm{ch}_2\big((-1)^{p+q}H_{\mathbb{C}}^{p,q}(\mathfrak{g}_f)\big)\mathrm{Td}(T_{\mathcal{X}/S})+\frac{1}{12}c_1(\omega_{\mathcal{X}/S})\cup c_1(T_{\mathcal{X}/S}).$$
\end{lemma}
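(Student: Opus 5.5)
The plan is to deduce the curvature formula from the Bismut–Gillet–Soulé curvature theorem for Quillen metrics on determinant lines of direct images, applied term by term to the Spencer complex. The Spencer complex $\mathrm{Sp}^{\bullet}(\mathcal{E}^1)$ of the relative flat connection $\nabla_{\mathcal{X}/S}$ is a bounded complex of locally free $\mathcal{O}_{\mathcal{X}}$-modules, with terms $\Omega^k_{\mathcal{X}/S}\otimes E$ up to the prolongation shifts $\mathcal{S}^{r-i,i}_k$ of Theorem \ref{thm: RSingerSpencer}. It is fiberwise elliptic.

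First, I would identify $\lambda_{Sp}(\mathcal{E})=\det R\pi_*\mathrm{Sp}^{\bullet}(\mathcal{E}^1)$ with the alternating tensor product $\bigotimes_k \lambda(\mathrm{Sp}^k)^{(-1)^k}$ of Knudsen–Mumford determinants of the individual direct images $R\pi_*\mathrm{Sp}^k$. Since these are locally free with Hermitian metrics induced by $h_E$ and the fiberwise Kähler metric, each factor carries its own Quillen metric. The Spencer Laplacians $\Delta_k^{r-i,i}$ of Theorem \ref{thm: RSingerSpencer} play the role of the $\bar\partial$-Laplacians, with the $\zeta$-regularization defined there.

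The main step is to show that the Quillen metric defined via \eqref{eqn: QuillenMetric} from the Spencer Laplacians agrees with the alternating product of the Dolbeault Quillen metrics on the $\lambda(\mathrm{Sp}^k)$. I would do this with the double-complex argument, as in Bismut–Gillet–Soulé's comparison for acyclic complexes and the Ma/Bismut treatment of flat-bundle versus Dolbeault torsion. The curvature-compatibility of $h_E$ with $\nabla$, together with fiberwise Kähler identities, makes the mixed anomaly Bott–Chern term $d$-exact in the $S$-directions, so it does not contribute to the curvature.

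Next I would apply BGS curvature to each factor, giving $c_1(\lambda(\mathrm{Sp}^k),h_Q)=\big[\int_{\mathcal{X}/S}\mathrm{Td}(T_{\mathcal{X}/S},g)\,\mathrm{ch}(\mathrm{Sp}^k,h)\big]^{(2)}$, and take the alternating sum. By the hypercohomology spectral sequence of Subsect.~\ref{ssec: Hyp of geom str} and Lemma \ref{GRRSpencerDelta}, the alternating sum of Chern forms can be regrouped by the bigrading $(p,q)$ into $\mathrm{ch}\big(\sum(-1)^{p+q}H^{p,q}_{\mathbb{C}}(\mathfrak{g}_f)\big)$, up to exact forms. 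Extracting the degree-two component of the integrand gives the $\mathrm{ch}_2\cdot\mathrm{Td}$ term. The remaining correction $\tfrac{1}{12}c_1(\omega_{\mathcal{X}/S})\cup c_1(T_{\mathcal{X}/S})$ comes from the $\mathrm{Td}_2=\tfrac{1}{12}(c_1^2+c_2)$ piece paired against the rank part of the Chern character. I would rewrite it using $c_1(\omega_{\mathcal{X}/S})=-c_1(T_{\mathcal{X}/S})$ and the vanishing of the $c_2$-contribution in the alternating sum, by the Koszul/Euler-class identity $\sum(-1)^k\mathrm{ch}(\Omega^k)=c_n\,\mathrm{Td}^{-1}$.

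I expect the main obstacle to be the comparison in the second paragraph, namely that the Spencer Laplacians, which involve prolonged bundles $Z^{k-i}$, have regularized determinants whose alternating product matches the BGS Quillen metric without an extra anomaly. Only then can the BGS formula be transported. My first approach would be to use the flat (involutive) structure to put the Spencer complex into the Dolbeault double complex, and then invoke the anomaly formula to control the difference, checking that it is $\partial\bar\partial$-closed on $S$.
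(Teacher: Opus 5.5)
The gap is in your final step, where the summand $\tfrac{1}{12}c_1(\omega_{\mathcal{X}/S})\cup c_1(T_{\mathcal{X}/S})$ is supposed to come from ``$\mathrm{Td}_2$ paired against the rank part''. Apply Bismut--Gillet--Soul\'e to each $\lambda(\Omega^k_{\mathcal{X}/S}\otimes E)$ and take the alternating sum. The Euler-class identity you quote holds at the level of Chern--Weil forms, so
\[
\sum_k(-1)^k\,\mathrm{ch}(\Omega^k_{\mathcal{X}/S}\otimes E)\,\mathrm{Td}(T_{\mathcal{X}/S})=c_n(T_{\mathcal{X}/S})\,\mathrm{ch}(E,h_E),\qquad c_1\big(\lambda_{Sp}(\mathcal{E}),h_Q\big)=\int_{\mathcal{X}/S}c_n(T_{\mathcal{X}/S})\,c_1(E,h_E),
\]
up to the sign convention for determinant lines, and that is the whole answer. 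The virtual bundle $\sum_k(-1)^k\Omega^k_{\mathcal{X}/S}\otimes E$ has rank $\sum_k(-1)^k\binom{n}{k}\,\mathrm{rk}\,E=0$. So $\mathrm{Td}_2$ times the rank vanishes entirely, its $c_1^2$ part as well as its $c_2$ part, and nothing is left over. Your first term must be the full degree-$(2n+2)$ component of $\mathrm{ch}\cdot\mathrm{Td}$, not $\mathrm{ch}_2\cdot\mathrm{Td}$; once it is, any further correction double-counts.

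There is also a type mismatch you should have flagged. A $\mathrm{Td}_2$-contribution is a $4$-form on $\mathcal{X}$, and it reaches degree $2$ on $S$ only after fiber integration and only when $n=1$. The summand in the statement is not fiber-integrated at all, so it lies in $H^4(\mathcal{X})$ rather than being a $(1,1)$-class on $S$; as written the identity cannot hold. The paper's own argument (additivity of $\mathrm{ch}$, Lemma~\ref{GRRSpencerDelta}, Lemma~\ref{prop: TopIndex}) is purely cohomological and does not account for this term either. Bismut--Gillet--Soul\'e is the right tool for a curvature statement, but applied honestly it rules the second summand out rather than producing it.

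A $\tfrac{1}{12}$-term of this shape arises only for the $p$-weighted BCOV line \eqref{eqn: BCOVLine}. With Chern roots $x_j$ of $T_{\mathcal{X}/S}$ one has $\sum_p(-1)^p p\,\mathrm{ch}(\Omega^p_{\mathcal{X}/S})\,\mathrm{Td}(T_{\mathcal{X}/S})=-\sum_j\Big(\prod_{i\neq j}x_i\Big)\frac{x_j}{e^{x_j}-1}=-c_{n-1}+\tfrac{n}{2}c_n-\tfrac{1}{12}c_1c_n+\cdots$. Its degree-$(2n+2)$ part gives $\tfrac{1}{12}\int_{\mathcal{X}/S}c_1(\omega_{\mathcal{X}/S})\,c_n(T_{\mathcal{X}/S})$. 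So the statement appears to conflate $\lambda_{Sp}(\mathcal{E})$ with that line (and $c_n$ with $c_1$), and what your method actually proves is the formula displayed above.

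By contrast, the comparison you flagged as the main obstacle is harmless here. For a unitary flat $\nabla$ on K\"ahler fibers, $\Delta_\nabla=2\Delta_{\bar\partial_E}$. The real Ray--Singer torsion of a unitary flat bundle on the $2n$-dimensional fibers is trivial by duality. Together with $\sum_q(-1)^q\zeta_{p,q}\equiv 0$, this forces $\prod_p\tau(X_s,\Omega^p\otimes E)^{(-1)^p}=1$. Hence both Quillen metrics agree with the $L^2$-metric under the Hodge decomposition, and no Bott--Chern anomaly analysis is needed. Note also that Bismut--Gillet--Soul\'e requires a closed $(1,1)$-form on $\mathcal{X}$ restricting to the fiber metrics, which the hypotheses do not supply.
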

\begin{proof}
We use that $\mathrm{ch}(Sp(\mathcal{E}))=\sum_k(-1)^k\mathrm{ch}(Sp^k(\mathcal{E}),$ and then the GRR formula, given by Lemma \ref{GRRSpencerDelta}. Then, use Lemma \ref{prop: TopIndex}.
\end{proof}

\section{Mixed-type $D$-algebras}
\label{sec: MixedType}
In this section we define mixed-type $D$-spaces and prove the index theorem for them.

 Let $Z\to X_{DR}$ be a $D_X$-scheme. Recall the diagram \eqref{setupdiag}. In particular, we have
 $$
T_N^*L \xleftarrow{(g_N)_d} N\times_M T_M^*X\xrightarrow{(g_N)_{\pi
}} T_M^*X.$$ 

\begin{definition}
\label{MixedDef}
    The $D_X$-scheme $Z$ is said to be \emph{mixed-type for} $\Omega:=(M,N,L,X,Y)$ if if there exist test families of solutions $u_T,v_T$ (take $T=\mathrm{Spec}(\mathbb{C})$, for simplicity)
such that: 
    \begin{enumerate}
        \item $\mathrm{Char}^{der}(Z,u)\cap T_L^*X\subset T_X^*X;$ 
        \item $T_N^*M\cap \bigg(T^*M\cap C_{T_M^*X}\big(\mathrm{Char}^{der}(Z,v)\big)\bigg)\subset T_M^*M,$
        \item In a neighbourhood of $N$, there exists closed cones $\lambda,\lambda'$ such that 
        $$\mathrm{Char}^{der}(Z,w)\cap T_M^*X\subset \lambda\cup \lambda',$$
        with $\lambda\cap \lambda'=M\times_X T_X^*X.$
    \end{enumerate}
\end{definition}
Condition (a) says that the behaviour is elliptic along $L$ near $N$, meaning no nonzero real conormals along $L$, while (b) means that characteristic directions meet the real conormal transversely, translating to the statement that $N$ is hyperbolic for $T_{Z,v}$ on $M$ (one also says $T_{Z,v}$ is hyperbolic along $N$ in $M$).

This definition is meaningful, and occurs in practice. Indeed, Let \(M = N \times \mathbb{R}\) be a real-analytic manifold and \(X = Y \times \mathbb{C}\) its complexification. 
Suppose \(P\) is a differential operator on \(X\) that is hyperbolic on \(M\) in the direction \(N \times \{0\}\), as in the case of the wave operator on spacetime. 
Let \(L = N \times i\mathbb{R}\). 
In many cases (eg, for the wave operator $P$), it becomes elliptic on \(L\). 
The passage from \(M\) to \(L\) allows one to study properties of \(P\) on \(M\) via its elliptic counterpart on \(L\).

We will discuss it further after our result is proven (see Example \ref{globhyp}).
\subsection{Mixed-type index}
\label{ssec: MixedTypeIndex}
The microlocal gluing condition of Definition \ref{MixedDef} (iii) allows us to prove the following.
\begin{theorem}
\label{theorem: Mixed index}
    Suppose $Z\to X_{DR}$ is mixed-type with respect to $\Omega$. Suppose $\gamma\subset T_NL$ is an open convex cone such that $\overline{\gamma}$ contains the zero-section $N$ and $g_{N\pi}^{-1}(\lambda)=g_{Nd}^{-1}(\gamma^{\circ a}).$
    Then, the hypercohomologies 
    $$H^i\big(R\Gamma(M;R\mathrm{Sol}_{X}(T_{Z,\phi},\mathscr{B}_{M,\lambda}))\big),$$
    are finite-dimensional, thus the Euler-characteristic is well-defined and coincides with the hyperbolic index,
    $$\mathrm{Ind}^{hyp}_{(M,\lambda)}(Z,\phi)=\int_{T^*X} \mu\mathrm{Eu}(T_{Z,\phi})\cup \mu \mathrm{Eu}\big(\Gamma_{\gamma}(\mathscr{B}_{NL})\big).$$
\end{theorem}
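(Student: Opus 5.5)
The plan is to reduce the statement to the Schapira--Schneiders index theorem for elliptic pairs \cite{SS94a,SS94b}, applied to the pair $(T_{Z,\phi},F)$. Here $F$ is the $\mathbb{R}$-constructible sheaf underlying $\mathscr{B}_{M,\lambda}$, realized through the cone $\gamma$ as $\Gamma_\gamma(\mathscr{B}_{NL})$. The first step is to pass to the linearization. Since $Z$ is mixed-type, the tangent complex $T_{Z,\phi}=\phi^*\mathbb{T}_Z$ along the solution $\phi$ is a bounded coherent complex of $\D_X$-modules (Assumption \ref{ass: Simplifying}). Its derived characteristic variety $\mathrm{Char}^{der}(Z,\phi)$ is the support of $\mu(T_{Z,\phi})$, as in the proof of Lemma \ref{prop: K-theory commute1}. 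Hence $R\mathrm{Sol}_X(T_{Z,\phi},-)$ is an ordinary solution functor for a coherent $\D_X$-complex, and the microlocal stalk computations of Section \ref{sec: MicrolocalAnalysis} apply to it.

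The second step is to identify the coefficient sheaf microlocally. The hypothesis $g_{N\pi}^{-1}(\lambda)=g_{Nd}^{-1}(\gamma^{\circ a})$ is used through diagram \eqref{setupdiag} to show that $R\Gamma_\gamma\nu_N\mathscr{B}_M$ and $\mathscr{B}_{M,\lambda}$ have the same microsupport, namely $\gamma^{\circ a}$ transported to $T_M^*X$. By the stalk formulas recalled in Section \ref{sec: MicrolocalAnalysis}, both vanish off $\lambda$. This gives
\[
R\mathrm{Sol}_X(T_{Z,\phi},\mathscr{B}_{M,\lambda})\simeq R\mathrm{Sol}_X\big(T_{Z,\phi},\Gamma_\gamma(\mathscr{B}_{NL})\big),
\]
and the pair $(T_{Z,\phi},\Gamma_\gamma\mathscr{B}_{NL})$ is the one we work with.

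The third step is to show that this pair is elliptic in the sense of Schapira--Schneiders, i.e. $\mathrm{Char}^{der}(T_{Z,\phi})\cap SS(F)\subset T^*_XX$. Condition (3) of Definition \ref{MixedDef} splits $\mathrm{Char}^{der}\cap T_M^*X$ into $\lambda\cup\lambda'$, and the two cones meet only in the zero section. On the $\lambda'$ piece, $SS(F)$ is disjoint from $\mathrm{Char}^{der}$ away from the zero section, because $F$ is microsupported in $\lambda$. On the $\lambda$ piece we use the two remaining conditions:
\begin{itemize}
\item hyperbolicity along $N$ (condition (2)) and Lemma \ref{Propagation for hyperbolic directions} confine the solution microsupport to $Char^{hyp}_M$, which is transversal to $T_N^*M$;
\item ellipticity along $L$ (condition (1)) removes the nonzero conormals in $T_L^*X$.
\end{itemize}
Together these give the required non-characteristic intersection. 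With ellipticity and compactness of support near $N$ (taking $\overline\gamma\supset N$ to control properness), the finiteness theorem of \cite{SS94a} gives finite-dimensional $H^i R\Gamma(M;\cdot)$. The Euler characteristic then equals $\int_{T^*X}\mu\mathrm{Eu}(T_{Z,\phi})\cup\mu\mathrm{Eu}(F)$, where $\mu\mathrm{Eu}(T_{Z,\phi})$ is computed by \eqref{eqn: MicroChernA} as the degree-$2d$ part of $\mathrm{Ch}\,\sigma\cup\pi^*td$.

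The main obstacle is the third step, gluing the hyperbolic and elliptic regimes at the interface $\lambda\cap\lambda'=M\times_XT^*_XX$. Near that interface one must check that the normal cone $C_{T_M^*X}(\mathrm{Char}^{der})$ does not acquire nonzero directions inside $SS(\Gamma_\gamma\mathscr{B}_{NL})$. I would first try to estimate $SS(\Gamma_\gamma\nu_N\mathscr{B}_M)\subset\gamma^{\circ a}+\dots$ using \cite[Cor.~6.4.4]{KS90}, as in the proof of Lemma \ref{Propagation for hyperbolic directions}. The convexity of $\gamma$ and the identity $g_{N\pi}^{-1}(\lambda)=g_{Nd}^{-1}(\gamma^{\circ a})$ should then make the bound sharp enough to yield the required transversality.
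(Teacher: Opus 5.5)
\textbf{There is a genuine gap, and it sits in your second step.} Equality of microsupports, or the fact that both complexes vanish off $\lambda$, does not produce an isomorphism: two complexes with the same microsupport need not be related by any morphism at all. Moreover, $\mathscr{B}_{M,\lambda}$ is a sheaf on $M$ while $\Gamma_\gamma(\mathscr{B}_{NL})$ is a sheaf on $N$, so any comparison can only hold after restricting to $N$.

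The missing idea is the canonical comparison map coming from the microlocal correspondence $T_N^*L\xleftarrow{g_{Nd}}N\times_MT_M^*X\xrightarrow{g_{N\pi}}T_M^*X$ of \eqref{setupdiag}. Write $\mathcal{S}(Z,-)=R\mathrm{Sol}_X(T_{Z,\phi},-)$. The paper uses $Rg_{Nd!}g_{N\pi}^{-1}\mathcal{S}(Z,\mathscr{C}_M)\simeq\mathcal{S}(Z,\mu_N\mathscr{B}_L)\otimes\omega_{L/N}$. The hypothesis $g_{N\pi}^{-1}(\lambda)=g_{Nd}^{-1}(\gamma^{\circ a})$ is exactly what lets $R\Gamma_\lambda$ pass through $Rg_{Nd!}g_{N\pi}^{-1}$ and become $R\Gamma_{\gamma^{\circ a}}$. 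Applying $R\pi_{N*}$ and using properness of $g_{Nd}$ on the support then gives $\mathcal{S}(Z,\mathscr{B}_{M,\lambda})|_N\simeq\mathcal{S}(Z,\Gamma_\gamma\mathscr{B}_{NL})$, up to the twist by $\omega_{L/N}$. Using the cone hypothesis only to match supports is too weak to replace this.

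\textbf{Your third step is left open, as you acknowledge, and as posed it would be hard to close.}
\begin{itemize}
\item $\Gamma_\gamma(\mathscr{B}_{NL})$ is not $\mathbb{R}$-constructible, so the Schapira--Schneiders finiteness and index theorems do not apply to the pair $(T_{Z,\phi},\Gamma_\gamma\mathscr{B}_{NL})$. You would first have to exhibit a constructible $F$ with $R\mathcal{H}om(D'F,\mathcal{O}_X)$ realizing the coefficients, and compute $SS(F)\subset T^*X$. Instead, your argument treats the cone $\lambda\subset T_M^*X$, which constrains microfunction solutions, as if it were $SS(F)$.
\item On the $M$ side no such ellipticity exists in the case of interest: for the wave operator of Example \ref{globhyp}, $\mathrm{Char}$ meets $\mathring{T}_M^*X$. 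Ellipticity therefore has to be imported from $L$ via condition (1), again through the correspondence.
\item Even granting an elliptic pair, you would obtain $\int_{T^*X}\mu\mathrm{Eu}(T_{Z,\phi})\cup\mu\mathrm{Eu}(F)$. Matching this with the stated pairing needs the Euler-class transfer the paper performs, which is the second use of the cone hypothesis and has no counterpart in your plan. That transfer takes a class $\nu\in H_{g_{N\pi}^{-1}(\lambda)}(N\times_MT_M^*X)$ with $g_{Nd*}\nu=\mu\mathrm{Eu}(\Gamma_\gamma\mathscr{B}_{NL})$. It then applies the projection formula along $g_{Nd}$ and $g_{N\pi}$ to rewrite the pairing as $\int_{T^*X}\mu\mathrm{Eu}(T_{Z,\phi})\cup\mu\mathrm{Eu}(\mathscr{B}_{M,\lambda})$.
\end{itemize}
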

\begin{proof}
For simplicity, let us set $\mathcal{S}(Z,-):=R\mathrm{Sol}_X(T_{Z,\phi},-),$ so, for example $\mathcal{S}(Z,\mathscr{C}_M)=R\mathrm{Sol}_X(T_{Z,\phi},\mathscr{C}_M).$
The microlocal Euler classes are defined by
\[
\mu eu(\Gamma_\gamma \mathscr{B}_{NL})
\in H^{2n}_{S_\gamma}(T^*X;\pi^{-1}\omega_X),
\qquad
\mu eu(\mathscr{B}_{M,\lambda})
\in H^{2n}_{\mathrm{Char}^{\mathrm{der}}(\mathbb{T}_{A,\phi})\cap T_M^*X}(T^*X;\pi^{-1}\omega_X),
\]
where $S_\gamma=\overline{\gamma^{\circ a}}$ is the closure of the polar-cone of $\gamma$.

First, note that $g^{-1}\mathcal{S}(Z,\mathcal{O}_X)\simeq \mathcal{S}(Z,\mathscr{B}_L).$ Thus,
$$Rg_{Nd!}g_{N\pi}^{-1}\mathcal{S}(Z,\mathscr{C}_M)\simeq \mathcal{S}(Z,\mu_N\mathscr{B}_L)\otimes \omega_{L/N}.$$
Since 
$$Rg_{Nd!}g_{N\pi}^{-1}R\Gamma_{\lambda}\mathcal{S}(Z,\mathscr{C}_M)\simeq Rg_{Nd!}R\Gamma_{g_{N\pi}^{-1}(\lambda)}g_{N\pi}^{-1}\mathcal{S}(Z,\mathscr{C}_M)\simeq R\Gamma_{\gamma^{\circ a}}Rg_{Nd!}g_{N\pi}^{-1}\mathcal{S}(Z,\mathscr{C}_M),$$
we have that there is a canonical identification
$$Rg_{Nd!}g_{N\pi}^{-1}\mathcal{S}(Z,\Gamma_{\lambda}\mathscr{C}_M)\simeq \mathcal{S}(Z,\Gamma_{\gamma^{\circ a}}\mu_N\mathscr{B}_L)\otimes \omega_{L/N}.$$
Apply $R\pi_{N*}$ to this and use that $g_{Nd}$ is proper on the support of $\mathcal{S}(Z,\mathscr{C}_M),$ we get that 
$$R\pi_{N*}Rg_{Nd!}g_{N\pi}^{-1}\mathcal{S}(Z,\Gamma_{\lambda}\mathscr{C}_M)\simeq R\pi_*g_{N\pi}^{-1}\mathcal{S}(Z,\Gamma_{\lambda}\mathscr{C}_M)\simeq (R\pi_{M*}\mathcal{S}(Z,\Gamma_{\lambda}(\mathscr{C}_M))|_N.$$
Thus, there is a canonical isomorphism,
$$\mathcal{S}(Z,\mathscr{B}_{M\lambda})|_N\simeq \mathcal{S}(Z,\pi_{N*}\Gamma_{\gamma^{\circ a}}\mu_N\mathscr{B}_L)\otimes \omega_{L/N}.$$

In other words, 
We have 
\[
\begin{tikzcd}
    R\mathscr{H}om_{\D_X}(\M,\mathscr{B}_{M,\lambda})|_{N}\arrow[dr,"\rho_{MN}"]\arrow[rr,"\simeq"] & & R\mathscr{H}om_{\D_X}(\M,\Gamma_{\gamma}\mathscr{B}_{NL})\arrow[dl,"b_{\gamma,N}"]
    \\
    & R\mathscr{H}om_{\D_X}(\M,\widetilde{\mathscr{B}}_N)[d] &,
\end{tikzcd}
\]
where $\widetilde{\mathscr{B}}_N:=R\Gamma_N(\mathcal{O}_X)\otimes or_N[dim_M]\simeq H_N^n(\mathcal{O}_X)\otimes or_N.$
Then, there exists $\nu\in H_{g_{N\pi}^{-1}(\lambda)}(N\times_MT_M^*X),$ such that $g_{Nd,*}(\nu)=\mu\mathrm{Eu}(\Gamma_{\gamma}\mathscr{B}_{NL}).$ Since $g_{N\pi}^{-1}(\lambda)=g_{Nd}^{-1}(\gamma^{\circ a}),$ by hypothesis, then 
$$\int_{T^*X} \mu\mathrm{Eu}(Z,\phi)\cup \mu\mathrm{Eu}(\Gamma_{\gamma}\mathscr{B}_{NL})=\int_{T^*X}\mu\mathrm{Eu}(Z,\phi)\cup g_{Nd*}(\nu),$$
which by the projection formula and using the hypothesis gives 
$$\int_{N\times_M T_M^*X}g_{Nd}^*(\mu\mathrm{Eu}(Z,\phi))\cup \nu=\int_{T_M^*X}\mu\mathrm{Eu}(Z,\phi)\cup g_{N\pi*}(\nu)=\int_{T^*X}\mu\mathrm{Eu}(Z,\phi)\cup \mu\mathrm{Eu}(\mathscr{B}_{M\lambda}).$$
\end{proof}

\begin{remark}
    Near a clean intersection $N=M\cap L$, with coordinates $(x_N,x_M,x_L)$, the microlocal stalks can be computed as iterated cones:
\[
R\Gamma_\gamma \nu_N(\mathscr{B}_M)_{(x_N,\xi)}
\simeq
\mathrm{Cone}\Bigl(
R\Gamma_{\gamma\cap T_M^*X} (\mathscr{B}_M|_{x_N}) 
\to R\Gamma_{\gamma\cap T_L^*X} (\mathscr B_L|_{x_N}) 
\Bigr).
\]
The cones encode the boundary values morphisms
\[
R\mathscr{H}om(\mathbb{T}_{A,\phi},\mathscr{B}_{M,\lambda})|_N
\longrightarrow
R\mathscr{H}om(\mathbb{T}_{A,\phi},\Gamma_\gamma \mathscr{B}_{NL})
\longrightarrow
R\mathscr{H}om(\mathbb{T}_{A,\phi},\widetilde{\mathscr{B}}_N)[d].
\]
\(\widetilde{\mathscr{B}}_N := R\Gamma_N(\mathcal O_X) \otimes or_N[dim M]\) is the local cohomology along $N$.
Since $\mathrm{Char}^{\mathrm{der}}(\mathbb{T}_{A,\phi})\cap S_\gamma$ is compact in a neighbourhood of $N$, the stalks of $R\Gamma_\gamma \mathscr{B}_{NL}$ and $R\Gamma_\lambda \mathscr{B}_{M,\lambda}$ are finite-dimensional, ensuring that the microlocal Euler class pairing is well-defined.
\end{remark}
 The example of interest comes from wave operator on globally hyperbolic space-times.

 \begin{example}
     \label{globhyp}
     Let $M=N\times \mathbb{R},L=N\times i\mathbb{R}$ and $Y$ a complexification of $N$ with $X=Y\times\mathbb{C}.$ Fix coordinates $t+is\in\mathbb{C}$ and $(t+is\tau+i\sigma)\in T^*\mathbb{C}.$ Then $(x;i\eta)\in T_N^*Y,$ and $N$ is identified with $N\times \{0\}$ in $X.$ Let $P\in\mathcal{D}_X^{\leq m},$ be an operator of order $\leq m$ which is elliptic on $L$ and weakly-hyperbolic on $M$ in the $\pm dt,$ directions.
     Globally hyperbolic space-time $N\times\mathbb{R}$ possess wave-operators with this property. Let
     $$L^+:=N\times \{t+is|t=0,s>0\},$$
     and put
     $$\Lambda:=T_N^*Y\times \{(t+is;\tau+i\sigma)|s=0,\tau=0,\sigma \leq 0\}.$$
     In this case, the map $g_{Nd}$ is given by $(x,0;i\eta,i\sigma)\mapsto (x;\sigma),$ and $\gamma=L^+$. The $\D$-module is $\M_P:=\D_X/\D_X\cdot P,$ and
     the isomorphism $\mathcal{H}om_{\D_X}(\M_P,\mathscr{B}_{M,\lambda})|_{N}\simeq \mathcal{H}om_{\D_X}(\M_P,\mathscr{B}_{L^+})|_N$ holds and both admit boundary value morphisms to $\mathcal{E}xt_{\D_X}^1(\M_P,\widetilde{\mathscr{B}}_N)\simeq \widetilde{\mathscr{B}}_N/P\cdot \widetilde{\mathscr{B}}_N.$
 \end{example}

\section{Extension to Configuration Spaces}
\label{sec: ConfigurationSpaces}
\subsection{The cotangent diagram}
For a diagram $\mathcal{X}=\{X_i\}$ denoted $T^*\mathcal{X}(i):=T^*X_i$ and understand $T^*\mathcal{X}(i\rightarrow j)$ to be given by the diagram
\begin{equation}
\label{eqn: Cotangent Diagram}
\begin{tikzcd}
T^*X_j \arrow[d,"\pi_{j}"] & \arrow[l] X_j\times_{X_i}T^*X_i\arrow[d,"\pi_j'"] \arrow[r]& T^*X_i\arrow[d,"\pi_i"]
\\
X_j & \arrow[l,"id_{X_j}"] X_j\arrow[r,"\xi_g"] & X_i
\end{tikzcd}
\end{equation}
In particular $T^*\mathcal{X}$ is the collection of all such diagrams (\ref{eqn: Cotangent Diagram}).

The following makes this statement more precise.
\begin{lemma}
For $i\xrightarrow{g} j$ and $j\xrightarrow{h} k$ in $\mathrm{I}$ which are composeable, one has an induced morphism 
$\gamma:T^*\mathcal{X}(i\rightarrow j)\rightarrow T^*\mathcal{X}(j\rightarrow k)$ such that 
$$T^*\mathcal{X}=\underset{g\in\mathrm{Ar}(\mathrm{I}),\gamma}{\mathrm{colim}}\hspace{.2mm}T^*\mathcal{X}(i\xrightarrow{g} j),$$
where we form the homotopy-colimit over all diagrams with faces of the form (\ref{eqn: Cotangent Diagram}) formed by morphisms $\gamma.$
\end{lemma}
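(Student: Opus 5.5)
The plan is to make the morphisms $\gamma$ explicit using base change of the cotangent correspondence, and then to read off $T^*\mathcal{X}$ from the universal property of the colimit over the arrow category of $\mathrm{I}$.

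First I construct $\gamma$. Let $g:i\to j$ and $h:j\to k$ be composable, and write $\xi_g:X_j\to X_i$ and $\xi_h:X_k\to X_j$ for the structure maps of the diagram $\mathcal{X}$. There is a canonical isomorphism of fibre products
$$X_k\times_{X_i}T^*X_i\simeq X_k\times_{X_j}\big(X_j\times_{X_i}T^*X_i\big),$$
which is transitivity of base change. The codifferential $X_j\times_{X_i}T^*X_i\to T^*X_j$ appearing in \eqref{eqn: Cotangent Diagram} is a map over $X_j$, so pulling it back along $\xi_h$ gives a map
$$X_k\times_{X_i}T^*X_i\to X_k\times_{X_j}T^*X_j.$$
I define $\gamma$ componentwise, taking each row of the diagram for $g$ to the corresponding row for $h$:
\begin{itemize}
\item on the left cotangent terms it is the codifferential $X_k\times_{X_j}T^*X_j\to T^*X_k$ induced by $\xi_h$;
\item on the middle terms it is the base-changed map just constructed;
\item on the right cotangent terms it is the codifferential of $\xi_g$;
\item on the bottom row it is $\xi_h$ (as $X_j\leftarrow X_k$ in the appropriate variance).
\end{itemize}
Every square in the resulting cube commutes because the chain rule gives $(\xi_g\circ\xi_h)_d=(\xi_h)_d\circ\xi_h^*(\xi_g)_d$, together with naturality of the projections $\pi_j'$.

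Second, I check that the assignment $g\mapsto T^*\mathcal{X}(g)$ together with the maps $\gamma$ forms a functor on the twisted arrow category of $\mathrm{I}$. This means checking that identities go to identities, which is immediate since $X_i\times_{X_i}T^*X_i=T^*X_i$. It also means checking associativity for triples $g,h,\ell$. In the $\infty$-categorical (homotopy-coherent) setting I expect these coherences to come for free from functoriality of fibre products and of the cotangent complex $\mathbb{L}$, because $T^*$ is $\mathrm{Spec}\,\mathrm{Sym}\,\mathbb{T}$ and the cotangent complex is functorial.

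Third, I define $T^*\mathcal{X}$ as this colimit. I then verify that it recovers the intended object: restricting to identity arrows recovers each $T^*X_i$, and the cofinal arrows exhibit every diagram \eqref{eqn: Cotangent Diagram} as a face of the colimit cocone.

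The main obstacle is the homotopy coherence in the second step. To handle it, I would realise $T^*\mathcal{X}$ as a straightening: namely the cocartesian fibration over $\mathrm{Tw}(\mathrm{I})$ classified by $g\mapsto\mathrm{Corr}(g)$. I would then invoke Lurie's straightening theorem, so that the colimit is computed as the localisation of the total space at the cocartesian edges.
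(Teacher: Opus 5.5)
Your Step 1 does not give a morphism $T^*\mathcal{X}(i\to j)\to T^*\mathcal{X}(j\to k)$ of diagrams of schemes, and the phrase ``in the appropriate variance'' hides a change of category. Each top-row component you list has the wrong source:
\begin{itemize}
\item the codifferential of $\xi_h$ starts at $X_k\times_{X_j}T^*X_j$, not at $T^*X_j$;
\item your base-changed map starts at $X_k\times_{X_i}T^*X_i\simeq X_k\times_{X_j}(X_j\times_{X_i}T^*X_i)$, not at $X_j\times_{X_i}T^*X_i$;
\item the codifferential of $\xi_g$ starts at $X_j\times_{X_i}T^*X_i$, not at $T^*X_i$.
\end{itemize}
To reach these objects from the $g$-diagram you must go backwards along projections, so every top component is a span. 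Meanwhile, on the bottom row $\xi_h,\xi_g$ go from the $h$-diagram to the $g$-diagram. No natural map of schemes $T^*X_j\to T^*X_k$, or $T^*X_j\to T^*X_i$ over $\xi_g$, exists to repair this.

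Your cube therefore lives in $\mathrm{Corr}(\mathrm{Sch})$. There the chain rule $(\xi_g\circ\xi_h)_d=(\xi_h)_d\circ\xi_h^*(\xi_g)_d$ says exactly that the spans compose: the composite of the top spans of $T^*\mathcal{X}(j\to k)$ and $T^*\mathcal{X}(i\to j)$ is the top span of $T^*\mathcal{X}(i\to k)$. That is a sound construction, but you must make it explicit. The colimit then has to be formed among diagrams in $\mathrm{Corr}(\mathrm{Sch})$, which need not admit such colimits, and you never say where the colimit is taken or why it exists.

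The indexing is also off. In $\mathrm{Tw}(\mathrm{I})$ a composable pair gives no morphism $g\to h$; the two are related only through $h\circ g$. In the paper's $\mathrm{Ar}(\mathrm{I})$, by contrast, the square $(g,h)$ is such a morphism. However, $(h,\ell)\circ(g,h)=(h\circ g,\ell\circ h)$ is a general square and not of composable-pair form. So functoriality needs transition data for every square $(a,b)\colon g\to g'$ with $b\circ g=g'\circ a$. Your recipe does extend to these: base-change $(\xi_a)_d$ along $\xi_{g'}$. But the ``associativity for triples $g,h,\ell$'' you propose to check is not the relevant coherence.

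Steps 3 and 4 do not prove the identification. Defining $T^*\mathcal{X}$ as the colimit makes the lemma a tautology, whereas the paper had already defined $T^*\mathcal{X}$ as the collection of the diagrams \eqref{eqn: Cotangent Diagram}. The claim that ``restricting to identity arrows recovers each $T^*X_i$'' confuses the diagram with its colimit: the colimit only receives cocone maps from the $T^*X_i$ and in general glues them. The straightening argument computes colimits in $\mathrm{Cat}_\infty$ of functors valued in $\infty$-categories. Its output is an $\infty$-category, not a geometric object over the $X_i$, and $g\mapsto\mathrm{Corr}(g)$, a span of schemes, is not $\infty$-category-valued in the first place.

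For comparison, the paper's proof consists only of the commutative cube. There $\gamma$ is taken in the opposite direction, as a map $X_k\times_{X_j}T^*X_j\to X_j\times_{X_i}T^*X_i$, and nothing is said about coherence or the colimit. Its unlabelled edge $T^*X_j\to T^*X_i$ over $\xi_g$ has exactly the variance defect that your base-change description avoids. So Steps 1--2, repaired as above, establish that $g\mapsto T^*\mathcal{X}(g)$ extends to a functor on $\mathrm{Ar}(\mathrm{I})$ with values in diagrams in $\mathrm{Corr}(\mathrm{Sch})$. The colimit statement additionally requires a target category in which that colimit exists, and neither your argument nor the paper's supplies one.
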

\begin{proof}
It suffices to notice that the arrows $g,h$ define morphisms of varieties $\xi_g:X_j\rightarrow X_i,\xi_h:X_k\rightarrow X_j$ which, along with the cotangent maps $T^*\xi_g,T^*\xi_h$ deteremine $\gamma$.
Indeed, they provide a commutative cube\footnote{We have omitted the obvious left-hand side of the cube.}
 \begin{equation*}
\label{eqn: Cotangent Cube}
\begin{tikzcd}[row sep=scriptsize, column sep=scriptsize]
&X_k\times_{X_j}T^*X_j\arrow[dl, "\gamma"] \arrow[rr, ""] \arrow[dd, "\pi_k'"] & & T^*X_j\arrow[dl, ""] \arrow[dd, "\pi_j"] \\
X_j\times_{X_i}T^*X_i\arrow[rr, ""] \arrow[dd, "\pi_j'"] & & T^*X_i\\
& X_k\arrow[dl, "\xi_h"] \arrow[rr, "\xi_h"] & & X_j \arrow[dl, "\xi_g"] \\
X_j \arrow[rr, "\xi_g"] & & X_i \arrow[from=uu, crossing over]\\
\end{tikzcd}
\end{equation*}
from which we identify $\gamma$ as
$T^*\xi_g':=\xi_h\times T^*\xi_h:X_k\times_{X_j}T^*X_j\rightarrow X_j\times_{X_i}T^*X_i.$

\end{proof}

Consider a morphism $\alpha:\mathcal{X}\rightarrow \mathcal{Y}$ of diagrams i.e. for all $g:i\rightarrow j$ we have that
\[
\begin{tikzcd}
X_j\arrow[d,"\alpha_j"]\arrow[r,"\xi_g^{\mathcal{X}}"] & X_i\arrow[d,"\alpha_i"]
\\
Y_j\arrow[r,"\xi_g^{\mathcal{Y}}"]& Y_i
\end{tikzcd}
\]
commutes. Put for simplicity the induced map
$$\alpha_{g:i\rightarrow j}:=\xi_g^{\mathcal{Y}}\circ \alpha_j:X_j\rightarrow Y_i.$$

The morphism $\alpha$ defines the commutative diagram at the level of cotangent spaces given by
\begin{equation}
\label{eqn: Cotangent space diagram}
\begin{tikzcd}
T^*X_j& \arrow[l] X_j\times_{X_i}T^*X_i\arrow[r] & T^*X_i
\\
X_j\times_{Y_j}T^*Y_j\arrow[u,"\alpha_{jd}"]
\arrow[d,"\alpha_{j\pi}"]
& \arrow[l]X_j\times_{Y_i}T^*Y_i\arrow[ul,"\Delta_{\alpha_gd}"]\arrow[u]\arrow[d]\arrow[dr,"\Delta_{\alpha_g\pi}"] \arrow[r] & X_i\times_{Y_i}T^*Y_i\arrow[u,"\alpha_{id}"]\arrow[d,"\alpha_{i\pi}"]
\\
T^*Y_j & \arrow[l] Y_j\times_{Y_i}T^*Y_i\arrow[r] & T^*Y_i
\end{tikzcd},
\end{equation}
where we have defined two natural morphisms
$$\Delta_{\alpha_g,d}:X_j\times_{Y_i}T^*Y_i\rightarrow T^*X_j,\hspace{2mm}\text{ and }\hspace{1mm} \Delta_{\alpha_g,\pi}:X_j\times_{Y_i}T^*Y_i\rightarrow T^*Y_i.$$

\subsection{Extending non-characteristic maps}
\label{ssec: Extending NC}
Consider as above a diagram of smooth $k$-schemes $\mathcal{X}:I\rightarrow Sch_k$ where $\mathcal{X}(i):=X_i\in Sch_k$ for each $i$ and for every arrow in $I$ we obtain the evident map of $k$-schemes.

\subsubsection{Non-Characteristic morphisms}
For $Z$ a closed sub-manifold $X$ denote the normal bundle by $T_ZX$, and the conormal bundle by $T_Z^*X.$ Denote by $T_X^*X$ the zero-section.

For a smooth map $f:X\rightarrow Y,$ there is a diagram
\begin{equation}
    \label{eqn: Usual CKK Diagram}
    \begin{tikzcd}
    & \arrow[dl,"f_d"] T^*Y\times_Y X\arrow[dr,"f_{\pi}", labels=below left]& 
    \\
    T^*X & & T^*Y
    \end{tikzcd}
\end{equation}
If $\Lambda\subset T^*Y$ is a closed $\mathbb{R}^+$-conic subset, one says that $f$ is \emph{non-characteristic} for $\Lambda,$ or that $\Lambda$ is non-characteristic for $f$ if one has that
\begin{equation}
    \label{eqn: Classical NC condition}
    f_{\pi}^{-1}\big(\Lambda\big)\cap T_X^*Y\subset X\times_Y T_Y^*Y.
\end{equation}
Notice that $T_X^*Y=\mathrm{ker}(f_d)=f_d^{-1}\big(T_X^*X\big).$

The appropriate extension to diagrams 
$\overline{X}:=\mathcal{X},$ whose domain $\mathrm{I}$ is an essentially small category for which every $g:i_1\rightarrow i_2,$ gives a map $\xi_g:X_{i_1}\rightarrow X_{i_2},$ is as follows. First we define $T^*\overline{X}$ to be the totality of all induced diagrams:
\[
\begin{tikzcd}
T^*X_{i_1}\arrow[d,"\pi_{i_1}"] & \arrow[l,"(\xi_g)_d"] X_{i_1}\times_{X_{i_2}}T^*X_{i_2}\arrow[d] \arrow[r,"(\xi_g)_{\pi}"] & T^*X_{i_2}\arrow[d,"\pi_{i_2}"]
\\
X_{i_1}\arrow[r,"\mathbf{1}_{X_{i_1}}"] & X_{i_1}\arrow[r,"\xi_g"] & X_{i_2}
\end{tikzcd}
\]
One puts $T_{X_{i_1}}^*X_{i_2}:=\mathrm{ker}(\xi_{g})_d=\big(\xi_g\big)_d^{-1}T_{X_{i_1}}^*X_{i_1}.$
\vspace{2mm}

A family $\overline{\Lambda}:=\lbrace \Lambda_i\rbrace_{i\in \mathrm{I}},$ of closed conic subsets $\Lambda_i\subset T^*X_i$ for every $i\in\mathrm{I}$, satisfying 
\begin{equation}
    \label{eqn: Conic for Cotangent Diagram}
    \Lambda_{i_1}\subset \big(\xi_g\big)_d\big(\xi_g\big)_{\pi}^{-1}\Lambda_{i_2},
\end{equation}
is said to be a closed conic subset for $T^*\overline{X}.$
If $\overline{\Lambda}=\lbrace\Lambda_i\rbrace_{i\in\mathrm{I}}$ is a closed conic, it is said to be \emph{transversal to the identity} if:
\vspace{2mm}

For all $g:i_1\rightarrow i_2$ in $\mathrm{I}$, the map $\xi_g:X_{i_1}\rightarrow X_{i_2}$ is non-characteristic with respect to $\Lambda_{i_2}.$ 
\vspace{1mm}

In other words, we have that 
$$(\xi_g)_d^{-1}T_{X_{i_1}}^*X_{i_1}\cap (\xi_g)_{\pi}^{-1}\Lambda_{i_2}\subset X_{i_1}\times_{X_{i_2}}T_{X_{i_2}}^*X_{i_2}.$$
Letting $\lambda:\overline{X}\rightarrow \overline{Y}$ be a morphism of diagrams, the corresponding non-characteristic condition is the obvious one.
\subsection{Characteristic varieties on configuration spaces}
\label{ssec: Char Vars on Config Space}
Consider $\mathcal{M}:=\{\mathcal{M}^{I}\}_{I\in fSet}\in \mathsf{Coh}(\mathcal{D}_{Ran_X})$ and define its characteristic variety to be the family
\begin{equation}
    \label{RanChar}\mathcal{C}har(\mathcal{M}):=\big\{\mathcal{C}har(\mathcal{M})^{I}:=\mathrm{Char}(\mathcal{M}^I)\subset T^*(X^I)\big\}_{I\in fSet}.
    \end{equation}
Then the main result of this subsection is the following 
\begin{theorem}
    \label{theorem: Factorization Characteristics}
Let $\mathcal{M}^{(Ran)}\in \mathrm{Coh}(\mathcal{D}_{Ran_X})$ and consider $\mathcal{C}har(\mathcal{M})\rightarrow Ran_X$, the levelwise co-Cartesian fibration which is moreover a closed $\mathbb{C}_{Ran_X}^{\times}$-conic subset of $T^*Ran_X.$ Then if $\mathcal{M}\in \mathsf{CohFAlg}(X),$ is of the form $\mathcal{M}^I:=\mathcal{M}^{\boxtimes I}$ for some $\mathcal{M},$ one has that $\mathcal{C}har(\mathcal{M})$ is a factorization $k$-scheme over $X.$
\end{theorem}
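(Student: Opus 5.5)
The plan is to check the factorization axioms for the family $\mathcal{C}har(\mathcal{M})=\{\mathrm{Char}(\mathcal{M}^{\boxtimes I})\}_{I\in fSet}$ directly, using two standard facts about characteristic varieties. The first is the product formula for external tensor products. The second is the behaviour of $\mathrm{Char}$ under non-characteristic pull-back along diagonal embeddings. These are inserted into the cotangent-diagram formalism of Subsect.~\ref{ssec: Extending NC}.

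\textbf{Step 1: factorization isomorphisms.} For $I=I_1\sqcup I_2$ we have $T^*X^I\simeq T^*X^{I_1}\times T^*X^{I_2}$. For coherent $\mathcal{D}$-modules one has
\[
\mathrm{Char}(\mathcal{M}^{\boxtimes I_1}\boxtimes\mathcal{M}^{\boxtimes I_2})=\mathrm{Char}(\mathcal{M}^{\boxtimes I_1})\times\mathrm{Char}(\mathcal{M}^{\boxtimes I_2}).
\]
To see this, take the product good filtration; its associated graded is the external tensor product $\mathrm{gr}\mathcal{M}^{\boxtimes I_1}\boxtimes\mathrm{gr}\mathcal{M}^{\boxtimes I_2}$ over $\mathcal{O}_{T^*X^{I_1}}\boxtimes\mathcal{O}_{T^*X^{I_2}}$. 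The support of this external tensor product is the product of the supports. By induction this gives $\mathrm{Char}(\mathcal{M}^{\boxtimes I})=\mathrm{Char}(\mathcal{M})^{I}$. Restricting over the disjoint locus $(X^{I_1}\times X^{I_2})_{\mathrm{disj}}$ then yields the required factorization isomorphism of schemes over $X^I$. These isomorphisms are compatible with permutations of $I$ and with iterated decompositions, because the identifications of supports are canonical.

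\textbf{Step 2: compatibility along diagonals.} For a surjection $\alpha:I\to J$, let $\Delta(\alpha):X^J\hookrightarrow X^I$ be the corresponding diagonal. We need
\[
\Delta(\alpha)_d\,\Delta(\alpha)_\pi^{-1}\,\mathrm{Char}(\mathcal{M})^I=\mathrm{Char}(\mathcal{M})^J,
\]
which is the conic-family condition \eqref{eqn: Conic for Cotangent Diagram}, upgraded to an equality. The inclusion ``$\subset$'' of $\mathrm{Char}(\Delta(\alpha)^*\mathcal{M}^{\boxtimes I})$ into the left-hand side is the standard estimate for the characteristic variety of an inverse image [KS90]. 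It becomes an equality, with $\Delta(\alpha)^*\mathcal{M}^{\boxtimes I}\simeq\mathcal{M}^{\boxtimes J}$ coherent, exactly when $\Delta(\alpha)$ is non-characteristic for $\mathrm{Char}(\mathcal{M})^I$. That is, the family should be transversal to the identity in the sense of Subsect.~\ref{ssec: Extending NC}.

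\textbf{Step 3: assembling the structure.} With Steps 1 and 2 in hand, the diagram $I\mapsto\mathrm{Char}(\mathcal{M})^I\to X^I$ is levelwise co-Cartesian over $Ran_X$, and I will assemble it as a colimit over the cotangent diagram, using the lemma describing $T^*\mathcal{X}$ as a colimit over arrows. The $\mathbb{C}^\times$-conicity is preserved levelwise, since each product and pull-back of conic sets is conic.

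I expect Step 2 to be the main obstacle. The non-characteristic condition fails for general $\mathcal{M}$: for example, if $(x;\xi)$ and $(x;-\xi)$ both lie in $\mathrm{Char}(\mathcal{M})$, the diagonal is characteristic. My first attempt would be to use the hypothesis $\mathcal{M}\in\mathsf{CohFAlg}(X)$, so that the unit and multiplication structure provide the identification $\Delta^*\mathcal{M}^{\boxtimes I}\simeq\mathcal{M}^{\boxtimes J}$, and then work with the reduced images $\Delta(\alpha)_d\Delta(\alpha)_\pi^{-1}$. If that does not suffice, the fallback is to define the factorization structure on $\mathcal{C}har(\mathcal{M})$ by these images themselves, so that compatibility holds by construction, and to check associativity of the resulting maps via the commutative cotangent cube.
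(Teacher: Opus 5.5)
Your Step 1 is the paper's argument. The paper restricts along the open embedding $j(\alpha)\colon U(\alpha)\hookrightarrow X^I$, replaces $\mathcal{M}^{(I)}$ by $\boxtimes_{j\in J}\mathcal{M}^{(I_j)}$, and applies the product formula for characteristic varieties of external tensor products. For diagonals it adds only a one-line inclusion.

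The genuine gap is your Step 2, and it is worse than a failure ``for general $\mathcal{M}$''.

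\textbf{The diagonal is almost always characteristic.} Since $\mathrm{Char}(\mathcal{M})$ is $\mathbb{C}^\times$-conic, $(x;\xi)\in\mathrm{Char}(\mathcal{M})$ always forces $(x;-\xi)\in\mathrm{Char}(\mathcal{M})$. So the diagonal $X\hookrightarrow X^2$ is characteristic for $\mathcal{M}\boxtimes\mathcal{M}$ unless $\mathrm{Char}(\mathcal{M})\subset T^*_XX$, i.e.\ unless $\mathcal{M}$ is an integrable connection. For characteristic maps the inverse-image estimate you cite is not available.

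\textbf{The equality you want is false.} First, $\Delta^*(\mathcal{M}\boxtimes\mathcal{M})\simeq\mathcal{M}\otimes^{L}_{\mathcal{O}_X}\mathcal{M}$, not $\mathcal{M}$. Second, the set you want to equal $\mathrm{Char}(\mathcal{M})$ is $\Delta_d\Delta_\pi^{-1}\bigl(\mathrm{Char}(\mathcal{M})\times\mathrm{Char}(\mathcal{M})\bigr)=\mathrm{Char}(\mathcal{M})+_X\mathrm{Char}(\mathcal{M})$. This equals $\mathrm{Char}(\mathcal{M})$ only when its fibers are additively closed. For example, on $X=\mathbb{C}^2$ take $\mathcal{N}=\mathcal{D}_X/\mathcal{D}_X\partial_1\oplus\mathcal{D}_X/\mathcal{D}_X\partial_2$. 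Then $\mathrm{Char}(\mathcal{N})=\{\xi_1\xi_2=0\}$, while the fiberwise sum is all of $T^*X$. The square-zero extension $\mathcal{O}_X\oplus\mathcal{N}$ is a coherent commutative unital $\mathcal{D}_X$-algebra with the same characteristic variety, so the algebra structure cannot rescue the equality.

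\textbf{Neither fallback works.} Fallback (a) fails because a multiplication $\mathcal{M}\otimes_{\mathcal{O}_X}\mathcal{M}\to\mathcal{M}$ is only a morphism, not the identification you need. Fallback (b) changes the object of the theorem, and it is also ill-defined. For fixed $J$, taking $\alpha=\mathrm{id}_J$ returns $\mathrm{Char}(\mathcal{M})^J$. Taking $J\sqcup\{*\}\twoheadrightarrow J$ instead returns a product with one factor $\mathrm{Char}(\mathcal{M})+_X\mathrm{Char}(\mathcal{M})$.

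\textbf{What you can actually prove along diagonals} is the one-sided condition \eqref{eqn: Conic for Cotangent Diagram}:
\[
\mathrm{Char}(\mathcal{M})^J\subset\Delta(\alpha)_d\Delta(\alpha)_\pi^{-1}\mathrm{Char}(\mathcal{M})^I .
\]
It needs no identification of $\Delta(\alpha)^*\mathcal{M}^{\boxtimes I}$. A closed conic set contains the zero section over its projection, so $(x;\xi)=(x;\xi)+(x;0)+\dots+(x;0)$. Together with Step 1, this gives a closed conic family for $T^*\overline{X}$ in the sense of Subsect.~\ref{ssec: Extending NC}. That family factorizes over disjoint loci but is not transversal to the identity. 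This is what can be proved in this generality, so drop the non-characteristic route and the demand for equality.

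Do not rely on the paper's closing line either. It asserts $\Delta_{\alpha,\pi}^*\mathrm{Char}(\mathcal{M}^I)\subset\mathrm{Char}(\mathcal{M}^J)$, where $\Delta_{\alpha,\pi}^*$ is the restriction map $T^*X^I\times_{X^I}X^J\to T^*X^J$. That is the reverse inclusion, and it fails on the same example, so it cannot close your Step 2.
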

\begin{proof}
This follows since if $\mathcal{M}^{(I)}$ is of the form $\mathcal{M}^{\boxtimes I}$ for some $\mathcal{D}_X$-module $\mathcal{M},$ then we need only assume that $\mathcal{M}$ itself is coherent. Indeed, it follows from the classical fact that if $\mathcal{M},\EuScript{N}$ are coherent $\mathcal{D}_X$ and $\mathcal{D}_Y$-modules, then their external tensor product is a coherent $\mathcal{D}_{X\times Y}=\mathcal{D}_X\boxtimes\mathcal{D}_Y$-module.

To obtain factorization structure maps, let $\alpha:I\rightarrow J$ be a partition and $j(\alpha):U(\alpha)\hookrightarrow X^I$ the corresponding open embedding.
Make the identifications: for $I=\sqcup_{j\in J} I_j$ with $I_j=\alpha^{-1}(j)$ for a surjection $\alpha,$ we have
$$X^I\simeq \prod_{j\in J} X^{I_j},\hspace{3mm}\text{ and }\hspace{2mm} T^*(X^I)\simeq (T^*X)^{I}\simeq T^*\big(\prod_{j\in J}X^{I_j}\big)\simeq \prod_{j\in J}T^*(X^{I_j}).$$

For $j(\alpha):U(\alpha)\hookrightarrow X^I,$ consider the microlocal diagram (c.f \ref{eqn: Usual CKK Diagram}):
\[
\begin{tikzcd}
    & \arrow[dl,"j(\alpha)_d", labels=above left] U(\alpha)\times_{X^I} T^*(X^I)\arrow[dr,"j(\alpha)_{\pi}"] & 
    \\
    T^*U(\alpha) & & T^*(X^I)\simeq \prod_{j\in J}T^*X^{I_j}.
\end{tikzcd}
\]
We want to show there are isomorphisms
$$\mathcal{C}har(\mathcal{M})^{(I)}|_{U(\alpha)}\rightarrow \mathcal{C}har(\mathcal{M})^{(I_1)}\times\cdots\times \mathcal{C}har(\mathcal{M})^{I_n}|_{U(\alpha)},$$
where $n:=|J|.$ To this end, unpacking \eqref{RanChar} over each stratum, we observe that
\begin{eqnarray*}
\mathcal{C}har(\mathcal{M})^{(I)}|_{U(\alpha)}&\simeq& j(\alpha)_{\pi}^{-1}\mathcal{C}har(\mathcal{M}^{(I)})
\\
&\simeq& \mathcal{C}har\big(j(\alpha)^*\mathcal{M}^{(I)}\big)
\\
&\xrightarrow{\simeq}& \mathcal{C}har\bigg(j(\alpha)^*\big(\boxtimes_{j\in J}\mathcal{M}^{(I_j)}\big)\bigg),
\end{eqnarray*}
where we used the factorization equivalence for $\mathcal{M}.$ Since $j(\alpha)$ is an open embedding we have
$$\mathcal{C}har\bigg(j(\alpha)^!\big(\boxtimes_{j\in J}\mathcal{M}^{(I_j)}\big)\bigg)\subset j(\alpha)_dj(\alpha)_{\pi}^{-1}\mathcal{C}har(\big(\boxtimes_{j\in J}\mathcal{M}^{(I_j)}\big)),$$
and that this is in fact an equality.
By well-known properties\footnote{Specifically, that for any complex of sheaves $F$ on $M$ and $G$ on $N$, 
$\mu supp(F\boxtimes^L G)\subset \mu supp(F)\times \mu supp(G).$ If
$f:N\rightarrow M$ is proper on $supp(F),$ then $Rf_!F\xrightarrow{\simeq} Rf_*F$ and 
$\mu supp(Rf_*F)\subset f_{\pi}f_d^{-1}\mu supp(F).$
Moreover, if $f$ is a closed embedding, $\mu supp(Rf_*F)=f_{\pi}f_d^{-1}\mu supp(F).$} of characteristic varieties (see for example \cite{KS90}) we get
$$j(\alpha)_dj(\alpha)_{\pi}^{-1}\mathcal{C}har\big(\boxtimes_{j\in J}\mathcal{M}^{(I_j)}\big)=j(\alpha)_dj(\alpha)_{\pi}^{-1}\big(\mathcal{C}har(\mathcal{M}^{(I_1)})\times\cdots\times \mathcal{C}har(\mathcal{M}^{(I_n)}).$$
This coincides with the image of the restriction of
$\prod_{j\in J}\mathcal{C}har(\mathcal{M}^{(I_j)})|_{U(\alpha)},$
to $T^
*U(\alpha).$  
We have used the isomorphisms
$$\mathcal{C}har(\mathcal{M})^{(I\sqcup J)}|_{(X^{I\sqcup J})_{disj}}=\mathrm{Char}(\mathcal{M}^{(I\sqcup J)})|_{(X^{I\sqcup J})_{disj}}=\mathrm{Char}(\mathcal{M}^I)\times \mathrm{Char}(\mathcal{M}^J),$$
as subsets of $T^*(X^I)\times T^*(X^J)\simeq T^*(X^{I\sqcup J})$.
Moreover, $\Delta_{\alpha}:X^J\hookrightarrow X^I,$ gives $\Delta_{\alpha,\pi}^*:T^*X^I\times_{X^I}X^J\to T^*X^J,$ so $\Delta_{\alpha,\pi}^*\mathrm{Char}(\mathcal{M}^I)\subset \mathrm{Char}(\mathcal{M}^J).$
\end{proof}
\begin{lemma}
\label{proposition: Hyperfunction on Ran}
Hyperfunctions upgrade to the structure of a factorization $D$-module.
\end{lemma}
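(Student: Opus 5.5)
The approach is to build the family $\{\mathscr{B}_{M^I}\}_{I\in fSet}$, where $M$ is a real analytic manifold with complexification $X$, so that each $M^I$ is a real analytic manifold with complexification $X^I$. We then check the two axioms of a factorization $D$-module recalled in the introduction: the diagonal compatibilities $\Delta(\alpha)$ and the factorization isomorphism \eqref{eqn: Factorization property}. Concretely, we set
\[
\mathscr{B}_{M^I}:=R\Gamma_{M^I}(\mathcal{O}_{X^I})\otimes or_{M^I}[\dim M^I]\simeq H^{|I|\dim M}_{M^I}(\mathcal{O}_{X^I})\otimes or_{M^I}.
\]
Each $\mathscr{B}_{M^I}$ is a (non-coherent) $\D_{X^I}$-module supported on $M^I$ and concentrated in degree $0$, as recalled in Subsect.~\ref{ssec: A microlocal diagram and ellipticity}.

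The first step is the external product. We use the relative Künneth formula for local cohomology, $R\Gamma_{M^{I_1}\times M^{I_2}}(\mathcal{O}_{X^{I_1}}\boxtimes\mathcal{O}_{X^{I_2}})\simeq R\Gamma_{M^{I_1}}\mathcal{O}_{X^{I_1}}\boxtimes R\Gamma_{M^{I_2}}\mathcal{O}_{X^{I_2}}$, together with $or_{M^I}\simeq or_{M^{I_1}}\boxtimes or_{M^{I_2}}$. Together these give Sato's classical morphism $\mathscr{B}_{M^{I_1}}\boxtimes\mathscr{B}_{M^{I_2}}\to\mathscr{B}_{M^{I}}$. We then restrict to the disjoint locus $U(\alpha)\subset X^I$ and use the pure codimensionality of the hyperfunction sheaves, $H^j_{M^I}(\mathcal{O})=0$ for $j\neq\operatorname{codim}$. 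Since $\mathcal{O}_{X^I}$ is the completed external tensor product, this upgrades the morphism to an isomorphism after restriction to $U(\alpha)$. This is the analogue of the identification $\mathrm{Char}(\mathcal{M}^{I\sqcup J})|_{disj}=\mathrm{Char}(\mathcal{M}^I)\times\mathrm{Char}(\mathcal{M}^J)$ used in the proof of Theorem~\ref{theorem: Factorization Characteristics}.

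The second step is the diagonal. For a surjection $\alpha:I\to J$ the diagonal $\Delta(\alpha):X^J\hookrightarrow X^I$ restricts to $M^J\hookrightarrow M^I$. We produce $\Delta(\alpha)^!\mathscr{B}_{M^I}\simeq\mathscr{B}_{M^J}$ (up to the standard shift) by composing the base change $\Delta^!R\Gamma_{M^I}\simeq R\Gamma_{M^J}\Delta^!$ with $\Delta(\alpha)^!\mathcal{O}_{X^I}$, computed via the transfer bimodule $\D_{X^J\to X^I}$. The required input is non-characteristicity: we verify, using \eqref{eqn: Classical NC condition} and the conic family formalism of Subsect.~\ref{ssec: Extending NC}, that $\Delta(\alpha)$ is non-characteristic for $SS(\mathbb{C}_{M^I})\subset T^*_{M^I}X^I$. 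This reduces to $T^*_{X^J}X^I\cap T^*_{M^I}X^I\subset$ zero section over $M^J$, which holds because $M^J=M^I\cap X^J$ is a clean, totally real intersection. Homotopy coherence of these equivalences then follows from functoriality of $(-)^!$ and $R\Gamma$ under compositions of diagonals, using the colimit description of $T^*\mathcal{X}$ in the cotangent-diagram lemma.

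The main obstacle is the first step. The external product of hyperfunctions is not a plain tensor product, and $\boxtimes$ of $\D$-modules involves $\mathcal{O}_{X^{I_1}}\boxtimes\mathcal{O}_{X^{I_2}}\to\mathcal{O}_{X^I}$, which is not an isomorphism of sheaves. I would first try to argue at the level of solution complexes, via Kashiwara–Schapira's $\mu hom$ and the Künneth formula for microlocalization, $\mu_{M^I}(\mathcal{O}_{X^I})\simeq\mu_{M^{I_1}}\mathcal{O}\boxtimes^L\mu_{M^{I_2}}\mathcal{O}$ for tensored-with-$\D$ objects. Failing that, I would weaken the claim to a lax factorization structure, whose comparison maps become isomorphisms on coherent test modules after applying $R\mathcal{H}om_{\D}(\mathcal{M}^{\boxtimes I},-)$.
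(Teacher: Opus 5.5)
Your plan has the same shape as the paper's proof. You check diagonal compatibility by commuting $\Delta(\alpha)^!$ with $R\Gamma_{M^I}$, and factorization by a Künneth formula for local cohomology on the disjoint locus. However, the diagonal step rests on a false claim, and the factorization step is left unproved.

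\textbf{The non-characteristic claim is false.} $\Delta(\alpha)$ is \emph{not} non-characteristic for $T^*_{M^I}X^I$. At a point of $M^J$, the intersection $T^*_{X^J}X^I\cap T^*_{M^I}X^I$ consists of the covectors $(i\xi_k)_{k\in I}$ with $\xi_k$ real and $\sum_{k\in\alpha^{-1}(j)}\xi_k=0$ for every $j$. That is, it equals $i\,T^*_{M^J}M^I$, which has real rank $n(|I|-|J|)>0$ whenever $|J|<|I|$.
\begin{itemize}
\item Cleanness of $M^J=M^I\cap X^J$ only gives $T^*_{M^J}X^I=T^*_{M^I}X^I+T^*_{X^J}X^I$.
\item A trivial intersection would require $M^I$ and $X^J$ to be transversal, and they are not.
\item This is exactly why $\delta(x_1-x_2)$ cannot be restricted to $x_1=x_2$: its singular spectrum lies in these directions.
\end{itemize}
There is also a mismatch of functors. $\Delta^!R\Gamma_{M^I}\simeq R\Gamma_{M^J}\Delta^!$ holds for the sheaf-theoretic $\Delta^!$. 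The transfer bimodule $\mathcal{D}_{X^J\to X^I}$ computes the $\mathcal{D}$-module pullback, and the two functors differ on $\mathcal{O}_{X^I}$. So the step as written does not go through.

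\textbf{How the diagonal step can be repaired.} The paper uses no micro-support hypothesis here; its key input is $\Delta(\alpha)^{-1}\mathbb{C}_{M^I}\simeq\mathbb{C}_{M^J}$. One rigorous way to use this:
\begin{itemize}
\item Up to shift, the $\mathcal{D}$-module pullback is $\mathcal{O}_{X^J}\otimes^L_{\mathcal{O}_{X^I}}(-)$.
\item $\mathcal{O}_{X^J}$ is locally perfect over $\mathcal{O}_{X^I}$ (Koszul complex in the transverse coordinates), so this functor commutes with $R\Gamma_{M^I}$.
\end{itemize}
This gives
\[
\mathcal{O}_{X^J}\otimes^L_{\mathcal{O}_{X^I}}R\Gamma_{M^I}(\mathcal{O}_{X^I})\simeq R\Gamma_{M^I}(\mathcal{O}_{X^J})\simeq R\Gamma_{M^J}(\mathcal{O}_{X^J}),
\]
concentrated in the single degree $n|J|$. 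The result is the top Tor, namely hyperfunctions $\delta(u)\otimes g$ killed by the transverse coordinates $u$, rather than a restriction. That is why no non-characteristic condition is needed.

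\textbf{The factorization step remains open.} The paper disposes of it with a one-line appeal to the Künneth formula for local cohomology on the disjoint locus. Your objection is fair: the $\mathcal{D}$-module $\boxtimes$ is built from $\mathcal{O}_{X^{I_1}}\boxtimes\mathcal{O}_{X^{I_2}}$, which is not $\mathcal{O}_{X^I}$, and that one-liner does not address this. Your proposal does not close the gap either.
\begin{itemize}
\item The sentence ``since $\mathcal{O}_{X^I}$ is the completed external tensor product, this upgrades the morphism to an isomorphism'' is not an argument. The completion is exactly what the uncompleted $\mathcal{D}$-module product lacks, and sections of $\mathscr{B}$ carry no natural Hausdorff topology to complete against.
\item Restricting to $U(\alpha)$ does not help. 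The discrepancy is local and is already present over $U_1\times U_2$ with $U_1\cap U_2=\emptyset$.
\item The $\mu hom$ route meets the same obstruction, because the Künneth formula for microlocalization concerns $F_1\boxtimes F_2$, not $\mathcal{O}_{X^I}$.
\item The lax fallback changes the statement being proved.
\item The ``test module'' fallback is no weaker than the original claim. For the coherent module $\mathcal{M}=\mathcal{D}_X$ one has $\mathcal{M}^{\boxtimes I}=\mathcal{D}_{X^I}$, and $R\mathcal{H}om_{\mathcal{D}_{X^I}}(\mathcal{D}_{X^I},-)$ is the identity, so it returns the original comparison map. To gain anything you would have to restrict the test modules, for example to holonomic ones, whose solution complexes are constructible and admit a Künneth formula.
\end{itemize}
As written, the diagonal step needs the replacement above, and the factorization isomorphism is not established.
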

\begin{proof}
    Recall $R\Gamma_{M^I}(\mathcal{O}_{X^I})\simeq R\mathcal{H}om_{X^I}(\mathbb{C}_{M^I},\mathcal{O}_{X^I}),$ so apply $\Delta(\alpha)^!$ there is a canonical isomorphism in $D^b(X^J),$ via Grothendieck duality,
    $$\Delta^!(\alpha)R\mathcal{H}om_{X^I}(\mathbb{C}_M^I,\mathcal{O}_{X^I})\simeq R\mathcal{H}om_{X^J}(\Delta(\alpha)^{-1}\mathbb{C}_{M^I},\Delta(\alpha)^!\mathcal{O}_{X^I}),$$
    but since $\Delta(\alpha)^{-1}\mathbb{C}_{M^I}\simeq \mathbb{C}_{\Delta^{-1}M^I}\simeq\mathbb{C}_{M^J},$ we get
    $$R\mathcal{H}om_{X^J}(\mathbb{C}_{M^J},\Delta(\alpha)^!\mathcal{O}_{X^I})\simeq R\Gamma_{M^J}(\Delta^!\mathcal{O}_{X^I})\simeq R\Gamma_{M^J}(\omega_{X^J/X^I}[-c_{I/J}],$$
    where $c_{I/J}$ is the complex codimension of $X^J$ in $X^I.$
    Equivalently, 
    $$\Delta^!R\mathcal{H}om_{D_{X^I}}(D_{X^I}'\mathbb{C}_{M^I},\mathcal{O}_{X^I})\simeq R\mathcal{H}om_{D_X^J}(\Delta^{-1}D_{X^I}'\mathbb{C}_{M^I},\Delta^!\mathcal{O}_{X^I}),$$
    and since $\Delta^{-1}D_{X^I}'\mathbb{C}_{M^I}\simeq D_{X^J}'\Delta^{-1}\mathbb{C}_{M^I}\simeq D_{X^J}'\mathbb{C}_{M^J},$
    the right-hand side agrees. Consequently, we have for each $\alpha:I\twoheadrightarrow J,$ an equivalence
    $$\Delta(\alpha)^!\big(R\mathcal{H}om_{D_{X^I}}(D_{X^I}'\mathbb{C}_{M^I},\mathcal{O}_{X^I})\otimes \omega_{M^I}\big)\xrightarrow{\simeq}R\mathcal{H}om_{D_{X^J}}(D_{X^J}'\mathbb{C}_{M^J},\mathcal{O}_{X^J})\otimes \omega_{M^J}.$$
    This establishes that $\{\mathcal{B}_{M^I}\}$ define a $\D$-module on the Ran space. We now prove the factorization axioms. This follows from K\"unneth formula in local cohomology, restricted to the disjoint locus.
Explicitly, for $M^I\simeq M^{I_1}\times M^{I_2}\subset X^{I_1}\times X^I_2,$ 
one has 
$$H_{M^I}^{n\cdot |I|}(\mathcal{O}_{X^I})\simeq H_{M^{I_1}}^{n\cdot |I_1}(\mathcal{O}_{X^{I_1}})\otimes H_{M^{I_2}}^{n\cdot |I_2|}(\mathcal{O}_{X^{I_2}}),$$
from which the result follows.
\end{proof}
We apply Proposition \ref{proposition: Hyperfunction on Ran} to the case of causal manifolds, hence by \cite{JS16} also to Lorentzian manifolds. 
To this end, let $\Lambda \subset T^*X,$ be a closed convex analytic subset and let $\mathrm{Mod}_{\Lambda}(X)$, denote sheaves with support $\mathrm{supp}(F)\subset \Lambda\cap T^*X.$ 
Let $(M,\gamma)$ be causal with polar cone $\gamma^{\circ}$ and put 
$$\mathrm{Mod}_{[\gamma]}(\mathcal{D}_{\mathrm{Ran}_X}):=\big\{\{F_I\}_{I\in \mathrm{fSet}}|\mathrm{SS}(F_I)\subset (\gamma^{\circ})^I\big\}.$$
This is well-defined due to the behavior of singular supports under exterior tensor product \cite[Proposition 5.4.1]{KS90} i.e. $\mathrm{SS}(F\boxtimes G)\subset \mathrm{SS}(F)\times \mathrm{SS}(G).$ We will say that $F=\{F_I\}$ is \emph{causal} if it satisfies (\ref{eqn: Factorization property}) over \emph{space-like disjoints}. 
For each $x\in X$, put $\gamma_x^{\circ}:=\{\xi\in T_x^*X|\xi(v_x)\geq 0,\forall v_x\in \gamma_x\}$ and for $I\in \mathrm{fSet},$ put $\Gamma_I^{\circ}:=\prod_{i\in I}\gamma_{x_i}^{\circ}\subset T_{(x_i)}^*(X^I).$
Consider the cone-support condition
\begin{equation}
    \label{eqn: Cone-support}\mathrm{SS}(F_{X^I})\subset \Gamma_I^{\circ},\forall I\in \mathrm{fSet}.
\end{equation}
\begin{lemma}
    \label{proposition: Hyperfunction on Ran gamma version}
    The assignment $I\to \mathscr{B}_{\mathrm{Ran}_M}^{[\gamma]}(I):=\{u\mathcal{B}_{M^I}|SS(u)\subset \gamma^I\},$ is an object of $\mathrm{Mod}_{[\gamma]}(\D_{\mathrm{Ran}_X}),$ (cf. Notation \ref{SupportNotation}). In other words, $\mathscr{B}_{\mathrm{Ran}_M}^{[\gamma]}$ is a commutative factorization $\D$-algebra.
\end{lemma}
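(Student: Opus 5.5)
The argument runs in parallel with the proof of Lemma \ref{proposition: Hyperfunction on Ran}, with one extra step: the microsupport condition must be preserved by every structure map. First, for each $I\in \mathrm{fSet}$ I would realise $\mathscr{B}^{[\gamma]}_{\mathrm{Ran}_M}(I)$ sheaf-theoretically as the subsheaf of $\mathscr{B}_{M^I}$ of hyperfunctions whose microsupport (the singular spectrum, a subset of $T_{M^I}^*X^I\simeq iT^*M^I$) lies in the closed convex cone $\Gamma_I^{\circ}=\prod_{i\in I}\gamma^{\circ}_{x_i}$. Concretely, this is $\pi_{*}\Gamma_{\Gamma_I^{\circ}}(\mathscr{C}_{M^I})$ glued with $\mathscr{A}_{M^I}$, or equivalently $R\mathcal{H}om(D'\mathbb{C}_{M^I},\mathcal{O}_{X^I})$ restricted to the cone via the Sato triangle $\mathscr{A}\to\mathscr{B}\to\pi_*\mathscr{C}$. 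This sheaf is a $\D_{X^I}$-submodule, because differential operators do not enlarge the singular spectrum (cf. Lemma \ref{Propagation for hyperbolic directions}). It is closed under products whenever these are defined, because $\Gamma_I^{\circ}$ is a convex cone with $\Gamma_I^{\circ}\cap -\Gamma_I^{\circ}=0$. The same sum rule $V_1+_XV_2$ used for $\mathsf{Char}_{\D}(Sym(\M))$ in Section \ref{ssec: PDEGeometryIndexes} then makes the product of two such hyperfunctions well defined, and its microsupport stays in $\Gamma_I^{\circ}+_X\Gamma_I^{\circ}=\Gamma_I^{\circ}$. This gives the commutative algebra structure at each level. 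The singular-support condition \eqref{eqn: Cone-support} holds by construction, so each level lies in $\mathrm{Mod}_{[\gamma]}$.

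Next come the Ran (diagonal) compatibilities. For a surjection $\alpha:I\twoheadrightarrow J$ I would take the equivalence $\Delta(\alpha)^!\mathscr{B}_{M^I}\simeq\mathscr{B}_{M^J}$ already established in Lemma \ref{proposition: Hyperfunction on Ran} and check that it carries the $[\gamma]$-subsheaves to each other. The diagonal $\Delta(\alpha)$ is non-characteristic for $\Gamma_I^{\circ}$: a conormal covector to the diagonal is a tuple $(\xi_i)$ with $\sum_{i\in\alpha^{-1}(j)}\xi_i=0$ for each $j$, and such a tuple lies in $\prod\gamma^{\circ}_x$ only if every $\xi_i=0$, again by properness of the cone. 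So the restriction of hyperfunctions is defined, and \cite[Prop.~5.4.13]{KS90} gives the microsupport bound $\Delta(\alpha)_d\Delta(\alpha)_\pi^{-1}(\Gamma_I^{\circ})\subset\Gamma_J^{\circ}$, since sums of covectors in $\gamma_x^{\circ}$ stay in $\gamma_x^{\circ}$. This is the transversality-to-the-identity condition of Subsect.~\ref{ssec: Extending NC}. It shows that $\{\mathscr{B}^{[\gamma]}(I)\}$ is a closed conic family compatible with the diagram $\mathrm{Ran}_X$.

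For factorization, over the (space-like) disjoint locus $U(\alpha)$ I would use the Künneth isomorphism for local cohomology from Lemma \ref{proposition: Hyperfunction on Ran} together with the product rule $\mathrm{SS}(F\boxtimes G)\subset\mathrm{SS}(F)\times\mathrm{SS}(G)$ \cite[Prop.~5.4.1]{KS90}. Together these show that the exterior product maps $\mathscr{B}^{[\gamma]}(I_1)\boxtimes\mathscr{B}^{[\gamma]}(I_2)$ into $\mathscr{B}^{[\gamma]}(I)|_{U(\alpha)}$. The argument mirrors Theorem \ref{theorem: Factorization Characteristics}.

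The main obstacle is the converse inclusion, namely that this map is an equivalence onto the $\Gamma_I^{\circ}$-supported part and not merely an injection. A hyperfunction on $M^{I_1}\times M^{I_2}$ with singular spectrum in $\Gamma^{\circ}_{I_1}\times\Gamma^{\circ}_{I_2}$ need not be a finite sum of exterior products. To handle this I would work microlocally, using the identification $\mathscr{C}_{M^{I}}\simeq\mathscr{C}_{M^{I_1}}\boxtimes^{L}\mathscr{C}_{M^{I_2}}$ on the product of the cones away from the zero sections. I would realise the $[\gamma]$-part as boundary values $b_\gamma$ of holomorphic functions on the tuboid $M^I+i\Gamma_I$. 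Since $\Gamma_I=\prod_i\gamma_{x_i}$, this tuboid is a product of tuboids, so the sheaf of holomorphic functions on it is the completed tensor product of the factor sheaves, and we interpret $\boxtimes$ in the completed sense appropriate to ind-coherent $\D$-modules. Causality, meaning that factorization is required only over space-like disjoint configurations, is used here: it guarantees that the tuboids over distinct points do not interact. Commutativity of the factorization algebra then follows from the symmetry of $\boxtimes$ together with the level-wise product from the first step.
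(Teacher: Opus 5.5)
The paper states this lemma without proof (it is meant to follow from Lemma~\ref{proposition: Hyperfunction on Ran} and \cite[Prop.~5.4.1]{KS90}), so your argument has to be judged on its own terms.

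Your levelwise $\D$-algebra structure is fine. The relevant input there is the microlocal product theorem for hyperfunctions, not the rule $V_1+_XV_2$ for characteristic varieties.

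\textbf{The Ran step.} Your plan is to take the equivalence $\Delta(\alpha)^!\mathscr{B}_{M^I}\simeq\mathscr{B}_{M^J}$ of Lemma~\ref{proposition: Hyperfunction on Ran} and check that it carries the $[\gamma]$-subsheaves to each other. That check fails.

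First, that equivalence is not restriction of hyperfunctions. Apply $R\Gamma_{M^I}$ to $0\to\mathcal{O}\xrightarrow{t}\mathcal{O}\to\mathcal{O}_{\{t=0\}}\to 0$. This shows that multiplication by a defining equation $t$ of the diagonal is surjective on $\mathscr{B}_{M^I}$, with kernel the hyperfunctions $v\,\delta(t)$. So the Koszul complex computing $\mathbf{L}\Delta(\alpha)^*\mathscr{B}_{M^I}$ is concentrated in degree $-c$, where $c$ is the codimension, and the equivalence is $v\mapsto v\,\delta_{M^J}$. In the sheaf-theoretic reading of that proof it passes through $\Gamma_{M^J}\mathscr{B}_{M^I}$, with the same outcome.

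Second, you correctly check that $\Delta(\alpha)$ is non-characteristic for $\Gamma_I^{\circ}$. Kashiwara's watermelon (Holmgren-type) theorem then shows that $\mathscr{B}^{[\gamma]}_{M^I}$ has no nonzero section supported on $M^J$. So that equivalence kills the $[\gamma]$-part rather than matching it with $\mathscr{B}^{[\gamma]}_{M^J}$.

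What you actually use is restriction, which is a different structure. On $\mathscr{B}^{[\gamma]}$ the Koszul complex of the diagonal equations is exact outside degree $0$. Hence $\mathscr{B}^{[\gamma]}_{\mathrm{Ran}_M}$ is not a sub-object of the Ran $\D$-module of that lemma: the two differ by a shift by $c$ along each diagonal. The family $\mathscr{B}^{[\gamma]}$ fits the $\Delta(\alpha)^*$-convention of the introduction, not the $\Delta(\alpha)^!$-convention of the lemma.

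For this structure you only show that restriction exists and bounds the singular spectrum. You never prove the equivalence $\mathbf{L}\Delta(\alpha)^*\mathscr{B}^{[\gamma]}_{M^I}\xrightarrow{\simeq}\mathscr{B}^{[\gamma]}_{M^J}$. That requires:
\begin{enumerate}
\item exactness of the Koszul complex on $\mathscr{B}^{[\gamma]}$, by the support theorem applied one non-characteristic hypersurface at a time;
\item the identification of $H^0$ with restriction, since a holomorphic function on the product tuboid vanishing on the complexified diagonal lies in the ideal of the diagonal;
\item surjectivity, by pulling back along the projection $X^I\to X^J$ given by a section of $\alpha$;
\item compatibility with composition of surjections.
\end{enumerate}

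\textbf{Factorization.} You observe yourself that hyperfunctions with singular spectrum in $\Gamma^{\circ}_{I_1}\times\Gamma^{\circ}_{I_2}$ need not lie in the image of the exterior product. So with the $\boxtimes$ of $\D$-modules in \eqref{eqn: Factorization property}, the factorization map is not surjective. Declaring $\boxtimes$ to be a completed tensor product ``appropriate to ind-coherent $\D$-modules'' changes the statement rather than proving it: ind-completion adds filtered colimits, not topological completions. You would have to change the ambient category, for instance to sheaves of DFS spaces with $\hat\otimes$, and say so explicitly.

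Even in that setting, ``the tuboid is a product, so its holomorphic functions form the completed tensor product'' is too quick. Products of tuboids are not cofinal among tuboids of profile $\Gamma_{I_1}\times\Gamma_{I_2}$, because the latter may shrink toward the faces $\Gamma_{I_1}\times\{0\}$ and $\{0\}\times\Gamma_{I_2}$. You therefore need two further facts:
\begin{itemize}
\item every hyperfunction with singular spectrum in $\Gamma^{\circ}_{I_1}\times\Gamma^{\circ}_{I_2}$ is a boundary value from a product tuboid;
\item $\hat\otimes$ commutes with the inductive limits over shrinking tuboids.
\end{itemize}
Nothing in this step uses disjointness, since the tuboid is a product over all of $M^{I_1}\times M^{I_2}$. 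The appeal to causality therefore does no work.

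\textbf{Membership in $\mathrm{Mod}_{[\gamma]}$.} The condition \eqref{eqn: Cone-support} is phrased with the micro-support $\mathrm{SS}(F_{X^I})$ of the object, hence the reference to \cite[Prop.~5.4.1]{KS90]}. It is not a condition on the singular spectra of its sections, and the two notions differ. For $M=\mathbb{R}$, the subsheaf of $\mathscr{B}_{\mathbb{R}}$ of boundary values from the upper half-plane has micro-support all of $T^*\mathbb{R}$, because of sections over half-lines that do not extend. So membership in $\mathrm{Mod}_{[\gamma]}$ is not ``by construction'', and you must state which condition you are verifying. The same conflation affects your use of \cite[Prop.~5.4.13]{KS90}, which bounds micro-supports of sheaves, not singular spectra of restricted hyperfunctions.
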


\begin{corollary}
\label{cor: Ran Elliptic Regularity}
Suppose $\M$ is elliptic. Then, for each $I\in \mathrm{fSet},$ there are homotopy equivalences,
$$\M^I\otimes _{\D_{X^I}}^L\mathcal{O}_{X^I}\xrightarrow{\simeq }\M^I\otimes_{\D_X^I}^L\mathscr{B}_{M^I},$$
compatible with diagonal embeddings.
\end{corollary}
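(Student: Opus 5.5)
The strategy is to reduce the statement, for each fixed $I\in\mathrm{fSet}$, to classical elliptic regularity on the product manifold $M^I\subset X^I$. Then compatibility with the diagonal embeddings $\Delta(\alpha):X^J\hookrightarrow X^I$ follows from the naturality of the morphism $\mathcal{O}\to\mathscr{B}$ together with Lemma~\ref{proposition: Hyperfunction on Ran}.

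\emph{Step 1: ellipticity is preserved.} Here $\M$ is elliptic on $M$ and $\M^I$ is either $\M^{\boxtimes I}$ or, more generally, a factorization module built from it. The external product bound used in the proof of Theorem~\ref{theorem: Factorization Characteristics} gives
\[
\mathrm{Char}(\M^{\boxtimes I})=\prod_{i\in I}\mathrm{Char}(\M)\subset (T^*X)^I .
\]
Under the identification $T^*_{M^I}X^I\simeq\prod_{i\in I}T^*_MX$ we obtain
\[
\mathrm{Char}(\M^I)\cap T^*_{M^I}X^I\subset\prod_{i\in I}\bigl(\mathrm{Char}(\M)\cap T_M^*X\bigr)\subset\prod_{i\in I}T_X^*X=T^*_{X^I}X^I .
\]
Hence $\M^I$ is elliptic on $M^I$ in the sense of \eqref{eqn: Elliptic condition}.

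\emph{Step 2: regularity for each $I$.} Apply the elliptic regularity recalled in Subsect.~\ref{ssec: A microlocal diagram and ellipticity}, namely $\mathbf{R}\mathcal{S}ol(\M^I,\mathscr{A}_{M^I})\simeq\mathbf{R}\mathcal{S}ol(\M^I,\mathscr{B}_{M^I})$. By Verdier/$\D$-module duality (the passage between $\mathcal{H}om$ and $\otimes^L$ used in the proof of Theorem~\ref{ASinger}), this becomes a quasi-isomorphism
\[
\M^I\otimes^L_{\D_{X^I}}\mathcal{O}_{X^I}\big|_{M^I}\xrightarrow{\simeq}\M^I\otimes^L_{\D_{X^I}}\mathscr{B}_{M^I}.
\]
Equivalently, $\mathbf{R}\mathcal{H}om_{\mathcal{E}}(\M^I,\mathscr{C}_{M^I})$ vanishes off the zero section, because its support is contained in $\mathrm{Char}(\M^I)\cap T^*_{M^I}X^I$.

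\emph{Step 3: compatibility with diagonals.} For $\alpha:I\twoheadrightarrow J$, the proof of Lemma~\ref{proposition: Hyperfunction on Ran} gives $\Delta(\alpha)^!\mathscr{B}_{M^I}\simeq\mathscr{B}_{M^J}$, and the analogous statement holds for $\mathcal{O}$. The factorization structure gives $\Delta(\alpha)^*\M^I\simeq\M^J$. The comparison map $\mathcal{O}_{X^I}\to\mathscr{B}_{M^I}$ is the canonical morphism $\mathcal{O}\to R\Gamma_{M}(\mathcal{O})\otimes\omega_M$, which is natural in the embedding. Applying $\Delta(\alpha)^!$, or equivalently the projection formula for $\otimes^L_{\D}$, therefore yields a commuting square that intertwines the equivalences for $I$ and $J$.

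The main obstacle is the base change in Step 3. One has to check that $\Delta(\alpha)$ is non-characteristic for $\M^I$, so that $\Delta(\alpha)^*$ commutes with $\otimes^L_{\D}$ and with passage to solutions. I would derive this from the ellipticity in Step 1 using the non-characteristic criterion \eqref{eqn: Classical NC condition} on $T^*_{X^J}X^I$. If that fails in general, I would restrict to the disjoint locus and extend by factorization.
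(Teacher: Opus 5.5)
\noindent The paper states this corollary without proof. The intended route is presumably levelwise classical elliptic regularity combined with Lemma~\ref{proposition: Hyperfunction on Ran}, and that is the route you follow.

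Your Steps~1 and~2 handle the levelwise part correctly for $\M^I=\M^{\boxtimes I}$, including the necessary replacement of $\mathcal{O}_{X^I}$ by $\mathscr{A}_{M^I}=\mathcal{O}_{X^I}|_{M^I}$. For the passage from $\mathcal{H}om$ to $\otimes^L$, the precise tool is $R\mathcal{H}om_{\D}(\mathcal{N},\mathcal{F})\simeq R\mathcal{H}om_{\D}(\mathcal{N},\D)\otimes^L_{\D}\mathcal{F}$ for coherent $\mathcal{N}$. You also need that the dual has characteristic variety inside $\mathrm{Char}(\mathcal{N})$, so it is again elliptic.

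The genuine gap is Step~3: the non-characteristic property you plan to extract from ellipticity is false.
\begin{itemize}
\item For $\alpha:\{1,2\}\twoheadrightarrow\{1\}$ the conormal bundle of the diagonal is $T^*_X(X\times X)=\{(x;\xi,-\xi)\}$. So $\Delta$ is non-characteristic for $\M\boxtimes\M$ iff there is no $\xi\neq0$ with both $(x;\xi)$ and $(x;-\xi)$ in $\mathrm{Char}(\M)$.
\item Since $\mathrm{Char}(\M)$ is $\mathbb{C}^\times$-conic, this holds iff $\mathrm{Char}(\M)\subset T^*_XX$, i.e.\ iff $\M$ is an integrable connection.
\item Ellipticity only controls $\mathrm{Char}(\M)$ over the real conormal $T^*_MX$. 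For $M=\mathbb{R}^2$ and $\M=\D_X/\D_X(\partial_1+i\partial_2)$, $\M$ is elliptic, yet the covector $(1,i)$ lies in $\mathrm{Char}(\M)$. Hence every diagonal of $X^I$ is characteristic for $\M^{\boxtimes I}$.
\item Passing to the real diagonal $M^J\subset M^I$ and using hyperbolicity does not help either. An elliptic operator of positive order in dimension $\geq2$ has no hyperbolic direction, so $(\theta,-\theta)$ is never hyperbolic for $\M\boxtimes\M$.
\end{itemize}

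Your fallback does not repair this. The disjoint locus is exactly the complement of all diagonals, so compatibility with $\Delta(\alpha)$ has no content there. Factorization isomorphisms over disjoint loci determine nothing across the diagonals.

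There is also an internal inconsistency. Step~1 uses $\M^I=\M^{\boxtimes I}$, while Step~3 invokes $\Delta(\alpha)^*\M^I\simeq\M^J$. But $L\Delta^*(\M\boxtimes\M)\simeq\M\otimes^L_{\mathcal{O}_X}\M$, which for the example above is not even $\D_X$-coherent. Indeed, after the linear change of coordinates making $\partial_1+i\partial_2=\partial_u$, one has $\M\simeq\mathcal{O}\boxtimes\D$ and $\M\otimes_{\mathcal{O}_X}\M\simeq\mathcal{O}\boxtimes(\D\otimes_{\mathcal{O}}\D)$, with $\D\otimes_{\mathcal{O}}\D$ free of infinite rank. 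In particular it is not isomorphic to $\M$.

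So your argument proves the levelwise equivalences. For the diagonal part, naturality of $\mathscr{A}_{M^I}\to\mathscr{B}_{M^I}$ only gives commutation with the canonical, generally non-invertible, restriction morphisms. A genuine compatibility of equivalences needs an extra hypothesis that neither your proposal nor the statement supplies. Two examples:
\begin{itemize}
\item $\M$ is an integrable connection.
\item $\{\M^I\}$ is an honest factorization $\D$-module with each $\M^I$ elliptic on $M^I$ and each $\Delta(\alpha)$ non-characteristic for $\M^I$. In that case Step~1 must be replaced by this assumption, since ellipticity near the diagonals is not inherited from $\M$.
\end{itemize}
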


\section{The $D$-geometric index}
\label{sec: GlobalAbstract}
In this section we answer affirmatively a conjecture posed in \cite{KSY23}, recalled as follows
\begin{conjecture}
     Let $X$ be a smooth scheme. The non-linear $\D$-Fredholm index can be expressed via Koszul duality,
 by the right-hand side of the GRR formula on loop stack of $X$. Moreover, the Euler-characteristic of its derived global section of its symbol complex may be expressed as a $K$-theoretic intersection
 number between $X$ and the cycle defined by the associated graded symbol morphism, inside the total space of the cotangent stack.
\end{conjecture}

Assume $Z\to X_{DR}$ is $D$-afp and let $\Lambda\subset q^*Z\times_X T^*X$ be a closed conic subset and let 
$j_{\Lambda}:(q^*Z\times_X T^*X)\backslash \Lambda\rightarrow q^*Z\times_X T^*X,$ be the inclusion of the complement.
Working with ind-coherent sheaves, we have 
$$j_{\Lambda}^*:\mathsf{IndCoh}(q^*Z\times_X T^*X)\rightarrow \mathsf{IndCoh}\big(q^*Z\times_X T^*X\backslash \Lambda).$$
\begin{definition}
    The category of micro-supported ind-sheaves on $Z$ is $\mathsf{IndCoh}_{\Lambda}(q^*Z\times_X T^*X):=\mathrm{hofib}(j_{\Lambda}^*).$
    \end{definition}
If $f:X'\to X$ is non-char morphisms of analytic spaces, we have commutativity of the following

\[
\begin{tikzcd} q^*Z \times_X T^*X \backslash \Lambda \arrow[r, "j_\Lambda"] \arrow[d, "f^*"] & q^*Z \times_X T^*X \arrow[d, "f^*"] \\ q'^*Z' \times_{X'} T^*X' \backslash f^{-1} \Lambda \arrow[r, "j_{f^{-1}\Lambda}"] & q'^*Z' \times_{X'} T^*X' \end{tikzcd}
\]
That is, by Beck-Chevalley we get $f^*\circ j_{\Lambda}^*\xrightarrow{\sim}j_{f^{-1}\Lambda}^*\circ f^*,$ equivalences of which follows from the diagram
\[
\begin{tikzcd} \mathsf{IndCoh}_\Lambda(q^*Z \times_X T^*X) \arrow[r] \arrow[d, "f^*"] & \mathsf{IndCoh}(q^*Z \times_X T^*X) \arrow[d, "f^*"] \arrow[r, "j_\Lambda^*"] & \mathsf{IndCoh}((q^*Z \times_X T^*X) \backslash \Lambda) \arrow[d, "f^*"] \\ \mathsf{IndCoh}_{f^{-1}\Lambda}(q'^*Z' \times_{X'} T^*X') \arrow[r] & \mathsf{IndCoh}(q'^*Z' \times_{X'} T^*X') \arrow[r, "j_{f^{-1}\Lambda}^*"] & \mathsf{IndCoh}((q'^*Z' \times_{X'} T^*X') \backslash f^{-1}\Lambda) \end{tikzcd}
\]

In particular, we find: 

\[
\begin{tikzcd} \mathsf{IndCoh}_\Lambda(q^*Z \times_X T^*X) \arrow[r, "a_*"] \arrow[d, hook] & \mathrm{Perf}_\Lambda(pt) \arrow[r, "\chi"] & \mathbb{Z} \\ \mathsf{IndCoh}(q^*Z \times_X T^*X) \arrow[ur, dashed, swap, "a_*"] \end{tikzcd}
\]

Finally,

\[
\begin{tikzcd}[column sep=huge] \varphi^*\mathbb{T}_{\EQ} \in \mathsf{IndCoh}_\Lambda(q^*Z \times_X T^*X) \arrow[r, "a_*"] \arrow[d, "\sigma"] & a_* \varphi^*\mathbb{T}_{\EQ} \in \mathrm{Perf}_\Lambda(pt) \arrow[d, "\sigma_*"] \arrow[r, "\chi"] & \mathbb{Z} \\ \sigma(\varphi^*\mathbb{T}_{\EQ}) \in \mathrm{Coh}_\Lambda(T^*X) \arrow[r, "a_*"] & a_* \sigma(\varphi^*\mathbb{T}_{\EQ}) \in \mathrm{Perf}_\Lambda(pt) \arrow[ur, "\chi" swap] \end{tikzcd}
\]

\begin{lemma}
\label{prop: BC-micro}
    Suppose $f:Y\to X$ is smooth, with $Z\to X_{DR}$ a bounded $D_X$-afp generalized PDE. Denote by $Z_Y$ the pullback generalized PDE. Then,
    $$\mathrm{Char}(Z_Y/Y_{DR};u_T)=(id_T\times f_d)(id_T\times f_{\pi}^{-1})(\mathrm{Char}(Z/X_{DR};u_T)).$$
\end{lemma}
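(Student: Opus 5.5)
We reduce the statement to the classical behaviour of characteristic varieties of coherent $\D$-modules under smooth (hence non-characteristic) pull-back, applied to the linearization along a $T$-point. Recall that $\mathrm{Char}(Z/X_{DR};u_T)$ is by definition the (derived) characteristic variety of the linearized $\D$-module $u_T^*\mathbb{T}_{Z/X_{DR}}$ on $T\times X$. It is the support of $\mathrm{Gr}^F$ for a good filtration, taken in each cohomological degree. Boundedness and the $D$-afp hypothesis guarantee that this complex is bounded with coherent cohomology. The first step is therefore to identify the tangent complex of the pullback PDE: for $Z_Y:=Z\times_{X_{DR}}Y_{DR}$ and a solution $u_T$ of $Z_Y$ (equivalently, of $Z$ composed with $f$), base change for relative cotangent complexes gives
\[
u_T^*\mathbb{T}_{Z_Y/Y_{DR}}\simeq (id_T\times f)^{*}_{\D}\, u_T^*\mathbb{T}_{Z/X_{DR}},
\]
the $\D$-module (shifted $!$- or $*$-) inverse image along $id_T\times f$. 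Shifts do not affect supports.

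Second, since $f$ is smooth, $id_T\times f$ is smooth, so $(id_T\times f)_d$ is injective on fibres. In particular $f$ is non-characteristic for every closed conic subset, and the non-characteristic condition \eqref{eqn: Classical NC condition} holds trivially. We then invoke the standard theorem (Kashiwara; \cite{KS90}, and the analogue for coherent $\D$-modules) that for a non-characteristic morphism the inverse image of a coherent $\D$-module is coherent, with
\[
\mathrm{Char}(f^*\M)=f_d f_\pi^{-1}\mathrm{Char}(\M).
\]
For smooth $f$ we would verify equality directly, not merely inclusion. Locally $f$ is a projection $Y=X\times F\to X$, and $f^*\M=\M\boxtimes\mathcal{O}_F$. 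A good filtration on $\M$ pulls back to a good filtration with $\mathrm{Gr}(f^*\M)=\mathrm{Gr}(\M)\boxtimes\mathcal{O}_F$, supported on $\mathrm{Char}(\M)\times T^*_FF$, which is exactly $f_d f_\pi^{-1}\mathrm{Char}(\M)$. The same computation works uniformly over the base $T$ by tensoring with $\mathcal{O}_T$, which gives the factors $id_T$. Applying it degree by degree to the bounded complex, and using exactness of smooth pullback, so that $H^i$ commutes with $f^*$, yields the claimed equality for the derived characteristic variety.

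The main obstacle is the first step: making precise that the linearization of the pulled-back prestack is the $\D$-module pullback of the linearization, compatibly with $T$-families. Here I would use that $Y_{DR}\to X_{DR}$ is formally smooth and that the tangent complex of a fibre product is the pullback of the tangent complex. The $D$-afp hypothesis ensures that $\mathbb{T}$ is an honest (dualizable) object, so no completion issues arise. A secondary subtlety is that characteristic varieties of complexes are only well defined once a bounded coherent model exists, which is exactly what Assumption~\ref{ass: Simplifying}-type boundedness provides.
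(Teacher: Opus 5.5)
Your proposal is correct and follows essentially the paper's route: base change of the linearization along the Cartesian square $T\times Y_{DR}\to T\times X_{DR}$, so that $ev^{Y,!}_{u_T}\mathbb{T}_{Z_Y/Y_{DR}}\simeq (id_T\times f_{DR})^!\,ev^{!}_{u_T}\mathbb{T}_{Z/X_{DR}}$ with $(id_T\times f_{DR})^!=(id_T\times f_{DR})^*[d]$ for smooth $f$, followed by the smooth-pullback formula $\mathrm{Char}(f^!M)=f_d f_\pi^{-1}\mathrm{Char}(M)$. The one difference is how you handle $T$: the paper splits the linearization as $E_T\boxtimes M_X$ via $\mathsf{IndCoh}(T\times X_{DR})^{\omega}\simeq\mathsf{Coh}(T)\otimes\mathsf{IndCoh}(X_{DR})^{\omega}$ and uses $\mathrm{SS}(E_T\boxtimes N)=\mathrm{supp}(E_T)\times\mathrm{Char}(N)$, whereas your $\mathcal{O}_T$-linear local product computation treats the parameter directly, and so does not need every compact object to be an exterior product (that equivalence only says such objects generate).
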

\begin{proof}
    Recall $X$ is a smooth projective $\mathbb{C}$-scheme, and let $Z\to X_{DR}$ be $D_X$-afp. Let $T$ be any classical test scheme with $u_T:T\to Z$ a solution. Let $f:Y\to X$ be smooth with corresponding de Rham map $f_{DR}:Y_{DR}\to X_{DR}.$
Put, $ev_{u_T}:T\times X_{DR}\to Z,$ and $ev_{u_T}^Y:T\times Y_{DR}\to Z_Y,$ where $Z_Y:=f_{DR}^*Z.$ Note the Cartesian diagrams,
\[
\begin{tikzcd} Z_Y \ar[r]\ar[d] & Z \ar[d] \\ Y_{DR} \ar[r,"f_{DR}"] & X_{DR} \end{tikzcd}\hspace{2mm} \text{and }\hspace{1mm}
\begin{tikzcd} T\times Y_{DR} \ar[r,"\,id_T\times f_{DR}\,"] \ar[d,"\mathrm{ev}^Y_{u_T}"'] & T\times X_{DR} \ar[d,"\mathrm{ev}_{u_T}"] \\ Z_Y \ar[r] & Z \end{tikzcd}
\]
$$ev_{u_T}^{Y,!,\mathsf{IndCoh}}(\mathbb{T}_{Z_Y/Y_{DR}})\simeq (id_T\times f_{DR})_{\mathsf{IndCoh}}^!\big(ev_{u_T}^{!,\mathsf{IndCoh}}\mathbb{T}_{Z/X_{DR}}\big)\in \mathsf{IndCoh}(T\times Y_{DR})^{\omega}.$$
Moreover,
$$\mathbb{L}_{Z_Y/Y_{DR},u_T}\simeq \mathbb{D}_{T\times Y_{DR}}(ev_{u_T}^{Y,!}\mathbb{T}_{Z_Y/Y_{DR}})\simeq \mathbb{D}_{T\times Y_{DR}}\big((id_T\times f_{DR})_{\mathsf{IndCoh}}^!ev_{u_T}^!\mathbb{T}_{Z/X_{DR}}\big),$$
and as $f$ is smooth, $(id_T\times f_{DR})^!$ coincides with $(id_T\times f_{DR})^*[d],$ where $d$ is the relative dimension of $f:Y\to X.$

Since $X$ is smooth and projective, $\mathsf{IndCoh}(X_{DR})^{\omega}\simeq \mathsf{Perf}(X_{DR})\simeq \mathsf{Coh}(X_{DR})$ and $\mathsf{IndCoh}(T\times X_{DR})^{\omega}\simeq \mathsf{Coh}(T)\otimes \mathsf{IndCoh}(X_{DR})^{\omega}$. Thus any object decomposes as an exterior tensor product of a coherent complex on $T$ and a perfect complex on $X_{DR}$.
We have
$$ev_{u_T}^{Y,!}\mathbb{T}_{Z_Y/Y_{DR}}\simeq(id_T\times f_{DR})^!(E_T\boxtimes M_X)\simeq E_T\boxtimes f_{DR}^!(M_X).$$
Therefore,
$$\mathbb{L}_{Z_Y/Y_{DR},u_T}\simeq \mathbb{D}_T(E_T)\boxtimes \mathbb{D}_Y(f_{DR}^!M_X).$$
Consider the diagram.
\[
\begin{tikzcd} Y\times_X T^*X \ar[r,"f_\pi"] \ar[d,"{^t f'}"'] & T^*X \ar[d,"\pi_X"] \\ T^*Y \ar[r,"\pi_Y"] & Y \end{tikzcd}\]
Since $\mathrm{SS}(E_T\boxtimes f_{DR}^!M_X)\simeq \mathrm{supp}(E_T)\times \mathrm{Char}(f_{DR}^!M_X)\subset T\times_Y T^*Y,$ since $f$ is smooth, we have
$\mathrm{Char}(f_{DR}^!M_x)\simeq df\big(f_{\pi}^{-1}\mathrm{Char}(M_X)\big)\subset T^*Y,$ therefore,
$$\mathrm{Char}(Z_Y/Y_{DR};u_T)=\mathrm{SS}\big(ev_{u_T}^{Y,!}\mathbb{T}_{Z_Y/Y_{DR}}\big)\simeq \mathrm{supp}(E_T)\times df(f_{\pi}^{-1}\mathrm{Char}(M_X)\big),$$
thus is given by
$$(id_T\times df)\big(\mathrm{supp}(E_T)\times f_{\pi}^{-1}\mathrm{Char}(M_X)\big)\simeq (id_T\times df)(id_T\times f_{\pi}^{-1})\mathrm{Char}(Z/X_{DR};u_T).$$

Moreover, let $f:Y\to X$ be a smooth morphism of smooth projective schemes. Consider the base change
\[
\begin{tikzcd}
T \times Y_{DR} \arrow[r,"id_T\times f_{DR}"] \arrow[dr,swap,"ev_{u_T}"] & T \times X_{DR} \arrow[d,"ev_{u_T}"]\\
& Z
\end{tikzcd}
\]
Then the pullback
\[
(id_T\times f_{DR})^! \, ev_{u_T}^{!,\mathsf{IndCoh}} \mathbb{T}_{Z/X_{DR}} \in \mathsf{IndCoh}(T\times Y_{DR})^{\omega}
\]
satisfies the decomposition
\[
ev_{u_T}^{!,\mathsf{IndCoh}} \mathbb{T}_{f_{DR}^*Z/Y_{DR}}\simeq E_T\boxtimes f_{DR}^! M_X,
\]
so that the characteristic variety is preserved under smooth pullback:
\[
\mathrm{Char}(f_{DR}^*Z/Y_{DR};u_T)\simeq \mathrm{supp}(E_T)\times \mathrm{Char}(f_{DR}^! M_X)\simeq \mathrm{supp}(E_T)\times \mathrm{Char}(M_X).
\]
\end{proof}
From this result, we have 
$\mu \mathbb{T}_{Z/X_{DR},u_T}\in D_{coh}^b(T\times T^*X),$ which provides
a class in $K$-theory with support,
$$[\mu\mathbb{T}_{Z/X_{DR},u_T}]\in K_{\mathbf{char}}^0(T\times T^*X).$$
We require a result allowing us to base-change solutions.
\begin{lemma}\label{prop: BC-solution}
    Let $g:T'\to T$ be a morphism of test affine schemes. Then
    $$[\mu\mathbb{T}_{Z/X_{DR}},u_{T'}]=(g\times id_{T^*X})^*[\mu \mathbb{T}_{Z/X_{DR},u_T}],$$
    holds in $K_{\mathbf{char}(u_{T'})}^0(T'\times T^*X).$
\end{lemma}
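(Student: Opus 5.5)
The strategy is to show that forming the linearization along a $T$-family of solutions, and then microlocalizing it, is compatible with base change in the test scheme. Once that is established, the identity in $K$-theory with supports follows by functoriality.

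\textbf{Step 1: linearizations base-change.} The solution $u_{T'}$ is, by definition, the composite $u_T\circ g$. Hence the evaluation maps satisfy
\[
ev_{u_{T'}}=ev_{u_T}\circ(g\times id_{X_{DR}}):T'\times X_{DR}\to Z.
\]
By functoriality of $!$-pullback on $\mathsf{IndCoh}$ this gives
\[
ev_{u_{T'}}^{!}\mathbb{T}_{Z/X_{DR}}\simeq (g\times id_{X_{DR}})^!\,ev_{u_T}^!\mathbb{T}_{Z/X_{DR}}.
\]
As in the proof of Lemma \ref{prop: BC-micro}, $X$ is smooth projective, so compact objects decompose. We may therefore write
\[
ev_{u_T}^!\mathbb{T}_{Z/X_{DR}}\simeq E_T\boxtimes M_X
\]
(or a finite colimit of such objects), and the pullback becomes $g^!E_T\boxtimes M_X$. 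Up to a twist by the relative dualizing complex of $g$, which is invertible on the locus where we compare classes, we may replace $g^!$ by $g^*$ (in the derived sense). The $D_X$-factor $M_X$ is untouched.

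\textbf{Step 2: microlocalization base-changes.} Microlocalization $\mu$ is built from the associated graded for a good filtration, $\mathcal{O}_{T^*X}\otimes_{\pi^{-1}\mathrm{Gr}\,\D_X}\pi^{-1}\mathrm{Gr}^F(-)$. The $T$-direction enters only as an $\mathcal{O}_T$-linear parameter, so a good filtration of $E_T\boxtimes M_X$ pulls back along $g$ to a good filtration. This yields
\[
\mu\mathbb{T}_{Z/X_{DR},u_{T'}}\simeq \mathbf{L}(g\times id_{T^*X})^*\,\mu\mathbb{T}_{Z/X_{DR},u_T}
\]
in $D^b_{coh}(T'\times T^*X)$.

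\textbf{Step 3: supports.} Taking supports gives
\[
\mathbf{char}(u_{T'})\subset (g\times id)^{-1}\mathbf{char}(u_T),
\]
and in fact equality holds on the level of the $T^*X$-factor, which is $\mathrm{Char}(M_X)$ in both cases, as in Lemma \ref{prop: BC-micro}. Consequently the derived pullback
\[
(g\times id_{T^*X})^*:K^0_{\mathbf{char}(u_T)}(T\times T^*X)\to K^0_{\mathbf{char}(u_{T'})}(T'\times T^*X)
\]
is well defined: a perfect complex acyclic off a closed set stays acyclic off its preimage. Applying this map to the isomorphism of Step 2 gives the claimed identity of classes.

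\textbf{Main obstacle.} The delicate point is Step 2. We must check that $\mu$ commutes with $\mathbf{L}(g\times id)^*$ when $g$ is not flat, because the associated graded of a pulled-back filtration need not be the pullback of the associated graded. The plan is to handle this at the level of $K$-classes rather than objects. Since the class $[\mathrm{Gr}^F]$ is independent of the good filtration, we resolve $\mathrm{Gr}^F(E_T\boxtimes M_X)$ by a complex that is flat over $\mathcal{O}_T$. This is possible after replacing $E_T$ by a perfect (locally free) resolution, which exists because $T$ is affine and $E_T$ is coherent with bounded cohomology under the $D$-afp and boundedness hypotheses. For such flat filtered complexes, the filtration and the pullback commute, so the two constructions agree in $K$-theory with supports.
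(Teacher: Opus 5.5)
Your three steps follow the paper's own route. The paper uses $\mathrm{ev}_{u_{T'}}^!\simeq(g\times\mathrm{id})^!\,\mathrm{ev}_{u_T}^!$. It then simply asserts $\mu\bigl((g\times\mathrm{id})^!\mathcal{F}_T\bigr)\simeq(g\times\mathrm{id}_{T^*X})^*\mu(\mathcal{F}_T)$. Finally, $\mathbf{char}(u_{T'})\subseteq(g\times\mathrm{id})^{-1}\mathbf{char}(u_T)$ makes the pullback on $K^0$ with supports defined. The problem is in the justifications you add for the middle step.

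First, you cannot trade $g^!$ for $g^*$ ``up to a twist which is invertible on the locus where we compare classes''. For an arbitrary morphism of affine test schemes, $g^!\mathcal{F}\simeq g^*\mathcal{F}\otimes g^!\mathcal{O}_T$ requires $\mathcal{F}$ perfect or $g$ of finite Tor-dimension. For example, take $g:\mathrm{pt}\to\mathrm{Spec}\,A$ with $A=k[\epsilon]/(\epsilon^2)$. Then $g^!\mathcal{O}_T\simeq k$, yet $g^!k=R\mathrm{Hom}_A(k,k)$ is unbounded above while $g^*k=k\otimes^{L}_{A}k$ is unbounded below. Moreover, $g^!\mathcal{O}_T$ need not be a shifted line bundle at all; it is one for, e.g., smooth maps or regular embeddings. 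And even an invertible twist $L[d]$ multiplies the class by $(-1)^d[L]$, so it cannot simply be dropped.

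What makes the identity hold on the nose is the $\mathsf{IndCoh}$ convention itself. The linearization is the Serre dual of the $*$-pulled-back cotangent complex, so $\mathcal{F}_T\simeq\Upsilon(\mathcal{P}_T)=\mathcal{P}_T\otimes\omega$ with $\mathcal{P}_T$ perfect on $T\times X_{DR}$; perfectness is what the finiteness hypotheses on $Z$ must supply. One also has $(g\times\mathrm{id})^!\circ\Upsilon\simeq\Upsilon\circ(g\times\mathrm{id})^*$. If the class is formed from $\mathcal{P}_T$, the dualizing complexes on the two sides match exactly. If instead $\mathcal{F}_T$ is taken naively as a coherent object, the two sides differ by a relative dualizing twist.

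Second, your resolution of the ``main obstacle'' rests on a false claim. A coherent complex with bounded cohomology on an affine scheme need not admit a perfect resolution: over $A$ the residue field has only the periodic resolution $\cdots\xrightarrow{\epsilon}A\xrightarrow{\epsilon}A\to k$. Affineness gives only a bounded-above resolution by finite free modules. Without perfectness, or finite Tor-dimension of $g$, $\mathbf{L}(g\times\mathrm{id})^*$ does not preserve $D^b_{coh}$, and there is no class in $K^0$ of perfect complexes to pull back. So perfectness must come from the cotangent complex as above, not from $T$ being affine.

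Once perfectness is available, the flatness issue you isolate is not the real one. $\mathrm{Gr}^F$ is a cofiber construction, equivalently reduction of the Rees module modulo the filtration parameter, and the good filtration lives in the $\mathcal{D}_X$-direction. Hence the exact functor $\mathbf{L}(g\times\mathrm{id})^*$ commutes with it in the filtered derived category with no flatness hypothesis. For an exterior product this is just $\mathrm{Gr}^F(P_T\boxtimes M_X)\simeq P_T\boxtimes\mathrm{Gr}^F M_X$ with $P_T$ perfect.

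So the real content of the middle step is finiteness together with the $!$-versus-$*$ bookkeeping. The paper passes over this; your proposal correctly flags it but then settles it with the two incorrect claims above.
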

\begin{proof}
Put $u_{T'}:=u_T\circ (g\times id_X),$ and consider  
\[
\begin{tikzcd}
T'\times X_{DR}\arrow[r,"g\times\mathrm{id}_{X_{DR}}"] \arrow[dr,"\mathrm{ev}_{u_T'}"] & T\times X_{DR}\arrow[d,"\mathrm{ev}_{u_T}"]\\ & Z. \end{tikzcd}\]
Set $\F_T:=\mathrm{ev}_{u_T}^{!,\mathrm{IC}}(\mathbb{T}_{Z/X_{DR}}),$ and similarly for $\F_{T'},$ for simplicity.
Then,
$$\mathrm{ev}_{u_T'}^{!,\mathrm{IC}}\simeq (g\times id_X)_{\mathrm{IC}}^!\circ \mathrm{ev}_{u_T}^{!,\mathrm{IC}}.$$
We have 
$$\mu_{\D}^V\big((g\times \mathrm{id})^{!,\mathrm{IC}}\F_T\big)\simeq (g\times \mathrm{id}_{T^*X})^*(\mu_{\D}^V\F_T),$$
and
$$\mu_{\D}^V(\F_{T'})\simeq \mu_{\D}^V\big(g\times \mathrm{id})^!\F_T\big)\simeq (g\times \mathrm{id}_{T^*X})^*\mu_{\D}^V(\F_T).$$
Therefore, 
$$\mu\mathbb{T}_{Z/X_{DR},u_{T'}}\simeq (g\times\mathrm{id}_{T^*X})^*\big(\mu\mathbb{T}_{Z/X_{DR},u_T}\big),$$in $D_{coh}^b(T'\times T^*X).$ Finally, note the support condition is preserved since $\mathbf{char}(u_{T'})=\mathrm{SS}(\F_{T'})\subseteq (g\times \mathrm{id}_{T^*X})^{-1}\mathrm{SS}(\F_{T})=(g\times \mathrm{id}_{T^*X})^{-1}\mathbf{char}(u_T),$ 
thus we have the well-defined map in $K$-theory, 
$(g\times \mathrm{id}_{T^*X})^*:K^0_{\mathbf{char}(u_T)}(T\times T^*X) \to K^0_{\mathbf{char}(u_{T'})}(T'\times T^*X).$
\end{proof}
From Lemma \ref{prop: BC-micro} and Lemma \ref{prop: BC-solution}, assume that 
$\F_{T}\simeq E_{T}\boxtimes M_X,$ for each test affine $T.$ Then, 
$$\kappa[u_T]=[\mu_{\D}(E_T\boxtimes M_X)]\simeq [E_T]\boxtimes [\mu M_X]\in K_{\mathrm{supp}\times \mathrm{SS}}^0(T\times T^*X),$$
where $[E_T]\in K_{\mathrm{supp}}^0(T)$ and $[\mu M_X]\in K_{\mathrm{SS}(M_X)}^0(T^*X).$
The Chern map with supports,
$$\mathrm{ch}_{\mathrm{supp}\times \mathrm{SS}}:K_{\mathrm{supp}\times \mathrm{SS}}^0(T\times T^*X)\to H_{\mathrm{supp}\times \mathrm{SS}}^{\bullet}(T\times T^*X,\mathbb{C}).$$
Standard properties of Chern maps yield
$$\mathrm{Ch}_{\mathrm{supp}\times \mathrm{SS}}(\kappa[u_T])=\mathrm{ch}_{\mathrm{supp}}([E_T])\boxtimes \mathrm{ch}_{\mathrm{SS}}([\mu M_X])\in H_{\mathrm{supp}}^{\bullet}(T)\otimes_{\mathbb{C}} H_{\mathrm{SS}(M_X)}^{\bullet}(T^*X,\mathbb{C}).$$
From Kashiwara-Schapira, 
$$\mu ch(M_X):=\mathrm{ch}_{\mathrm{SS}}([\mu M_X])\in H_{\mathrm{SS}}^{\bullet}(T^*X;\mathbb{C}),$$
is the microlocal Euler class of the coherent $D_X$-module $M_X.$
To summarize, we have
\begin{lemma}
\label{Chern of decomposition}
    $\mu \mathrm{Ch}_{\mathbf{char},u_T}\simeq \mathrm{ch}_{\mathrm{supp}}(E_T)\boxtimes \mu ch(M_X).$
\end{lemma}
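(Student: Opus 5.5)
The plan is to compute $\mu\mathrm{Ch}_{\mathbf{char},u_T}$ straight from its definition as the Chern character with supports of the class $\kappa[u_T]=[\mu\mathbb{T}_{Z/X_{DR},u_T}]\in K^0_{\mathbf{char}(u_T)}(T\times T^*X)$. First, Lemma \ref{prop: BC-solution} shows that this class is well defined and compatible with base change in $T$. Lemma \ref{prop: BC-micro} then gives the support identification $\mathbf{char}(u_T)=\mathrm{supp}(E_T)\times\mathrm{SS}(M_X)$, so the relevant support is a product. As in the proof of Lemma \ref{prop: BC-micro}, I use that $\mathsf{IndCoh}(T\times X_{DR})^{\omega}\simeq\mathsf{Coh}(T)\otimes\mathsf{IndCoh}(X_{DR})^{\omega}$ to write $\F_T\simeq E_T\boxtimes M_X$. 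This reduces the statement to a K\"unneth property for microlocalization followed by a K\"unneth property for the Chern character with supports.

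The first key step is the microlocal K\"unneth isomorphism $\mu_{\D}(E_T\boxtimes M_X)\simeq E_T\boxtimes\mu M_X$ in $D^b_{coh}(T\times T^*X)$. Microlocalization is done only in the $X$-direction, namely by tensoring $\pi^{-1}\mathrm{Gr}^F$ over $\pi^{-1}\mathrm{Gr}^F\D_X$ with $\mathcal{O}_{T^*X}$, or equivalently by extending scalars to $\mathcal{E}_X$. A good filtration on $M_X$ tensored with the trivial filtration on $E_T$ is then a good filtration on the exterior product. Its associated graded is $E_T\boxtimes\mathrm{Gr}^F M_X$ because $\mathcal{O}_T$ is flat over $\mathbb{C}$. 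This gives the identity $\kappa[u_T]=[E_T]\boxtimes[\mu M_X]$ in $K^0_{\mathrm{supp}\times\mathrm{SS}}(T\times T^*X)$ via the exterior product $K^0_{\mathrm{supp}}(T)\otimes K^0_{\mathrm{SS}}(T^*X)\to K^0_{\mathrm{supp}\times\mathrm{SS}}(T\times T^*X)$.

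The second step is multiplicativity of the Chern character with supports under exterior products: $\mathrm{ch}_{A\times B}(a\boxtimes b)=\mathrm{ch}_A(a)\boxtimes\mathrm{ch}_B(b)$. I will prove this by writing $a\boxtimes b=p_1^*a\otimes p_2^*b$ and using three standard facts: naturality of $\mathrm{ch}$ under pullback, multiplicativity for the cup product with supports $H_{p_1^{-1}A}\otimes H_{p_2^{-1}B}\to H_{A\times B}$, and $p_1^{-1}A\cap p_2^{-1}B=A\times B$. Finally, I identify $\mathrm{ch}_{\mathrm{SS}}([\mu M_X])$ with $\mu ch(M_X)$ by definition, following Kashiwara--Schapira and the $[\,\cdot\,]^{2d}$ formula in Subsection \ref{ssec: Relation to AS}. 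If the Todd factor $\pi^*td_X(TX)$ is included as in \eqref{eqn: MicroChernA}, it pulls back from the second factor only, so it is absorbed into $\mu ch(M_X)$.

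I expect the main obstacle to be the first step, specifically the assumption that $\F_T$ splits as a single exterior product. In general a compact object of $\mathsf{Coh}(T)\otimes\mathsf{IndCoh}(X_{DR})^{\omega}$ is only a finite colimit (retract of cones) of exterior products. My approach is to use that both sides of the claimed identity are additive in $K$-theory with supports. Additivity reduces the general case to pure tensors, and the identity then holds with the right-hand side read as a finite sum of such classes. I would also check that the good-filtration argument is compatible with cones, which follows from the fact that $\mathrm{Gr}^F$ is exact on strict filtered morphisms after refining the filtrations.
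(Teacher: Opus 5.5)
Your proposal is correct and follows the paper's own argument. The paper assumes the splitting $\F_T\simeq E_T\boxtimes M_X$ just before the lemma; from it one gets $\kappa[u_T]=[E_T]\boxtimes[\mu M_X]$ in $K^0_{\mathrm{supp}\times\mathrm{SS}}(T\times T^*X)$, multiplicativity of the Chern character with supports under exterior products gives the formula, and $\mathrm{ch}_{\mathrm{SS}}([\mu M_X])$ is identified with $\mu ch(M_X)$ following Kashiwara--Schapira. Your good-filtration and cup-product-with-supports arguments simply supply the ``standard properties'' the paper cites. Your last paragraph is not needed, since the statement only makes sense when $\F_T$ is a single exterior product. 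Note also that additivity alone would not settle the general case: it handles cones but not retracts, and retracts can have $K$-classes outside the span of exterior products.
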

Let $p_T:T\times T^*X\to T$ and $p_{T^*X}:T\times T^*X\to T^*X,$ be the projections. Then, we have the following polynomiality of a certain generating function related to the solution-wise index.
\begin{lemma}
Let $(X,\omega)$ be compact K\"ahler and let $t$ be an indeterminate variable, and fix non-zero complex numbers $\rho=(\rho_1,\ldots,\rho_{d_X}) \in (\mathbb{C}^{\times})^{\dim X}$. For each $T$-family of solutions $u_T$, of a $D_X$-afp $D$-scheme, put
$$Z_t(u_T):=\sum_{\ell=0}^{d_X}\rho_{\ell}t^{d-\ell}\bigg<\mathrm{ch}_{\mathrm{supp}}E_T\boxtimes \mu ch(M_X),p_T^*(1)\wedge p_{T^*X}^*(\pi^*\omega)^{\ell}\bigg>_{T\times T^*X}.$$
Then, $Z_t$ is a polynomial in $t$, with leading coefficient proportional to $\chi(T,E_T)\cdot \mu\mathrm{ind}(M_X).$
\end{lemma}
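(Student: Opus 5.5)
The plan is to reduce the pairing to a product of two independent integrals by means of the Künneth decomposition in Lemma \ref{Chern of decomposition}. The class $\mathrm{ch}_{\mathrm{supp}}(E_T)\boxtimes \mu ch(M_X)$ is an exterior product, and the test form $p_T^*(1)\wedge p_{T^*X}^*(\pi^*\omega)^{\ell}$ is also an exterior product. So for each $\ell$ the pairing on $T\times T^*X$ should factor as
\[
\Big\langle \mathrm{ch}_{\mathrm{supp}}E_T, 1\Big\rangle_T\cdot \Big\langle \mu ch(M_X),\pi^*\omega^{\ell}\Big\rangle_{T^*X}.
\]
This uses the Fubini/Künneth property of integration with compact (here proper) supports, which I would justify from the supports: $\mathrm{supp}(E_T)$ is proper over the point, since $E_T$ is coherent and $T$ is a test scheme, and $\mathrm{SS}(M_X)$ is conic with compact base because $X$ is compact.

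Next I would identify the two factors.
\begin{itemize}
\item For the $T$-factor, pairing with $1$ picks out the appropriate degree component of $\mathrm{ch}(E_T)$. By Hirzebruch–Riemann–Roch on $T$, with the Todd class absorbed into the definition of the pairing in the same way $\pi^*\mathrm{td}_X$ is absorbed in \eqref{eqn: MicroChernA}, this gives $\chi(T,E_T)$.
\item For the $T^*X$-factor, write $c_\ell:=\int_{T^*X}\mu ch(M_X)\cup\pi^*\omega^{\ell}$. These are finite numbers because $\mu ch(M_X)$ has supports in the closed conic set $\mathrm{SS}(M_X)$. Moreover $c_\ell$ vanishes unless the cohomological degrees match, $\deg \mu ch(M_X)+2\ell=2\dim_{\mathbb{R}}$-top. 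Since $\ell$ runs only over $0,\dots,d_X$, $Z_t(u_T)=\chi(T,E_T)\sum_{\ell}\rho_\ell c_\ell t^{d-\ell}$ is a finite sum and hence a polynomial in $t$.
\end{itemize}

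The leading coefficient comes from the term $\ell=0$, provided the coefficient is nonzero. It equals $\rho_0\,\chi(T,E_T)\int_{T^*X}\mu ch(M_X)$. Taking the top-degree part $[\cdot]^{2d}$, this is precisely the microlocal index $\mu\mathrm{ind}(M_X)$, namely the integral of $\mu eu(M_X)$, compare the formula for $\mu eu$ in Subsect.~\ref{ssec: Relation to AS}. This gives proportionality with constant $\rho_0\neq 0$.

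The main obstacle is this last step. One has to make sure that the $\ell=0$ pairing really isolates the top-degree class $\mu eu$ and that the lower-degree components of $\mu ch(M_X)$ contribute only to lower powers of $t$. One also has to handle the indexing mismatch between $\rho_\ell$ and $t^{d-\ell}$. I would first try to settle this by grading by cohomological degree: the $\ell$-th term sees only the degree-$(2d-2\ell)$ part of $\mu ch(M_X)$. If the $\ell=0$ coefficient vanishes for degenerate $M_X$, I would interpret "leading" as the highest nonvanishing power, which is still proportional to $\chi(T,E_T)$ times the corresponding microlocal intersection number.
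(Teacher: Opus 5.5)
You follow the paper's route: a Künneth splitting via Lemma~\ref{Chern of decomposition}, GRR on $T$ to turn $\int_T\mathrm{ch}_{\mathrm{supp}}(E_T)$ into $\chi(T,E_T)$, finiteness of the sum over $\ell$, and reading the $t^d$-coefficient off the $\ell=0$ term. The argument breaks at the step you flag, because the degree count is wrong.

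Since $\dim_{\mathbb{R}}T^*X=4d$, pairing with $\pi^*\omega^{\ell}$ extracts the degree $4d-2\ell$ component of $\mu ch(M_X)$, not the degree $2d-2\ell$ one. The paper writes exactly $\int_{T^*X}\mu\mathrm{ch}^{2(2d-\ell)}\wedge\pi^*\omega^{\ell}$. So the $t^d$-coefficient is $\rho_0\,\chi(T,E_T)\int_{T^*X}[\mu ch(M_X)]^{(4d)}$.

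By contrast, $\mu eu(M_X)=[\mu ch(M_X)]^{(2d)}$ is a middle-degree class. It contributes nothing against $\pi^*\omega^0=1$ and survives only against $\pi^*\omega^{d}$, i.e.\ in the constant term $\rho_d t^0$. This is consistent with your own remark that lower degrees go to lower powers of $t$. Hence your sentence ``the top-degree part $[\cdot]^{2d}$ \dots\ is the integral of $\mu eu(M_X)$'' is false, and no grading argument can make the $\ell=0$ term isolate $\mu eu$.

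The claim closes as it does in the paper. Its proof sets $A_0=\chi(T,E_T)\int_{T^*X}\mu ch(M_X)\wedge\pi^*\omega^{0}$ and identifies this with $\chi(T,E_T)\cdot\mu\mathrm{ind}(M_X)$. So $\mu\mathrm{ind}(M_X)$ must be read as the integral of the top-degree part of the full microlocal Chern character, not of $\mu eu$. The paper's displayed $a_0(u)$, which pairs $\mu\mathrm{ch}^{(2d)}$ with $\pi^*\omega^d$, is really the $\ell=d$ term; do not let it suggest the $\mu eu$ reading. With the correct reading, your $\ell=0$ computation gives the statement with constant $\rho_0$. Your fallback reinterpretation of ``leading'' is then unnecessary, since the $t^d$-coefficient is proportional even when it vanishes.

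Two support arguments also fail as stated.
\begin{itemize}
\item Supports in a closed conic set do not make $\int_{T^*X}\mu ch(M_X)\cup\pi^*\omega^{\ell}$ finite. The form $\pi^*\omega^\ell$ has full support, and a conic set is compact only if it lies in the zero section. You need $\mathrm{SS}(M_X)\subset T^*_XX$; this is the role of the ``compact support condition $\Lambda$'' with which the paper's proof opens.
\item Coherence of $E_T$ on an affine test scheme does not give proper support; $\mathcal{O}_{\mathbb{A}^1}$ on $\mathbb{A}^1$ is a counterexample. Properness of $\mathrm{supp}(E_T)$ must be assumed for $\chi(T,E_T)$ and the GRR step to make sense.
\end{itemize}
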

\begin{proof}
Given a solution $u:*\to R\mathrm{Sol}_X(Z),$ with compact support condition $\Lambda,$ 
$$\mu\mathrm{Ch}_{\Lambda}^{u}(Z):=\mathrm{ch}_{u^{-1}\Lambda}(\sigma_{\Lambda}\mathbb{T}_{Z,u})\cup \pi^*\mathrm{Td}_X(TX)\in H_{\Lambda}^*(T^*X;\mathbb{C}).$$
Decompose, 
$\mu\mathrm{Ch}_{\Lambda}^u(Z):=\sum_{p=0}^{2d_X}\mu\mathrm{ch}^{(2p)}(Z,u),$
note $\pi^*[\omega]^{\ell}$ pairs with degree $2p$ contributions:
$$\big<\mu\mathrm{Ch}_{\Lambda}^u(Z),\pi^*\omega^{\ell}\big>=\int_{T^*X}\mu\mathrm{ch}^{2(2d-\ell)}\wedge \pi^*\omega^{\ell}.$$
Thus,
$$Z_t(u):=\sum_{\ell=0}^{d_X}\rho_{\ell}t^{d-\ell}\int_{T^*X}\bigg(\sum_{p=0}^{2d}\mu\mathrm{ch}^{(2p)}\bigg)\wedge \pi^*\omega^{\ell}.$$
We see
$Z_t(u)=\sum_{k=0}^{d_X}a_k(u)\cdot t^{d-k},$
and in particular,
$$a_0(u)=\rho_0\int_{T^*X}\mu\mathrm{ch}^{(2d)}\wedge \pi^*\omega^d,\hspace{1mm} \mu\mathrm{ch}^{(2d)}=\big[\mathrm{ch}_{u^{-1}\Lambda}(\sigma_{\Lambda}\mathbb{T}_{,u})\cup \pi^*\mathrm{Td}_X(TX)\big]^{(2d)}.$$

Evaluating the pairing gives
$$\sum_{\ell}^d\rho_{\ell}t^{d-\ell}\bigg(\int_t\mathrm{ch}_{\mathrm{supp}}(E_T)\bigg)\cdot \bigg(\int_{T^*X}\mu ch(M_X)\wedge \pi^*\omega^{\ell}\bigg).$$
Via GRR, we obtain 
$$\sum_{\ell}^d\rho_{\ell}t^{d-\ell}\chi(T,E_T)\cdot\bigg(\int_{T^*X}\mu ch(M_X)\wedge \pi^*\omega^{\ell}\bigg).$$
Writing
$$Z_t(u_T)=\sum_{\ell=0}^{d}\rho_{\ell}t^{d-\ell}A_{\ell}=\rho_0t^d A_d +O(t^{d-1}),$$
where
$$A_{\ell}:=\bigg(\int_t \mathrm{ch}_{\mathrm{supp}}(E_T)\bigg)\cdot \bigg(\int_{T^*X}\mu ch(M_X)\wedge \pi^*\omega^{\ell}\bigg),$$
we see
$$Z_t(u_T)=\rho_0 t^d\chi(T,E_T)\cdot \mu\mathrm{ind}(M_X)+O(t^{d-1}),$$
since $A_0=\chi(T,E_T)\cdot \mu\mathrm{ind}(M_X).$
\end{proof}

\subsection{Outlook: Virtual index theory for PDEs}
\label{sec: Virtual}
We summarize the main results of the sequel. Consider 
$$HH_*\big(\mathsf{IndCoh}(Y)\big)\simeq \Gamma(\mathcal{L}Y,\omega_Y)\simeq \omega(\mathcal{L}Y),$$
be the Hochschild homology. Let $M\in \mathsf{IndCoh}(Y)$ be dualizable. The \emph{categorical character} is the Hochschild trace map, i.e. the image of the identity endomorphism under
$$\mathrm{Tr}:\mathrm{End}_{\mathsf{IndCoh}(Y))}(M)\to HH_*\big(\mathsf{IndCoh}(Y)\big).$$
In other words,
$$\mathbf{1}\xrightarrow{coev}M\otimes M^{\vee}\xrightarrow{\mathrm{Id}_M\otimes\mathrm{Id}_{M^{\vee}}} M\otimes M^{\vee}\xrightarrow{ev}\mathbf{1},$$
represents the trace of the identity as element of endomorphism of the unit. Identifying the latter with elements of $HH_*$, produces the character,
$$[M]:=\mathrm{Tr}\big(id_{M}\big)\in HH_*\big(\mathsf{IndCoh}(Y)\big).$$
This is functorial for a large class of maps e.g. ind-proper morphisms $f:Y\to Y',$ giving a push-forward,
$$f_*:HH_*(\mathsf{IndCoh}(Y))\to HH_*(\mathsf{IndCoh}(Y')).$$

We apply this to the global setting of not necessarily affine $X_{DR}$-spaces $Z\in \mathrm{PreStk}_{/X_{DR}}$ admitting deformation theory whose derived moduli stack of solutions $R\mathrm{Sol}(Z)$ is locally almost of finite-type (laft), by putting $Y:=X_{DR}$, viewed as a (derived) ind-inf scheme.

For a solution $\varphi$, linearization is given by pull-back $\varphi^*\mathbb{T}_{\EQ}$, which is a $\D$-module on $X.$ The $\mathcal{D}$-Fredholm index of $\EQ$ (along $\varphi$) is the Euler
characteristic of its linearization complex
$$\mathrm{Index}_{\varphi}(\EQ):=\chi\big(a_*(\varphi^*\mathbb{T}_{\EQ}^{\ell})\big),$$ 
where $a:X\rightarrow pt.$
Similarly, letting $R\mathrm{Sol}_X(\EQ)$ denote its pre-stack of solutions, the $\mathcal{D}$\emph{-geometric Euler-Poincaré} index (along $\varphi)$ is the corresponding index for the pre-stack $R\mathrm{Sol}_X(\EQ).$

\begin{theorem}
    Let $Z\to X_{DR}$ be $D$-afp and bounded. Then, for each solution $u:*\to \RS(Z),$ we have a categorical character $[T_{Z/X_{DR},u}]\in HH_*(D_X-\mathsf{Mod}(X)),$ and 
    $p_*[T_{Z/X_{DR}},u]\simeq \int_{\mathcal{L}p}\big[T_{Z/X,u}]\big]\in HH_*(D-\mathsf{Mod}(pt))\simeq HH_*(\mathsf{Vect}_k),$
    for $p:X\to pt,$ with $p_*$ the de Rham push-forward.
\end{theorem}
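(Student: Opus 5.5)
The plan is to build the character from dualizability and then get the pushforward identity from functoriality of Hochschild homology under $p_*$.

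\textbf{Step 1: dualizability.} Since $Z\to X_{DR}$ is $D$-afp and bounded, the relative tangent complex $\mathbb{T}_{Z/X_{DR}}$ is dual to a perfect cotangent complex. For a solution $u:*\to \RS(Z)$, the pullback $T_{Z/X_{DR},u}=u^*\mathbb{T}_{Z/X_{DR}}$ is then a bounded complex of finite-rank $\mathcal{D}_X$-modules. This is the same reduction used after Assumption \ref{ass: Simplifying}, where compactness makes $\mathbb{T}_{\A}$ a bounded complex of finite rank $\A[\D]$-modules.

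Because $X$ is smooth projective, we have $\mathsf{IndCoh}(X_{DR})^{\omega}\simeq \mathsf{Coh}(X_{DR})$, as in the proof of Lemma \ref{prop: BC-micro}. Hence $T_{Z/X_{DR},u}$ is a compact object of $D_X\text{-}\mathsf{Mod}(X)\simeq \mathsf{IndCoh}(X_{DR})$.

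Compact objects are dualizable with respect to Verdier duality $\mathbb{D}_{X_{DR}}$, using the pairing $\mathcal{M}\otimes^!\mathbb{D}\mathcal{M}\to \omega_{X_{DR}}$. So the categorical character
$$[T_{Z/X_{DR},u}]:=\mathrm{Tr}(\mathrm{id})\in HH_*(D_X\text{-}\mathsf{Mod}(X))\simeq \Gamma(\mathcal{L}X_{DR},\omega)$$
is defined.

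\textbf{Step 2: pushforward.} The map $p:X\to pt$ is proper, so $p_{DR}:X_{DR}\to pt$ is ind-proper. Its de Rham pushforward $p_*$ is continuous and has a continuous right adjoint $p^!$. Consequently $p_*$ preserves compact objects and induces a map on Hochschild homology.

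The formal input is the functoriality of traces for functors with continuous right adjoints, in the sense of Ben-Zvi–Nadler secondary traces. It gives
$$p_*\,\mathrm{Tr}(\mathrm{id}_M)=\mathrm{Tr}(\mathrm{id}_{p_*M})$$
for dualizable $M$. We then identify the induced map $HH_*(\mathsf{IndCoh}(X_{DR}))\to HH_*(\mathsf{Vect}_k)$ with integration along the loop map
$$\mathcal{L}p:\mathcal{L}X_{DR}\to \mathcal{L}pt=pt,$$
acting on $\omega$-sections. This comes from the base change $\mathcal{L}Y\simeq Y\times_{Y\times Y}Y$ together with the compatibility of $\mathsf{IndCoh}$ pushforward with the Künneth formula.

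\textbf{Step 3: evaluation.} Under $HH_*(\mathsf{Vect}_k)\simeq k$, the element $\mathrm{Tr}(\mathrm{id}_{p_*T_{Z/X_{DR},u}})$ is $\chi(a_*u^*\mathbb{T})$. This recovers $\mathrm{Index}_u$ from Definition \ref{DFredIndex}, which serves as a consistency check.

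\textbf{Main obstacle.} The hardest step is the identification in Step 2 of the abstract trace pushforward with $\int_{\mathcal{L}p}$. It requires checking that the $\mathsf{IndCoh}$ formalism on the inf-scheme $X_{DR}$ satisfies the proper base change needed for the loop-space Beck–Chevalley square. I would first try to reduce this to the known statement for $X$ itself via the Hodge degeneration $X_{Dol}\rightsquigarrow X_{DR}$ used for symbols in \cite{KSY23}. Failing that, I would cite the Gaitsgory–Rozenblyum theory of ind-proper pushforward for laft prestacks directly.
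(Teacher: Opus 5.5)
Your Step 2 is also the load-bearing part of the paper's proof. The paper writes down the commuting square identifying $p_*$ on $HH_*(D_X\text{-}\mathsf{Mod}(X))$ with $\mathcal{L}p_*$ on $\omega(\mathcal{L}X)$, asserting it without proof. It then concludes from $p_*\mathrm{Tr}(\mathrm{id}_T)\simeq\mathrm{Tr}(p_*\mathrm{id}_T)\simeq\mathrm{Tr}(\mathrm{id}_{p_*T})$.

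The difference is where the hypotheses enter. The paper uses boundedness to pass to truncations $Z\to Z^{\le n}$. From the cofiber sequence $j_n^*\mathbb{L}_{Z^{\le n}/X_{DR}}\to\mathbb{L}_{Z/X_{DR}}\to\mathbb{L}_{Z/Z^{\le n}}$, pulled back along $u=j_n\circ u_n$, it derives the additivity $p_*[T_{Z/X_{DR},u}]=p_*[u_n^*T_{Z^{\le n}/X_{DR}}]+p_*[u^*T_{Z/Z^{\le n}}]$. You instead use $D$-afp plus boundedness directly to get compactness of $T_{Z/X_{DR},u}$. The paper's final identity comes from the trace-functoriality line alone, so your more direct route loses nothing. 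Your Step 3 also makes explicit the link with $\mathrm{Index}_u$, which the paper leaves implicit.

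On your ``main obstacle'': $X_{DR}$ takes values in sets, so $\mathcal{L}(X_{DR})\simeq X_{DR}$. Hence $HH_*(D_X\text{-}\mathsf{Mod}(X))\simeq\Gamma(X_{DR},\omega_{X_{DR}})$ is de Rham homology and $\int_{\mathcal{L}p}$ is de Rham pushforward. Ind-proper base change for $\mathsf{IndCoh}$ on laft inf-schemes therefore suffices, and the Hodge-degeneration detour is unnecessary.

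One justification in Step 1 is wrong and should be replaced. Compact objects of $(D_X\text{-}\mathsf{Mod}(X),\otimes^!)$ are not dualizable in the symmetric monoidal sense; only lisse objects are. Dualizability of $\mathcal{M}$ would force $\mathcal{M}\otimes^!(-)$ to preserve compacts. A delta module $\delta_x$ is compact, but $\delta_x\otimes^!\mathcal{D}_X=i_{x*}i_x^!\mathcal{D}_X$ is an infinite sum of copies of $\delta_x$ and so is not compact. The same failure occurs for $\mathcal{D}_X/\mathcal{D}_X\cdot P$ with $P$ a single operator of positive order and $\dim X\ge 2$, which is the typical shape of $T_{Z/X_{DR},u}$.

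Verdier duality on compact objects gives the self-duality of the DG category $D_X\text{-}\mathsf{Mod}(X)$, with unit $\Delta_*\omega_X$ and counit $p_*\Delta^!$. It does not give a pairing $\mathcal{M}\otimes^!\mathbb{D}\mathcal{M}\to\omega_{X_{DR}}$ satisfying the zig-zag identities. In any case, a coev/ev trace would land in $\mathrm{End}(\omega_{X_{DR}})$ rather than in $HH_*$.

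What you need is only compactness. $T_{Z/X_{DR},u}$ defines a functor $\mathsf{Vect}_k\to D_X\text{-}\mathsf{Mod}(X)$ with continuous right adjoint, and $[T_{Z/X_{DR},u}]$ is the image of $1\in HH_*(\mathsf{Vect}_k)$ under the induced map. Your Step 2 then applies verbatim to the composite $\mathsf{Vect}_k\to D_X\text{-}\mathsf{Mod}(X)\xrightarrow{p_*}\mathsf{Vect}_k$, namely $k\mapsto p_*T_{Z/X_{DR},u}$. This gives $p_*[T_{Z/X_{DR},u}]=[p_*T_{Z/X_{DR},u}]=\chi(p_*T_{Z/X_{DR},u})$. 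The paper's coev/ev description of the character has the same conflation, so this correction applies there too.
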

\begin{proof}{Sketch of proof}
Consider the commuting diagram:
\[
\begin{tikzcd} HH_*(\mathcal{D}_X\text{-}\mathrm{Mod}(X)) \arrow[r, "p_*"] \arrow[d, "\simeq"] & HH_*(\mathcal{D}\text{-}\mathrm{Mod}(\mathrm{pt})) \arrow[d, "\simeq"] \\ \omega(\mathcal{L}X) \arrow[r, "\mathcal{L}p_*"] & \omega(\mathcal{L}\mathrm{pt}), \end{tikzcd}
\]
Since $Z$ is $D_X$-afp and bounded, $Z\to Z^{\leq n}$ relative to $X_{DR}$ induces for each solution $u$ a solution of the truncation $u_n:*\to \RS(Z^{\leq n}),$ and via the sequence
$$j_n^*\mathbb{L}_{Z^{\leq n}/X_{DR}}\to \mathbb{L}_{Z/X_{DR}}\to \mathbb{L}_{Z/Z^{\leq n}},$$
via pull-back along $u$, since $u=j_n\circ u_n,$ for each $n$, we have, by dualizing an equality of categorical characters, 
$$p_*[T_{Z/X_{DR},u}]=p_*\big[u_n^*T_{Z^{\leq n}/X_{DR}}\big]+p_*[u^T_{Z/Z^{\leq n}}].$$
Moreover, since
$$p_*[T_{Z/X_{DR},u}]=p_*(Tr(id_{T_{Z/X_{DR},u}}))\simeq Tr(p_*id_{T_{Z/X_{DR}},u})\simeq Tr(id_{p_*T_{Z/X_{DR},u}}),$$
the result follows.
\end{proof}

\bibliographystyle{amsalpha}
\bibliography{Bibliography}

\providecommand{\bysame}{\leavevmode\hbox to3em{\hrulefill}\thinspace}
\providecommand{\MR}{\relax\ifhmode\unskip\space\fi MR }
\providecommand{\MRhref}[2]{%
  \href{http://www.ams.org/mathscinet-getitem?mr=#1}{#2}
}
\providecommand{\href}[2]{#2}
\begin{thebibliography}{BCOV94}

\bibitem[AS63]{AtiyahSinger1963}
Michael~F. Atiyah and Isadore~M. Singer, \emph{The index of elliptic operators on compact manifolds}, Bulletin of the American Mathematical Society \textbf{69} (1963), 422--433.

\bibitem[BCOV94]{BCOV1994}
Michael Bershadsky, Sergio Cecotti, Hirosi Ooguri, and Cumrun Vafa, \emph{Kodaira-spencer theory of gravity and exact results for quantum string amplitudes}, Communications in Mathematical Physics \textbf{165} (1994), no.~2, 311--427.

\bibitem[BD04]{BD04}
A.~Beilinson and V.~Drinfeld, \emph{Chiral algebras}, Colloquium Publications, vol.~51, American Mathematical Society, 2004.

\bibitem[BF86a]{BismutFreed1986EllipticFamiliesI}
Jean-Michel Bismut and Daniel~S. Freed, \emph{The analysis of elliptic families. {I}. {M}etrics and connections on determinant bundles}, Communications in Mathematical Physics \textbf{106} (1986), no.~1, 159--176.

\bibitem[BF86b]{BismutFreed1986EllipticFamiliesII}
\bysame, \emph{The analysis of elliptic families. {II}. {D}irac operators, eta invariants, and the holonomy theorem}, Communications in Mathematical Physics \textbf{107} (1986), no.~1, 103--163.

\bibitem[Bis86]{Bismut1986FamiliesDirac}
Jean-Michel Bismut, \emph{The {A}tiyah--{S}inger index theorem for families of {D}irac operators: Two heat equation proofs}, Inventiones Mathematicae \textbf{83} (1986), no.~1, 91--151.

\bibitem[BMS23]{BMS23}
Marco Benini, Giorgio Musante, and Alexander Schenkel, \emph{Green hyperbolic complexes on lorentzian manifolds}, Comm. Math. Phys. \textbf{403} (2023), no.~2, 699--744.

\bibitem[BN13]{BenZviNadler2013SecondaryTraces}
David Ben‑Zvi and David Nadler, \emph{Secondary traces}, arXiv preprint arXiv:1305.7177, 2013.

\bibitem[BR21]{BR}
Ugo Bruzzo and Vladimir~N. Rubtsov, \emph{Koszul complexes and spectral sequences associated with lie algebroids}, S{\~a}o Paulo Journal of Mathematical Sciences \textbf{15} (2021), no.~2, 495--504.

\bibitem[Bru17]{B}
Ugo Bruzzo, \emph{Lie algebroid cohomology as a derived functor}, Journal of Algebra \textbf{483} (2017), 245--261.

\bibitem[BZN13]{BenZviNadler2013NonlinearTraces}
David Ben-Zvi and David Nadler, \emph{Nonlinear traces}, arXiv preprint arXiv:1305.7175, 2013.

\bibitem[CG16a]{CG16}
Kevin Costello and Owen Gwilliam, \emph{Factorization algebras in quantum field theory, vol. 1}, Cambridge University Press, 2016.

\bibitem[CG16b]{CG16b}
\bysame, \emph{Factorization algebras in quantum field theory, vol. 2}, 2016.

\bibitem[C\u03]{Caldararu2003I}
Andrei C\u{a}ld\u{a}raru, \emph{The mukai pairing, i: The hochschild structure}, arXiv preprint arXiv:math/0308079v2, 2003.

\bibitem[CW10]{CaldararuWillerton2010}
Andrei C\u{a}ld\u{a}raru and Simon Willerton, \emph{The mukai pairing. i. a categorical approach}, New York Journal of Mathematics \textbf{16} (2010), 61--98.

\bibitem[EF08]{Felder}
Markus Engeli and Giovanni Felder, \emph{A riemann-roch-hirzebruch formula for traces of differential operators}, Annales scientifiques de l'École Normale Supérieure, Série 4 \textbf{41} (2008), no.~4, 623--655.

\bibitem[EiMM21]{eriksson2018bcov}
Dennis Eriksson, Gerard~Freixas i~Montplet, and Christophe Mourougane, \emph{Bcov invariants of {Calabi}–{Yau} manifolds and degenerations of {Hodge} structures}, no.~3, 379--454, 76 pages, Preprint arXiv:1809.05581.

\bibitem[ES19]{ES}
H{\'e}l{\`e}ne Esnault and Claude Sabbah, \emph{Good lattices of algebraic connections (with an appendix by claude sabbah)}, Documenta Mathematica \textbf{24} (2019), 271--301.

\bibitem[Fed96]{Fedosov1996DQ}
Boris Fedosov, \emph{Deformation quantization and index theory}, Mathematical Topics, vol.~9, Akademie Verlag, Berlin, 1996.

\bibitem[FG12]{FG12}
John Francis and Dennis Gaitsgory, \emph{Chiral {Koszul} duality}, Selecta Mathematica \textbf{18} (2012), no.~1, 27--87.

\bibitem[FLY08]{FangLuYoshikawa2008}
H.~Fang, Z.~Lu, and K.-I. Yoshikawa, \emph{Analytic torsion for {C}alabi--{Y}au threefolds}, J. Differential Geom. \textbf{80} (2008), no.~2, 175--259.

\bibitem[GLL17]{BVIndex}
Ryan Grady, Qin Li, and Si~Li, \emph{Batalin--vilkovisky quantization and the algebraic index}, Advances in Mathematics \textbf{317} (2017), 575--639.

\bibitem[Gol67]{Goldschmidt1967}
Hubert Goldschmidt, \emph{Integrability criteria for systems of nonlinear partial differential equations}, Journal of Differential Geometry \textbf{1} (1967), no.~3--4, 269--307.

\bibitem[GS91]{GilletSoule1991}
Henri Gillet and Christophe Soul{\'e}, \emph{Analytic torsion and the arithmetic {T}odd genus}, Topology \textbf{30} (1991), no.~1, 21--54, With an appendix by D. Zagier.

\bibitem[GWW25]{GuiWangWilliams2025Jouanolou}
Zhengping Gui, Minghao Wang, and Brian~R. Williams, \emph{Higher-dimensional chiral algebras in the jouanolou model}, 68 pages; submitted 30 Oct 2025.

\bibitem[JS16]{JS16}
B.~Jubin and P.~Schapira, \emph{Sheaves and $\mathcal{D}$-modules on lorentzian manifolds}, Letters in Mathematical Physics \textbf{106} (2016), 607--648.

\bibitem[Kas95]{Kashiwara1970}
Masaki Kashiwara, \emph{Algebraic study of systems of partial differential equations}, Mémoires de la Société Mathématique de France, no.~63, Société Mathématique de France, 1995, Master's thesis, University of Tokyo, 1970. Translated by A. D'Agnolo and J.-P. Schneiders.

\bibitem[KLV86]{KLV}
Joseph Krasil'shchik, Valentin Lychagin, and Alexander~M. Vinogradov, \emph{Geometry of jet spaces and nonlinear partial differential equations}, Advanced Studies in Contemporary Mathematics, vol.~1, Gordon and Breach Science Publishers, New York, 1986.

\bibitem[KO68]{KatzOda1968}
Nicholas~M. Katz and Tadao Oda, \emph{On the differentiation of {De Rham} cohomology classes with respect to parameters}, Journal of Mathematics of Kyoto University \textbf{8} (1968), no.~2, 199--213.

\bibitem[Kon95]{Kontsevich1994HAMS}
Maxim Kontsevich, \emph{Homological algebra of mirror symmetry}, Proceedings of the International Congress of Mathematicians, Vol. 1, 2 (Zürich), Birkhäuser, Basel, 1995, pp.~120--139.

\bibitem[KR25]{KR}
J.~Kryczka and V.~Rubtsov, \emph{Spencer hypercohomologies in the geometric theory of partial differential equations}, Lobachevskii Journal of Mathematics \textbf{46} (2025), no.~11, 5795–--5813.

\bibitem[Kry26]{Kry2026}
Jacob Kryczka, \emph{Conservation laws and cohomological intersections}, Journal of Geometry and Physics (2026), no.~105763.

\bibitem[KS90]{KS90}
Masaki Kashiwara and Pierre Schapira, \emph{Sheaves on manifolds}, Grundlehren der Mathematischen Wissenschaften, vol. 292, Springer-Verlag, Berlin, 1990, With a chapter in French by Christian Houzel.

\bibitem[KS12]{KSDQMods}
\bysame, \emph{Deformation quantization modules}, Ast\'erisque, no. 345, Société Mathématique de France, 2012. \MR{3012169}

\bibitem[KS14]{KashiwaraSchapira2014}
\bysame, \emph{Microlocal euler classes and hochschild homology}, Journal of the Institute of Mathematics of Jussieu \textbf{13} (2014), no.~3, 487--516.

\bibitem[KS25]{KSh1}
Jacob Kryczka and Artan Sheshmani, \emph{The $\mathcal{D}$-geometric hilbert scheme -- part i: Involutivity and stability}, arXiv preprint (2025).

\bibitem[KSY23]{KSY23}
Jacob Kryczka, Artan Sheshmani, and Shing-Tung Yau, \emph{Derived moduli spaces of nonlinear pdes {I}: Singular propagations}, arXiv preprint \textbf{arXiv:2312.05226} (2023), Submitted and updated version available on author's webpage: \url{https://sites.google.com/view/jacobkryczka/research}.

\bibitem[KSY24]{KSY2}
\bysame, \emph{Derived moduli spaces of nonlinear pdes {II}: Variational tricomplex and {BV} formalism}, arXiv preprint \textbf{arXiv:2406.16825} (2024).

\bibitem[Li23]{Li2023VertexQME}
Si~Li, \emph{Vertex algebras and quantum master equation}, Journal of Differential Geometry \textbf{123} (2023), no.~3, 461--521.

\bibitem[LR89]{LR}
Valentin~V. Lychagin and Vladimir~N. Rubtsov, \emph{Topological indices of spencer complexes associated with geometric structures}, Mathematical Notes of the Academy of Sciences of the USSR \textbf{45} (1989), no.~4, 305--312.

\bibitem[Lyc85]{L}
Valentin~V. Lychagin, \emph{Differential operators on fiber bundles}, Russian Mathematical Surveys \textbf{40} (1985), no.~2, 187--188, Translated from Uspekhi Matematicheskikh Nauk.

\bibitem[NT95]{NestTsygan1995AlgebraicIndex}
Ryszard Nest and Boris Tsygan, \emph{Algebraic index theorem}, Communications in Mathematical Physics \textbf{172} (1995), no.~2, 223--262.

\bibitem[NT96]{FormalAnalytic}
\bysame, \emph{Formal versus analytic index theorems}, International Mathematics Research Notices (1996), no.~11, 557--564.

\bibitem[Pau14]{Pau14}
Fr{\'e}d{\'e}ric Paugam, \emph{Towards the mathematics of quantum field theory}, vol.~59, Springer Science \& Business Media, 2014.

\bibitem[Qui64]{Q}
Daniel~G. Quillen, \emph{Formal properties of overdetermined systems of linear partial differential equations}, Phd thesis, Harvard University, 1964.

\bibitem[Qui85a]{Quillen2}
Daniel Quillen, \emph{Determinants of {C}auchy--{R}iemann operators over a {R}iemann surface}, Functional Analysis and Its Applications \textbf{19} (1985), no.~1, 31--34.

\bibitem[Qui85b]{Quillen1}
\bysame, \emph{Superconnections and the {C}hern character}, Topology \textbf{24} (1985), no.~1, 89--95.

\bibitem[Ram08]{Ramadoss2008}
Ajay~C. Ramadoss, \emph{The relative riemann-roch theorem from hochschild homology}, New York Journal of Mathematics \textbf{14} (2008), 643--717.

\bibitem[RS73]{RaySinger1971}
Daniel~B. Ray and Isadore~M. Singer, \emph{{$R$-torsion and the Laplacian on Riemannian manifolds}}, Annals of Mathematics \textbf{98} (1973), no.~1, 154--177.

\bibitem[Sab11]{Sa}
Claude Sabbah, \emph{Introduction to the theory of $\mathcal{D}$-modules}, Nankai University, 2011, Lecture Notes.

\bibitem[Sat73]{Sato}
Mikio Sato, \emph{Hyperfunctions and pseudodifferential equations}, Hyperfunctions and Pseudo-differential Equations, Lecture Notes in Mathematics, vol. 287, Springer, 1973, pp.~265--529.

\bibitem[Sch73]{Schmid1973}
Wilfried Schmid, \emph{Variation of hodge structure: the singularities of the period mapping}, Inventiones Mathematicae \textbf{22} (1973), 211--319.

\bibitem[Sch94]{Sch}
Jean-Pierre Schneiders, \emph{An introduction to $\mathcal{D}$-modules}, Bulletin de la Soci{\'e}t{\'e} Royale des Sciences de Li{\`e}ge \textbf{63} (1994), 223--295.

\bibitem[Spe64]{Sp}
Donald~C. Spencer, \emph{De rham theorems and neumann decompositions associated with linear partial differential equations}, Annales de l'Institut Fourier \textbf{14} (1964), no.~1, 223--241.

\bibitem[Spe69]{Sp2}
\bysame, \emph{Overdetermined systems of linear partial differential equations}, Bulletin of the American Mathematical Society \textbf{75} (1969), no.~2, 179--239.

\bibitem[SS94a]{SS94a}
Pierre Schapira and Jean-Pierre Schneiders, \emph{Elliptic pairs i. relative finiteness and duality}, Ast{\'e}risque \textbf{224} (1994).

\bibitem[SS94b]{SS94b}
\bysame, \emph{Elliptic pairs ii. euler class and relative index theorem}, Ast{\'e}risque \textbf{224} (1994).

\bibitem[Ste77]{Steenbrink1977}
Joseph H.~M. Steenbrink, \emph{Mixed hodge structure on the vanishing cohomology}, Real and Complex Singularities (Proc. Ninth Nordic Summer School/NAVF Sympos. Math., Oslo, 1976), Sijthoff and Noordhoff, Alphen aan den Rijn, 1977, pp.~525--563.

\bibitem[Ste76]{Steenbrink1975}
Joseph Steenbrink, \emph{Limits of hodge structures}, Inventiones Mathematicae \textbf{31} (1975/76), no.~3, 229--257.

\bibitem[TV20]{TVGRR}
Bertrand To{\"e}n and Gabriele Vezzosi, \emph{Algebraic foliations and derived geometry {II}: the {Grothendieck–Riemann–Roch} theorem}, arXiv preprint.

\bibitem[Vin01]{Vin01}
A.~Vinogradov, \emph{Cohomological analysis of partial differential equations and secondary calculus}, American Mathematical Society, 2001.

\bibitem[Wit82]{Witten1982Constraints}
Edward Witten, \emph{Constraints on supersymmetry breaking}, Nuclear Physics B \textbf{202} (1982), no.~2, 253--316.

\bibitem[Wit88]{WittenLoop}
\bysame, \emph{The index of the {D}irac operator in loop space}, Elliptic Curves and Modular Forms in Algebraic Topology (Alan~L. Carey, Richard~M. Hain, and Michael~K. Murray, eds.), Lecture Notes in Mathematics, vol. 1326, Springer, Berlin, 1988, pp.~161--181.

\bibitem[Yos04]{Yoshikawa2004}
Ken-Ichi Yoshikawa, \emph{K3 surfaces with involution, equivariant analytic torsion, and automorphic forms on the moduli space}, Inventiones Mathematicae \textbf{156} (2004), no.~1, 53--117.

\bibitem[Yos07]{Yoshikawa2007}
\bysame, \emph{On the singularity of quillen metrics}, Mathematische Annalen \textbf{337} (2007), no.~1, 61--89.

\bibitem[Yos15]{Yoshikawa2015}
\bysame, \emph{Degenerations of calabi-yau threefolds and bcov invariants}, International Journal of Mathematics \textbf{26} (2015), no.~4, 1540010, 33.

\bibitem[ZL94]{LZ}
Leonid Zil'bergleit and Valentin~V. Lychagin, \emph{Spencer cohomology of differential equations}, Global Analysis, World Scientific, 1994, pp.~215--252.

\bibitem[Zuc79]{Zucker1979}
Steven Zucker, \emph{Hodge theory with degenerating coefficients. l² cohomology in the poincaré metric}, Annals of Mathematics, Second Series \textbf{109} (1979), no.~3, 415--476.

\end{thebibliography}
\end{document}